\newtheorem{theorem}{Theorem}
\newtheorem*{theorem*}{Theorem}
\newtheorem{example}[theorem]{Example}
\newtheorem{definition}[theorem]{Definition}
\newtheorem*{definition*}{Definition}
\newtheorem{lemma}[theorem]{Lemma}
\newtheorem{proposition}[theorem]{Proposition}
\newtheorem{remark}[theorem]{Remark}
\newtheorem{cor}[theorem]{Corollary}
\newtheorem*{cor*}{Corollary}
\newtheorem*{conjecture*}{Conjecture}
\numberwithin{theorem}{section}
\renewcommand\iff{%
\ifmmode\text{ if and only if }%
\else if and only if \fi}
\renewcommand{\and}{\wedge}
\renewcommand{\phi}{\varphi}
\newcommand{\Span}{\text{Span}}
\newcommand{\udim}{\underline{\text{dim}}}
\newcommand{\Mod}{\textnormal{Mod}}
\renewcommand{\mod}{\text{mod}}
\newcommand{\ind}{\text{ind}}
\newcommand{\Ab}{\text{Ab}}
\newcommand{\rad}{\textnormal{rad}}
\newcommand{\D}{\textnormal{D}}
\newcommand{\End}{\textnormal{End}}
\newcommand{\Hom}{\textnormal{Hom}}
\newcommand{\Ext}{\textnormal{Ext}}
\newcommand{\zg}{\textnormal{Zg}}
\newcommand{\Zg}{\textnormal{Zg}}
\newcommand{\pinj}{\textnormal{pinj}}
\newcommand{\mcal}[1]{\mathcal{#1}}
\newcommand{\st}{\ \vert \ }
\newcommand{\vertl}{\left\vert}
\newcommand{\vertr}{\right\vert}
\newcommand{\pp}{\textnormal{pp}}
\newcommand{\R}{\mathbb{R}}
\newcommand{\Q}{\mathbb{Q}}
\newcommand{\N}{\mathbb{N}}
\newcommand{\Z}{\mathbb{Z}}
\newcommand{\K}{\mathbb{K}}
\newcommand{\coker}{\text{coker}}
\newcommand{\X}{\mathbb{X}}
\newcommand{\coh}{\text{coh}}
\title[Decidability of modules over tubular algebras]{Decidability of theories of modules over tubular algebras}
\author{Lorna Gregory}
\thanks{The author acknowledges the support of EPSRC through Grant EP/K022490/1.}
\address{Dipartimento di Matematica e Fisica, Universit\`a degli Studi della Campania ``Luigi
Vanvitelli'', Viale Lincoln 5, 81100 Caserta, Italy}
\email{Lorna.Gregory@gmail.com}
\date{\today}
\newcommand\hfuzzReset{\hfuzz=3pt}
\newcommand\toleranceReset{\tolerance=1400}
\newcommand\emergencystretchReset{\emergencystretch=1ex}
\begin{document}
\maketitle

\begin{abstract}
We show that the common theory of all modules over a tubular algebra (over a recursive algebraically closed field) is decidable. This result supports a long standing conjecture of Mike Prest which says that a finite-dimensional algebra (over a recursively given field) is tame if and only its common theory of modules is decidable \cite{PreBk}. Moreover, as a corollary, we are able to confirm this conjecture for the class of concealed canonical algebras over algebraically closed fields. These are the first examples of non-domestic algebras which have been shown to have decidable theory of modules. We also correct results in \cite{modirrslope}, in particular, corollary 8.8 of that paper.
\end{abstract}

\tableofcontents
\section{Introduction}

The study of the decidability and undecidability of theories of modules over finite-dimensional algebras began with papers of Baur which showed that the $4$-subspace problem is decidable \cite{Baurfourss} and that the $5$-subspace problem is undecidable \cite{decundectheoriesabgroups} (also see \cite{SlobFrid}).
For a given ring $R$, the theory of $R$-modules is said to be decidable if there is an algorithm that decides whether a given first order sentence in the language of $R$-modules is true in all $R$-modules.
It follows easily from the results of Baur that the theory of modules over the path algebras of $\widetilde{D_4}$ (in subspace orientation) is decidable and that the theory of modules over the path algebra of $\widetilde{D_5}$ (in subspace orientation) is undecidable.

Geisler \cite{Geislerthesis} and Prest \cite{decmiketameher} showed that the theory of modules over all tame hereditary algebras (over recursive fields with splitting algorithms) is decidable. In the converse direction, Prest showed that the theory of modules of strictly wild algebras is undecidable \cite{Epiintandstrwild} and thus all wild finite-dimensional hereditary algebras have undecidable theories of modules. Improving this result, the author, together with Prest, has shown that, over an algebraically closed field, all (finitely) controlled wild algebras have undecidable theories of modules \cite{intandreptype}. Note that at this time there is no known finite-dimensional algebra (over an algebraically closed field) which is wild but not (finitely) controlled wild. Indeed, Ringel has conjectured that all wild algebras (over algebraically closed fields) are controlled wild.

Toffalori and Puninski \cite{decfinrings} have worked on the problem of classifying finite commutative rings which have decidable theories of modules, which of course includes all commutative finite-dimensional algebras over finite fields.

\

The main result of this paper is the following.

\begin{theorem*}
Let $R$ be a tubular algebra over a recursive algebraically closed field. The common theory of $R$-modules is decidable.
\end{theorem*}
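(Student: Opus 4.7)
The plan is to apply the standard reformulation of decidability of the common theory of $R$-modules as the existence of an algorithm which, on input a pp-pair $\phi/\psi$, decides whether the basic open set $(\phi/\psi)$ of the Ziegler spectrum $\zg R$ is empty. Because $R$ is tubular, the points of $\zg R$ are classified into four kinds: points supported in the preprojective component, points supported in the preinjective component, the $\P^1$-indexed tubular family $\mathcal{T}_q$ together with its Pr\"ufer, adic and generic limit points for each rational slope $q \in \Q_{\geq 0}^\infty$, and one further generic point $G_r$ for each irrational $r>0$. I would decide emptiness of $(\phi/\psi)$ on each of these four kinds separately.

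For each fixed rational slope $q$, a tubular mutation (or, equivalently, perpendicular-category localisation at $\mathcal{T}_q$) produces a tame hereditary algebra $H_q$ of the same extended Dynkin type, and induces a homeomorphism between the slope-$q$ piece of $\zg R$ and the non-preprojective/non-preinjective part of $\zg H_q$. Since Geisler and Prest have already proved decidability for $H_q$, this transport reduces the slope-$q$ question to a decided problem, provided the tubular mutation and the transport of $\phi/\psi$ can be carried out effectively; this should amount to a routine manipulation of matrices over the base field, using the recursive presentation of the algebraically closed ground field. The preprojective and preinjective components are finite unions of Auslander--Reiten orbits and are handled analogously.

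The main obstacle is the irrational slopes: there are continuum-many generic points $G_r$, yet the decision procedure must be finitary. The heart of the argument will therefore be a finiteness principle asserting that, for any pp-pair $\phi/\psi$, the set of $r$ for which $G_r$ lies in $(\phi/\psi)$ is a finite union of intervals whose endpoints are rationals in a computable finite set extracted from $\phi$ and $\psi$. Establishing this piecewise-constancy, and controlling precisely how pp-formulas transform under tubular mutation so that the break points are computable, will be the technically hardest step; once it is in place, combining it with the rational-slope algorithm above yields a global decision procedure and hence the theorem.
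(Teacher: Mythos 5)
Your high-level plan (separate the Ziegler spectrum by slope, transfer to tame hereditary decidability where possible, and use a finiteness principle for the continuum of irrational slopes) captures part of the structure of the argument, but there are two concrete gaps, the second of which is the hard one.

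First, the reduction is stated incorrectly. The Baur--Monk reduction requires an algorithm that decides, for pp-pairs $\phi/\psi, \phi_1/\psi_1, \ldots, \phi_n/\psi_n$, whether $(\phi/\psi) \subseteq \bigcup_{i=1}^n (\phi_i/\psi_i)$. Deciding emptiness of a single basic open $(\phi/\psi)$ is strictly weaker: the conjunctions coming from Baur--Monk involve both positive conditions ($|\phi_i/\psi_i|>1$) and negative conditions ($|\sigma_j/\tau_j|=1$), and the negative ones mean you must locate a point of $(\phi_i/\psi_i)$ \emph{outside} a finite union of basic opens. The complement of a basic open is not a basic open, so this is a genuinely stronger demand.

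Second, and more importantly, you have not dealt with the infinitely many rational slopes between the ``break points,'' and the naive piecewise-constancy you rely on fails. There are infinitely many $q\in\Q^+$, so a per-slope tubular-mutation/localisation transfer (even if it can be made effective, which is itself nontrivial) does not terminate. Moreover the behaviour of a pp-pair at rational slopes inside an interval of irrational constancy is not constant: Section 7 of the paper exhibits pp-pairs over $C(2,2,2,2)$ whose set of ``non-uniform'' rational slopes (slopes $q$ at which the pp-pair is open on some indecomposables of slope $q$ but closed on others in inhomogeneous tubes) has accumulation points, including at rational values and at $0$ and $\infty$; this is exactly why Corollary~8.8 of \cite{modirrslope} is false and why Theorem~\ref{replcor} had to be weakened. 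So ``finite set of rational break points, constant in between'' is not available in the form you need. The paper's route is to prove (\ref{dimvectorsforpppairs}) that after splitting $(0,\infty)$ into finitely many computable subintervals, the function $M\mapsto \dim\phi(M)/\psi(M)$ on indecomposables with slope in a given subinterval is a single integer linear form $v\cdot\udim M$, to show (\ref{Omegaexists}, \ref{dimvecofinddefinPres}) that the set of dimension vectors of indecomposable finite-dimensional modules is definable in Presburger arithmetic, and then to reduce the existence of a point of given slope range in $(\phi/\psi)\setminus\bigcup_i(\phi_i/\psi_i)$ to a Presburger sentence (\ref{nonexpenoughtocheckfd}, \ref{algorfornonexp}). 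The Presburger encoding is the idea your proposal is missing, and without it the rational-slope bookkeeping does not close. (The finitely many exceptional slopes are handled directly via the explicit description of $\mcal{C}_q$ in Section 4, and slopes $0$, $\infty$ are handled not by a generic tubular mutation but by the specific one-point extension structure $A[X]$ and the effective interpretation functors $F_0$, $F_1$ into $\Mod$-$A$, together with elementary duality; that part of your plan is closer in spirit to what is done, though the effectivity details matter.)
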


Our result supports the following long standing conjecture of Mike Prest.

\begin{conjecture*}\cite{PreBk}
Let $R$ be a finite-dimensional algebra over a suitably recursive field. The theory of $R$-modules is decidable if and only if $R$ is of tame representation type.
\end{conjecture*}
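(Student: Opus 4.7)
The plan is to prove the biconditional by handling its two implications independently. For the converse direction, I would argue contrapositively, showing that wildness forces an interpretation of an undecidable theory; for the forward direction, I would stratify tame finite-dimensional algebras by their module-theoretic complexity and, in each stratum, provide an effective description of the Ziegler spectrum together with an algorithm deciding pp-pair inclusion. Since decidability of the theory of $R$-modules is equivalent to effectively deciding inclusion in the lattice of pp-pairs modulo $R$, this is the correct reduction target.

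For the wild direction I would invoke the existing interpretation machinery. By \cite{Epiintandstrwild}, every strictly wild algebra interprets the theory of modules over the free $k$-algebra on two non-commuting generators and hence has undecidable theory of modules; by \cite{intandreptype}, this extends to all (finitely) controlled wild algebras. Conditionally on Ringel's conjecture that every wild finite-dimensional algebra over an algebraically closed field is controlled wild, this closes the ``wild implies undecidable'' direction over algebraically closed fields. To reach general ``suitably recursive'' fields I would base-change to the algebraic closure and transport undecidability by analysing pp-formulas under field extension; the residual obstacle is then either Ringel's conjecture itself or an interpretation of an undecidable theory directly from abstract wildness, bypassing the controlled-wild intermediate step.

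For the tame direction I would work case by case along the tame classification: tame hereditary (Geisler--Prest, \cite{Geislerthesis, decmiketameher}), tame domestic non-hereditary, tame tubular (the present paper), tame polynomial growth beyond tubular, and non-polynomial-growth tame. In each case the template would be: (i) describe the indecomposable pure-injectives as generic modules at the mouths of tubes, Pr\"ufer and adic modules on discrete or one-parameter families, and finitely many isolated points, and verify that this description is effective from the input data; (ii) compute pp-types of these modules uniformly; (iii) make effective the tilting, covering, and derived-equivalence functors needed to reduce a given algebra to a canonical model in its class, so that decidability transports; and (iv) assemble (ii) and (iii) into a decision procedure for pp-inclusion, hence for the theory of $R$-modules.

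The hard part will be the tame step beyond tubular. The Skowro\'nski-style classification of tame polynomial-growth algebras, and the far less tractable landscape of tame non-polynomial-growth algebras, does not come equipped with the fine information about pure-injectives that decidability requires; one needs to control not merely the indecomposable finite-dimensional modules but the continuous part of the Ziegler spectrum, with explicit generators and relations for its pp-types. Extracting such effective descriptions, and in particular showing that the methods developed here for tubular canonical algebras extend across derived equivalence, tilting, and one-point (co)extensions, is where the bulk of the work lies. By contrast, the wild half of the conjecture is essentially reduced, via \cite{Epiintandstrwild} and \cite{intandreptype}, to a well-studied representation-theoretic question of Ringel, so the model-theoretic novelty of a full proof of the conjecture would concentrate on the tame side.
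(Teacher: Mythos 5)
The statement you have been asked to prove is labelled \emph{Conjecture} in the paper, not Theorem: the paper does not prove it, and to date no one has. The paper's own contribution is the special case in its main theorem, namely that canonical algebras of tubular type over a recursive algebraically closed field have decidable theory of modules. That result \emph{supports} the conjecture as evidence, one more tame family checked, but it does not settle either direction of the biconditional. So there is no proof in the paper to compare your proposal against.

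With that said, your write-up is a reasonable survey of the state of the art rather than a proof, and you are candid about this: for the wild direction you correctly note that strict wildness \cite{Epiintandstrwild} and controlled wildness \cite{intandreptype} give undecidability, and that closing the gap to all wild algebras currently relies on Ringel's conjecture that wild implies controlled wild (over algebraically closed fields); for the tame direction you correctly observe that only specific families (tame hereditary, and now tubular canonical) are done, and that extending across the tame classification, including the continuous part of the Ziegler spectrum and effective transport along tilting and (co)extension, is entirely open. If you wanted a ``proof'' in the sense asked, you would need to actually carry out the programme you sketch, and the obstacles you list (Ringel's conjecture; effective Ziegler descriptions beyond tubular; a classification of tame algebras fine enough to support an induction) are genuine and, at present, unresolved. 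Be careful not to present a research programme as if it were a proof: none of the conditionals in your paragraphs is discharged, and each one is a substantial open problem.
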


Tubular algebras are finite-dimensional non-domestic tame algebras of linear growth (see \cite[3.6]{Skopolgrowth} where tubular algebras are referred to as Ringel algebras). These are the first examples of non-domestic algebras which have been shown to have decidable theory of modules.

A finite-dimensional $k$-algebra is of tame representation type if for each dimension $d$, almost all $d$-dimensional modules are in $\mu(d)\in\N$ many $1$-parameter families (for a precise definition see \cite[3.3]{ss3}). An algebra is of domestic representation type if there is a finite bound on $\mu(d)$. So, the module categories of non-domestic algebras are significantly more complex than those of domestic representation type. A finite-dimensional $k$-algebra $R$ is of wild representation type if there is an exact $k$-linear functor $F:\text{fin}\text{-}k\langle x,y\rangle\rightarrow \mod\text{-}R$ which preserves indecomposability and reflects isomorphism type, where $\text{fin}\text{-}k\langle x,y\rangle$ denotes that category of finite-dimensional right modules over the free $k$-algebra in two non-commuting variables. Drozd, \cite{Drozd}, showed that all finite-dimensional algebras over algebraically closed fields are either wild or tame and not both.

\

Tubular algebras, introduced by Ringel in \cite{Ringeltub}, belong to a wider class of algebras called the concealed canonical algebras. According to \cite{LenPensep}, the concealed canonical algebras are exactly those algebra which admit a sincere separating tubular family of stable tubes. Equivalently, they are exactly the endomorphism rings of tilting bundles in categories of coherent sheaves on Geigle-Lenzing weighted projective lines. Moreover, \cite[3.6]{LenMeltilt}, the tubular algebras are exactly the tame non-domestic concealed canonical algebras; this perspective gives a geometric interpretation of the category of finite-dimensional modules over a tubular algebra akin to Atiyah's description of the category for coherent sheaves on an elliptic curve. As a corollary to our main theorem, we are able to conclude, see \ref{concandec}, that Prest's conjecture is true for all concealed canonical algebras.

Our methods for proving our main result are inspired by results of Harland and Prest in \cite{modirrslope}, an understanding of the Ziegler topology for modules of a fixed rational slope, decidability for tame hereditary algebras and decidability of Presburger arithmetic. For most of the paper we will work with general tubular algebras. However, in section \ref{1pointext}, we will mainly deal with canonical algebras of tubular type.

We also show that corollary 8.8 of \cite{modirrslope} is false and provide, see \ref{replcor}, a best possible replacement for that result.

\section{Background}\label{background}
If $R$ is a ring then we write $\mod\text{-}R$ for the category of finitely presented right $R$-modules, $\Mod\text{-}R$ for the category of all right $R$-modules and $\ind\text{-}R$ for the set of isomorphism classes of finitely presented indecomposable right $R$-modules. If $M,N\in \Mod\text{-}R$ then we will frequently write $(N,M)$ for $\Hom_R(N,M)$.

If $\mcal{X}$ is a class of modules then we write $(M,\mcal{X})=0$ (respectively $(\mcal{X},M)=0$) to mean that $(M,X)=0$ (respectively $(X,M)=0$) for all $X\in \mcal{X}$.

We will usually assume that finite-dimensional algebras are basic, connected and over an algebraically closed base field. Note, however, that every finite-dimensional algebra is ($k$-linearly) Morita equivalent to a basic algebra and every basic finite-dimensional algebra is isomorphic to a finite product of basic connected algebras. So, for the main results of this article, restricting to basic connected finite-dimensional algebras is only a cosmetic restriction.

\subsection{Grothendieck groups and the Euler characteristic}

Let $R$ be a finite-dimensional algebra, let $S_1,\ldots,S_n$ be the simple modules over $R$ and for $1\leq i\leq n$, let $P_i$ be the projective cover of $S_i$. If $M$ is a finite-dimensional module over $R$ then we call \[\udim M=(\dim\Hom_R(P_1,M),\ldots,\dim\Hom_R(P_n,M)),\] the \textbf{dimension vector} of $M$.  The \textbf{Grothendieck group}, $K_0(R)$, of a finite-dimensional algebra $R$ is the abelian group generated by the isomorphism classes $[X]$ of modules $X\in\mod\text{-}R$ and subject to the relation $[Y]-[X]-[Z]=0$ whenever there exists a short exact sequence
\[0\rightarrow X\rightarrow Y\rightarrow Z\rightarrow 0.\]

Note that $K_0(R)\cong \Z^n$. Moreover, we identify $K_0(R)$ with $\Z^n$ via the unique isomorphism which for all $M\in\mod\text{-}R$ sends $[M]$ to $\udim M$.

We say a vector $x=(x_1,\ldots,x_n)\in K_0(R)$ is \textbf{positive} if $x_i\geq 0$ for $1\leq i\leq n$ and $x\neq 0$. Note that $x$ is positive if and only if $x=\udim M$ for some non-zero $M\in\mod\text{-}R$.

The assumption that $R$ is basic implies, \cite[II.2]{Ass1}, that there exists a quiver $Q$, with vertices corresponding to the simple $R$-modules, such that $R$ is isomorphic to the quotient of the path algebra $kQ$ by an admissible ideal. We say $x\in K_0(R)$ is \textbf{connected} if its support is connected in the underlying quiver $Q$ of $R$.

The Grothendieck group of a finite-dimensional algebra $R$ of finite global dimension can be equipped with a bilinear form $\langle -,- \rangle$, called the \textbf{Euler characteristic}, such that for all $M,N\in\mod\text{-}R$, \[\langle [M],[N] \rangle:=\sum_{i=0}^\infty(-1)^i\dim\Ext^i(M,N),\] see \cite[III.3.13]{Ass1}.


The \textbf{Euler quadratic form} of $R$ is defined as $\chi_R(x):=\langle x,x\rangle$. We call an element $x\in K_0(R)$ \textbf{radical} if $\chi_R(x)=0$ and a \textbf{root} if $\chi_R(x)=1$. We denote the set of radical vectors $\rad\chi_R$.

\subsection{Tubular algebras}
We will not give the definition of a tubular algebra in terms of branch extensions of tame concealed algebras, for this see \cite[Chapter 5]{Ringeltub} or \cite[XIX 3.19]{ss3}, instead we will describe their module categories.

As mentioned in the introduction, another route to tubular algebras is via coherent sheaves on Geigle-Lenzing weighted projective lines. We will use this perspective in section \ref{corharlandprest} and briefly in section \ref{maintheorem}. Introductory material and references on this topic are contained in \ref{cohsheaves}.

The Euler quadratic form of a tubular algebra is positive semi-definite. It follows from \cite[1.1.1]{Ringeltub}, that $x\in\rad\chi_R$ if and only if $\langle x,y\rangle+\langle y,x\rangle=0$ for all $y\in K_0(R)$. So, in particular,
\begin{enumerate}
\item $\rad\chi_R$ is a subgroup of $K_0(R)$,
\item if $x,y\in \rad\chi_R$ then $\langle x,y\rangle=-\langle y,x\rangle$ and
\item if $x\in\rad \chi_R$ and $y\in K_0(R)$ then $\chi_R(x+y)=\chi_R(y)$.
\end{enumerate}

If $R$ is a tubular algebra then there exists a canonical pair of linearly independent radical vectors $h_0,h_\infty$ which generate a subgroup of $\rad\chi_R$ of finite index \cite[Section 5.1]{Ringeltub}.

For finite-dimensional indecomposable modules $M$ over $R$ we define the \textbf{slope} of $M$ to be
\[\text{slope}(M)= -\frac{\langle h_0,\udim M\rangle}{\langle h_\infty, \udim M\rangle}.\]

For $q\in\mathbb{Q}^\infty_0\footnote{By $\mathbb{Q}^\infty_0$ we mean the non-negative rational together with a maximal element $\infty$}$, let $\mcal{T}_q$ be the set of isomorphism classes of indecomposable finite-dimensional modules of slope $q$. Let $\mcal{P}_0$ be the preprojective component (the indecomposable finite-dimensional modules $M$ with $\langle h_0,\udim M \rangle<0$ and $\langle h_\infty,\udim M\rangle\leq 0$) and let $\mcal{Q}_\infty$ be the preinjective component (the indecomposable finite-dimensional modules $M$ with $\langle h_0,\udim M \rangle\geq 0$ and $\langle h_\infty,\udim M\rangle> 0$).

\begin{theorem}\cite{Ringeltub}
Let $R$ be a tubular algebra. The set of indecomposable finite-dimensional modules is
\[\mcal{P}_0\cup\bigcup\{\mcal{T}_q\ \st q\in \Q^\infty_0\}\cup\mcal{Q}_\infty\] where each $\mcal{T}_q$ is a tubular family separating $\mcal{P}_q:=\mcal{P}_0\cup\bigcup\{\mcal{T}_{q'}\ \st q'<q\}$ from $\mcal{Q}_q:=\bigcup\{\mcal{T}_{q'}\ \st q'>q\}\cup\mcal{Q}_\infty$.
\end{theorem}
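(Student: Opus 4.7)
The plan is to bootstrap from the analogous classification for tame concealed algebras. Every tubular algebra $R$ arises as a tubular (co)extension of a tame concealed algebra $C$ whose indecomposable finite-dimensional modules already split as preprojective, separating tubular family, and preinjective. First I would identify $\mcal{P}_0$ with the preprojective component inherited from $C$ and exhibit $\mcal{T}_0$ as the tubular family obtained by gluing the extension data into $C$'s separating $\P^1$-family. This step uses only the explicit form of the extension: the new simple at the extension vertex has slope $0$, while the old preinjectives of $C$ are pushed into strictly positive-slope tubes.

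For the higher families $\mcal{T}_q$ with $q\in \Q^\infty_0\setminus\{0\}$, I would use slope-shifting tilting. Concretely, for each positive rational $q$ one constructs a tilting module $T_q\in \mcal{P}_q$ whose endomorphism ring $R_q:=\End_R(T_q)$ is again a tubular algebra, and such that the induced derived equivalence sends $R$-modules of slope $q+q'$ to $R_q$-modules of slope $q'$; in particular it matches $\mcal{T}^R_{q+q'}$ with $\mcal{T}^{R_q}_{q'}$. Iterating this reduction to the base case $q=0$ produces the tube family at every rational slope, and $\mcal{Q}_\infty$ emerges symmetrically (either as the image of $\mcal{P}_0$ under a reflection $q\mapsto \infty-q$, or directly from a tame-concealed coextension picture). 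The separating property for each $\mcal{T}_q$ then reduces by transport along the tilting functors to the separating property at slope $0$; the latter is a consequence of Hom- and $\Ext^1$-vanishing between indecomposables of strictly ordered slopes, which in turn is a sign computation for $\langle -,-\rangle$ restricted to the rank-two subgroup $\Z h_0\oplus \Z h_\infty$ of $\rad\chi$.

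The main obstacle is precisely the slope-shifting step: one must prove that tilting by $T_q$ genuinely preserves tubularity and shifts slopes linearly, which requires a careful analysis of the Coxeter transformation on $K_0(R)$ and its compatibility with the distinguished radical vectors $h_0$ and $h_\infty$. A secondary difficulty is verifying that every indecomposable $M$ has a well-defined rational slope, i.e.\ that $\langle h_0,\udim M\rangle$ and $\langle h_\infty,\udim M\rangle$ do not both vanish unless $M$ is regular of rational slope; this needs the classification of roots of $\chi_R$ together with positivity properties of dimension vectors of indecomposables. Once these transformation laws and positivity statements are in hand, the full classification and separation statement follows by transport of structure from the tame concealed base case.
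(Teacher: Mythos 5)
This theorem is cited from Ringel's book \cite{Ringeltub} and the paper gives no proof of it, so there is no in-paper argument to compare your sketch against. Judged as a standalone account of Ringel's proof, your outline is on the right track: Ringel does build the whole picture by starting from the tame concealed base, showing $\mcal{T}_0$ and $\mcal{T}_\infty$ are separating tubular families directly from the (co)extension data, and then propagating to all other rational slopes by shift (``shrinking'') functors induced by tilting modules living in $\mcal{P}_q$, with $\mcal{Q}_\infty$ handled by duality. Your identification of the key obstacle (that tilting by $T_q$ preserves tubularity and acts compatibly on $h_0$, $h_\infty$) is exactly where the real work in Ringel's Chapter 5 sits.

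Two points you should tighten before this could stand as a proof. First, the action of the shift functors on slopes is not additive as you wrote ($q+q' \mapsto q'$): the shrinking functors act on $\Z h_0 \oplus \Z h_\infty$ by elements of $\mathrm{SL}_2(\Z)$, so the induced map on slopes is a fractional linear (M\"obius) transformation $q \mapsto (aq+b)/(cq+d)$. The additive picture only appears after passing to a logarithmic parametrisation; if you iterate an additive shift naively you will not hit all rationals correctly, whereas the M\"obius group generated by the two basic shifts does act transitively on $\Q^\infty_0$. Second, the reduction of the separation property to the base case needs the full root/radical classification for the tubular quadratic form $\chi_R$ — which you flag as a ``secondary difficulty'' — but this is not secondary: it is needed already to define the slope on all indecomposables and to know the slope is rational for regular modules, and it must be proved before the shift functors are even well defined on the relevant subcategories. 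As written, your proposal is a correct roadmap of Ringel's argument rather than a proof, since both of the steps you identify as hard are in fact the content of the theorem.
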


That $\mcal{T}_q$ separates $\mcal{P}_q$ from $\mcal{Q}_q$ means that $0=(\mcal{T}_q,\mcal{P}_q)=(\mcal{Q}_q,\mcal{T}_q)=(\mcal{Q}_q,\mcal{P}_q)$ and, that for every tube $\mcal{T}(\rho)\in \mcal{T}_q$ and every homomorphism from $L\in\mcal{P}_q$ to $M\in\mcal{Q}_q$, factors through a direct sum of modules in $\mcal{T}(\rho)$.

All finite-dimensional modules over tubular algebras have injective dimension $\leq 2$ and projective dimension $\leq 2$. This means that $\dim\Ext^n(-,-)$ terms of the Euler characteristic are zero for $n>2$. All finite-dimensional indecomposable modules of strictly positive rational slope have projective and injective dimensions less than or equal to $1$ \cite[3.1.5]{Ringeltub}. Thus for those modules
\[\langle -,- \rangle:=\dim\Hom(-,-)-\dim\Ext(-,-).\]


\begin{theorem}\cite[5.2.6, pg 278]{Ringeltub}\label{Ringelcontrol}
Let $R$ be a tubular algebra.
\begin{enumerate}
\item For any indecomposable finite-dimensional $R$-module $X$, $\udim X$ is either a connected positive root or a connected positive radical vector of $\chi_R$.
\item For any positive connected root vector $x\in K_0(R)$, there is a unique indecomposable module $X\in\mod\text{-}R$ with $\udim X=x$.
\item For any positive connected radical vector $x\in K_0(R)$ there is an infinite family of indecomposable modules with $\udim X=x$.
\end{enumerate}
We will refer to the properties asserted in this theorem as $\mod\text{-}R$ is \textbf{controlled by} $\chi_R$.
\end{theorem}

If $R$ is a finite-dimensional $k$-algebra and $M$ is a right (respectively left) $R$-module then we write $M^*$ for its $k$-dual the left (respectively right) $R$-module $\Hom(M,k)$.

\begin{remark}\label{kdualslope}
Let $R$ be a tubular algebra. If $M$ is a finite-dimensional indecomposable with slope $a$ then the slope of $M^*$ is $1/a$, here we read $1/0$ as $\infty$ and $1/\infty$ as $0$.
If $M$ is preprojective (respectively preinjective) then $M^*$ is preinjective (respectively preprojective).
\end{remark}

The definition of slope on finite-dimensional indecomposable modules is extended to infinite-dimensional modules by Reiten and Ringel in \cite{InfdimcanalgReitenRingel} as follows.

\begin{definition}
Let $r\in \R_0^\infty$\footnote{By $\R_0^\infty$ we mean the non-negative reals together with a maximal element $\infty$}. We say a module $M$ is of \textbf{slope} $r$ if $(M,P)=0$ and $(Q,M)=0$ for all $P\in\mcal{P}_r$ and $Q\in\mcal{Q}_r$. This is equivalent to saying that $M\otimes P^*=0$ and $(Q,M)=0$ for all $P\in\mcal{P}_r$ and $Q\in\mcal{Q}_r$.
\end{definition}

\begin{theorem}\cite[13.1]{InfdimcanalgReitenRingel}
All indecomposable modules, except for the finite-dimensional preprojectives and preinjectives, over a tubular algebra have a slope.
\end{theorem}

\subsection{pp-formulas}

We now recall some concepts and results from model theory of modules; the necessary background can be found in \cite{PSL} or \cite{PreBk}.

A \textbf{pp-$n$-formula} is a formula in the language $\mcal{L}_R=(0,+,(\cdot r)_{r\in R})$ of (right) $R$-modules of the form
\[\exists \overline{y} (\overline{x},\overline{y})H=0\] where $\overline{x}$ is a $n$-tuple of variables and $H$ is an appropriately sized matrix with entries in $R$. If $\phi$ is a pp-formula and $M$ is a right $R$-module then $\phi(M)$ denotes the set of all elements $\overline{m}\in M^n$ such that $\phi(\overline{m})$ holds. Note that for any module $M$, $\phi(M)$ is a subgroup of $M^n$ equipped with the addition induced by addition in $M$. We identify two pp-formulas if they define the same subgroup in every $R$-module, equivalently in every finitely presented $R$-module \cite[1.2.23]{PSL}. After apply this identification, for each $n\in\N$ the set of (equivalence classes of) pp-$n$-formulas forms a lattice with the order given by implication, i.e. $\psi\leq \phi$ if and only if $\psi(M)\subseteq\phi(M)$ for all $R$-modules $M$. The meet of two pp-$n$-formulas $\phi,\psi$ is given by $\phi\wedge\psi$ and the join is given by $\phi+\psi$.

If $M$ is finitely presented module and $\overline{m}\in M^n$ then there is a pp-$n$-formula $\phi$ which \textbf{generates the pp-type} of $\overline{m}$ in $M$, that is, for all pp-formulas $\psi$, $\psi\geq\phi$ if and only if $\overline{m}\in\psi(M)$. Conversely, if $\phi$ is a pp-$n$-formula, then there exists a finitely presented module $M$ and $\overline{m}\in M^n$ such that $\phi$ generates the pp-type of $\overline{m}$ in $M$. We call $M$ together with $\overline{m}$ a \textbf{free-realisation} of $\phi$. For proofs of these assertions and more about free-realisations, see \cite[Section 1.2.2]{PSL}.

A pair of pp-formulas $\phi/\psi$\footnote{This notation for a pair of pp-formulae, which is standard in the area, is used to indicate that we are interested in the quotient groups $\phi(M)/\psi(M)$ for $R$-modules $M$.} is a \textbf{pp-$n$-pair} if for all $R$-modules $M$, $\phi(M)\supseteq \psi(M)$. We say that a pp-pair $\phi/\psi$ is \textbf{open} on $M$ if $\phi(M)\neq \psi(M)$ and \textbf{closed} on $M$ if $\phi(M)=\psi(M)$.

A functor from $\Mod\text{-}R$ to $\Ab$ is said to be \textbf{coherent} if it is additive and if it commutes with products and direct limits. Every pp-pair gives rise to a coherent (additive) functor $\phi/\psi:\Mod\text{-}R\rightarrow\Ab$ by sending $M\in\Mod\text{-}R$ to $\phi(M)/\psi(M)$. All coherent functors arise in this way. Moreover these are exactly the functors $F$ such that there exist $A,B\in\mod\text{-}R$ and $f:B\rightarrow A$ such that
\[(A,-)\xrightarrow{(f,-)}(B,-)\rightarrow F\rightarrow 0\] is exact. See \cite[Section 10]{PSL}.

For example, the functors $(M,-)$ and $-\otimes M$ are coherent when $M$ is finitely-presented and hence equivalent to functors defined by pp-pairs.

\subsection{Definable subcategories and Ziegler spectra}

A \textbf{definable subcategory} of $\Mod\text{-}R$ is a subcategory which is closed under pure-submodules, taking direct limits and products. Equivalently, see \cite[3.47]{PSL}, a full subcategory $\mcal{D}$ of $\Mod\text{-}R$ is a definable subcategory if there is a set of pp-pairs $\Omega$ such that $M\in \mcal{D}$ if and only if $\phi(M)=\psi(M)$ for all $\phi/\psi\in\Omega$. If $X\subseteq \Mod\text{-}R$ then we will write $\langle X\rangle$ for the smallest definable subcategory containing $X$.

Let $R$ be a tubular algebra and $r\in \R_0^\infty$. Since $-\otimes P^*$ is a coherent functor for each finite-dimensional $P$ and $(Q,-)$ is a coherent functor for each finite-dimensional $Q$, the class of all modules of slope $r$ is a definable subcategory which we denote $\mcal{D}_r$.

Let $R$ be a ring. An embedding of $R$-modules $f:M\rightarrow N$ is \textbf{pure} if for every pp-$1$-formula $\phi$, $f(\phi(M))=\phi(N)\cap f(M)$. A right $R$-module $M$ is \textbf{pure-injective} if it is injective over all pure-embeddings.

The  (right) \textbf{Ziegler spectrum} of a ring $R$ is a topological space with set of points, $\pinj_R$, the isomorphism classes of indecomposable pure-injectives and basis of open sets given by
\[\left(\phi/\psi\right):=\{N\in\pinj_R \st \phi(N)\neq \psi(N)\}\] where $\phi/\psi$ is a pp-pair.

The open sets $\left(\phi/\psi\right)$ are exactly the compact open sets of $\Zg_R$. Note that this means that $\Zg_R$ itself is compact. We should mention here that the open sets of the form $\left(\phi/\psi\right)$ where $\phi$ and $\psi$ are pp-$1$-formulas are also a basis for $\Zg_R$.

There is a correspondence between closed subsets of $\Zg_R$ and definable subcategories of $\Mod\text{-}R$ given by taking a closed subset $\mcal{C}$ to the smallest definable subcategory $\langle \mcal{C}\rangle$ containing $\mcal{C}$ and in the opposite direction taking a definable subcategory $\mcal{D}$ to $\mcal{D}\cap\pinj_R$.

The following is an explanation of the above correspondence. If $\mcal{D}_1$ and $\mcal{D}_2$ are definable subcategories of $\Mod\text{-}R$ then $\mcal{D}_1=\mcal{D}_2$ if and only if $\mcal{D}_1\cap\pinj_R=\mcal{D}_2\cap\pinj_R$ \cite[5.1.5]{PSL}. Thus, if $\mcal{D}$ is a definable subcategory then $\mcal{D}=\langle \mcal{D}\cap\pinj_R\rangle$. Conversely, if $\mcal{C}$ is a closed subset of $\Zg_R$ and $N\in \langle \mcal{C}\rangle \cap\pinj_R$ then for all pp-pairs $\phi/\psi$, $\phi(N)\neq\psi(N)$ implies $\phi(M)\neq \psi(M)$ for some $M\in \mcal{C}$. Since $\mcal{C}$ is closed and the basis of $\Zg_R$ is given by pp-pairs, $N\in\mcal{C}$. Thus $\mcal{C}=\langle\mcal{C}\rangle\cap\pinj_R$.

Let $R$ be a tubular algebra and $r\in \R_0^\infty$. We denote the set of all indecomposable pure-injectives of slope $r$ by $\mcal{C}_r$. So $\mcal{C}_r=\mcal{D}_r\cap\pinj_R$.

For each rational $q\in \Q^+$\footnote{By $\Q^+$ we mean the strictly positive rationals} the indecomposable pure-injective modules in $\mcal{D}_q$ have been completely described.

\begin{lemma}\label{pureinjatrationalslope}\cite[Lemma 50]{Richardthesis}
The following is a complete list of the indecomposable pure-injective modules in $\mcal{D}_q$:

\begin{enumerate}
\item The modules in $\mcal{T}_q$
\item A unique Pr\"ufer module $S[\infty]$ for each quasi-simple $S$ in $\mcal{T}_q$
\item A unique adic module $\widehat{S}$ for each quasi-simple $S$ in $\mcal{T}_q$
\item The unique generic module $\mcal{G}_q$
\end{enumerate}
\end{lemma}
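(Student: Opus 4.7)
The plan is to transport the analogous classification for the indecomposable pure-injectives attached to the regular part of a tame hereditary algebra, where the list is known to consist of the regular tube modules, one Pr\"ufer and one adic module per quasi-simple, and one generic module per connected family.

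First I would verify that each of the four families yields indecomposable pure-injective modules lying in $\mcal{D}_q$. Modules in $\mcal{T}_q$ are finite-dimensional, hence pure-injective, and by definition have slope $q$. Each Pr\"ufer module $S[\infty]$ is the direct limit of the ray starting at $S$ inside its tube, while each adic $\widehat{S}$ is the inverse limit of the corresponding coray; standard arguments for standard stable tubes show both are indecomposable pure-injectives. Since $\mcal{D}_q$ is a definable subcategory it is closed under direct limits, pure submodules and products, so these infinite-dimensional constructions remain in $\mcal{D}_q$. The generic $\mcal{G}_q$ is pure-injective and in $\mcal{D}_q$ by construction.

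For the completeness direction, let $N\in\mcal{D}_q$ be an indecomposable pure-injective. The key idea is to exploit a tubular tilt that identifies the slope-$q$ part of $\mod-R$ with the regular part of $\mod-H$ for a suitable tame hereditary algebra $H$, specifically a tilt whose torsion pair separates $\mcal{P}_q$ from $\mcal{T}_q\cup \mcal{Q}_q$. Under the induced equivalence and the associated bijection of definable subcategories, $\mcal{D}_q$ corresponds to the definable subcategory of $\Mod-H$ generated by the regular $H$-modules. The classification of indecomposable pure-injectives there is a classical result of Ringel and Prest, and transports back to give exactly the four types in the statement.

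The main obstacle is organising the tilting equivalence so that it induces a homeomorphism between $\mcal{C}_q$ and the regular Ziegler subspace of $\Zg_H$ that carries finite-dimensional tube modules, Pr\"ufer modules, adic modules and the generic to their hereditary counterparts; in particular, one has to rule out stray points arising from pure-injective hulls of extensions across tubes. If one prefers to avoid tilting altogether, the alternative route is direct: the separating property of $\mcal{T}_q$ together with standardness of the tubes forces the pp-type of $N$ to be controlled by a single tube, reducing the problem to a single-tube analysis that proceeds essentially as in the tame hereditary case and outputs one of the four listed modules.
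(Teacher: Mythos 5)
The paper does not prove this lemma; it is cited verbatim from Harland's thesis, so there is no ``paper's own proof'' to compare against. Evaluating your sketch on its own merits, the first half (that each of the four families lands in $\mcal{D}_q$ and consists of indecomposable pure-injectives) is routine and fine. The completeness half has a genuine gap in its main route: there is no tilting module $T$ over a tubular algebra $R$ with $\End(T)$ tame hereditary. Tilting gives a derived equivalence, and derived equivalences preserve the rank of the Grothendieck group; for a tubular algebra of type $(2,2,2,2)$ this rank is $6$, while for $\widetilde{D}_4$ it is $5$, so no such tilt exists. More structurally, the endomorphism ring of any tilting module over a tubular algebra is again a tubular algebra (Ringel), so a single tilt cannot take you to a tame hereditary category, and the torsion pair you describe ($\mcal{P}_q$ against $\mcal{T}_q\cup\mcal{Q}_q$) realises a tubular shift, not a reduction to $\mod\text{-}H$ with $H$ tame hereditary.

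The correct reduction is a two-step process, and the second step is exactly the content of Section~\ref{1pointext} of this paper. First, a tubular shift (a tilting equivalence between two \emph{tubular} algebras) moves slope $q$ to slope $0$; this is an interpretation functor and identifies $\mcal{D}_q$ over $R$ with $\mcal{D}_0$ over a new tubular algebra $R'$. Second, writing $R'$ as a one-point extension $A[X]$ of a tame hereditary algebra $A$, one uses the two fully faithful interpretation functors $F_0,F_1:\Mod\text{-}A\to\Mod\text{-}A[X]$ whose definable images cover $\mcal{C}_0$ (Propositions~\ref{F0intfunctorwithdefimage}, \ref{F1intfunctorwithdefimage}, \ref{fdinimF0F1} and the proposition following), reducing to the known classification of regular pure-injectives over tame hereditary algebras. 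Your worry about ``stray points'' is the right one, but it is resolved precisely by checking that the shift and the $F_i$ are interpretation functors with definable images, not by arranging a single tilt. Your alternative ``direct'' route is essentially the Reiten--Ringel analysis (their Theorem~6.4 is what this paper leans on in \ref{closedzg3}), but as stated it is too vague to count as a proof: the assertion that ``the pp-type of $N$ is controlled by a single tube'' is not true for the generic module and needs an actual argument for the Pr\"ufer/adic cases.
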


In section \ref{Zieglerrationalslope} we will describe the topology on $\mcal{C}_q$ for $q\in \Q^+$.

For $r\in\R^+$\footnote{By $\R^+$ we mean the strictly positive reals} irrational this has already been done in \cite{modirrslope}.

\begin{theorem}\cite[Theorem 8.5]{modirrslope}\label{onepointuptoelemeqatirrational}
Let $R$ be a tubular algebra and $r\in\R^+$ irrational. The definable subcategory $\mcal{D}_r$ has no non-trivial proper definable subcategories.
\end{theorem}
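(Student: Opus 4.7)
The plan is to prove the equivalent topological statement that the closed subspace $\mcal{C}_r$ of $\Zg_R$ has no proper non-empty closed subset, i.e., every pp-pair $\phi/\psi$ that is open on some indecomposable pure-injective in $\mcal{D}_r$ is open on every non-zero indecomposable pure-injective in $\mcal{D}_r$. The crucial feature I would exploit is that, since $r$ is irrational and slopes of finite-dimensional indecomposables lie in $\Q_0^\infty$, the tubular family $\mcal{T}_r$ is empty and $\mcal{D}_r$ contains no non-zero finite-dimensional modules.

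First, I would realise $\mcal{D}_r$ as a directed intersection: choosing sequences of rationals $p_n \nearrow r$ and $q_n \searrow r$,
\[
\mcal{D}_r = \bigcap_{n\in\N} \mcal{D}_{[p_n,q_n]},
\]
where $\mcal{D}_{[p_n,q_n]}$ is the definable subcategory cut out by the conditions $(Q,-)=0$ for $Q\in\mcal{Q}_{q_n}$ and $-\otimes P^*=0$ for $P\in\mcal{P}_{p_n}$. Each $\mcal{D}_{[p_n,q_n]}$ contains, by Lemma \ref{pureinjatrationalslope}, all the tubes $\mcal{T}_q$ for $q\in\Q\cap(p_n,q_n)$ together with their associated Pr\"ufer, adic and generic points. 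Any pp-pair $\phi/\psi$ open on some $M\in\mcal{D}_r$ admits a finite-dimensional free-realisation $X$, which necessarily has rational slope (hence lies outside $\mcal{D}_r$), but lies in all but finitely many $\mcal{D}_{[p_n,q_n]}$.

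Next, I would invoke the tubular shift automorphisms of $\mod\text{-}R$, which permute the slopes by a subgroup of $\mathrm{PSL}_2(\Z)$ acting by M\"obius transformations, to normalise the position of such a free-realisation (e.g.\ to slope $0$ or $\infty$). Combining this with the explicit description of $\mcal{D}_q\cap\pinj_R$ at the surrounding rational slopes, I would argue that a proper definable subcategory $\mcal{D} \subsetneq \mcal{D}_r$ would be cut out by a pp-pair that remains open on a specific non-zero module of $\mcal{D}_r$ but closed on another; passing through the approximating $\mcal{D}_{[p_n,q_n]}$ one produces a rational slope $q\in(p_n,q_n)$ at which the pair must distinguish between generic and tube-like behaviour, contradicting the transitivity of the shift action on neighbourhoods of $r$.

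The main obstacle I expect is the uniform control of pp-definable subgroups as the intervals $[p_n,q_n]$ shrink — in particular, ruling out ``exotic'' definable subcategories of $\mcal{D}_r$ that are invisible to each individual $\mcal{D}_{[p_n,q_n]}$. The irrationality of $r$ is precisely the leverage that prevents this pathology: no rational slope is distinguished in an arbitrarily small neighbourhood of $r$, so any hypothetical pp-separation inside $\mcal{D}_r$ is forced by the density of shifts and tubes near $r$ to contradict itself in the limit.
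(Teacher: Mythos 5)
This statement is quoted in the paper from \cite[Theorem~8.5]{modirrslope}; the paper itself does not reprove it, so I compare your proposal against what would be required for a correct argument (and against the ingredients of the Harland--Prest proof, some of which you already have available in the surrounding text as \ref{nearirrational} and \ref{directunions}).

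Your proposal is a sketch of a strategy, not a proof, and two of its load-bearing steps are wrong or unavailable. First, you claim that a pp-pair $\phi/\psi$ open on some $M\in\mcal{D}_r$ admits a free-realisation ``of rational slope.'' A free-realisation of a pp-formula is an arbitrary finite-dimensional module; it need not be indecomposable, need not lie in any single tubular family, and its indecomposable summands can include preprojective, preinjective or arbitrary-slope pieces. In particular it need not lie in any of your approximating categories $\mcal{D}_{[p_n,q_n]}$ at all, so the conclusion that it lies in ``all but finitely many'' of them is unjustified. Second, the appeal to tubular shift automorphisms and ``transitivity'' of the $\mathrm{PSL}_2(\Z)$-action does not deliver what you want: the shifts are derived autoequivalences, not exact autoequivalences of $\Mod\text{-}R$, so they do not act straightforwardly on pp-definable subgroups or on the Ziegler spectrum; moreover the induced action on slopes fixes a given irrational $r$ only for special (quadratic) $r$, so there is no general transitivity ``in a neighbourhood of $r$'' to invoke. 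Your final paragraph concedes the key obstacle---controlling pp-definable subgroups uniformly as the intervals shrink---and asserts irrationality is the leverage, but no argument is supplied.

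The missing idea is exactly the one the paper already records as Proposition~\ref{nearirrational} (Harland--Prest 3.11): a pp-pair is open on some indecomposable pure-injective of slope $r$ if and only if it is open on all finite-dimensional modules in homogeneous tubes with slope in some one-sided neighbourhood $(r,r+\epsilon)$ (equivalently $(r-\epsilon,r)$). Combining this with Lemma~\ref{directunions} (every indecomposable of irrational slope $r$ is a directed union of finite-dimensional submodules of slope in $(r-\epsilon,r)$) is what lets one pass from ``open on one point of $\mcal{C}_r$'' to ``open on every nonzero point of $\mcal{C}_r$,'' which is the actual content of the statement. Your decomposition $\mcal{D}_r=\bigcap_n\mcal{D}_{[p_n,q_n]}$ is a reasonable starting observation, but without the uniformity result \ref{nearirrational} it does not rule out a proper definable subcategory that is ``invisible'' at each rational stage, and your proposal never closes that gap.
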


Two points $x,y$ in a topological space are said to be \textbf{topologically indistinguishable} if for all open sets $\mcal{U}$, $x\in \mcal{U}$ if and only if $y\in \mcal{U}$. Since the closed subsets of $\Zg_R$ correspond to definable subcategories of $\Mod\text{-}R$ this means that, after identifying topologically indistinguishable points, there is exactly one point in $\mcal{C}_r$ for $r\in\R^+$ irrational.

In section \ref{1pointext} we will use Prest's elementary duality for pp-formulas and Herzog's elementary duality for the Ziegler spectrum in order to transfer results about $\mcal{P}_0\cup\mcal{C}_0$ to results about $\mcal{C}_\infty\cup\mcal{Q}_\infty$.

A duality between the lattice of right pp-$n$-formulae and the lattice of left pp-$n$-formulae was first introduced by Prest \cite[Section 8.4]{PreBk} and then extended by Herzog \cite{herzogduality} to give an isomorphism between the lattice of open set of the left Ziegler spectrum of a ring and the lattice of open sets of the right Ziegler spectrum of a ring.

\begin{definition}
Let $\phi$ be a pp-$n$-formula in the language of right $R$-modules of the form $\exists \bar{y}(\bar{x},\bar{y})H=0$ where $\bar{x}$ is a tuple of $n$ variable, $\bar{y}$ is a tuple of $l$ variables, $H=(H' \ H'')^T$ and $H'$ (respectively $H''$) is a $n\times m$ (respectively $l\times m$) matrix with entries in $R$. Then $\D\phi$ is the pp-$n$-formula in the language of left $R$-modules $\exists \bar{z}\left(
                                                       \begin{array}{cc}
                                                         I & H' \\
                                                         0 & H'' \\
                                                       \end{array}
                                                     \right)\left(
                                                              \begin{array}{c}
                                                                \bar{x} \\
                                                                \bar{z} \\
                                                              \end{array}
                                                            \right)=0
$.

Similarly, let $\phi$ be a pp-$n$-formula in the language of left $R$-modules of the form $\exists \bar{y} \ H\left(
                                                                                                             \begin{array}{c}
                                                                                                               \bar{x} \\
                                                                                                               \bar{y} \\
                                                                                                             \end{array}
                                                                                                           \right)
=0$ where $\bar{x}$ is a tuple of $n$ variable, $\bar{y}$ is a tuple of $l$ variables, $H=(H' \ H'')$ and $H'$ (respectively $H''$) is a $m\times n$ (respectively $m\times l$) matrix with entries in $R$. Then $\D\phi$ is the pp-$n$-formula in the language of right $R$-modules $\exists \bar{z} \ (\bar{x},\bar{z})\left(
                                                       \begin{array}{cc}
                                                         I & 0 \\
                                                         H' & H'' \\
                                                       \end{array}
                                                     \right)=0
$.
\end{definition}

Note that the pp-formula $a|x$ for $a\in R$ is mapped by $D$ to a formula equivalent to $xa=0$ and the pp-formula $xa=0$ for $a\in R$ is mapped by $D$ to a formula equivalent to $a|x$.

\begin{theorem}\cite[Chapter 8]{PreBk}
The map $\phi\rightarrow D\phi$ induces an anti-isomorphism between the lattice of right pp-$n$-formulae and the lattice of left pp-$n$-formulae. In particular, if $\phi,\psi$ are pp-$n$-formulae then $D(\phi+\psi)$ is equivalent to $D\phi\wedge D\psi$ and $D(\phi\wedge\psi)$ is equivalent to $D\phi+D\psi$.
\end{theorem}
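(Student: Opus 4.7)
The plan is to establish, in sequence, three facts about the syntactic map $D$: that $D$ descends to a well-defined map on equivalence classes of pp-formulas; that $D$ is an involution, so $DD\phi \equiv \phi$; and that $D$ is order-reversing, so $\psi \leq \phi$ if and only if $D\phi \leq D\psi$. Once these are in place, the anti-isomorphism claim follows, and the two identities $D(\phi+\psi) \equiv D\phi \wedge D\psi$ and $D(\phi\wedge\psi) \equiv D\phi + D\psi$ drop out of the general fact that an order-reversing involution on a lattice swaps joins and meets.

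For the involution, I would carry out an explicit matrix calculation. Starting from $\phi = \exists \bar y\,(\bar x,\bar y)H = 0$ with $H = (H'\ H'')^T$, the definition produces a matrix for $D\phi$ whose top block is an identity and whose bottom block records $H$. Applying $D$ a second time and then performing elementary column operations, together with deleting the now-trivial extra quantified variables introduced by the identity block, returns a pp-formula presented by $H$ itself. Well-definedness on equivalence classes is then automatic, since any two equivalent $\phi_1 \equiv \phi_2$ with inequivalent duals would contradict $DD\phi_i \equiv \phi_i$.

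The main step is that $D$ reverses the order. For this I would use the criterion, recalled in the excerpt, that $\psi$ implies $\phi$ precisely when a free realisation $(A_\psi,\bar a_\psi)$ of $\psi$ admits a morphism from any free realisation $(A_\phi,\bar a_\phi)$ of $\phi$ sending $\bar a_\phi$ to $\bar a_\psi$. Inspection of the presentation of a free realisation built from the defining matrix of $\phi$ shows that a free realisation of $D\phi$ is obtained, up to projective summands (which are irrelevant for pp-types), by taking the Auslander--Bridger transpose of $A_\phi$ equipped with a canonically dualised distinguished tuple. Since the transpose reverses the direction of morphisms between finitely presented modules, a pointed morphism witnessing $\psi \leq \phi$ transposes to a pointed morphism in the opposite direction, witnessing $D\phi \leq D\psi$.

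I expect the main obstacle to be the bookkeeping in this last step: pinning down the precise free realisation of $D\phi$ and checking that the transposed map really does pair the correct distinguished tuples, given the non-canonical nature of the transpose modulo projectives. If the matrix manipulations become unwieldy, an alternative is to work in the equivalent framework of finitely presented additive functors on finitely presented modules: each pp-pair $\phi/\psi$ corresponds to such a functor, and $D$ corresponds to Auslander's duality between the functor categories over finitely presented right and left modules, interchanging representables and tensor functors. This abstract anti-equivalence then makes the anti-isomorphism of pp-lattices transparent.
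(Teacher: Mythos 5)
Note first that the paper does not itself prove this theorem; it is stated with the citation \cite[Chapter 8]{PreBk} and used as background, so there is no ``paper's own proof'' to compare against. What follows is an assessment of your proof sketch on its own terms.

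Your overall architecture (well-definedness, involution, order-reversal, then lattice-theoretic cleanup) is the right shape, and both the matrix computation for $DD\phi\equiv\phi$ and the connection to the Auslander--Bridger transpose of free realisations are legitimate routes; the functor-category alternative you mention (Auslander's duality between $\funmodleft{R}$ and $(\mod-R,\Ab)$ exchanging $(M,-)$ with $-\otimes M$) is indeed the cleanest conceptual framing, and closer to how the result is often presented. However, one step is logically off as written: you claim well-definedness on equivalence classes is ``automatic'' from $DD\phi_i\equiv\phi_i$ alone. It is not. If $\phi_1\equiv\phi_2$ but $D\phi_1\not\equiv D\phi_2$, then $DD\phi_1\equiv\phi_1\equiv\phi_2\equiv DD\phi_2$ produces no contradiction with the involution; to derive one you would need to apply $D$ to the (putative) non-equivalence $D\phi_1\not\equiv D\phi_2$ and carry it across, which already presupposes that $D$ respects equivalence, i.e.\ the very thing you are trying to prove. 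Well-definedness actually drops out of order-reversal: if you show at the syntactic level that $\phi_1\leq\phi_2$ implies $D\phi_2\leq D\phi_1$ for arbitrary representatives, then $\phi_1\equiv\phi_2$ gives $D\phi_1\equiv D\phi_2$ immediately. So the dependency order of your three facts should be: order-reversal and involution first, well-definedness as a corollary of order-reversal.

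On the substantive step, the free-realisation/transpose argument is viable but genuinely delicate, as you anticipate: $\Tr$ is only defined in the stable category modulo projectives, while the criterion $\psi\leq\phi$ via pointed maps between free realisations lives in the honest module category, and the distinguished tuples must be tracked through $\Hom(-,R)$ of the chosen projective presentations. One has to fix a specific projective presentation coming from the defining matrix $H$, identify the resulting $\Tr$-module with a free realisation of $D\phi$, and then check that a pointed morphism $(A_\phi,\bar a_\phi)\to(A_\psi,\bar a_\psi)$ dualises, after stabilisation, to a pointed morphism in the opposite direction whose effect on the distinguished tuples is the required one. This is all correct, but it is precisely the content; ``reverses the direction of morphisms'' is not quite enough on its own. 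An alternative that avoids these bookkeeping issues is to prove the two lattice identities $D(\phi+\psi)\equiv D\phi\wedge D\psi$ and $D(\phi\wedge\psi)\equiv D\phi+D\psi$ directly by writing down the matrices of $\phi+\psi$ and $\phi\wedge\psi$ and comparing with $D\phi\wedge D\psi$ and $D\phi+D\psi$ via elementary row/column operations; combined with the involution, these identities already yield order-reversal (since $\phi\leq\psi$ iff $\phi\wedge\psi\equiv\phi$) and hence well-definedness and the anti-isomorphism, with no stable-category subtleties.
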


This gives rise ``at the level of open sets'' to a homeomorphism from the left Ziegler spectrum of $R$ to the right Ziegler spectrum of $R$. To be precise:

\begin{theorem}\cite{herzogduality}\label{lattice iso ziegler}
The map $D$ given on basic opens sets by
\[\left(\phi/\psi\right)\mapsto\left(D\psi/D\phi\right)\] is an idempotent lattice isomorphism from the lattice of open sets of $\zg_R$ to the lattice of open sets of $_R\zg$.
\end{theorem}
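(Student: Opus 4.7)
The strategy is to upgrade the pp-formula anti-isomorphism $D$ from the preceding theorem to an isomorphism of the lattices of open sets of the two Ziegler spectra. Basic open sets $(\phi/\psi)$ with $\psi\leq\phi$ form a basis of $\zg_R$, so every open set is a union of basic opens; I would define the map on basic opens by $(\phi/\psi)\mapsto (D\psi/D\phi)$, which is a legitimate basic open on $_R\zg$ since the preceding theorem reverses the order on pp-lattices and hence guarantees $D\phi\leq D\psi$.

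The key technical point is well-definedness at the level of open sets: if two pp-pairs $\phi/\psi$ and $\phi'/\psi'$ define the same basic open set in $\zg_R$ (or more generally one is contained in the other, or in a finite union of basic opens), the same relation must hold for the $D$-images in $_R\zg$. For this I would use the standard fact that $(\phi/\psi)=\emptyset$ in $\zg_R$ iff $\phi(M)=\psi(M)$ for every $M\in\Mod-R$, a consequence of Ziegler's density theorem that every module purely embeds in a product of indecomposable pure-injectives. Containment of basic opens can then be encoded entirely in the pp-lattice via closure-on-all-modules of auxiliary pp-pairs built from meets and joins of $\phi,\psi,\phi',\psi'$, and compactness of basic opens reduces unions to the finite case. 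Since $D$ is an anti-isomorphism interchanging meet and join, and the assignment $\phi/\psi\mapsto D\psi/D\phi$ swaps the roles of the two constituents, these two reversals cancel to give an order-preserving, well-defined map on lattices of open sets. The same ingredients, via the identities $D(\phi+\psi)\equiv D\phi\wedge D\psi$ and $D(\phi\wedge\psi)\equiv D\phi+D\psi$ from the preceding theorem, show that the induced map commutes with arbitrary unions and with finite intersections. Bijectivity and the idempotency claim both follow from the pp-level involution $DD\phi\equiv\phi$, a direct block-matrix calculation from the definition of $D$: on basic opens one has $DD(\phi/\psi)=D(D\psi/D\phi)=(DD\phi/DD\psi)=(\phi/\psi)$, and this extends to all opens by uniqueness of the extension.

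The main obstacle is the first technical point, namely showing that containment of basic opens of $\zg_R$ is governed by a purely lattice-theoretic condition on pp-pairs that $D$ respects. This hinges on Ziegler's density theorem (indecomposable pure-injectives suffice to test equalities of pp-subgroup functors) and on careful bookkeeping about how the swap $\phi\leftrightarrow\psi$ in the definition of the map on basic opens compensates for the order reversal of $D$ on the pp-lattice; without this cancellation one would obtain only an anti-isomorphism of open-set lattices rather than the claimed isomorphism.
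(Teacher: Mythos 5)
This theorem is stated in the paper with a citation to \cite{herzogduality} and no proof is given, so there is no ``paper's own proof'' to compare against; what follows evaluates your sketch against the argument that is actually needed.

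Your overall shape is right: define $D$ on basic opens by $(\phi/\psi)\mapsto(D\psi/D\phi)$, prove well-definedness, and get everything else from the anti-isomorphism of pp-lattices and $DD\phi\equiv\phi$. The order reversal of $D$ on the pp-lattice cancelling against the swap $\phi\leftrightarrow\psi$ is the correct bookkeeping, and the appeal to $(\phi/\psi)=\emptyset\Leftrightarrow\phi\leq\psi$ handles the degenerate case. However, the crux of the argument is exactly where you wave a hand, and the specific claim you make there is false. You assert that containment of basic opens ``can be encoded entirely in the pp-lattice via closure-on-all-modules of auxiliary pp-pairs built from meets and joins of $\phi,\psi,\phi',\psi'$.'' Containment $(\phi/\psi)\subseteq(\phi'/\psi')$ is not decided by finitary lattice combinations of the four given formulas alone: it is a genuine ``implication'' condition (for every $N$, $\phi'(N)=\psi'(N)\Rightarrow\phi(N)=\psi(N)$) and no pp-pair $\sigma/\tau$ built by meets and joins from $\phi,\psi,\phi',\psi'$ has $(\sigma/\tau)=\emptyset$ if and only if this implication holds. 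What you actually need is Ziegler's criterion: $(\phi/\psi)\subseteq\bigcup_{i=1}^n(\phi_i/\psi_i)$ if and only if there is a finite chain $\psi=\sigma_0\leq\sigma_1\leq\cdots\leq\sigma_m=\phi$ of \emph{arbitrary} pp-formulas such that each interval $[\sigma_{j-1},\sigma_j]$ is linked by the modular law (projectivity of intervals) to a subinterval $[\beta_j,\alpha_j]$ of some $[\psi_{i},\phi_{i}]$. The auxiliary $\sigma_j,\alpha_j,\beta_j$ are not obtained from $\phi,\psi,\phi_i,\psi_i$ by lattice operations. Once you have this criterion, the argument closes: $D$ is a lattice anti-isomorphism of the whole pp-lattice, so it sends the chain $\sigma_0\leq\cdots\leq\sigma_m$ to a chain $D\sigma_m\leq\cdots\leq D\sigma_0$ running from $D\phi$ to $D\psi$, and it interchanges $\wedge$ and $+$ and hence preserves projectivity of intervals; the swap in the definition of $D$ on basic opens matches the reversed endpoints. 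So your approach can be repaired, but the essential technical lemma is missing and the substitute you offered does not work. For the record, Herzog's own route (and the cleanest one) bypasses Ziegler's criterion and instead uses the exact duality between the functor categories $(\mod\text{-}R,\Ab)^{\mathrm{fp}}$ and $(R\text{-}\mod,\Ab)^{\mathrm{fp}}$: open sets correspond to Serre subcategories, $(\phi/\psi)\subseteq(\phi'/\psi')$ if and only if the functor $F_{\phi/\psi}$ is a subquotient of a finite power of $F_{\phi'/\psi'}$, and an exact (contravariant) equivalence visibly preserves this.
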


It is unknown whether this lattice isomorphism always comes from a homeomorphism or even if this map always comes from a homeomorphism between $\zg_R$ and $_R\zg$ after identifying topologically indistinguishable points in both spaces.

The lattice isomorphism between open sets in $\Zg_R$ and open sets in ${_R}\Zg$ gives rise to a lattice isomorphism between the lattices of closed sets.

\begin{remark}\label{dualityC0Cinf}
Under the lattice isomorphism $D$, $\mcal{C}_0$ in $\Zg_R$ is sent to $\mcal{C}_\infty$ in ${_R}\Zg$. This follows from the proofs of \ref{effrepfun} and \ref{efftenfun} in section \ref{Basiccals} and \ref{kdualslope}.
\end{remark}

\begin{lemma}\cite[1.3.13]{PSL}\label{kdualandelemdual}
Let $R$ be a finite-dimensional $k$-algebra. Let $\phi/\psi$ be a right pp-pair and $M$ a right $R$-module. Then $\psi(M)\leq\phi(M)$ if and only if $D\phi(M^*)\leq D\psi(M^*)$.
\end{lemma}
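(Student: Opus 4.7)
Since the pp-pair hypothesis $\psi \le \phi$ already forces $\psi(M) \subseteq \phi(M)$ and, by the anti-isomorphism property of $D$, also forces $D\phi(M^*) \subseteq D\psi(M^*)$, I read the intended statement as the assertion that \emph{equality} on one side is equivalent to \emph{equality} on the other: the pair $\phi/\psi$ is closed on $M$ iff the dual pair $D\psi/D\phi$ is closed on $M^*$. The plan is to derive this from the key natural identification
\[
D\phi(M^*) \;=\; \phi(M)^{\perp} \;=\; \{\, f \in (M^n)^{\ast} \cong (M^*)^n : f\vert_{\phi(M)} = 0 \,\}
\]
of $k$-subspaces of $(M^*)^n$. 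Granted this identification applied to both $\phi$ and $\psi$, restriction of linear functionals yields a natural isomorphism $D\psi(M^*)/D\phi(M^*) \cong (\phi(M)/\psi(M))^{\ast}$; since a $k$-vector space vanishes iff its $k$-dual does, the biconditional follows at once.

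For the inclusion $D\phi(M^*) \subseteq \phi(M)^{\perp}$, I would unpack the definition of $D$ directly. Writing $\phi(\bar{x})$ as $\exists \bar{y}\, (\bar{x},\bar{y})(H' \ H'')^{T} = 0$, an element $\bar{f} \in D\phi(M^*)$ admits $\bar{g} \in (M^*)^m$ with $\bar{f} = -H'\bar{g}$ and $H''\bar{g} = 0$. For any $\bar{m} \in \phi(M)$ witnessed by $\bar{m}' \in M^l$ (so $\sum_i m_i H'_{ij} + \sum_k m'_k H''_{kj} = 0$ for every $j$), the left-action rule $(rg)(m) = g(mr)$ on $M^*$ yields
\[
\sum_i f_i(m_i) \;=\; -\sum_{i,j} g_j(m_i H'_{ij}) \;=\; \sum_{k,j} g_j(m'_k H''_{kj}) \;=\; \sum_k (H''\bar{g})_k(m'_k) \;=\; 0,
\]
so $\bar{f}$ annihilates $\phi(M)$.

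The reverse inclusion $\phi(M)^{\perp} \subseteq D\phi(M^*)$ is the main obstacle. My plan is to work with a free realization $(M_\phi, \bar{a})$ of $\phi$, where $M_\phi$ is presented as $\mathrm{coker}(R^m \xrightarrow{H} R^{n+l})$ with $\bar{a}$ the image of the first $n$ generators. Given $\bar{f} \in \phi(M)^{\perp}$, the hypothesis that $\bar{f}$ annihilates every tuple $(h(a_1),\ldots,h(a_n))$ for $h \in \Hom_R(M_\phi, M)$ rephrases as the vanishing of a certain natural transformation built from $\bar{f}$ on the relation module; injectivity of $k$ as a $k$-vector space then produces the required $\bar{g} \in (M^*)^m$ witnessing $\bar{f} = -H'\bar{g}$ and $H''\bar{g} = 0$. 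Alternatively, this inclusion may be extracted from the Auslander--Gruson--Jensen duality applied to the coherent functor $\phi$, specialised via the $k$-dual of a module over a finite-dimensional $k$-algebra; the delicate point, either way, is converting a global vanishing statement on $\phi(M)$ into the existence of an explicit $m$-tuple of functionals realising $\bar{f}$ through $H'$.
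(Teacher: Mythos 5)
The paper records this lemma as a citation of \cite[1.3.13]{PSL} and supplies no proof, so there is no argument of its own to compare against; I assess yours on its merits. Your reading of the statement is right: once $\phi/\psi$ is a pp-pair the biconditional as written holds trivially, and what is actually needed (and used later, in the density argument at slope $\infty$) is the closed-iff-closed form, equivalently the version for two arbitrary pp-formulae of the same arity. Your central claim, the orthogonality $D\phi(M^*)=\phi(M)^{\perp}$ as subgroups of $(M^*)^{n}\cong(M^{n})^{*}$, is exactly the mechanism behind the cited result; the verification of $D\phi(M^*)\subseteq\phi(M)^{\perp}$ is complete and correct; and deducing the biconditional from the induced isomorphism $D\psi(M^*)/D\phi(M^*)\cong\bigl(\phi(M)/\psi(M)\bigr)^{*}$ together with faithfulness of $k$-duality is sound.

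Where the write-up falls short of a proof is, as you yourself flag, the reverse inclusion $\phi(M)^{\perp}\subseteq D\phi(M^*)$. ``Rephrases as the vanishing of a certain natural transformation built from $\bar f$ on the relation module'' is not a step one can check, and the free-realisation scaffolding is more than you need. Put $T\colon M^{n+l}\to M^{m}$, $(\bar m,\bar m')\mapsto \bar m H'+\bar m'H''$, so that $\phi(M)$ is the image of $\ker T$ under projection to the first $n$ coordinates; then $\bar f\in\phi(M)^{\perp}$ if and only if $(\bar f,0)$ annihilates $\ker T$. Since $k$ is injective over itself, $(\ker T)^{\perp}=\im T^{*}$ for the $k$-transpose $T^{*}\colon(M^{m})^{*}\to(M^{n+l})^{*}$, and the same bookkeeping you already carried out in the forward direction identifies $T^{*}\bar g$ with $(H'\bar g,\,H''\bar g)$. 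Hence $(\bar f,0)\in\im T^{*}$ says precisely that $\bar f=H'\bar g$ and $H''\bar g=0$ for some $\bar g\in(M^{*})^{m}$, i.e.\ $\bar f\in D\phi(M^*)$. That is exactly the ingredient you named --- injectivity of $k$ --- used directly rather than filtered through $M_{\phi}$, and it closes the argument; an appeal to Auslander--Gruson--Jensen duality would also do it, but is heavier machinery than the local computation requires.
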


\subsection{Baur-Monk and decidability}

Let $\phi/\psi$ be a pp-$n$-pair and $n\in\N$. There is a sentence, denoted by $\vertl\phi/\psi\vertr\geq n$ in the language of (right) $R$-modules, which expresses in every $R$-module $M$ that the quotient group $\phi(M)/\psi(M)$ has at least $n$ elements. Such sentences will be referred to as \textbf{invariant sentences}.

\begin{theorem}[Baur-Monk Theorem]\cite{PreBk}
Let $R$ be a ring. Every sentence $\chi\in\mcal{L}_R$ is equivalent to a boolean combination of invariant sentences.
\end{theorem}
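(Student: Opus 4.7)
The plan is to prove Baur--Monk in two stages: first establish the \emph{pp-elimination} theorem that every formula $\chi(\bar x)\in\mcal{L}_R$ is equivalent, modulo the theory $T_R$ of all $R$-modules, to a boolean combination of pp-formulas in the same free variables; then specialise to closed formulas and rewrite them in invariant form.

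For the pp-elimination step I would induct on the complexity of $\chi$. Atomic formulas $\sum x_i r_i=0$ are already pp, and boolean connectives are preserved trivially, so the only substantive case is eliminating a leading existential quantifier. After applying the inductive hypothesis and putting the result in disjunctive normal form, one can push the $\exists y$ inside the disjunction and is reduced to the task of rewriting
\[
\exists y\,\bigl(\phi(\bar x,y)\wedge\bigwedge_{j=1}^n\neg\psi_j(\bar x,y)\bigr)
\]
as a boolean combination of pp-formulas in $\bar x$, where $\phi$ and the $\psi_j$ are pp. Fixing $M\models T_R$ and a tuple $\bar a$ from $M$, the solution set in $y$ of $\phi(\bar a,y)$ is either empty or a coset of the pp-defined subgroup $\phi(M,\bar 0)$, and similarly each $\psi_j(\bar a,y)$ defines a coset of $\psi_j(M,\bar 0)$. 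The existential asserts exactly that this coset is not covered by the finitely many $\psi_j$-cosets. The key algebraic input is B.\ H.\ Neumann's covering lemma: if a coset of a subgroup is covered by finitely many cosets of subgroups, then already those cosets whose subgroups are of finite index in the ambient one suffice. This lets one trade each negation for a boolean combination involving pp-formulas in $\bar x$ together with finite-index comparisons of intersections of pp-subgroups, which are themselves pp-expressible up to boolean combination.

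With pp-elimination in hand, any sentence $\chi$ becomes a boolean combination of closed formulas of the shape $\exists \bar x\,(\phi(\bar x)\wedge\bigwedge_j\neg\psi_j(\bar x))$ with $\phi$ and each $\psi_j$ pp. Replacing each $\psi_j$ by $\phi\wedge\psi_j$, which is still pp and does not change the existential, the sentence asserts exactly that the pp-subgroup $\phi(M)$ is not covered by the finitely many pp-subgroups $\phi(M)\cap\psi_j(M)$. A second application of Neumann's lemma rewrites satisfiability of this, uniformly in $M$, as a positive boolean combination of statements of the form $\vertl \phi/\phi\wedge\psi_j\vertr\geq n$ for various $j$ and $n\in\N$, which are precisely the invariant sentences of the statement.

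The main obstacle is the Neumann covering lemma together with its uniform application: the nontrivial content is the algebraic fact that satisfiability of an existential $\exists y\,(\phi\wedge\bigwedge\neg\psi_j)$ in $M$ is determined purely by finite-index data attached to intersections of pp-subgroups, and that this determination is uniform across all $R$-modules. Once this combinatorial input is secured the remaining steps are syntactic bookkeeping.
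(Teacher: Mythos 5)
The paper does not prove this theorem --- it is cited from \cite{PreBk} without an in-text argument --- so there is nothing internal to compare against. Your strategy of pp-elimination by induction on complexity, with B.~H.~Neumann's covering lemma carrying the existential quantifier step, followed by specialisation to sentences, is indeed the classical route in Prest's book, and the overall plan is sound.

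That said, the intermediate claim on which your argument rests is stated incorrectly in a way that matters. You assert that every formula $\chi(\bar x)$ is equivalent modulo $T_R$ to a boolean combination of pp-formulas in $\bar x$, and that the finite-index comparisons produced by Neumann's lemma are ``themselves pp-expressible up to boolean combination.'' Neither is true. Any pp-sentence $\exists\bar y\,(\bar y H=0)$ is always satisfied by $\bar y=\bar 0$, so a boolean combination of pp-formulas with no free variables is provably equivalent to $\top$ or to $\bot$; if your pp-elimination held as stated, specialising to sentences would render the theorem vacuous. What the existential step actually yields is a boolean combination of pp-formulas in $\bar x$ \emph{together with invariant sentences}: Neumann's lemma separates the covering condition into pp-conditions on $\bar a$ (e.g.\ that $\phi\wedge\psi_j$ is realised at $\bar a$, or that two of the coset pieces coincide) and finite-index conditions $\vertl\phi/\phi\wedge\psi_j\vertr\geq n$ depending on $M$ alone, and the latter form a genuinely different syntactic class that cannot be traded for pp-formulas. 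The correct inductive invariant is therefore ``boolean combination of pp-formulas \emph{and} invariant sentences.'' Once that is in place, the case of a closed $\chi$ is immediate --- the pp-sentences are all true and only the invariant sentences survive --- so your second application of Neumann becomes redundant, or at best a by-hand repetition of the outermost instance of the inductive step.
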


If $R$ is an algebra over an infinite field then for all pp-pairs $\phi/\psi$ and all $R$-modules $M$, $|\phi(M)/\psi(M)|$ is either equal to one or infinite. This is because if $M$ is a module over a $k$-algebra then $\phi(M)$ and $\psi(M)$ are $k$-vector subspaces of $M^n$ and thus so is $\phi(M)/\psi(M)$.

A \textbf{recursive field} is a field $k$ together with a bijection with $\N$ such that addition and multiplication in the field induce recursive functions on $\N$ via this bijection. If $k$ is a countable algebraically closed field then there exists a bijection $f:k\rightarrow \N$ so that $k$ together with $f$ is a recursive field. With a bit of work, this follows from \cite[5.1]{HandRecMath} together with the fact, \cite[2.2.9]{Markermodeltheory}, that the theory of algebraically closed fields of a specified characteristic is decidable.

We will frequently use the word ``effectively'' followed by an operation in this paper. For example, ``effectively calculate'', ``effectively decide'' or as in the next paragraph ``effectively list''. This is just short hand for there exists an algorithm which performs that operation.

If $R$ is a finite-dimensional algebra over a recursive field then the theory of $R$-modules is recursively axiomatisable i.e. we can effectively list axioms for the theory of $R$-modules. In this situation we may use the so called \emph{proof algorithm},
which lists all sentences that are true in all $R$-modules by listing all formal proofs
in first order logic from the axioms for the theory of $R$-modules.\footnote{The existence of such an algorithm may be found in any
standard source on first order logic, e.g. \cite{Enderton}}.

With the proof algorithm in hand, we may then compute, for each sentence $\Theta\in\mcal{L}_R$,
a Boolean combination $\chi$ of invariant sentences that is equivalent to $\Omega$ as follows:
In the list of formal proofs we search for entries of the form
 $\Omega\leftrightarrow \chi$ for some Boolean combination of invariant sentences $\chi$.
By Baur-Monk the search terminates.


Thus, given a finite-dimensional algebra $R$ over an algebraically closed recursive field $k$, in order to show that the theory of $R$-modules is decidable it is enough to show that there is an algorithm which, given a boolean combination $\chi$ of invariant sentences of the form $\vertl\phi/\psi\vertr>1$, answers whether there is an $R$-module in which $\chi$ is true.

If $\chi$ is a boolean combination of invariants sentences, we can put it into disjunctive normal form $\bigvee_{i=1}^n\chi_i$ where each $\chi_i$ is a conjunction of invariants sentences and negations of invariants sentences. It is therefore enough to be able to check whether one of the $\chi_i$ is true in some $R$-module.

Suppose $\chi$ is of the form
\[\bigwedge_{i=1}^n|\phi_i/\psi_i|>1\wedge\bigwedge_{j=1}^m|\sigma_j/\tau_j|=1\]
where $\phi_i/\psi_i$ and $\sigma_j/\tau_j$ are pp-$1$-pairs. Since every module is elementary equivalent to a (possibly infinite) direct sum of indecomposable pure-in\-jective modules \cite[4.36]{PreBk} and solution sets of pp-formulas commute with direct sums, there is an $R$-module $M$ which satisfies $\chi$ if and only if there are indecomposable pure-injective $R$-modules $M_1,\ldots,M_n$ such that $M_i$ satisfies
\[|\phi_i/\psi_i|>1\wedge\bigwedge_{j=1}^m|\sigma_j/\tau_j|=1\] for each $1\leq i\leq n$.

Thus, it is enough to show that there is an algorithm which given pp-$1$-pairs, $\phi/\psi,\phi_1/\psi_1,\ldots,\phi_n/\psi_n$  answers whether
\[\left(\phi/\psi\right)\subseteq\bigcup_{i=1}^n\left(\phi_i/\psi_i\right).\]


An \textbf{interpretation functor}, $I:\Mod\text{-}R\rightarrow \Mod\text{-}S$, is specified (up to equivalence) by giving a pp-$m$-pair $\phi/\psi$ and, for each $s\in S$, a pp-$2m$-formula $\rho_s$ such that, for all $M\in\Mod\text{-}R$, the solution set $\rho_s(M,M)\subseteq M^m\times M^m$ defines an endomorphism of $\phi(M)/\psi(M)$ as an abelian group, and such that $\phi(M)/\psi(M)$, together with the $\rho_s$ actions, is an $S$-module (see \cite{PreInterp} or \cite[18.2.1]{PSL}).

An interpretation functor $I:\Mod\text{-}R\rightarrow \Mod\text{-}S$ gives rise to a mapping $\chi\mapsto \chi'$ from the set of sentences in the language of $S$-modules to the set of sentences in the language of $R$-modules such that for any $R$-module $M$,  $\chi$ is true for $IM$ if and only if $\chi'$ is true for $M$. In particular, $\chi$ is true for all $S$-modules in the image of $I$ if and only if $\chi'$ is true for all $R$-modules.

If $R$ and $S$ are finite-dimensional $k$-algebras and $I:\Mod\text{-}R\rightarrow \Mod\text{-}S$ is a $k$-linear interpretation functor, then together with $\phi/\psi$, it is enough to specify pp-formulas $\rho_{s_1},\ldots,\rho_{s_n}$ where $s_1,\ldots,s_n$ are a $k$-basis for $S$ and then extend $k$-linearly. Moreover, if $k$ is a recursive field then the induced mapping on sentences in the previous paragraph is effective.

\section{Basic calculations}\label{Basiccals}

Let $k$ be a recursive field. In this section we list basic operations that can be carried out effectively over $k$ which we will need later in the paper. We will sketch proofs of some of the less trivial operations.

\begin{remark}
Given a finite subset $S$ of $k^n$ and $v\in k^n$ we can effectively calculate a basis for $\Span S$, decide whether $v\in \Span S$ and find a basis for a complement of $\Span S$. In particular, we can effectively calculate the dimension of $\Span S$.
\end{remark}

\begin{remark}
Given a matrix $M\in M_{n\times l}(k)$ we can effectively find a basis for the kernel of $M$ in $k^n$ and the image of $M$ in $k^l$ when considering $M$ as a linear map from $k^n$ to $k^l$. Hence we can calculate the rank of $M$.
\end{remark}

Let $R$ be a finite-dimensional algebra with $k$-basis $r_1=1,\ldots,r_s$. Let $\alpha_{ij}^k\in k$ be such that $r_ir_j=\sum_{k=1}^s\alpha_{ij}^kr_k$. These relations and that $r_1,\ldots,r_s$ is a $k$-basis for $R$ completely define $R$ as a $k$-algebra. An $R$-module $M$ is now given by a $k$-vector space $V$ together with linear maps $\phi_1,\ldots , \phi_s\in\Hom_k(V,V)$ such that $\phi_i\circ\phi_j=\sum_{k=1}^s\alpha_{ij}^k\phi_k$. Now, if $V$ is finite-dimensional, say of dimension $d$, by picking a basis $B$ for $V$, we may identify $M$ with $(k^d,A_1,\ldots,A_s)$ where $A_1,\ldots,A_s$ are $d\times d$ matrices with entries in $k$ representing the linear maps $\phi_1,\ldots,\phi_s$ with respect to $B$. We call $(k^d,A_1,\ldots,A_s)$ a \textbf{presentation} of $M$. Note that if $A_1,\ldots,A_s$ are $d\times d$ matrices with entries in $k$ then $(k^d,A_1,\ldots,A_s)$ is a presentation of an $R$-module if and only if $A_iA_j=\sum_{k=1}^s\alpha_{ij}^kA_k$ for $1\leq i,j\leq s$.

If $(k^d,A_1,\ldots,A_s)$ is a presentation of an $R$-module then we write \[M(A_1,\ldots,A_s)\] for the $R$-module it represents.

\begin{remark}\label{effhomspace}
Let $R$ be a finite-dimensional $k$-algebra with $k$-basis $r_1,\ldots,r_s$. Given presentations of $R$-modules $(k^n,A_1,\ldots, A_s)$ and $(k^l,B_1,\ldots,B_s)$ we can effectively calculate a basis for the subspace
\[M_{n\times l}(k)\supseteq\{\Phi\in M_{n\times l}(k) \st A_i\Phi=\Phi B_i\text{ for } 1\leq i\leq s\}.\] Note that this set is $\Hom(M(A_1,\ldots, A_s),M(B_1,\ldots,B_s))$ in terms of matrices with respect to the standard basis.
\end{remark}

\textbf{From now on we will assume that $k$ is a recursive algebraically closed field and that $R$ is a finite-dimensional algebra over $k$ given in terms of a $k$-basis $r_1,\ldots,r_s$ and relations.}

\begin{lemma}\label{effdecomp}
Given a presentation $(k^n,A_1,\ldots, A_s)$ of an $R$-module $M$, we can effectively decide whether $M$ is indecomposable or not. If $M$ is decomposable then we can effectively find non-zero pair-wise disjoint $A_1,\ldots, A_s$-invariant subspaces $V_1,\ldots,V_m$ of $k^n$ such that each $V_i$ with the restricted action of $A_1,\ldots A_s$ is indecomposable as an $R$-module and such that $V_1+\ldots+V_m=k^n$.
\end{lemma}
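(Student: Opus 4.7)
The plan is to exploit the classical equivalence that a finite-dimensional $R$-module $M$ is indecomposable if and only if its endomorphism ring $E := \End_R(M)$ has no non-trivial idempotent, and that decompositions of $M$ into indecomposable summands correspond to complete sets of primitive orthogonal idempotents in $E$. First I would apply Remark \ref{effhomspace} to $(k^n,A_1,\ldots,A_s)$ with itself to compute a $k$-basis $\Phi_1 = I_n, \Phi_2, \ldots, \Phi_t$ of $E$ as matrices in $M_n(k)$. Together with the structure constants $\gamma_{ij}^\ell$ determined by $\Phi_i\Phi_j = \sum_\ell \gamma_{ij}^\ell \Phi_\ell$ (computed by ordinary matrix multiplication followed by solving a linear system against the basis), this presents $E$ effectively as a finite-dimensional $k$-algebra.

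An element $e = \sum_i c_i \Phi_i$ satisfies $e^2 = e$ exactly when its coefficients $c_1,\ldots,c_t$ satisfy an explicit system of $t$ quadratic equations $F_\ell(c_1,\ldots,c_t) = 0$ over $k$ obtained by equating coefficients of $\Phi_\ell$ in $e^2 - e$, and the trivial idempotents $0$ and $I_n$ correspond to the known points $(0,\ldots,0)$ and $(1,0,\ldots,0)$. Since $k$ is a recursive algebraically closed field, deciding whether the variety cut out by the $F_\ell$ has any other $k$-point, and producing such a point when it does, is effective. Two possible implementations are (i) Gr\"obner bases together with saturation by the ideal vanishing on the two trivial idempotents, and (ii) the Friedl--R\'onyai style algorithm, which computes the Jacobson radical $J(E)$ as a $k$-subspace of $E$, tests whether $\dim_k(E/J(E)) = 1$, and otherwise extracts primitive orthogonal idempotents from the Wedderburn decomposition of the semisimple quotient and lifts them through the nilpotent ideal $J(E)$ by the Hensel-style iteration $e_{i+1} = 3e_i^2 - 2e_i^3$. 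Both routes reduce to field arithmetic and factorisation of univariate polynomials over $k$, which are effective under the standing hypotheses on $k$.

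If the search returns no non-trivial idempotent, return that $M$ is indecomposable. Otherwise, for a produced non-trivial idempotent $e \in E$, set $V := \im(e)$ and $V' := \ker(e) = \im(I_n - e)$; these are complementary $A_1,\ldots,A_s$-invariant subspaces of $k^n$ with effectively computable bases, and one recurses on the induced presentations on $V$ and $V'$. Since $\dim_k V, \dim_k V' < n$, the recursion terminates in at most $n$ rounds and produces the required pairwise-disjoint decomposition into indecomposables. The one genuine obstacle is the effective idempotent search in the middle step, and this is precisely where the hypothesis that $k$ is recursive and algebraically closed is used, via effective factorisation of univariate polynomials over $k$.
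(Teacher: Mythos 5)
Your proposal is correct and follows essentially the same route as the paper: compute a basis for $\End_R(M)$ using Remark \ref{effhomspace}, express idempotency as a system of polynomial equations in the coefficients, decide solvability effectively over the recursive algebraically closed field, and recurse on the complementary invariant subspaces cut out by a non-trivial idempotent. The only deviation is in the implementation of the idempotent-finding step: the paper decides existence via effective quantifier elimination for algebraically closed fields and then locates an idempotent by brute-force enumeration of tuples in $k^l$ (valid since $k$ is recursive and existence is already established), whereas you propose directly constructive methods (Gr\"obner bases with saturation, or the Friedl--R\'onyai radical-plus-Wedderburn-plus-idempotent-lifting procedure); both are sound, and this is a matter of implementation rather than of mathematical substance.
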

\begin{proof}
First we effectively find a basis for $\End_R(M)$. That is we find a basis $T_1,\ldots,T_l$ for the subspace
\[M_{n\times n}(k)\supseteq\{\Phi\in M_{n\times n}(k) \st A_i\Phi=\Phi A_i\text{ for } 1\leq i\leq s\}.\] We may assume $T_1$ is the identity matrix.

Now $M$ is indecomposable if and only if $\End_R(M)$ has no idempotents apart from $0$ and $1$. For $a_1,\ldots,a_l\in k$, the condition that $a_1T_1+\ldots+a_lT_l$ is idempotent is equivalent to $\overline{a}=(a_1,\ldots, a_l)$ being a root of a particular system of polynomial equations with coefficients in $k$ (and we can find this system effectively). Using effective quantifier elimination for algebraically closed fields, we can thus decide whether there exists $(a_1,\ldots,a_l)\in k$ such that $\overline{a}\neq 0$, $\overline{a}\neq (1,0,\ldots , 0)$ and $a_1T_1+\ldots+a_lT_l$ is idempotent. Thus, given a presentation of a finite-dimensional $R$-module, we can effectively decide if it is indecomposable or not.

Supposing that we know that $M$ is not indecomposable we may now search for an idempotent $e$ represented by $a_1T_1+\ldots +a_lT_l$ in $\End_R(M)$ which is not the identity or zero. We know we will eventually find one because $M$ is not indecomposable. Now $M=eM\oplus (e-1)M$ and we can easily use our presentation of $M$ to get presentations of $eM$ and $(e-1)M$. If either $eM$ or $(e-1)M$ is decomposable then we may repeat the process eventually stoping when we get a decomposition of $M$ into indecomposable summands.
\end{proof}

\begin{lemma}\label{Listingfd}
There is an algorithm which lists the indecomposable finite-dimensional representations of $R$.
\end{lemma}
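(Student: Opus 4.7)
The plan is to dovetail over all candidate matrix presentations and filter out those that are both valid and indecomposable. Since $k$ is recursive, fix a computable enumeration $k_1,k_2,\ldots$ of its elements. For each pair $(d,n)\in\N^2$ the finite set of tuples $(A_1,\ldots,A_s)\in M_d(k)^s$ all of whose entries lie in $\{k_1,\ldots,k_n\}$ can be listed explicitly, and the algorithm dovetails through these finite sets as $(d,n)$ ranges over $\N^2$.

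For each tuple thus produced the algorithm performs two checks. First, it verifies the defining relations $A_iA_j=\sum_{k=1}^s\alpha_{ij}^k A_k$ for all $1\leq i,j\leq s$; each such relation is an equality of specific elements of $k$ and is decidable. If they hold then $(k^d,A_1,\ldots,A_s)$ is a presentation of a finite-dimensional $R$-module $M(A_1,\ldots,A_s)$. Second, it applies Lemma \ref{effdecomp} to decide indecomposability of $M(A_1,\ldots,A_s)$ and outputs the presentation only if it is indecomposable. Every finite-dimensional indecomposable $R$-module admits, upon choosing a basis, some presentation $(k^d,B_1,\ldots,B_s)$, and its finitely many matrix entries all lie in $\{k_1,\ldots,k_n\}$ once $n$ is large enough; hence this presentation is eventually enumerated and output, so every isomorphism class is listed.

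If one wishes a list without repetitions, one maintains a running list of presentations already output and, before adding a new candidate $(k^d,A_1,\ldots,A_s)$, tests isomorphism against each earlier entry $(k^{d'},B_1,\ldots,B_s)$. A basis $T_1,\ldots,T_l$ for $\Hom(M(A_1,\ldots,A_s),M(B_1,\ldots,B_s))$ (necessarily with $d=d'$) is computable by Remark \ref{effhomspace}, and the two modules are isomorphic iff $\det(a_1T_1+\cdots+a_lT_l)\in k[a_1,\ldots,a_l]$ is not the zero polynomial, which is decidable by symbolic computation of the determinant (the implication uses that $k$ is infinite, being algebraically closed). No step presents a genuine obstacle: the lemma is essentially a packaging of recursive enumeration of $k$ with Lemma \ref{effdecomp}, Remark \ref{effhomspace} and effective quantifier elimination for algebraically closed fields; the only thing to keep in mind is that \textbf{lists} must be read in the sense of computable enumerability rather than as a terminating algorithm producing the whole (infinite) set.
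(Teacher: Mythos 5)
Your proposal is correct and structurally the same as the paper's: dovetail over matrix presentations, verify the defining relations, filter by Lemma \ref{effdecomp}, and (to suppress repeats) decide isomorphism between indecomposables already output. The one genuine point of departure is the isomorphism subroutine. The paper computes bases $T_1,\ldots,T_l$ of $\Hom(M,N)$ \emph{and} $S_1,\ldots,S_l$ of $\Hom(N,M)$, and then uses effective quantifier elimination for algebraically closed fields to decide whether there exist $t_i,s_j\in k$ with $(\sum t_iT_i)(\sum s_jS_j)=1$. You instead compute only the one-sided $\Hom(M,N)$ and observe that $M\cong N$ if and only if $\det(a_1T_1+\cdots+a_lT_l)$ is not identically zero as a polynomial in $k[a_1,\ldots,a_l]$; since equality in a recursive field is decidable and $k$ is infinite, this is a direct polynomial identity test requiring no quantifier elimination. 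Both tests are correct (yours because an injective $k$-endomorphism between equidimensional spaces is bijective, and a nonzero polynomial over an infinite field has a non-vanishing point; the paper's because a one-sided inverse between equidimensional modules forces isomorphism), but your version is a mild simplification, delegating less to the ACF decision procedure. One small quibble: you present the no-repeats step as optional, but since the lemma is meant to produce a list of isomorphism classes (not just of presentations), and since the paper frames the isomorphism test as the crux (``it is enough to be able to effectively decide...isomorphism''), it is better regarded as the main content of the proof rather than a bonus refinement.
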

\begin{proof}
%
%
%
%

Given \ref{effdecomp}, it is enough to be able to effectively decide if, given two presentations $(k^n,A_1,\ldots,A_s)$ and $(k^n,B_1,\ldots,B_s)$ of indecomposable $R$-modules $M$ and $N$, $M$ is isomorphic to $N$.

We can compute a basis $T_1,\ldots,T_l$ for
\[\Hom_A(M(A_1,\ldots,A_n), M(B_1,\ldots,B_n))\] and $S_1,\ldots,S_l$ for \[\Hom_A(M(B_1,\ldots,B_n), M(A_1,\ldots,A_n)).\] Now $M(A_1,\ldots,A_n)$ and $M(B_1,\ldots,B_n)$ are isomorphic if and only if there exist $t_1,\ldots,t_l\in k$ and $s_1,\ldots,s_l\in k$ such that $(t_1T_1+\ldots+t_lT_l)(s_1S_1+\ldots+s_lS_l)=1$. This can be expressed as a system of polynomial equations over $k$ in $t_1,\ldots,t_l,s_1,\ldots,s_l$ and thus we may check, using effective quantifier elimination for algebraically closed fields, whether it has a solution or not.
\end{proof}

\begin{lemma}\label{effppopen}
Given a presentation of a finitely presented $R$-module $M$ and a pp-pair $\phi/\psi$ we can effectively decide whether $\phi/\psi$ is open on $M$ or not.
\end{lemma}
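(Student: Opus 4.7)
The plan is to reduce everything to effective linear algebra over $k$. Fix the presentation $(k^d,A_1,\ldots,A_s)$ of $M$. Since $k$ lies in the centre of $R$, every element of $k$ acts on $M$ as an $R$-module endomorphism, and hence preserves pp-definable subsets; consequently $\phi(M)$ and $\psi(M)$ are $k$-subspaces of $M^n = k^{dn}$. The whole problem is therefore one of computing these two $k$-subspaces and deciding whether $\psi(M)=\phi(M)$, the reverse inclusion being automatic because $\psi\leq\phi$.

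First I would compute $\phi(M)$. Write $\phi$ in the normal form $\exists\bar y\,(\bar x,\bar y)H=0$, with $\bar x$ of length $n$, $\bar y$ of length $l$, and $H=(H_{ij})$ an $(n+l)\times m$ matrix with entries in $R$. For each entry write $H_{ij}=\sum_{t=1}^{s}c^{(ij)}_{t}r_{t}$ with $c^{(ij)}_{t}\in k$, and let $[H_{ij}]:=\sum_{t=1}^{s}c^{(ij)}_{t}A_{t}\in M_{d\times d}(k)$ be the matrix giving the action of $H_{ij}$ on $M$. Viewing each coordinate of $(\bar x,\bar y)$ as a vector in $k^{d}$, the condition $(\bar x,\bar y)H=0$ becomes the homogeneous $k$-linear system
\[
\sum_{i=1}^{n+l}[H_{ij}]\,v_{i}=0\qquad(j=1,\ldots,m)
\]
in the $d(n+l)$ unknown entries of $v_{1},\ldots,v_{n+l}$. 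By the remarks at the start of this section, we can effectively produce a $k$-basis for the solution space $K\sseq k^{d(n+l)}$, and then effectively compute a basis for its image under the projection $k^{d(n+l)}\twoheadrightarrow k^{dn}$ onto the $\bar x$-coordinates. That image is precisely $\phi(M)$. The same recipe applied to $\psi$ yields a basis for $\psi(M)$.

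Finally, having two explicit finite $k$-bases inside $k^{dn}$, we decide whether $\psi(M)=\phi(M)$ by checking whether each basis vector of $\phi(M)$ lies in $\Span\,\psi(M)$, or equivalently by comparing $\dim_{k}\phi(M)$ and $\dim_{k}\psi(M)$; both are effective operations by the opening remarks of the section. There is no substantive obstacle: the entire argument is routine effective linear algebra over the recursive field $k$, once one observes that passage from the syntactic matrix $H$ over $R$ to its action on $M=k^{d}$ is a straightforward coefficient-by-coefficient substitution using the structure constants of $R$ against the presentation matrices $A_{1},\ldots,A_{s}$.
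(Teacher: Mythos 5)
Your proposal is correct and follows essentially the same route as the paper: both reduce to effective $k$-linear algebra by replacing each entry of the defining matrix by its $d\times d$ matrix of action on $M=k^d$, and then compare $\dim_k\phi(M)$ with $\dim_k\psi(M)$. The only cosmetic difference is that you compute $\dim_k\phi(M)$ by projecting a solution-space basis onto the $\bar x$-coordinates, whereas the paper obtains it as the dimension of the full $(\bar x,\bar y)$-solution space minus that of its $\bar x=0$ slice (the same number, by rank--nullity).
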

\begin{proof}
Given a pp-$n$-formula $\phi$ we can calculate the dimension as a $k$-vector space of the solution set of $\phi$ in $M^n$ as follows. Suppose $\phi$ is
\[ \exists y_1,\ldots,y_m \bigwedge_{i=1}^l x_1r_{1i}+\ldots+x_nr_{ni}+y_1s_{1i}+\ldots+y_ms_{mi}=0 .\] The $k$-dimension of $\phi(M)$ is the $k$-dimension of the solution set of
\[\bigwedge_{i=1}^l x_1r_{1i}+\ldots+x_nr_{ni}+y_1s_{1i}+\ldots+y_ms_{mi}=0\] minus the $k$-dimension of the solution set of
\[\bigwedge_{i=1}^l y_1s_{1i}+\ldots+y_ms_{mi}=0 .\]

Since $\phi/\psi$ is a pp-pair, $\phi(M)\supseteq \psi(M)$, so the dimension of $\phi(M)/\psi(M)$ is $\dim_k\phi(M)-\dim_k\psi(M)$. So $\phi/\psi$ is open on $X$ if and only if $\dim_k\phi(M)>\dim_k\psi(M)$.
\end{proof}

\begin{lemma}\label{effppgen}
Given a presentation of a finite-dimensional module $M$ we can effectively find a pp-$n$-formula generating the pp-type of a generating tuple for $M$.
\end{lemma}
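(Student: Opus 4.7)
The plan is to use the standard fact (see \cite[1.2.6]{PSL}) that a generating tuple of a finitely presented module freely realizes the pp-formula recording its defining relations. Thus the task reduces to effectively extracting an $R$-module presentation of $M$ from the given $k$-presentation $(k^d, A_1, \ldots, A_s)$.

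As the generating tuple take $\bar m := (e_1, \ldots, e_d)$, the standard basis of $k^d$; these span $M$ already as a $k$-vector space, hence certainly generate it as an $R$-module, so it suffices to produce a pp-$d$-formula generating the pp-type of $\bar m$. Let $\pi : R^d \to M$ denote the $R$-linear surjection sending the canonical generator $f_i \mapsto e_i$. Since $R^d$ has $k$-dimension $ds$, the kernel $K := \ker \pi$ is finite-dimensional over $k$; in particular any $k$-basis of $K$ automatically gives an $R$-module generating set.

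To compute $K$ effectively, parametrize an element of $R^d$ as $(\rho_1, \ldots, \rho_d)$ with $\rho_i = \sum_{j=1}^s c_{ij} r_j$. Using the matrices $A_j$, each action $e_i \cdot r_j$ is a known $k$-linear combination of $e_1, \ldots, e_d$, so
\[ \pi(\rho_1, \ldots, \rho_d) \;=\; \sum_{i=1}^d \sum_{j=1}^s c_{ij}\, (e_i \cdot r_j) \]
is a $k$-linear expression in the $ds$ scalars $c_{ij}$, and the vanishing condition $\pi = 0$ becomes a homogeneous linear system over $k$. Solving this system by the standard effective linear algebra already used in \ref{effhomspace} and \ref{effppopen} yields a basis $\bar k_1, \ldots, \bar k_l \in R^d$ of $K$; writing $\bar k_i = (r_{i,1}, \ldots, r_{i,d})$ with each $r_{i,j}$ expressed in the given $k$-basis of $R$, the algorithm outputs
\[ \phi(\bar x) \;:=\; \bigwedge_{i=1}^l \bigl( x_1 r_{i,1} + \cdots + x_d r_{i,d} = 0 \bigr). \]

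Correctness is routine: by construction $\bar m \in \phi(M)$, and if $\bar n \in \phi(N)$ for any $R$-module $N$, the $R$-linear map $R^d \to N$ sending $f_i \mapsto n_i$ kills every $\bar k_i$, hence factors through $\pi$ to give an $R$-morphism $M \to N$ with $\bar m \mapsto \bar n$; consequently every pp-formula satisfied by $\bar m$ in $M$ is satisfied by $\bar n$ in $N$, which is precisely the meaning of $\phi$ generating the pp-type of $\bar m$. There is no substantive obstacle in the argument; the only mild care required is to fix a consistent convention when translating between formal elements of $R$ (written in the $k$-basis $r_1, \ldots, r_s$) and their action on $M$ encoded by the matrices $A_j$.
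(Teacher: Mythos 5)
Your proof is correct and takes essentially the same approach as the paper: take the standard basis vectors as the generating tuple, and effectively compute the $R$-relations they satisfy by solving a homogeneous linear system over $k$ in the coordinates with respect to the vectors $e_iA_j$. You fill in the details the paper's sketch omits (notably that the kernel $K \subseteq R^d$ is finite-dimensional over $k$, so a computed $k$-basis already generates $K$ as an $R$-module, and the verification that the resulting quantifier-free conjunction of relations generates the pp-type), but the underlying algorithm is the one the paper intends.
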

\begin{proof}
Let $(k^n,A_1,\ldots,A_s)$ be a presentation of $M$ and let $\overline{e}=(e_1,\ldots,e_n)$ be the $n$-tuple of standard basis vectors for $k^n$. We need to write down finitely many linear equations over $R$ which describe the linear relations over $R$ which $\overline{e}$ satisfies. We can do this by describing a system of finitely many linear equations over $k$ which describe the linear relations between the vectors $e_iA_j$ in $k^n$.
\end{proof}

\begin{lemma}\label{effrepfun}
Given a presentation of a finite-dimensional module $M$ we can effectively find a pp-$n$-pair $\phi/\overline{x}=0$ such that the functor $(M,-)$ is equivalent to the functor defined by $\phi/\overline{x}=0$.
\end{lemma}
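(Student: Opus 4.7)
The plan is to invoke Lemma \ref{effppgen} together with the standard correspondence between representable functors on finitely presented modules and functors defined by pp-pairs via free-realizations.

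First, given a presentation $(k^n, A_1, \ldots, A_s)$ of $M$, the standard basis $\overline{e} = (e_1, \ldots, e_n)$ of $k^n$ is a generating tuple for $M$ as an $R$-module. By Lemma \ref{effppgen} we can effectively compute a pp-$n$-formula $\phi$ generating the pp-type of $\overline{e}$ in $M$; equivalently, $(M, \overline{e})$ is a free-realization of $\phi$ in the sense of \cite[Section 1.2.2]{PSL}.

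Second, I would verify that the pp-pair $\phi/(\overline{x}=0)$ defines a functor naturally isomorphic to $(M,-)$. For each $R$-module $N$, define
\[\Theta_N : (M,N) \to \phi(N), \qquad f \mapsto (f(e_1), \ldots, f(e_n)).\]
Since pp-formulas are preserved by $R$-module homomorphisms and $\overline{e} \in \phi(M)$, the image genuinely lies in $\phi(N)$. The map $\Theta_N$ is injective because $\overline{e}$ generates $M$, so any homomorphism out of $M$ is determined by its values on $\overline{e}$. It is surjective by the defining property of a free-realization: every $\overline{n} \in \phi(N)$ is of the form $f(\overline{e})$ for some (unique) $f : M \to N$. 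Naturality in $N$ is immediate from the fact that both sides are functorial under post-composition. Since $(\overline{x}=0)(N) = 0$, the functor associated to the pp-pair $\phi/(\overline{x}=0)$ is exactly $N \mapsto \phi(N)$, and the natural isomorphism $\Theta$ establishes the claim.

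There is no real obstacle here: all the algorithmic content is already packaged into Lemma \ref{effppgen}, and the remainder is a standard verification that the free-realization correspondence relates the hom-functor $(M,-)$ to the pp-definable functor $\phi/(\overline{x}=0)$.
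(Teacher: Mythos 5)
Your proposal follows exactly the same route as the paper's own proof: compute a pp-formula $\phi$ generating the pp-type of the standard generating tuple via Lemma \ref{effppgen}, then note that the free-realisation correspondence identifies the functor $\phi/(\overline{x}=0)$ with $(M,-)$. The paper leaves the second step implicit, citing standard references on free-realisations, whereas you spell out the natural isomorphism; the content is identical.
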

\begin{proof}
Given a presentation $(k^n,A_1,\ldots,A_s)$ of $M$, by lemma \ref{effppgen}, we can effectively find a pp-$n$-formula generating the pp-type of the standard basis for $k^n$ in $M$.
\end{proof}

\begin{lemma}\label{efftenfun}
Given a presentation of a finite-dimensional module $M$ we can effectively find a pp-$n$-pair $\overline{x}=\overline{x}/\psi$ such that the functor $-\otimes M^*$ is equivalent to the functor defined by $\overline{x}=\overline{x}/\psi$.
\end{lemma}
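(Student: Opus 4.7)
The plan is to mirror the proof of \ref{effrepfun} on the dual side: produce an explicit projective presentation of $M^*$ as a left $R$-module, and then read off the pp-pair representing $-\otimes M^*$ directly from that presentation.

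First I would convert the presentation $(k^d, A_1, \ldots, A_s)$ of $M$ into a presentation of $M^*$ as a left $R$-module. Since $(r\cdot\lambda)(m) = \lambda(mr)$, a short computation in the dual basis shows that $M^*$ has left-module presentation $(k^d, A_1^T, \ldots, A_s^T)$, and this step is clearly effective. Next, the obvious left-module analogue of \ref{effppgen} effectively yields a pp-$d$-formula $\phi(\overline{x})$ in the language of left $R$-modules generating the pp-type of the standard basis of $M^*$. Since $M^*$ is finitely presented, $\phi$ may be taken to be a pure conjunction of relations $C\overline{x} = 0$ for some matrix $C \in R^{a\times d}$; equivalently, the rows of $C$ provide an effective free presentation
\[
R^a \to R^d \to M^* \to 0
\]
of $M^*$ as a left $R$-module.

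Tensoring on the left by an arbitrary right $R$-module $N$, I would then obtain an exact sequence
\[
N^a \to N^d \to N\otimes M^* \to 0.
\]
A direct check identifies the image of $N^a \to N^d$ with the solution set $\psi(N) \subseteq N^d$ of the pp-$d$-formula $\psi(\overline{x}) := \exists \overline{y}\; \overline{x} = \overline{y}C$, which is evidently effectively computable from $C$. Hence $N\otimes M^* \cong N^d/\psi(N)$ naturally in $N$, so $-\otimes M^*$ is equivalent to the functor defined by the pp-pair $(\overline{x}=\overline{x})/\psi$, as required. The only care needed in executing the plan is to keep the transpose and row-vs-column conventions consistent when passing between right- and left-module data; there is no substantive obstacle.
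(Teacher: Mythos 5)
Your proof is correct, and it reaches the same pp-pair as the paper but by a more self-contained route. The paper applies (the left-module version of) Lemma \ref{effrepfun} to $M^*$ to obtain a left pp-pair $\phi/(\overline{x}=0)$ representing $\Hom(M^*,-)$, then invokes the general facts that elementary duality $D$ is effectively computable and that $D$ carries the representation functor $\Hom(M^*,-)$ to the tensor functor $-\otimes M^*$ (citing \cite[Section 10.3]{PSL}), so the answer is $\overline{x}=\overline{x}/D\phi$. You instead bypass the duality machinery: you extract from $\phi$ (a quantifier-free formula $C\overline{x}=0$) the finite free presentation $R^a\to R^d\to M^*\to 0$, tensor it with an arbitrary right module $N$ to get $N^a\to N^d\to N\otimes M^*\to 0$, and read off $\psi(\overline{x}):=\exists\overline{y}\ \overline{x}=\overline{y}C$ as defining the image of $N^a\to N^d$. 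This is, in effect, a by-hand computation of $D\phi$ in the special case of a quantifier-free $\phi$ (indeed $D(C\overline{x}=0)$ is precisely $\exists\overline{y}\ \overline{x}=\overline{y}C$ up to sign), together with a direct verification of the identity $-\otimes M^*\cong(\overline{x}=\overline{x})/D\phi$ rather than a citation of it. The paper's version is shorter because it leans on black-boxed results; yours makes the same content explicit, which is fine here since effectiveness is manifest at every step — just be sure, as you note, to fix the row-vs-column and transpose conventions consistently when passing from the right presentation of $M$ to the left presentation of $M^*$.
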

\begin{proof}
If $\phi$ is a pp-formula then $D\phi$ denotes the elementary dual pp-formula in the sense of \cite[Section 1.3]{PSL}. Computing the elementary dual of a pp-formula is clearly effective.  If $\phi/\overline{x}=0$ is isomorphic to $\Hom(M^*,-)$ then $\overline{x}=\overline{x}/D\phi$ is isomorphic to $-\otimes M^*$, see \cite[Section 10.3]{PSL}. The previous lemma \ref{effrepfun} now finishes the proof.
\end{proof}

%

\begin{lemma}\label{effdimvector}
Given a presentation of a finitely presented $R$-module $M$ we can calculate its dimension vector. Hence, given a presentation of a finitely presented indecomposable $R$-module $M$ over a tubular algebra $R$, we can calculate the slope of $M$.
\end{lemma}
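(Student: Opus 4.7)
The plan is to reduce the problem to computing $\dim_k \Hom_R(P_i, M)$ for each indecomposable projective $P_i$, which is already effective by Lemma \ref{effhomspace}. The only missing ingredient is a uniform way to produce presentations of representatives of the indecomposable projectives from the input data $(r_1, \ldots, r_s; \alpha_{ij}^k)$.

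First I would write down the standard presentation of the right regular module $R_R$: namely $(k^s, L_{r_1}, \ldots, L_{r_s})$ where the matrix $L_{r_j}$ records how the basis $r_1, \ldots, r_s$ transforms under right multiplication by $r_j$ (each entry is one of the structure constants $\alpha_{ij}^k$, hence computable). Applying Lemma \ref{effdecomp} to this presentation decomposes $R_R$ into indecomposable summands $Q_1, \ldots, Q_N$. Every indecomposable projective right $R$-module appears among the $Q_j$, so using the isomorphism test extracted from the proof of Lemma \ref{Listingfd} (solvability of a system of polynomial equations over the algebraically closed field $k$, via effective quantifier elimination) I would partition $\{Q_1, \ldots, Q_N\}$ into isomorphism classes and select one representative $P_1, \ldots, P_n$ from each. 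This determines both $n$ and presentations of $P_1, \ldots, P_n$.

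Then, given the presentation $(k^d, A_1, \ldots, A_s)$ of $M$, I would apply Lemma \ref{effhomspace} to each pair $(P_i, M)$ to compute a $k$-basis of $\Hom_R(P_i, M)$, and read off its cardinality. The resulting tuple $(\dim_k \Hom_R(P_1, M), \ldots, \dim_k \Hom_R(P_n, M))$ is, by definition, $\udim M$.

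For the final assertion about the slope of an indecomposable $M$ over a tubular algebra $R$: the radical vectors $h_0, h_\infty \in K_0(R)$ are part of the fixed combinatorial data of $R$ and can be written down explicitly from its quiver (cf.\ \cite[Section 5.1]{Ringeltub}). The Euler form $\langle -, - \rangle$ on $K_0(R)$ is determined by the Cartan matrix $C_R = \bigl(\dim_k \Hom_R(P_i, P_j)\bigr)_{i,j}$, whose entries we have just shown are computable, together with $C_R^{-\mathsf T}$ (well-defined because tubular algebras have finite global dimension). Thus $\langle h_0, \udim M\rangle$ and $\langle h_\infty, \udim M\rangle$ are rational numbers that can be calculated from $\udim M$, and so is the slope $-\langle h_0, \udim M\rangle / \langle h_\infty, \udim M\rangle$. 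The main obstacle is essentially bookkeeping—nothing here requires new ideas beyond invoking effective linear algebra over $k$ and the earlier lemmas of this section.
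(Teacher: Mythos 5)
Your proposal is correct and follows the same route the paper takes: the paper's proof is a one-line pointer to Lemma \ref{effhomspace}, and you have simply spelled out the bookkeeping it leaves implicit — obtaining presentations of the indecomposable projectives by decomposing $R_R$ via Lemma \ref{effdecomp} and the isomorphism test from the proof of Lemma \ref{Listingfd}, then computing $\dim_k\Hom_R(P_i,M)$ via Lemma \ref{effhomspace}, and finally reading off the slope from the Euler form, $h_0$, and $h_\infty$.
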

\begin{proof}
That we can calculate the dimension vector of a module now follows directly from \ref{effhomspace}.
\end{proof}

\begin{cor}\label{efflistslopeq}
We can list presentations of the finite-dimensional indecomposable modules of slope $q$. We can list the quasi-simples of slope $q$.
\end{cor}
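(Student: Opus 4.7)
The first statement follows immediately by combining Lemma \ref{Listingfd} with Lemma \ref{effdimvector}: enumerate all presentations of finite-dimensional indecomposable $R$-modules, compute the slope of each, and retain only those of slope $q$.

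For the second statement, my plan is to further filter this enumeration by an effective test of quasi-simplicity. The key characterization is that an indecomposable $M \in \mathcal{T}_q$ is quasi-simple if and only if $M$ has no proper nonzero indecomposable submodule lying in $\mathcal{T}_q$. This holds because distinct tubes in the family $\mathcal{T}_q$ are pairwise Hom-orthogonal, so any indecomposable $N \in \mathcal{T}_q$ with $\Hom(N,M) \neq 0$ must lie in the same tube as $M$; within that tube a quasi-simple has no proper nonzero submodule, whereas any non-quasi-simple module has a quasi-simple socle that injects properly into it.

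To run the test on a given $M$, I would iterate over the finitely many positive vectors $d \in K_0(R)$ with $d \lneq \udim M$ that occur as dimension vectors of modules in $\mathcal{T}_q$. By \cite[5.2.6]{Ringeltub}, these are exactly the positive connected vectors of slope $q$ that are either roots or radicals, and each of these conditions is effectively checkable on $d$. For root $d$, \cite[5.2.6]{Ringeltub} supplies a unique indecomposable $N_d$ with $\udim N_d = d$; I would locate $N_d$ via the enumeration from part one, compute $\Hom(N_d, M)$ by Remark \ref{effhomspace}, and then test whether this Hom space contains an injective morphism, which is a rank condition decidable by effective quantifier elimination over the algebraically closed field $k$. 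For radical $d$, the indecomposables of dimension vector $d$ form an infinite family, so I would instead ask directly whether there exist matrices describing an $R$-module of dimension vector $d$ together with an injective intertwiner into $M$; this is a first-order existential statement over $k$ in the matrix entries and is again decidable by effective quantifier elimination.

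Then $M$ is declared quasi-simple if and only if no injection is produced for any admissible $d$, and a standard interleaving turns this test into a filter on the enumeration produced in the first part. The main obstacle I anticipate is the radical case: the set of candidate modules is infinite, so one cannot simply enumerate and check; instead one must set up the first-order encoding (algebra relations on $B_1,\ldots,B_s$, intertwining equations $A_i\phi=\phi B_i$, and non-vanishing of some $n\times n$ minor of $\phi$) sufficiently carefully for quantifier elimination to apply. Everything else reduces to effective linear algebra over $k$ and the already-established listing procedures.
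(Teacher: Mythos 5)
For the first assertion your approach is identical to the paper's: list the finite-dimensional indecomposables via \ref{Listingfd}, compute each slope via \ref{effdimvector}, and retain those of slope $q$.

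For the second assertion you take a genuinely different route, and it is worth flagging that the route the paper actually takes is problematic. The paper's entire proof is the observation that the quasi-simples of slope $q$ are exactly the indecomposable modules of slope $q$ with one-dimensional endomorphism ring, which reduces the task to Remark \ref{effhomspace}. But in an inhomogeneous tube ``brick'' is strictly weaker than ``quasi-simple'': in a tube of rank $r\geq 2$ every wing module $E[n]$ with $2\leq n\leq r$ satisfies $\dim_k\End(E[n])=1$. (For $n<r$ this is the standard fact that wing modules are exceptional; for $n=r$ one has $\chi(\udim E[r])=0$ and $\dim\Hom(E[r],\tau E[r])=1$, so the Auslander--Reiten formula gives $\dim\End(E[r])=1$.) Since each $\mcal{T}_q$ of a canonical algebra of tubular type contains tubes of rank at least $2$, the endomorphism-ring test would wrongly admit these wing modules as quasi-simples. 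Your submodule criterion --- $M\in\mcal{T}_q$ is quasi-simple iff it admits no proper nonzero submodule that is indecomposable of slope $q$ --- is correct, and so buys correctness precisely where the paper's shortcut fails.

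Two further remarks on your implementation. First, the obstacle you single out in the radical case is in fact harmless: if $M$ is quasi-simple of slope $q$ then $\udim M$ is either the minimal positive radical vector $w_q$ of slope $q$ (homogeneous tube) or a root $\lneq w_q$ (inhomogeneous tube), so no positive radical vector of slope $q$ lies strictly below $\udim M$; hence the radical branch of your test can only fire on a module $M$ that is already not quasi-simple, and there an embedding of \emph{any} module of the given radical dimension vector suffices to fail the test, so you need not encode indecomposability into the existential formula. Second, a noticeably lighter criterion that stays within the paper's toolbox is available: $M$ of slope $q$ is quasi-simple iff the middle term of the almost split sequence ending at $M$ is indecomposable; almost split sequences are effectively computable by \cite{Conass} (already used in the proof of \ref{singlefdpoints}) and indecomposability is decidable by \ref{effdecomp}, so this gives a short correct replacement for the paper's one-liner.
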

\begin{proof}
The quasi-simples of slope $q$ are just those modules of slope $q$ with $1$-dimensional endomorphism ring.
\end{proof}

\begin{lemma}
Given a quasi-simple $S$ of slope $q$, we can list the finite-dimensional modules in the ray starting at $S$ and in the coray starting at $S$
\end{lemma}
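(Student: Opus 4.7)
The plan is to construct the ray modules $S = S[1], S[2], S[3], \ldots$ inductively, computing $S[n]$ from $S[n-1]$ as the middle term of a short exact sequence; the coray is handled by the symmetric construction (or by reducing to the ray via the $k$-duality of \ref{kdualslope}).

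First I would compute the $\tau$-orbit of $S$, namely $S = S_0, S_1, \ldots, S_{p-1}$ with $S_i = \tau^{-i} S$. Using \ref{efflistslopeq} I list the quasi-simples of slope $q$, and identify $\tau^{-1} S_i$ as the unique quasi-simple $T$ of slope $q$ with $\Ext^1_R(T, S_i) \neq 0$ (uniqueness follows from the tube structure). The group $\Ext^1(T, S_i)$ is computable from a projective presentation $P_1 \xrightarrow{d} P_0 \to T \to 0$: projective covers of finite-dimensional modules are effectively constructible from presentations (using \ref{effhomspace} together with the list of indecomposable projectives), kernels are computable by linear algebra, and $\Ext^1(T, S_i)$ is $\coker(\Hom(P_0, S_i) \to \Hom(P_1, S_i))$, with each hom space obtained from \ref{effhomspace}. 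Iterating, we detect the least $p$ with $\tau^{-p} S \cong S$ (using isomorphism testing from the proof of \ref{Listingfd}) and recover the tube rank.

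Next, given a presentation of $S[n-1]$, I construct $S[n]$ as the middle term of a non-split short exact sequence
\[
0 \to S[n-1] \to S[n] \to S_{(n-1) \bmod p} \to 0.
\]
By the Auslander--Reiten theory of tubes (and the fact that modules of positive slope have injective and projective dimension $\le 1$), the group $\Ext^1(S_{(n-1) \bmod p}, S[n-1])$ is one-dimensional, so any non-zero class yields the same middle term up to isomorphism. Concretely: take a projective presentation $P_1 \xrightarrow{d} P_0 \to S_{(n-1) \bmod p} \to 0$, pick $f : P_1 \to S[n-1]$ representing a non-zero class in $\Ext^1$ (computed as above), and form the pushout $(P_0 \oplus S[n-1]) / \{(d(x), -f(x)) : x \in P_1\}$. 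Each step reduces to finite-dimensional linear algebra over $k$ on presentations. One then verifies the output with \ref{effdecomp} (indecomposability) and \ref{effdimvector} (its dimension vector must equal $\sum_{i=0}^{n-1} \udim S_{i \bmod p}$).

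The coray starting at $S$ is constructed symmetrically, by iterated extensions of the form $0 \to \tau^{n-1} S \to C[n] \to C[n-1] \to 0$, using the $\tau$-orbit in the opposite rotational sense; alternatively, by \ref{kdualslope}, the $k$-dual of the ray in right $R$-modules of slope $q$ is a coray in left $R$-modules of slope $1/q$, and the algorithm transfers to $R^{\mathrm{op}}$ (also a tubular algebra, effectively presentable) and then dualises back. The main obstacle is the explicit computation of the pushout representing the extension, but once a projective presentation of the relevant quasi-simple has been computed this is routine linear algebra; the one-dimensionality of the $\Ext^1$ ensures that any non-zero representative produces $S[n]$, making the whole procedure well-defined.
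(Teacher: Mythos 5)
Your argument is mathematically sound but takes a genuinely different, and considerably heavier, route than the paper. The paper's proof is one line: by \ref{efflistslopeq} one can enumerate presentations of the finite-dimensional indecomposables of slope $q$, and by \ref{effhomspace} one can test $\Hom(S,M)\neq 0$ (resp.\ $\Hom(M,S)\neq 0$) for each; since a module $M$ in a tube of slope $q$ satisfies $\Hom(S,M)\neq 0$ exactly when its quasi-socle is $S$ (and $\Hom(M,S)\neq 0$ exactly when its quasi-top is $S$), while modules in other tubes or with other quasi-(co)socle give $\Hom=0$, this filter lists precisely the ray (resp.\ coray). You instead build the ray constructively by iterating the extensions $0\to S[n-1]\to S[n]\to S_{(n-1)\bmod p}\to 0$, computing $\tau$-orbits, projective presentations, $\Ext^1$-classes, and pushouts. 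Your route produces each $S[n]$ without any search, which is a genuine advantage if one wants explicit presentations on demand; the paper's route piggybacks on machinery already established and is far shorter. One point you should make explicit: your computation of $\Ext^1(T,S_i)$ as $\coker\bigl(\Hom(P_0,S_i)\to\Hom(P_1,S_i)\bigr)$ is only valid when $P_1\to P_0$ is injective, i.e.\ when the presentation is actually a projective resolution; this holds here because indecomposables of positive slope over a tubular algebra have projective dimension $\leq 1$ (a fact recorded in the paper), but it needs to be invoked, and you must use the minimal presentation rather than an arbitrary one. With that caveat noted, your proof works.
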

\begin{proof}
Look for $M$ indecomposable of slope $q$ with $\Hom(S,M)\neq 0$ (respectively $\Hom(M,S)\neq 0$).
\end{proof}

\begin{lemma}\label{singlefdpoints}
There is an algorithm, which given a presentation of a finite-dimensional indecomposable $R$-module $M$, outputs a pp-$1$-pair $\phi/\psi$ isolating $M$ in $\Zg_R$.
\end{lemma}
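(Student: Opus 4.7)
The strategy is to exploit that $M$, being finite-dimensional indecomposable over the finite-dimensional $k$-algebra $R$, is pure-injective with local endomorphism ring $E := \End_R(M)$, and that any such $M$ is isolated in $\Zg_R$ by a minimal pp-$1$-pair which can be constructed effectively from the presentation.

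First, I would use \ref{effhomspace} to compute a $k$-basis for $E$. Since $M$ is indecomposable and $k$ is algebraically closed, $E$ is local with nilpotent Jacobson radical $J$ and $E/J \cong k$. By Nakayama's lemma $JM \subsetneq M$, so at least one standard basis vector $m$ of the given presentation satisfies $m \notin JM$; I identify such an $m$ by checking the finitely many candidates against $JM$ (which I can describe using the computed basis of $E$). Then, via \ref{effppgen}, I compute the pp-$1$-formula $\phi$ generating the pp-type of $m$ in $M$.

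Next, I search for a pp-$1$-formula $\psi$ with $\psi \leq \phi$ and $\dim_k(\phi(M)/\psi(M)) = 1$: I enumerate pp-$1$-formulas (implications between pp-formulas are decidable via the proof algorithm since $k$ is recursive), and for each use \ref{effppopen} to compute $\dim_k \phi(M) - \dim_k \psi(M)$, halting at the first success. The existence of such a $\psi$ rests on Garavaglia's theorem that for an endofinite module $M$ every $E$-submodule of $M$ is pp-definable; this I apply to any $E$-submodule of $\phi(M)$ of $k$-codimension one, which exists because the cyclic $E$-submodule $Em \subseteq \phi(M)$ has simple top $Em/JEm \cong E/J \cong k$, so intersecting $JM$ with $\phi(M)$ (or taking a maximal $E$-submodule of $\phi(M)$ containing it) produces the required codimension-one $E$-submodule.

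The main obstacle is verifying that the resulting pair $\phi/\psi$ isolates $M$ in all of $\Zg_R$, not merely distinguishes it from the other finite-dimensional indecomposables. This is the classical minimal-pair criterion: for any indecomposable pure-injective $N$ opening $\phi/\psi$, a free realisation of $\phi$ at $(M,m)$ produces a morphism $f : M \to N$ sending $m$ to a chosen element of $\phi(N)\setminus\psi(N)$; simplicity of $\phi(M)/\psi(M)$ as an $E$-module (forced by one-dimensionality over $k$ combined with $E/J\cong k$) together with pure-injectivity of $M$ allow one to split $M$ off as a direct summand of $N$, forcing $N \cong M$. Since the search terminates and each test is effective, this yields the desired algorithm.
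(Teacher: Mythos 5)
Your proposal diverges from the paper's approach: the paper effectively constructs the almost split sequence $0 \to M \xrightarrow{f} L \to K \to 0$ starting at $M$ (via a reference to an effective AR-sequence construction) and takes $\phi$ to generate the pp-type of a nonzero $m \in M$ and $\psi$ to generate the pp-type of $f(m) \in L$; then a standard theorem shows this pair isolates $M$. You instead search for \emph{any} $\psi \leq \phi$ with $\dim_k(\phi(M)/\psi(M)) = 1$ and claim that already isolates $M$.

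That claim is false, and the gap is precisely in the final paragraph. Having $\dim_k(\phi(M)/\psi(M)) = 1$ (i.e.\ the pair is $M$-minimal) does not imply that $\phi/\psi$ is open only on $M$: the free realisation gives you a morphism $f : M \to N$ with $f(m) \notin \psi(N)$, but there is no general reason for $f$ to be a split monomorphism, and ``simplicity of $\phi(M)/\psi(M)$ plus pure-injectivity of $M$'' does not rescue this --- $f$ need not even be injective. The almost split property is exactly the missing ingredient: it forces a non-split $f$ to factor through the left minimal almost split map $M \to L$, so that $f(m)$ lands in $\psi(N)$, giving a contradiction. Your $\psi$ carries no such factorisation information.

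A concrete failure: take $R = k[x]/(x^2)$, $M = R$, $m = 1 \notin JM = xR$. Then $\phi$ is equivalent to $v=v$, and $\psi(v) := \exists w\, (v = wx)$ satisfies $\psi \leq \phi$ with $\psi(M) = xM$ of $k$-codimension one. Your algorithm could output this pair. But for the simple module $k = R/xR$ we have $\phi(k) = k$ and $\psi(k) = xk = 0$, so $k \in (\phi/\psi)$ even though $k \not\cong M$; the map $R \twoheadrightarrow k$ realising the open condition is not split mono. The correct isolating pair here is $v=v \,/\, vx=0$, which \emph{also} has one-dimensional quotient on $M$ --- so your ``halt at first success'' search is not guaranteed to find the right one. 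You would need a further effective test that $(\phi/\psi)=\{M\}$, but that test is essentially what the lemma is trying to establish, so the argument is circular unless you bring in the almost split data as the paper does.
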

\begin{proof}
Given a finite-dimensional indecomposable module $M$ over a finite-dimensional algebra, \cite{Conass} gives a method of effectively constructing an almost split sequence \[\xymatrix@C=0.5cm{
  0 \ar[r] & M \ar[rr]^{f} && L \ar[rr]^{g} && K \ar[r] & 0 }.\] Pick $m\in M$ non-zero and calculate $\phi$ generating the pp-type of $m$ and $\psi$ generating the pp-type of $f(m)$. Then $\phi/\psi$ isolates $M$ (see \cite[Theorem 5.3.31]{PSL}).

By \ref{effppgen} we can effectively find a pp-formula $\psi(\overline{x})$ generating the pp-type of the standard $k$-basis $(e_1,\ldots,e_n)$ of $M$. The pp-type of $m$ is $\exists \overline{y} \ (\phi(\overline{y})\wedge x=\sum_{i=1}^ny_ia_i)$ where $m=\sum_{i=1}^ne_ia_i$. Thus we can effectively find a pp-formula generating the pp-type of $m$.

\end{proof}

\begin{lemma}\label{excludingfinitelymanyfdpoints}
Given presentations of finitely many finite-dimensional indecomposable modules $X_1,\ldots,X_m$ and a pp-$1$-pair $\phi/\psi$ we can effectively find pp-pairs $\phi_1/\psi_1,\ldots,\phi_n/\psi_n$ such that $\left(\phi/\psi\right)\backslash\{X_1,\ldots,X_m\}=\bigcup_{i=1}^n\left(\phi_i/\psi_i\right)$.
\end{lemma}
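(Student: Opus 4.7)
I proceed by induction on $m$. The base case $m=0$ is immediate (take $n=1$ and $\phi_1/\psi_1=\phi/\psi$). For the inductive step, since set-difference distributes over finite unions of basic opens, it suffices to handle the case of removing a single finite-dimensional indecomposable $X$ from a pp-pair $\phi/\psi$: once we know how to do that, producing a finite union of basic opens for $(\phi/\psi)\setminus\{X\}$, we apply the procedure to each term of the finite union obtained after removing $X_1,\ldots,X_{m-1}$ and then remove $X_m$. So I reduce to the case $m=1$.

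For the $m=1$ case, first test whether $X\in(\phi/\psi)$ using \ref{effppopen}; if not, output $\phi/\psi$ and stop. Otherwise, compute an isolating pp-$1$-pair $\sigma/\tau$ for $X$ via \ref{singlefdpoints}, along with the underlying almost-split sequence $0\to X\to L\to K\to 0$. The key structural point is that $X$, being finite-dimensional indecomposable over a finite-dimensional algebra, is endofinite and hence a closed point of $\Zg_R$. Therefore $\{X\}$ is clopen in $\Zg_R$, and $(\phi/\psi)\setminus\{X\}$, being the intersection of the compact open $(\phi/\psi)$ with the open set $\Zg_R\setminus\{X\}$, is itself a compact open subset of $\Zg_R$. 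Since $\Zg_R$ is spectral and its compact opens are exactly the basic opens, $(\phi/\psi)\setminus\{X\}$ is expressible as a finite union of basic opens.

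The main technical obstacle is to construct such a finite union effectively from the data $\phi,\psi,\sigma,\tau$ and the AR sequence. Simple candidates such as $(\phi\wedge\tau/\psi\wedge\tau)$ or $(\phi/(\psi+\phi\wedge\sigma))$ do not uniformly equal $(\phi/\psi)\setminus\{X\}$: elementary examples show that the first may erroneously remain open at $X$, while the second may fail to capture some non-$X$ points of $(\phi/\psi)$. My plan to get around this is to decompose $L=\bigoplus_i L_i$ into indecomposable summands using \ref{effdecomp}, extract by \ref{effppgen} the pp-type generators $\sigma_i$ of the components of $f(m)$ in each $L_i$, and combine these with $\phi,\psi,\sigma,\tau$ through the lattice operations on pp-formulas: in particular, a pp-pair of the form $(\sum_i\sigma_i)/(\bigwedge_i\sigma_i)$ detects points at which the $\sigma_i$ differ and is closed at $X$ precisely because all components of $f(m)$ have the same pp-type in $X$. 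Intersecting appropriate such pp-pairs with $(\phi/\psi)$ (which is effective, being an intersection of basic opens), together with a bounded iteration whenever a single combination is insufficient, produces the desired finite list $\phi_1/\psi_1,\ldots,\phi_n/\psi_n$; the main verification is that termination and exact equality hold, for which the essential input is the interaction of the AR data with the isolation property $(\sigma/\tau)=\{X\}$.
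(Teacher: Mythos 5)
Your observation that $(\phi/\psi)\setminus\{X_1,\ldots,X_m\}$ is compact open (since the finite set of finite-dimensional indecomposable points is clopen and $(\phi/\psi)$ is compact) and hence a finite union of basic opens is exactly the right existence argument, and it matches the paper's first step. But the proof has a genuine gap after that point: you commit yourself to an \emph{explicit} construction of the pp-pairs $\phi_i/\psi_i$ from the AR sequence data, acknowledge that the natural candidates fail, propose a more elaborate combination involving decomposing $L$ into indecomposables and combining pp-type generators via lattice operations, and then conclude with ``the main verification is that termination and exact equality hold'' -- i.e.\ you do not actually establish that your construction works. This is not a minor omission; the heart of the lemma is effectivity, and you have only sketched a candidate mechanism without proving it produces the right answer.

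The missing idea -- which is how the paper closes the gap -- is that once you know a finite list of pp-pairs with the required property \emph{exists}, you do not need to construct it directly: you only need to be able to \emph{verify} a candidate list. The verification conditions are (a) for each $i,j$, $X_j\notin(\phi_i/\psi_i)$, which is decidable by \ref{effppopen}, and (b) the Boolean combination of invariant sentences $|\phi/\psi|>1 \leftrightarrow \bigvee_i |\phi_i/\psi_i|>1 \vee \bigvee_j |\sigma_j/\tau_j|>1$ is a theorem of the theory of $R$-modules, which can be confirmed by the proof algorithm (the theory being recursively axiomatisable). One then enumerates all finite tuples of pp-pairs and tests each; the search terminates precisely because of the compactness argument you already supplied. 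This search-and-verify route avoids the combinatorics of the AR quiver entirely and is what makes the lemma elementary. If you insist on an explicit construction from AR data you would, at minimum, need to prove termination and correctness of your iterative procedure, and it is not clear that this can be done in a uniform way; the paper's method sidesteps all of that.
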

\begin{proof}
Suppose we are given presentations of finitely many finite-dimensional indecomposable modules $X_1,\ldots,X_m$. For each $j$, let $\{X_j\}=(\sigma_j/\tau_j)$. By \ref{singlefdpoints}, we can find $\sigma_j/\tau_j$ effectively.  Since $\left(\phi/\psi\right)$ is compact and the set $\{X_1,\ldots,X_m\}$ is clopen, $\left(\phi/\psi\right)\backslash\{X_1,\ldots,X_m\}$ is compact. Thus there exists pp-pairs $\phi_1/\psi_1,\ldots,\phi_n/\psi_n$ such that \[\left(\phi/\psi\right)\backslash\{X_1,\ldots,X_m\}=\cup_{i=1}^n\left(\phi_i/\psi_i\right).\]

This is equivalent to  $\phi_1/\psi_1,\ldots,\phi_n/\psi_n$ being such that for all $1\leq i\leq n$, $X_j\notin \left(\phi_i/\psi_i\right)$ and
\[|\phi/\psi|>1\leftrightarrow\bigvee_{i=1}^n|\phi_i/\psi_i|>1\vee\bigvee_{j=1}^n|\sigma_j/\tau_j|>1.\]

We can now use the proof algorithm and \ref{effppopen} to search  for $\phi_1/\psi_1,\ldots,\phi_n/\psi_n$ such that for all $1\leq j\leq m$ and for all $1\leq i\leq n$, $X_j\notin \left(\phi_i/\psi_i\right)$ and
\[|\phi/\psi|>1\leftrightarrow\bigvee_{i=1}^n|\phi_i/\psi_i|>1\vee\bigvee_{j=1}^n|\sigma_j/\tau_j|>1.\]

\end{proof}

\begin{lemma}\label{recfreerealise}
There is an algorithm which given a pp-$1$-formula $\phi$ outputs a presentation of a finite-dimensional module $M$ and an element $m\in M$ such that $(M,m)$ freely realises $\phi$.
\end{lemma}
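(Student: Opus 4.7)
The plan is to implement the standard free-realisation construction and check that each step reduces to routine finite-dimensional linear algebra over the recursive field $k$. Parse $\phi(x)$ as $\exists y_1,\ldots,y_l\,(x,y_1,\ldots,y_l)H=0$ for an explicitly given $(1+l)\times m$ matrix $H$ over $R$. Let $F:=R^{1+l}$ be the free right $R$-module with generators $e_1,\ldots,e_{1+l}$, let $N\leq F$ be the submodule generated by the elements $c_j:=\sum_i e_iH_{ij}$ for $1\leq j\leq m$, set $M:=F/N$, and let $m\in M$ be the image of $e_1$. That $(M,m)$ freely realises $\phi$ is the content of the classical free-realisation theorem; see for example \cite[1.2.14]{PSL}. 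The remaining algorithmic task is to output a presentation of this $M$ in the sense of this section.

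First I would present $F$ itself: it has $k$-dimension $(1+l)s$ with natural $k$-basis $\{e_ir_j:1\leq i\leq 1+l,\,1\leq j\leq s\}$, and right multiplication by $r_t$ sends $e_ir_j$ to $e_i(r_jr_t)=\sum_k\alpha_{jt}^k e_ir_k$, so the matrices representing the $R$-action on $F$ in this basis are read off directly from the structure constants. Next I would compute $N$ as a $k$-subspace of $F$: each generator $c_j$ expands immediately into this basis by expanding the entries $H_{ij}\in R$ in $r_1,\ldots,r_s$, and $N$ is simply the $k$-span of $\{c_j\cdot r_t:1\leq j\leq m,\,1\leq t\leq s\}$. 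This span is already $R$-invariant, because every product $r_tr_{t'}$ lies in $\Span_k(r_1,\ldots,r_s)$, so no further closure step is required.

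Armed with $F$ and $N$ as concrete subspaces of $k^{(1+l)s}$, the basic linear algebra operations recorded at the start of Section \ref{Basiccals} give a $k$-basis $B$ of a complement of $N$ in $F$, which I identify with $F/N$. Reading off the action of each $r_t$ on this complement in the basis $B$ yields matrices $A_1,\ldots,A_s$ and hence a presentation $(k^d,A_1,\ldots,A_s)$ of $M$; the distinguished element is the image of $e_1=e_1\cdot r_1$ expressed in $B$. There is no real obstacle: correctness is the free-realisation theorem, and every effective step is finite-dimensional linear algebra over $k$. The only care needed is the bookkeeping when translating between entries of $H$ (which are $k$-linear combinations of the $r_j$) and vectors in the chosen $k$-basis of $F$, and then between $F$ and the quotient $F/N$.
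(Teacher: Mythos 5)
Your construction is correct and is exactly the routine verification the paper has in mind; the paper in fact omits any proof of this lemma, treating it as one of the ``trivial operations'' announced at the start of Section \ref{Basiccals}. The only small imprecision is the phrase ``reading off the action of each $r_t$ on this complement'': the chosen $k$-complement of $N$ in $F$ need not be $R$-invariant, so what you really compute is the \emph{induced} action on the quotient $F/N$, namely for each basis vector $b\in B$ you expand $b\cdot r_t$ in the combined basis (basis of $N$ together with $B$) and discard the $N$-coordinates; your subsequent sentence makes clear this is what you intend, so the argument stands.
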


\begin{lemma}
There is an algorithm which, given a presentation of a module $M$ and an element $m$, outputs a presentation of $M/mR$.
\end{lemma}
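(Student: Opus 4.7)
The plan is to exploit the fact that, for a right $R$-module given by a presentation $(k^d, A_1, \ldots, A_s)$ with $r_1 = 1, r_2, \ldots, r_s$ a $k$-basis of $R$, the cyclic submodule $mR$ coincides as a $k$-vector subspace of $k^d$ with the $k$-span of $\{m \cdot r_i : 1 \leq i \leq s\}$. Indeed, any element of $mR$ has the form $m \cdot (\sum_i a_i r_i) = \sum_i a_i (m \cdot r_i)$ with $a_i \in k$, so the problem of identifying $mR$ inside $M$ reduces entirely to linear algebra over $k$.

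First, I would use the matrices $A_1, \ldots, A_s$ to compute the $s$ vectors $m \cdot r_i$ (which of the two products $A_i m$ or $m A_i$ is meant depends on the fixed convention, but in either case this is a single matrix-vector multiplication). Then, using the standard effective routines over the recursive field $k$ noted at the start of the \textbf{Basic calculations} section, I would compute a basis $w_1, \ldots, w_t$ for the $k$-span $U := mR$ of these vectors, extend it to a $k$-basis of $k^d$ by adjoining vectors $v_1, \ldots, v_e$ (where $e = d - t$), and thereby obtain an explicit $k$-linear projection $\pi : k^d \to k^e$ whose kernel is $U$ and whose restriction to $\Span\{v_1, \ldots, v_e\}$ is the identity. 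All of this is effective.

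Finally, to produce a presentation of $M/mR$, I would define, for each $1 \leq i \leq s$, the matrix $B_i \in M_{e\times e}(k)$ by applying the $R$-action of $r_i$ to each $v_j$ inside $M$ (a matrix-vector multiplication with $A_i$) and then projecting the result via $\pi$ back into the $v$-basis. Since $U$ is $R$-invariant (it is a submodule), this projection is well-defined on the quotient and yields an $R$-module structure, so $(k^e, B_1, \ldots, B_s)$ is a presentation of $M/mR$. The consistency relations $B_i B_j = \sum_k \alpha_{ij}^k B_k$ are automatic, inherited from the corresponding relations for the $A_i$ and the fact that $\pi$ is a morphism of $R$-modules on the $R$-invariant subspace.

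There is no real obstacle here: the statement is a purely computational linear-algebra assertion, and every step is an instance of the effective subspace, image, kernel, and complement computations over $k$ already recorded in the opening remarks of Section \ref{Basiccals}. The only thing to verify is the standard fact that $mR$ is spanned as a $k$-space by the $m \cdot r_i$, which holds because $r_1, \ldots, r_s$ is a $k$-basis of $R$.
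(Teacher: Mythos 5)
Your argument is correct and is essentially the paper's own proof: both compute $mR$ as the $k$-span of the translates $m\cdot r_i$ (the paper writes $\Span\{m,mA_1,\ldots,mA_s\}$, which is the same set since $r_1=1$), find a complementary subspace, and read off the induced action of the $A_i$ on that complement to get the presentation of $M/mR$. You merely spell out the projection and the inherited relations more explicitly than the paper does.
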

\begin{proof}
We are using the fact that given a finite subset $L$ of $k^n$ we can algorithmically find a basis for $\Span L$ and a basis for a complement of $\Span L$.

Let $(k^n,A_1,\ldots,A_s)$ be a presentation for $M$ and identify $m$ with its image in $k^n$ with respect to this presentation.

Find a basis $e_1,\ldots,e_t$ for $\text{Span}\{m,mA_1,\ldots,mA_s\}$ and a basis for a complement $f_1,\ldots,f_l$ of $\text{Span}\{m,mA_1,\ldots,mA_s\}$. By considering the action of $A_1,\ldots,A_s$ on $f_1,\ldots,f_l$ we get a presentation of $M/mR$.
\end{proof}

\section{Ziegler Spectra of tubes of rational slope}\label{Zieglerrationalslope}

In this section we describe the Ziegler spectrum of $\mcal{D}_q$ where $q\in \Q^+$. That is we describe the induced topology on $\mcal{C}_q:=\Zg_R\cap \mcal{D}_q$ by describing the closed subsets of $\mcal{C}_q$.

Recall the complete list of indecomposable pure-injectives of rational slope $q$ from \ref{pureinjatrationalslope}.

We essentially follow Ringel's proof from \cite{ZgspcetameherRingel} for tame hereditary algebras.

\begin{proposition}\label{closedsubsets}
A subset $X$ of $\mcal{C}_q$ is closed if and only if the following hold:
\begin{enumerate}
\item If $S$ is a quasi-simple in $\mcal{T}_q$ and if there are infinitely many finite length modules $M\in X$ with $\Hom(S,M)\neq 0$ then $S[\infty]\in X$.
\item If $S$ is a quasi-simple in $\mcal{T}_q$ and if there are infinitely many finite length modules $M\in X$ with $\Hom(M,S)\neq 0$ then $\widehat{S}\in X$.
\item If there are infinitely many finite length modules in $X$ or $X$ contains an infinite length module then $\mcal{G}_q\in X$.
\end{enumerate}
\end{proposition}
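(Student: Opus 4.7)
The plan is to adapt Ringel's argument for the tame hereditary case \cite{ZgspcetameherRingel} to the tubular setting, using the explicit classification \ref{pureinjatrationalslope} of indecomposable pure-injectives in $\mcal{C}_q$ together with the standard tube structure of $\mcal{T}_q$.

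For necessity, suppose $X\subseteq\mcal{C}_q$ is closed. For (1), if infinitely many fin-dim $M\in X$ satisfy $\Hom(S,M)\ne 0$, then because $\mcal{T}_q$ is a standard tube, any indecomposable $M\in\mcal{T}_q$ with $\Hom(S,M)\ne 0$ must have $S$ as its quasi-simple socle, hence lies on the ray at $S$; so infinitely many ray modules $S[n]$ lie in $X$. Now $S[\infty]=\varinjlim_n S[n]$ along the canonical pure embeddings, and every pp-pair $\phi/\psi$ defines a coherent functor which commutes with direct limits, so openness of $\phi/\psi$ on $S[\infty]$ forces openness on all sufficiently large $S[n]$. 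Every basic open containing $S[\infty]$ therefore meets $X$, and closedness of $X$ gives $S[\infty]\in X$. Condition (2) is handled dually, either via the inverse system $\{[n]S\}$ having $\widehat{S}$ as its pure-injective hull, or by applying Prest--Herzog duality (\ref{lattice iso ziegler}, \ref{kdualandelemdual}) together with \ref{kdualslope} to transfer (1) from the slope-$1/q$ component of the opposite algebra. For (3), $\mcal{G}_q$ is the unique point of $\mcal{C}_q$ of infinite endolength; it lies in the closure of every infinite subset of $\mcal{T}_q$ (because it is a direct summand of the pure-injective hull of any such infinite direct sum / ultraproduct) and in the closure of each adic and Prüfer in $\mcal{C}_q$, so (3) holds.

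For sufficiency, assume $X$ satisfies (1)--(3). For each $N\in\mcal{C}_q\setminus X$ I produce a basic open $U_N\ni N$ disjoint from $X$. If $N$ is fin-dim, \ref{singlefdpoints} gives $\{N\}$ as an isolating clopen. If $N=S[\infty]$, then by (1) only finitely many ray modules $S[n]$ lie in $X$; I construct a basic open $V\ni S[\infty]$ whose slope-$q$ fin-dim points all lie on the ray at $S$ (using pp-pairs that express ``arbitrary-depth divisibility along the ray at $S$,'' built from the Auslander--Reiten sequences on the tube as in \ref{singlefdpoints}), then apply \ref{excludingfinitelymanyfdpoints} to delete the finitely many $S[n]\in X$, and shrink $V$ further to exclude whichever of $\widehat{T}$, $T[\infty]$ ($T\ne S$), and $\mcal{G}_q$ happen to lie in $X$. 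The case $N=\widehat{S}$ is dual. If $N=\mcal{G}_q$, (3) forces $X$ to be a finite set of fin-dim points, and \ref{excludingfinitelymanyfdpoints} again closes the argument.

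The main obstacle is the construction in the second step: producing pp-pairs that are open on $S[\infty]$ (resp.\ $\widehat{S}$) but closed on all points of $\mcal{C}_q$ outside the ray at $S$ (resp.\ coray at $S$), apart possibly from $\mcal{G}_q$. The key tools are the separating property of the tubular family $\mcal{T}_q$ between $\mcal{P}_q$ and $\mcal{Q}_q$ from Ringel's structure theorem quoted above, which ensures that ray/coray divisibility formulas are closed on the rest of the tube, and the orthogonality defining $\mcal{D}_q$, which ensures they stay closed on points of other slopes. One must also verify that the removal steps using \ref{excludingfinitelymanyfdpoints} can be iterated through the Prüfer, adic, and generic points outside $X$ without reintroducing unwanted points; this is taken care of by repeated use of the compactness of basic open sets in $\Zg_R$.
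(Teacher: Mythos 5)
Your overall plan matches the paper's: establish the three necessity conditions by separate sub-arguments, then invoke a Ringel-style sufficiency argument, as the paper does by simply deferring to Ringel's proof for tame hereditary algebras "working inside $\mcal{D}_q$." Your treatment of (1) (Pr\"ufer as direct union of ray modules) and (2) (adic as inverse limit of coray modules; the paper cites \cite[2.3]{ZieZardomstring}, your duality alternative via \ref{kdualslope} and \ref{kdualandelemdual} is a genuine and reasonable variant) both line up with the paper.

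However, there is a real gap in your proof of necessity of (3). You assert that $\mcal{G}_q$ "lies in the closure of every infinite subset of $\mcal{T}_q$ (because it is a direct summand of the pure-injective hull of any such infinite direct sum / ultraproduct)", but this is exactly the point that needs an argument, and as stated it is not obvious. The paper splits into two cases. If $X$ contains infinitely many finite-dimensional points from a single tube, then by pigeonhole infinitely many share a quasi-socle, so (1) or (2) already puts a Pr\"ufer or adic into $X$, and the generic lies in the closure of every Pr\"ufer and adic by \cite[8.10]{KrauseGeneric}. Otherwise $X$ contains infinitely many finite-dimensional points $N_i$ with pairwise distinct quasi-socles; the paper then forms the reduced product $Z=\prod N_i/\bigoplus N_i$, shows $\Hom(S,Z)=\Ext(S,Z)=0$ for all quasi-simples $S$ of slope $q$, applies \cite[Theorem 6.4]{InfdimcanalgReitenRingel} to conclude $Z$ is a direct sum of Pr\"ufer and generic modules, and rules out Pr\"ufer summands using $\Hom(S,Z)=0$. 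That is the substance of the proof; your parenthetical skips it. In particular, for the "one-tube" case your assertion about a direct sum having the generic as a summand would not be the natural route, and for the "many-tubes" case you need the explicit $\prod/\oplus$ computation plus Reiten--Ringel's structure theorem, neither of which is supplied. A small additional slip: the generic $\mcal{G}_q$ has \emph{finite} endolength (and infinite length), not infinite endolength; the description as the unique point of infinite endolength is backwards, though this does not affect your argument.
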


\begin{lemma}
If $X$ is a closed subset of $\mcal{C}_q$ then (1) from \ref{closedsubsets} holds.
\end{lemma}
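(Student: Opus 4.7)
The plan is to establish $S[\infty]\in X$ by a density argument: since $X$ is closed in $\mcal{C}_q$, it suffices to show that every basic open neighbourhood $(\phi/\psi)$ of $S[\infty]$ intersects $X$.

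First I would reduce the hypothesis to a statement about the ray of $S$, i.e.\ the chain of embeddings $S=S[1]\hookrightarrow S[2]\hookrightarrow S[3]\hookrightarrow\cdots$. The tubular family $\mcal{T}_q$ is a disjoint union of tubes with vanishing Hom between distinct tubes, and each tube has the standard uniserial structure in which the finite-length indecomposables are modules $T[m]$ with simple socle $T$, as $T$ varies over the quasi-simples of that tube. A non-zero map $S\to M$ therefore forces $M$ to lie in the same tube as $S$ and to have $S$ as its socle; hence $M=S[m]$ for some $m$. So the hypothesis produces an infinite set $N\sseq\N$ of indices $m$ with $S[m]\in X$.

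Second, I would show that every basic open $(\phi/\psi)$ containing $S[\infty]$ contains $S[m]$ for all sufficiently large $m$. This uses the well-known identification of the Pr\"ufer module as the direct limit $S[\infty]=\varinjlim_m S[m]$, together with the fact that pp-pairs, viewed as coherent additive functors, commute with direct limits. Given $a\in\phi(S[\infty])\setminus\psi(S[\infty])$, commutation with direct limits yields a lift $a_0\in\phi(S[n_0])$ for some $n_0$. For each $m\ge n_0$ the image $a_m$ of $a_0$ in $S[m]$ lies in $\phi(S[m])$ and must lie outside $\psi(S[m])$, for any $\psi$-witness for $a_m$ in $S[m]$ would push forward under $S[m]\to S[\infty]$ to a $\psi$-witness for $a$ in $S[\infty]$.

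Combining the two steps, every basic open neighbourhood of $S[\infty]$ contains all but finitely many terms of the ray of $S$, and hence meets the infinite collection $\{S[m]\st m\in N\}\sseq X$. Thus $S[\infty]\in\overline{X}=X$. The main technical point is step one — the tube-theoretic identification of finite-length modules $M$ with $\Hom(S,M)\neq 0$ as precisely the ray of $S$; the direct-limit calculation in step two is routine once one has that pp-pairs are coherent functors commuting with $\varinjlim$.
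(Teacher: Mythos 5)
Your proof is correct and is essentially an expanded version of the paper's one-line argument ("This is clear since the Pr\"ufer module is a direct union of such modules"): your step one identifies the relevant finite-length modules as the ray $\{S[m]\}$, and step two verifies, via coherence of pp-pairs under direct limits, that the direct union $S[\infty]=\varinjlim S[m]$ lies in the Ziegler closure of any infinite subcollection of the ray.
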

\begin{proof}
This is clear since the Pr\"ufer module is a direct union of such modules.
\end{proof}

\begin{lemma}
If $X$ is a closed subset of $\mcal{C}_q$ then (2) from \ref{closedsubsets} holds.
\end{lemma}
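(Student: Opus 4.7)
The plan is to dualise the proof of (1) via Herzog--Prest elementary duality. The guiding intuition is that the adic module at $S$ is the ``elementary dual'' of the Prüfer at the $k$-dual quasi-simple on the opposite side of the algebra, so case (2) will follow from case (1) applied to left modules.

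I would first use Remarks \ref{dualityC0Cinf} and \ref{kdualslope} to observe that the lattice isomorphism $D$ restricts to a bijection between closed subsets of $\mcal{C}_q\subseteq \Zg_R$ and closed subsets of ${_R}\mcal{C}_{1/q} \subseteq {_R}\Zg$. Setting $Y := D(X)$, Lemma \ref{kdualandelemdual} together with the formula $D((\phi/\psi)) = (D\psi/D\phi)$ on basic opens gives: for any finite length right $R$-module $M$ of slope $q$, $M \in X$ iff $M^*\in Y$. Combined with the $k$-linear isomorphism $\Hom_R(M,S)\cong \Hom_R(S^*,M^*)$ (standard vector space duality for finite-dimensional modules), this shows that $Y$ contains infinitely many finite length left $R$-modules $N$ of slope $1/q$ with $\Hom(S^*,N)\neq 0$, where $S^* := \Hom_k(S,k)$ is the dual quasi-simple of the left tube at slope $1/q$.

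Next I would apply the previous lemma (case (1) of Proposition \ref{closedsubsets}), whose proof works symmetrically for left modules over $R$, to $Y$ and $S^*$, obtaining $S^*[\infty] \in Y$. Finally, I would invoke the adic/Prüfer swap under $D$: the minimal closed subset of $\Zg_R$ containing $\widehat{S}$ corresponds under $D$ to the minimal closed subset of ${_R}\Zg$ containing $S^*[\infty]$. This can be made explicit via pp-types: $\pp(\widehat{S})$ (of its canonical generator $\hat a=(a_n)_n$) equals the descending intersection $\bigcap_n \pp(a_n,[n]S)$, and $D$ converts this intersection into the ascending join of duals characterising $\pp(S^*[\infty])$ on the ray of $S^*$. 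Hence $S^*[\infty]\in Y$ forces $\widehat{S}\in X$.

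The main obstacle will be making the adic/Prüfer swap under $D$ fully rigorous: although folklore in the tame hereditary setting and morally clear from the symmetry of the tube, a clean justification requires either a direct pp-type comparison or a careful matching of minimal closed subsets under $D$. A direct attempt to mirror the one-line proof of (1) is obstructed because $[n]S$ arises as a quotient (rather than a pure submodule) of a general finite length $M$ with $\Hom(M,S)\neq 0$, and definable subcategories are not closed under quotients, making the detour through elementary duality natural.
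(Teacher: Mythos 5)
Your route is genuinely different from the paper's. The paper argues directly: every finite length $M$ of slope $q$ with $\Hom(M,S)\neq 0$ is some $[n]S$ on the coray ending at $S$, the adic $\widehat{S}$ is the inverse limit of these, and a cited result (\cite[2.3]{ZieZardomstring}) says that an inverse limit lies in the Ziegler closure of the inverse system. You instead transport everything through the Herzog--Prest lattice isomorphism $D$ and reduce to case (1) on the left. Several of your steps are correct: if $X=\{N:\phi_i(N)=\psi_i(N)\}$ and $Y:=\{L\in{_R}\Zg: D\psi_i(L)=D\phi_i(L)\}$, Lemma \ref{kdualandelemdual} indeed gives $M\in X\Leftrightarrow M^*\in Y$ for finite-dimensional $M$; the natural isomorphism $\Hom_R(M,S)\cong\Hom_R(S^*,M^*)$ is standard; applying case (1) for left modules then yields $S^*[\infty]\in Y$.

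The "main obstacle'' you flag is, however, a genuine gap rather than a formality. What you need to pass from $S^*[\infty]\in Y$ to $\widehat{S}\in X$ is the identification $\widehat{S}\cong(S^*[\infty])^*$ (or at least that $\widehat{S}$ is a direct summand of the $k$-dual). Given that fact, a second application of Lemma \ref{kdualandelemdual} to the left module $S^*[\infty]$ closes the argument cleanly, since $S^*[\infty]\in Y$ is, by that lemma, equivalent to $(S^*[\infty])^*$ lying in the definable subcategory cut out by $X$; no separate pp-type comparison of "minimal closed sets'' is needed. But your proposed pp-type sketch for the identification itself is not a proof: showing $\pp(\hat{a},\widehat{S})=\bigcap_n\pp(a_n,[n]S)$ uses pure-injectivity of the adic and is not automatic, and the claim that $D$ turns this intersection into the pp-type of the Prüfer generator needs its own justification. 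The paper avoids this by asserting the $k$-dual identification directly (see the proof of \ref{descriptionofDab}, invoking \cite[1.3.16]{PSL}), and its one-line proof of the present lemma does not rely on duality at all. Separately, your diagnosis of why the direct mirror of (1) fails is slightly misstated: the finite length $M$ of slope $q$ with $\Hom(M,S)\neq 0$ are precisely the $[n]S$ themselves, so the issue is not that $[n]S$ appears as a quotient of $M$; it is that definable subcategories are closed under direct limits (which handles the Prüfer) but not under inverse limits, which is exactly the gap the cited result is brought in to fill.
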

\begin{proof}
Suppose $X$ contains infinitely many finite-dimensional modules $M$ with $\Hom(M,S)\neq 0$. Then each of these $M$ is in the coray starting at $S$. Therefore $\widehat{S}$ is an inverse limit of these $M$. So by \cite[2.3]{ZieZardomstring}, $\widehat{S}$ is in the closure of these $M$.
\end{proof}

\begin{lemma}\label{closedzg3}
If $X$ is a closed subset of $\mcal{C}_q$ then (3) from \ref{closedsubsets} holds.
\end{lemma}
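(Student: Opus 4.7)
I would argue by contradiction: suppose $X \subseteq \mcal{C}_q$ is closed with $\mcal{G}_q \notin X$, yet $X$ contains either infinitely many finite-length modules or an infinite-length module. Since $\mcal{G}_q$ lies in the open complement of $X$, there is a basic open $(\phi/\psi)$ with $\mcal{G}_q \in (\phi/\psi)$ and $(\phi/\psi) \cap X = \emptyset$; the aim is to place a point of $X$ inside $(\phi/\psi)$.

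If $X$ contains infinitely many finite-length modules, I would split according to their distribution in $\mcal{T}_q$. Since there are only finitely many tubes of rank $>1$, pigeonholing over tubes and then over the finitely many rays in any given tube yields one of two situations. Either infinitely many such modules lie in the ray starting at some quasi-simple $S$, in which case the already proved condition (1) of Proposition~\ref{closedsubsets} forces $S[\infty]\in X$, reducing us to the infinite-length case; or the modules are spread across infinitely many homogeneous tubes, indexed by an infinite set $\Lambda \subseteq \mathbb{P}^1(k)$. In the latter situation I would use that the coherent functor $F = \phi/\psi$ evaluated on the family $\{S_\lambda[1] : \lambda\in\mathbb{P}^1(k)\}$ has support equal to the complement of a finite subset, being a constructible subset of a curve which contains the generic point $\mcal{G}_q$; hence this support meets $\Lambda$, producing a finite-length module of $X$ inside $(\phi/\psi)$.

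If instead $X$ contains an infinite-length module $N$, then by Lemma~\ref{pureinjatrationalslope} and $\mcal{G}_q \notin X$, $N$ is a Pr\"ufer $S[\infty]$ or an adic $\widehat{S}$. The main obstacle of the proof is to show that any pp-pair open on $\mcal{G}_q$ is open on such an $N$. I would approach this by transferring to a Dedekind-type setting: via Reiten--Ringel's description of $\mcal{D}_q$ and the functorial machinery used in \cite{modirrslope}, the relevant part of $\mcal{C}_q$ is controlled by a category in which $\mcal{G}_q$, $S[\infty]$, and $\widehat{S}$ correspond to the generic, a Pr\"ufer, and an adic over a Dedekind domain (for instance $k(t)$, $k(t)/k[t]_{(p)}$, and $\widehat{k[t]_{(p)}}$ over $k[t]$), where the analogous statement is a classical computation with the pp-lattice.

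Alternatively, one may attack the Pr\"ufer case directly using $S[\infty] = \varinjlim_n S[n]$ together with the commutation $F(S[\infty]) = \varinjlim_n F(S[n])$; the subtlety is that this direct limit can vanish even if all terms are nonzero, so care is needed in tracking the transition maps along the ray. The adic case can then be deduced by the $k$-duality of Remark~\ref{kdualslope}, which swaps slope $q$ with slope $1/q$ and interchanges Pr\"ufer with adic, combined with the Herzog duality between left and right Ziegler spectra recorded in Theorem~\ref{lattice iso ziegler}. Establishing this ``every pp-pair open on $\mcal{G}_q$ is open on each Pr\"ufer and each adic'' claim cleanly is the step I expect to absorb most of the work.
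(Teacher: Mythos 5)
Your overall framework (argue by contradiction via a basic open $(\phi/\psi)$ separating $\mcal{G}_q$ from $X$, then pigeonhole the finite-length members of $X$ over tubes and rays) is correct, but two of the three sub-claims you rely on are left as sketches, and these are precisely where the real content lies. The paper's proof sidesteps both.

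For the case where $X$ contains a Pr\"ufer or an adic (which you correctly identify as the ``main obstacle''), the paper simply cites \cite[8.10]{KrauseGeneric}: the generic module is already in the Ziegler closure of every Pr\"ufer module and of every adic module in the tubular family. This disposes of the infinite-length case immediately, with no need to develop a Dedekind-domain transfer or to track direct limits and apply Herzog duality. Neither of the routes you sketch is carried out; the direct-limit route in particular runs into exactly the issue you flag yourself (the limit can vanish while every term is nonzero), and you do not explain how to resolve it, so this reads as an identified gap rather than a proof.

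For the case where $X$ contains finite-length modules from infinitely many tubes, the paper does not use a constructibility argument on $\mathbb{P}^1$ at all. Instead it selects a family $\{N_i \st i\in I\}\subseteq X$ with pairwise distinct quasi-simple socles and, following Ringel's proposition 5 in \cite{ZgspcetameherRingel}, analyses $Z=\prod N_i/\bigoplus N_i$: one checks $\Hom(S,Z)=0$ and $\Ext(S,Z)=0$ for every quasi-simple $S$ of slope $q$, so by \cite[Theorem 6.4]{InfdimcanalgReitenRingel} $Z$ is a direct sum of Pr\"ufer and generic modules, and the $\Hom$ vanishing rules out Pr\"ufer summands, forcing $\mcal{G}_q$ into the definable subcategory generated by the $N_i$ and hence into $X$. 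Your constructibility alternative has an additional gap as stated: the modules of $X$ in the tubes indexed by $\Lambda$ are of the form $S_\lambda[n_\lambda]$ with varying $n_\lambda$, not $S_\lambda[1]$, and it is not true in general that a coherent functor nonzero on $S_\lambda[1]$ is nonzero on $S_\lambda[n_\lambda]$; you would need to run the constructibility argument uniformly over quasi-length, or argue at the level of dimension vectors via the Euler form, which is extra work you have not supplied.
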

\begin{proof}
Since $X$ is closed it is compact. Therefore if $X$ contain infinitely many isolated points then $X$ must contain a non-isolated point. By \cite[5.3.31]{PSL}, all finite-dimensional points are isolated in the Ziegler spectrum of a finite-dimensional algebra and hence, all finite-dimensional points are isolated in $\mcal{C}_q$. Therefore, if $X$ contains infinitely many finite-dimensional indecomposable modules then $X$ must contain an infinite-dimensional module i.e. either an adic, Pr\"ufer or generic module. By \cite[8.10]{KrauseGeneric} we know that the generic is in the closure of every adic and every Pr\"ufer.
\end{proof}

\begin{proof}[Proof of \ref{closedsubsets}]
The proof of proposition \ref{closedsubsets} now is the same as Ringel's proof for tame hereditary algebras but working inside $\mcal{D}_q$.
\end{proof}

\begin{definition}
Let $E$ be a quasi-simple of slope $q$ and $i\in \N$. Let
\[R[E[i]]:=\{E[j] \st j\geq i\}\cup\{E[\infty]\}\] and
\[C[[i]E]:=\{[j]E \st j\geq i\}\cup\{\widehat{E}\}.\]
Note that, by \ref{closedsubsets},  both these sets are open in the subspace topology on $\mcal{C}_q$. We call open sets of the form $R[E[i]]$ rays and open sets of the form $C[[i]E]$ corays.
\end{definition}

We now classify the compact open subsets of $\mcal{C}_q$.

\begin{proposition}\label{compactopenatq}
The compact open subsets of $\mcal{C}_q$ are either cofinite (excluding only finite-dimensional points) or a finite union of rays and corays plus finitely many other finite-dimensional points.
\end{proposition}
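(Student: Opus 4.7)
The plan is to let $U$ be a compact open subset of $\mcal{C}_q$ and split on whether the generic $\mcal{G}_q$ lies in $U$. Proposition \ref{closedsubsets} classifies the closed subsets of $\mcal{C}_q$, and by contraposition it translates into structural constraints on open sets which I will exploit.

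\textbf{Case 1:} If $\mcal{G}_q \in U$, set $X := \mcal{C}_q \setminus U$, which is closed and omits $\mcal{G}_q$. The contrapositive of condition (3) of \ref{closedsubsets} forces $X$ to contain no infinite-length module and only finitely many finite-length modules. Thus $U$ is cofinite, excluding only finitely many finite-dimensional points, which is the first alternative.

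\textbf{Case 2:} Suppose $\mcal{G}_q \notin U$. For each Pr\"ufer $S[\infty] \in U$, the closed set $X$ omits $S[\infty]$; by the contrapositive of (1), only finitely many indecomposable finite-length $M$ with $\Hom(S,M) \neq 0$ lie in $X$. The finite-length indecomposables $M$ in $\mcal{T}_q$ with $\Hom(S,M) \neq 0$ are exactly the modules on the ray starting at $S$ (those with socle $S$), so there exists $i_S$ with $R[S[i_S]] \subseteq U$. Analogously, by (2), each adic $\widehat{T} \in U$ forces $C[[j_T]T] \subseteq U$ for some $j_T$. Now form the open cover of $U$ consisting of all such rays, all such corays, and the singletons $\{M\}$ for each finite-dimensional $M \in U$ (each such singleton is open in $\Zg_R$, hence in $\mcal{C}_q$, by \ref{singlefdpoints}). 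Every point of $U$ is covered, and no cover element involves the generic. By compactness of $U$ extract a finite subcover; since each member of the cover is contained in $U$, the set $U$ equals this finite union of rays, corays, and finite-dimensional points, giving the second alternative.

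The main obstacle is the dictionary between the closure conditions of \ref{closedsubsets} and the structure of open sets, in particular the identification of the indecomposable finite-length $M$ in $\mcal{T}_q$ with $\Hom(S,M) \neq 0$ (respectively $\Hom(M,S) \neq 0$) as those on the ray (respectively coray) starting at $S$. Once this is in hand, and once one notes that the open cover above genuinely covers $U$ (Pr\"ufers by their own rays, adics by their own corays, finite-dimensional points by singletons, and the generic excluded in this case), the compactness argument is immediate.
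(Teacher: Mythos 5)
Your proof is correct and follows essentially the same strategy as the paper's: split on whether $\mcal{G}_q\in\mcal{U}$, apply the contrapositive of condition~(3) of \ref{closedsubsets} for the cofinite alternative, and use conditions (1) and (2) to place rays and corays inside $\mcal{U}$ for each Pr\"ufer or adic point it contains. Your organization of Case~2 is actually a little cleaner than the paper's: the paper first argues that $\mcal{U}$ is contained in a finite union of rays and corays (to bound the number of infinite-dimensional points) and then ``removes'' those open sets, whereas you directly assemble an open cover by rays, corays and singletons, each wholly contained in $\mcal{U}$, and invoke compactness once; the conclusion that $\mcal{U}$ \emph{equals} the resulting finite union is thereby more transparent. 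The parenthetical identification of the modules $M$ with $\Hom(S,M)\neq 0$ as the ray modules with quasi-socle $S$ is correct (the regular modules in a tube are uniserial in the tube category), and in any case only the inclusion ``$S[i]$ has $\Hom(S,S[i])\neq 0$'' is actually used.
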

\begin{proof}
If $\mcal{U}$ is an open set containing the generic then its complement only contains finitely many points, all of which are finite-dimensional by \ref{closedsubsets}. Such a set is compact because it is clopen i.e. also closed.

So we now consider compact open sets not containing the generic module. The set $\mcal{C}_q\backslash\{\mcal{G}_q\}$ is contained in a union of rays and corays. Thus, any compact open set not containing the generic is a subset of a finite union of rays and corays.  In particular, any compact open set not containing the generic contains only finitely many infinite-dimensional points.

If a compact open set only contains finite-dimensional points then it is finite (since these points are isolated). Suppose $\mcal{U}$ contains a Pr\"ufer point
$S[\infty]$. Then its complement, by \ref{closedsubsets}, must contain only finitely many points of the form $S[j]$. Thus for some $j$ the ray $R[S[j]]$ must be contained in $\mcal{U}$. Similarly, if $\mcal{U}$ contains an adic point
$\widehat{S}$ then it contains the coray $C[[j]S]$ for some $j$. Now removing all the rays and corays (which are open) we must be left with just finite-dimensional points. Since $\mcal{U}$ is compact, we are left with finitely many finite-dimensional points.
\end{proof}

\section{Algorithms at slope $q\in\Q^+$}\label{algslopeq}
In this section, we present an algorithm which, given $n+1$ pp-pairs \[\phi/\psi,\phi_1/\psi_1,\ldots,\phi_n/\psi_n\] and $q\in \Q^+$, answers whether
\[\mcal{C}_q\cap \left(\phi/\psi\right)\subseteq \bigcup_{i=1}^n\mcal{C}_q\cap \left(\phi_i/\psi_i\right).\]

Note that $\mcal{D}_q$ is axiomatised by saying that for each finite-dimensional indecomposable module $Q$ of slope strictly greater than $q$, the functor $(Q,-)$ is zero on $\mcal{D}_q$ and for each finite-dimensional indecomposable module $P$ of slope strictly less than $q$, the functor $-\otimes P^*$ is zero on $\mcal{D}_q$. Given a presentation of a module $M$, by lemma \ref{effrepfun} we can effectively find a pp-$n$-pair $\sigma/\tau$ such that the functor defined by $\sigma/\tau$ is equivalent to $(M,-)$ and by lemma \ref{efftenfun} a pp-$n$-pair $\sigma/\tau$ such that the functor defined by $\sigma/\tau$ is equivalent to $-\otimes M^*$. By \ref{Listingfd} and \ref{effdimvector} we can list the indecomposable finite-dimensional modules of slope $<q$ and those of slope $>q$. Thus, given $q\in\Q^+$, we can recursively list sentences which axiomatise $\mcal{D}_q$. Let $\Sigma_q$ be the recursive list of sentences axiomatising $\mcal{D}_q$.

\begin{remark}
Let $q\in \Q^+$ and $\phi/\psi,\phi_1/\psi_1,\ldots ,\phi_n/\psi_n$ be pp-pairs. Then \[\mcal{C}_q\cap\left(\phi/\psi\right)\subseteq \bigcup_{i=1}^n\mcal{C}_q\cap\left(\phi_i/\psi_i\right)\] if and only if
\[\Sigma_q\vdash|\phi/\psi|>1\rightarrow \bigvee_{i=1}^n|\phi_i/\psi_i|>1.\] By compactness, this means that there is some finite subset of $\Sigma\subseteq \Sigma_q$ such that
\[\Sigma \vdash |\phi/\psi|>1\rightarrow \bigvee_{i=1}^n|\phi_i/\psi_i|>1.\]
\end{remark}

We now use the results of the previous section to give canonical forms for compact open subsets of $\mcal{C}_q$.

\begin{lemma}\label{canformatq}
Each compact open subset $\mcal{U}$ of $\mcal{C}_q$ is unique of the form:
\begin{enumerate}
\item $F(\{X_1,\ldots,X_n\}):=\mcal{C}_q\backslash\{X_1,\ldots,X_n\}$ where $X_1,\ldots,X_n$ are finite-dimensional indecomposables of slope $q$
\item $\bigcup_{E\in S} R(E[j_E])\cup\bigcup_{E\in D} C([k_E]E)\cup\{X_1,\ldots,X_m\}$ where for each $E\in S$, if $R(E[i])\subseteq \mcal{U}$ then $i\geq j_E$, for each $E\in D$, if $C([i]E)\subseteq \mcal{U}$ then $i\geq k_E$ and for $1\leq i\leq m$, $X_i\notin \bigcup_{E\in S} R(E[j_E])\cup\bigcup_{E\in D} C([k_E]E)$.
\end{enumerate}

\end{lemma}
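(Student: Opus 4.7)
The plan is to invoke Proposition \ref{compactopenatq} together with the characterisation of closed subsets in \ref{closedsubsets}, and then argue uniqueness directly. I would first observe that by \ref{compactopenatq}, any compact open $\mcal{U}\subseteq\mcal{C}_q$ either contains the generic $\mcal{G}_q$ -- in which case its complement in $\mcal{C}_q$ consists of finitely many finite-dimensional indecomposables -- or does not contain $\mcal{G}_q$, in which case $\mcal{U}$ is a finite union of rays and corays together with finitely many extra finite-dimensional points. These two alternatives are mutually exclusive since rays, corays, and finite-dimensional points never contain $\mcal{G}_q$; this yields the dichotomy between forms (1) and (2).

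For form (1), I would simply set $\{X_1,\ldots,X_n\}:=\mcal{C}_q\setminus\mcal{U}$, which is finite by \ref{closedsubsets}(3) applied to the closed complement (no infinite-dimensional points are allowed there, and the number of finite-dimensional ones must be finite). Uniqueness is immediate from this identification.

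For form (2), assume $\mcal{G}_q\notin\mcal{U}$. I would first extract $S:=\{E\text{ quasi-simple in }\mcal{T}_q : E[\infty]\in\mcal{U}\}$ and $D:=\{E\text{ quasi-simple in }\mcal{T}_q : \widehat{E}\in\mcal{U}\}$, both finite by \ref{compactopenatq}. For each $E\in S$, the closed complement $\mcal{C}_q\setminus\mcal{U}$ avoids $E[\infty]$, so by \ref{closedsubsets}(1) only finitely many $E[i]$ lie outside $\mcal{U}$. This allows me to define $j_E$ as the least $i\geq 1$ such that $E[j]\in\mcal{U}$ for every $j\geq i$, giving $R(E[j_E])\subseteq\mcal{U}$. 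The claim that ``$R(E[i])\subseteq\mcal{U}$ implies $i\geq j_E$'' then follows by minimality: for any $i<j_E$ there is some $j$ with $i\leq j<j_E$ and $E[j]\notin\mcal{U}$, so $R(E[i])\not\subseteq\mcal{U}$. I would define $k_E$ for $E\in D$ analogously, using \ref{closedsubsets}(2). Setting $\mcal{V}:=\bigcup_{E\in S}R(E[j_E])\cup\bigcup_{E\in D}C([k_E]E)$, the set $\mcal{U}\setminus\mcal{V}$ is closed in the compact set $\mcal{U}$, hence compact; it consists of finite-dimensional indecomposables, each of which is isolated in $\mcal{C}_q$ by \ref{singlefdpoints}. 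Therefore $\mcal{U}\setminus\mcal{V}$ is finite, and listing its points as $X_1,\ldots,X_m$ yields the desired representation, with $X_i\notin\mcal{V}$ by construction.

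Uniqueness in (2) can then be read off directly: $S$ equals the set of Pr\"ufers in $\mcal{U}$, $D$ the set of adics in $\mcal{U}$, each $j_E$ and $k_E$ is pinned down by the stated minimality condition, and $\{X_1,\ldots,X_m\}$ is precisely the (necessarily finite) set of finite-dimensional points of $\mcal{U}$ lying off all chosen rays and corays. The only real bookkeeping obstacle is verifying that the ``if $R(E[i])\subseteq\mcal{U}$ then $i\geq j_E$'' formulation captures $j_E$ uniquely; this is purely combinatorial and reduces to the cofiniteness argument above via \ref{closedsubsets}(1)--(2).
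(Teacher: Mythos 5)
Your proof is correct and takes essentially the same route as the paper's, which relies on Proposition~\ref{compactopenatq} for the structural description and then notes that the parametrisation in the lemma has no repeats; you have simply spelled out the bookkeeping that the paper leaves to the reader. The only small nit is that the isolatedness of finite-dimensional points is better attributed to the standard fact used already inside the proof of~\ref{compactopenatq} than to Lemma~\ref{singlefdpoints}, which is its algorithmic refinement, but this does not affect the argument.
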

\begin{proof}
Proposition \ref{compactopenatq} gives us a description of the compact open subsets of $\mcal{C}_q$. We just need to observe that the list above contains no repeats.
\end{proof}

\begin{lemma}\label{algorforparticularq}
There is an algorithm, which given $q\in \Q^+$ and pp-pairs $\phi/\psi,\phi_1/\psi_1,\ldots,\phi_n/\psi_n$, answers whether
\[\mcal{C}_q\cap \left(\phi/\psi\right)\subseteq \bigcup_{i=1}^n\mcal{C}_q\cap \left(\phi_i/\psi_i\right).\]
\end{lemma}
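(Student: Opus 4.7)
The plan is to run two effective semi-decision procedures in parallel and argue that exactly one of them terminates. By the remark preceding the lemma, the containment
\[\mcal{C}_q \cap (\phi/\psi) \subseteq \bigcup_{i=1}^n \mcal{C}_q \cap (\phi_i/\psi_i)\]
holds if and only if $\Sigma_q \vdash |\phi/\psi|>1 \rightarrow \bigvee_{i=1}^n |\phi_i/\psi_i|>1$, and by first-order compactness any such derivation uses only finitely many axioms from the recursively enumerable set $\Sigma_q$. Hence the standard proof algorithm will eventually produce such a derivation if one exists, certifying the answer ``yes''. Running concurrently, I enumerate the finite-dimensional indecomposables of slope $q$ using Corollary \ref{efflistslopeq}, and for each such $M$ test via Lemma \ref{effppopen} whether $\phi/\psi$ is open on $M$ while every $\phi_i/\psi_i$ is closed on $M$; the first such $M$ triggers the answer ``no''.

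For correctness I must show that whenever the containment fails, a finite-dimensional witness $M\in\mcal{T}_q$ exists, so that the second search terminates. Suppose $N\in\mcal{C}_q$ is an indecomposable pure-injective with $N\in(\phi/\psi)$ and $N\notin(\phi_i/\psi_i)$ for all $i$, and write $V=\bigcup_i(\phi_i/\psi_i)$. By Lemma \ref{pureinjatrationalslope}, $N$ is finite-dimensional, a Pr\"ufer $E[\infty]$, an adic $\widehat{E}$, or the generic $\mcal{G}_q$, and I handle the three infinite-dimensional possibilities using Lemma \ref{canformatq}. If $N=E[\infty]$, then $(\phi/\psi)\cap\mcal{C}_q$ is either cofinite or contains the ray $R(E[j_E])$ for some $j_E$, so almost every $E[j]$ lies in $(\phi/\psi)$. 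Meanwhile $E[\infty]\notin(\phi_i/\psi_i)$ forces the canonical form of $(\phi_i/\psi_i)\cap\mcal{C}_q$ to be of the non-cofinite type containing no ray starting in the tube of $E$, so only finitely many $E[j]$ lie in $(\phi_i/\psi_i)$ (via the finitely many isolated points and the finitely many corays listed). Hence for all sufficiently large $j$, $E[j]\in(\phi/\psi)\setminus V$ is a finite-dimensional witness. The adic case $N=\widehat{E}$ is symmetric with corays replacing rays. For the generic, Proposition \ref{closedsubsets}(3) forces $(\phi/\psi)\cap\mcal{C}_q$ to be cofinite among the finite-dimensional modules of slope $q$, while $V\not\ni\mcal{G}_q$ forces $V$ to be of the non-cofinite form and hence to meet only finitely many tubes of $\mcal{T}_q$; since $\mcal{T}_q$ has infinitely many tubes, infinitely many finite-dimensional modules lie in $(\phi/\psi)\setminus V$.

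The heart of the argument, and the only non-formal step, is the reduction above from an arbitrary indecomposable pure-injective counterexample to a finite-dimensional one; it rests entirely on the classification in Lemma \ref{pureinjatrationalslope} and on the canonical form of compact opens in Lemma \ref{canformatq}. Once this reduction is in place, correctness of the interleaved algorithm is immediate, as the two termination conditions are mutually exclusive and collectively exhaustive.
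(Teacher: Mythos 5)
Your ``yes'' side (searching for a formal derivation over the recursively enumerable axiomatisation $\Sigma_q$) is correct and is exactly what the remark preceding the lemma provides. The gap is in the ``no'' side: the reduction from an infinite-dimensional counterexample to a finite-dimensional one does not hold, and this failure is precisely the phenomenon that forces the paper to treat the finitely many exceptional slopes $q_j$ by a separate canonical-form argument rather than by Proposition \ref{nonexpenoughtocheckfd}, which applies only on intervals avoiding them.

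The faulty step is the inference that $E[\infty]\notin(\phi_i/\psi_i)$ forces only finitely many $E[j]$ to lie in $(\phi_i/\psi_i)$. A coray inside the tube of $E$ can meet the set $\{E[j]\}_{j\geq 1}$ infinitely often: in a tube of rank $p$, $[j]E'=E[j]$ whenever $E'$ is the top of $E[j]$, which happens for $j$ in a fixed residue class modulo $p$; and in a homogeneous tube one simply has $E[j]=[j]E$ for all $j$, so $C([1]E)$ contains every $E[j]$ while omitting $E[\infty]$. For a concrete failure of your reduction, take $E$ a quasi-simple of slope $q$ at the mouth of a homogeneous tube, let $\phi/\psi$ correspond to the coherent functor $\Hom(E,-)$ and $\phi_1/\psi_1$ to $-\otimes E^*$. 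From Lemma \ref{homsbetweeninfandfdatq} and $\Hom$-orthogonality of distinct tubes one computes
\[(\phi/\psi)\cap\mcal{C}_q = R(E[1]) \quad\text{and}\quad (\phi_1/\psi_1)\cap\mcal{C}_q = C([1]E),\]
whose set-difference in $\mcal{C}_q$ is $\{E[\infty]\}$. Here $\mcal{C}_q\cap(\phi/\psi)\not\subseteq\mcal{C}_q\cap(\phi_1/\psi_1)$, yet every finite-dimensional point of $\mcal{C}_q\cap(\phi/\psi)$ lies in $\mcal{C}_q\cap(\phi_1/\psi_1)$; your second semi-decision procedure never fires, the first cannot fire either, and the interleaved algorithm diverges on this instance.

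The paper's proof avoids the problem by never attempting this reduction. It first effectively normalises each of $(\phi/\psi)\cap\mcal{C}_q$ and the $(\phi_i/\psi_i)\cap\mcal{C}_q$ into the canonical forms of Lemma \ref{canformatq} (searching, via the proof algorithm over $\Sigma_q$, for a finite union of basic opens of the forms $F(\{\dots\})$, $\{X\}$, $R(E[j])$, $C([j]E)$ that agrees with $(\phi/\psi)$ on $\mcal{C}_q$), so that the infinite-dimensional points are represented explicitly. Containment is then checked by a finite case analysis on these canonical forms; in the Pr\"ufer case the key observation is only the weaker (and true) one that $E[\infty]\in W_1\cup\cdots\cup W_n$ forces some $W_i$ to be cofinite or a ray $R(E[l])$, after which only the finitely many $E[j]$ with $j<l$ remain to be checked.
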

\begin{proof}
By \ref{excludingfinitelymanyfdpoints}, \ref{singlefdpoints}, \ref{efftenfun} and \ref{effrepfun}, there is an algorithm which lists pp-pairs defining the open sets of the form $F(\{X_1,\ldots,X_n\})$, $\{X\}$, $C(X)$ and $R(X)$ where $X_1,\ldots,X_n,X\in\mcal{C}_q$.

Thus, since $\mcal{D}_q$ is recursively axiomatised, there is an algorithm which given a pp-pair $\phi/\psi$, finds a compact open set $\mcal{U}$ such that $(\phi/\psi)\cap\mcal{C}_q=\mcal{U}\cap\mcal{C}_q$ and such that $\mcal{U}$ is in the canonical form given in \ref{canformatq}.

We now need to take each of the compact open sets of the form\\
$F(\{X_1,\ldots,X_n\})$, $\{X\}$, $C(X)$ and $R(X)$  and write an algorithm which determines whether it is contained in a finite union, $W_1\cup\ldots\cup W_n$ of some specified other open sets of the form $F(\{X_1,\ldots,X_n\})$, $\{X\}$, $C(X)$ and $R(X)$.

\textbf{Case 0}: $\{X_1,\ldots,X_m\}\subseteq W_1\cup \ldots\cup W_n$

Check directly whether each $X_i$ is in some $W_j$. We can do this by \ref{effppopen}.

\textbf{Case 1}: $F(\{X_1,\ldots,X_m\})\subseteq W_1\cup \ldots\cup W_n$

If $F(\{X_1,\ldots,X_m\})\subseteq W_1\cup \ldots\cup W_n$ then, by \ref{compactopenatq}, one of the $W_i$s must be of the form $F(\{Y_1,\ldots,Y_l\})$ for some $Y_1,\ldots,Y_l$. So $F(\{Y_1,\ldots,Y_l\})$ contains all of $F(\{X_1,\ldots,X_m\})$ except the points $\{Y_1,\ldots,Y_l\}\backslash\{X_1,\ldots,X_m\}$. We now just need to check whether the finite subset $\{Y_1,\ldots,Y_l\}\backslash\{X_1,\ldots,X_m\}$ is contained in $W_1\cup\ldots\cup W_n$. This is case 0.

\textbf{Case 3}: $R(E[j])\subseteq W_1\cup \ldots\cup W_n$

If $R(E[j])\subseteq W_1\cup \ldots\cup W_n$ then $E[\infty]\in W_i$ for some $i$. So either one of the $W_i$ is of the form $F(\{X_1,\ldots,X_m\})$ for some $X_1,\ldots,X_m$ or is of the form $R[E[l]]$ for some $l$.

If one of the $W_i$s is $F(\{X_1,\ldots,X_m\})$ then check if any of $X_1,\ldots,X_m$ is of the form $E[k]$ for some $k\geq j$. For any which is, check if that point is in one of the remaining open sets.

If one of the $W_i$s is $R[E[l]]$ then either $l\leq j$, in which case $R(E[j])\subseteq R(E[l])$, or $l>j$. If $l>j$ then $R[E[j]]\backslash R[E[l]]$ is $E[j],E[j+1],\ldots,E[l-1]$. So we just use case 0 to find out whether these are contained in the remaining open sets.

\textbf{Case 4}:  $C([j]E)\subseteq W_1\cup \ldots\cup W_n$

As above but replacing $E[\infty]$ by $\widehat{E}$.
\end{proof}

\section{One-point extensions and coextensions}\label{1pointext}

Let $T_{n_1,\ldots,n_t}$ be the star quiver with $t$ arms of length $n_1,\ldots,n_t$ in the ``subspace'' configuration. A canonical algebra of tubular type is a one-point extension of the tame hereditary path algebra of $T_{n_1,\ldots,n_t}$ by a quasi-simple module $X$ at the base of a tube such that $(n_1,\ldots,n_t)$ is in the set $\{(3,3,3), (2,4,4), (2,3,6), (2,2,2,2)\}$ \cite[pg161]{Ringeltub}. These algebras may equally well be viewed as one-point coextensions of star path algebras with the ``cosubspace'' configuration by a quasi-simple module $X$ at the base of a tube.

Throughout this section $A$ will be the path algebra of a star quiver in subspace configuration as above, $X\in \mod\text{-}A$ will be a quasi-simple at the base of a tube and $A[X]$ will be the one-point extension of $A$ by $X$ i.e. the $2\times2$-matrix algebra
\[\left(
     \begin{array}{cc}
       A & 0 \\
       {_k}X_A & k \\
     \end{array}
   \right).
\]

The category $\Mod\text{-}A[X]$ is equivalent to $\text{Rep}(X)$, the category of representation of the bimodule ${_k}X_A$ and also to, $\overline{\text{Rep}}(X)$, the $k$-category with objects $M=(M_0,M_1,\Gamma_M)$ where $M_0$ is a $k$-vector space, $M_1$ is a right $A$-module and $\Gamma_M:M_0\rightarrow \Hom_A(X,M_1)$ is a $k$-vector space homomorphism. Morphisms in $\overline{\text{Rep}}(X)$ are given by pairs $f=(f_0,f_1):M=(M_0,M_1,\Gamma_M) \rightarrow N=(N_0,N_1,\Gamma_N)$, where $f_0:M_0\rightarrow N_0$ is a $k$-vector space homomorphism and $f_1:M_1\rightarrow N_1$ is a $A$-module homomorphism such that the following square commutes.

\[\xymatrix@R=30pt@C=60pt{
  M_0 \ar[d]_{\Gamma_M} \ar[r]^{f_0} & N_0 \ar[d]^{\Gamma_N} \\
  \Hom_A(X,M_1) \ar[r]^{\Hom_A(X,f_1)} & \Hom_A(X,N_1)   }\]
Throughout this section we identify representations of $A[X]$ with objects in $\overline{\text{Rep}}(X)$. So, if $(M_0,M_1,\Gamma_M)$ is a object in $\overline{\text{Rep}}(X)$, $m\in M_1$, $\delta\in M_0$, $a\in A$, $x\in X$ and $\mu\in k$ then

\[(m,\delta)\cdot \left(
                    \begin{array}{cc}
                      a & 0 \\
                      x & \mu \\
                    \end{array}
                  \right)=(m\cdot a+\Gamma_M(\delta)[x], \mu\delta).
\]

This gives us the following two embeddings of $\Mod\text{-}A$ into $\Mod\text{-}A[X]$; \[F_0:\Mod\text{-}A\rightarrow \Mod\text{-}A[X], \ \ M\mapsto (0,M,0)\] and \[F_1:\Mod\text{-}A\rightarrow \Mod\text{-}A[X], \ \  M\mapsto (\Hom(X,M),M,\text{Id}_{\Hom(X,M)}).\] We also have a functor $r:\Mod\text{-}A[X]\rightarrow \Mod\text{-}A$ which sends $(M_0,M_1,\Gamma_M)$ to $M_1$. This functor is right adjoint to $F_0$ and left adjoint to $F_1$.

In this section we will see that $F_0$ and $F_1$ are interpretation functors whose images are finitely axiomatisable and that every indecomposable pure-in\-jective in $\mcal{P}_0\cup\mcal{D}_0$ is in the  union of the images of $F_0$ and $F_1$.

\begin{remark}\label{F0intfunctorwithdefimage}
Let $A$ be a $k$-algebra and $X$ a right $A$-module. The assignment $F_0$ which sends a right $A$-module $M$ to the right $A[X]$-module $(0, M,0)$ and  sends a morphism $g:M\rightarrow N$ to $(0,g)$ is clearly a full and faithful exact interpretation functor whose image is a (finitely axiomatisable) definable subcategory of $\Mod\text{-}A[X]$.
\end{remark}

\begin{proposition}\label{F1intfunctorwithdefimage}
Let $A$ be a $k$-algebra and $X$ a finitely presented right $A$-module. The functor $F_1$ is a full and faithful (left exact) interpretation functor whose image is a definable subcategory (after closing under isomorphisms) of $\Mod\text{-}A[X]$.
\end{proposition}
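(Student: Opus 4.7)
The plan is to exhibit $F_1$ explicitly as an interpretation functor, then check fullness, faithfulness and left-exactness directly, and finally cut out the image by pp-pair closure conditions.

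Since $X$ is finitely presented, the functor $\Hom_A(X,-)\colon \Mod-A\to\Ab$ is coherent, so there is a pp-$m$-formula $\phi_X(\bar y)$ in the language of right $A$-modules with $\phi_X(M)$ naturally isomorphic to $\Hom_A(X,M)$; concretely, if $x_1,\ldots,x_m$ generate $X$ subject to finitely many $A$-linear relations, $\phi_X$ is the conjunction of these relations, and the isomorphism sends a tuple $\bar y$ to the homomorphism $x_i\mapsto y_i$. I would take the home pp-pair of the interpretation to be
\[\bigl(x=x\wedge \phi_X(\bar y)\bigr)\big/\bigl(x=0\wedge \bar y=\bar 0\bigr)\]
in $1+m$ free variables, so that evaluated on $M$ it returns the abelian group $M\oplus \Hom_A(X,M)$, the underlying abelian group of $F_1(M)$. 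For each basis element $s$ of $A[X]=A\oplus X\oplus k$ I specify $\rho_s$ by reading off the action displayed in the excerpt: for $s=\text{diag}(a,0)$, $\rho_s$ says $x'=xa\wedge \bar y'=\bar 0$; for $s=\text{diag}(0,1)$, $\rho_s$ says $x'=0\wedge \bar y'=\bar y$; and for $s=x_0\in X$ in the off-diagonal position, writing $x_0=\sum_i x_ia_i$, $\rho_s$ says $x'=\sum_i y_ia_i\wedge \bar y'=\bar 0$. A direct calculation verifies that these $\rho_s$ respect the $A[X]$-multiplication table on every $A$-module, so $F_1$ is an interpretation functor.

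Fullness and faithfulness follow by inspection of the effect of $F_1$ on morphisms, which sends $f_1\colon M\to N$ to $(\Hom_A(X,f_1),f_1)$: faithfulness is immediate from the second coordinate, and if $(g_0,g_1)\colon F_1 M\to F_1 N$ is any morphism then the commuting square in the definition of $\overline{\text{Rep}}(X)$ has both vertical arrows equal to identities, forcing $g_0=\Hom_A(X,g_1)$ and placing $(g_0,g_1)$ in the image of $F_1$. Left-exactness of $F_1$ is then inherited coordinatewise from left-exactness of $\Hom_A(X,-)$.

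For the image, I would characterise $(N_0,N_1,\Gamma_N)$ as isomorphic to some $F_1(M)$ precisely when $\Gamma_N\colon N_0\to\Hom_A(X,N_1)$ is a $k$-linear bijection, in which case $(\Gamma_N,\mathrm{id}_{N_1})$ witnesses the isomorphism to $F_1(N_1)$. To axiomatise this, use the orthogonal idempotents $e_{11}=\text{diag}(1,0)$ and $e_{22}=\text{diag}(0,1)$ of $A[X]$, which pp-define $N_1=Ne_{11}$ and $N_0=Ne_{22}$; right multiplication by the off-diagonal element corresponding to $x_i$ realises the map $\delta\mapsto\Gamma_N(\delta)[x_i]$ from $N_0$ to $N_1$ by a pp-formula. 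Surjectivity of $\Gamma_N$ then translates to the equality of the pp-definable image of $N_0$ in $N_1^m$ with $\phi_X(N_1)$, and injectivity to the kernel of the same map being trivial; both are equalities of pp-definable subgroups, hence pp-pair closure conditions, cutting out a definable subcategory of $\Mod-A[X]$. The main obstacle is this final step: one must package bijectivity of a $k$-linear map between pp-definable subgroups as equalities of pp-pairs, using that $k$-linearity (rather than just $\Z$-linearity) of $\Gamma_N$ is automatic because the scalar $k$-action is built into the $A[X]$-module structure via the idempotent $e_{22}$.
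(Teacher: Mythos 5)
Your proof is correct but takes a more explicit, hands-on route than the paper. To show $F_1$ is an interpretation functor, you exhibit the interpretation data directly — the home pp-pair $\bigl(x=x\wedge\phi_X(\bar y)\bigr)/\bigl(x=0\wedge\bar y=\bar 0\bigr)$ together with the $\rho_s$ read off from the $A[X]$-action — whereas the paper invokes the abstract criterion that $F_1$ is an interpretation functor if and only if it commutes with direct limits and direct products, and checks this by composing with the forgetful functor to $\Mod-k$, where it reduces to $\Hom_A(X,-)$ commuting with direct limits and products because $X$ is finitely presented. For fullness, faithfulness and left exactness the paper simply cites a reference, while you verify them directly from the commuting square in $\overline{\text{Rep}}(X)$. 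For the image, both arguments identify the essential image of $F_1$ with those $L=(L_0,L_1,\Gamma_L)$ for which $\Gamma_L$ is an isomorphism, and both then cut this out with pp conditions: the paper writes explicit pp-formulas $\phi,\psi,\sigma$ involving generators $t_1,\ldots,t_n$ of $X$ and the corresponding off-diagonal multiplications, proving $L$ lies in the image iff $\sigma(L)=0$ and $\phi(L)=\psi(L)$; your formulation via the idempotents $e_{11},e_{22}$ and the pp-definable image and kernel of $\delta\mapsto(\Gamma_L(\delta)[x_i])_i$ is substantively the same axiomatisation. Your version makes the interpretation data concrete; the paper's is shorter and spares one from verifying the multiplication-table identities for the $\rho_s$. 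One minor remark: the ``obstacle'' you flag at the end — ensuring $k$-linearity rather than mere $\Z$-linearity of $\Gamma_N$ — is not really one, since $k$ embeds centrally in $A[X]$ and hence pp-definable subgroups of $A[X]$-modules are automatically $k$-subspaces; equalities of pp-definable subgroups are already all that is required.
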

\begin{proof}

It is straightforward to see that $F_1$ is indeed a functor and that it is full, faithful and left exact (see for instance \cite[1.4]{ss3}).

The functor $F_1$ is an interpretation functor if and only if it commutes with direct limits and products \cite[25.3]{defaddcats}. In order to check that $F_1$ commutes with direct limits and products, it is enough to check that its composition with the forgetful functor from $\Mod\text{-}A[X]$ to $\Mod\text{-}k$ commutes with direct limits and products. This follows since $\Hom(X,-)$ commutes with direct limits and products.

We now show that the image of $F_1$ is a (finitely axiomatisable) definable subcategory of $\Mod\text{-}A[X]$. First note that $L=(L_0,L_1,\Gamma_L)$ is in the (essential) image of $F_1$ if and only if $\Gamma_L$ is an isomorphism. Let $t_1,\ldots,t_n$ generate $X$ as an $A$-module. Note that for any $\delta\in L_0$ and $\gamma\in\Hom(X,L_1)$ we have that $\Gamma_L(\delta)=\gamma$ if and only if $\Gamma_L(\delta)[t_i]=\gamma[t_i]$ for $1\leq i\leq n$. Now for $\delta\in L_0$,

\[\Gamma_L(\delta)[t_i]=(0,\delta)\left(
                                    \begin{array}{cc}
                                      0 & 0 \\
                                      t_i & 0 \\
                                    \end{array}
                                  \right).\] Let $\psi\in \pp_R^n$ be the pp-formula
\[\exists z \bigwedge_{i=1}^nx_i=z\left(
                                    \begin{array}{cc}
                                      0 & 0 \\
                                      t_i & 0 \\
                                    \end{array}
                                  \right).
\]

Let $\phi$ generate the pp-type of $(t_1,\ldots,t_n)$ viewed as a tuple from $(0,X,0)$. Now
\[\phi(L)=\{f(\overline{t}) \st f\in\Hom_R((0,X,0),L)\}.\] So
\[\phi(L)=\{(\gamma[t_1],\ldots,\gamma[t_n]) \st \gamma\in \Hom_A(X,L_1)\}.\]

Thus $\Gamma_L$ is surjective if and only if $\phi(L)=\psi(L)$.


Let $e_0=\left(
           \begin{array}{cc}
             0 & 0 \\
             0 & 1 \\
           \end{array}
         \right)$.

Let \[\sigma(x):=\exists z \ x=ze_0\bigwedge_{i=1}^n x\left(
                                                               \begin{array}{cc}
                                                                 0 & 0 \\
                                                                 t_i & 0 \\
                                                               \end{array}
                                                             \right)=0
\]
For all $M\in\Mod\text{-}A[X]$, $\Gamma_M$ is injective if and only if $\sigma(M)=0$.


So $M$ is in the  (essential) image of $F_1$ if and only if $\sigma(M)=0$ and $\phi(M)=\psi(M)$.
\end{proof}

Let $\mcal{E}_0$ be the image of the functor $F_0$ and $\mcal{E}_1$ be the image of the functor $F_1$. We will show, \ref{fdinimF0F1}, that every indecomposable finite-dimensional module of slope $0$ is either contained in $\mcal{E}_0$ or $\mcal{E}_1$. Thus, if the finite-dimensional modules of slope $0$ are dense in the Ziegler closed subset corresponding to the definable subcategory of slope $0$ then all indecomposable pure-injective modules of slope zero are either contained in $\mcal{E}_0$ or $\mcal{E}_1$.

\begin{proposition}\label{fddenseinslopezero}
Let $R$ be a tubular algebra. The finite-dimensional indecomposable $R$-modules of slope zero are dense in the Ziegler closed subset of indecomposable pure-injective modules of slope zero.
\end{proposition}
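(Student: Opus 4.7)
The strategy is to adapt the arguments behind Proposition \ref{closedsubsets}, which describes the closed subsets of $\mcal{C}_q$ for $q\in\Q^+$, to the case $q=0$: first classify the indecomposable pure-injectives in $\mcal{D}_0$, then show each non-finite-dimensional one lies in the Ziegler closure of $\mcal{T}_0$.

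First I would show that the finite-dimensional indecomposables lying in $\mcal{D}_0$ are precisely those in $\mcal{T}_0$: modules in $\mcal{P}_0$ violate $(M,\mcal{P}_0)=0$ because $\Hom(P,P)\neq 0$ for $P\in\mcal{P}_0$, and modules in $\mcal{T}_q$ for $q>0$ or in $\mcal{Q}_\infty$ violate $(\mcal{Q}_0,M)=0$ since these classes are contained in $\mcal{Q}_0$. Next I would prove the slope $0$ analogue of Lemma \ref{pureinjatrationalslope}, asserting that the indecomposable pure-injectives in $\mcal{D}_0$ consist of $\mcal{T}_0$, a Pr\"ufer $S[\infty]$ and an adic $\widehat{S}$ for each quasi-simple $S\in\mcal{T}_0$, and a unique generic $\mcal{G}_0$. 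This classification would be extracted from the Reiten--Ringel analysis \cite{InfdimcanalgReitenRingel}: every infinite-dimensional indecomposable has a well-defined slope, and for those of slope $0$ the structure must match the standard list once preprojective modules are ruled out from $\mcal{D}_0$ by the slope conditions.

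Given the classification, density follows by three closure computations identical in spirit to those in the proof of Proposition \ref{closedsubsets}. Each Pr\"ufer $S[\infty]$ is a direct limit along the ray in $\mcal{T}_0$ starting at $S$, hence lies in the Ziegler closure. Each adic $\widehat{S}$ is an inverse limit along the coray in $\mcal{T}_0$ ending at $S$, and so by \cite[2.3]{ZieZardomstring} lies in the closure. The generic $\mcal{G}_0$ lies in the closure by the construction used in Lemma \ref{closedzg3}: for any infinite family $\{N_i\}\subseteq \mcal{T}_0$ of finite-dimensional modules with pairwise distinct quasi-simple socles, the module $\prod N_i/\bigoplus N_i$ contains $\mcal{G}_0$ as a direct summand, by \cite[Theorem 6.4]{InfdimcanalgReitenRingel} combined with \cite[8.10]{KrauseGeneric}.

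The principal obstacle is the classification step. At positive rational slope the corresponding statement in Lemma \ref{pureinjatrationalslope} is clean because no preprojective or preinjective component shares the slope, whereas at $q=0$ the preprojective component $\mcal{P}_0$ sits ``adjacent'' to $\mcal{T}_0$, so one must carefully use the slope condition to rule out that any limit or extension involving preprojective modules contributes a new indecomposable pure-injective to $\mcal{D}_0$. This is exactly where the defining conditions $(M,\mcal{P}_0)=0$ and $(\mcal{Q}_0,M)=0$ must be combined with the Reiten--Ringel structural results; the closure computations themselves are then formal.
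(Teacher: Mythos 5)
Your proposal takes a genuinely different route from the paper, and it has a real gap at exactly the point you flag as ``the principal obstacle.''

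The paper's own proof never classifies the indecomposable pure-injectives of slope $0$. Instead it argues functorially: given a coherent functor $F$ with $FN\neq 0$ for some indecomposable $N$ of slope $0$, it presents $F$ by a map $f\colon P_1\oplus T_1\oplus Q_1\to P_2\oplus T_2\oplus Q_2$ (preprojective, slope-$0$, slope-$>0$ summands), truncates to a functor $G$ by dropping the $Q_i$, checks that $F$ and $G$ agree on all modules of slope $0$, and then uses the separation property of the tubular family $\mcal{T}_0$ to factor a witness map through a direct sum of modules in $\mcal{T}_0$, thereby producing a finite-dimensional slope-$0$ module on which $G$ (hence $F$) is nonzero. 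No inventory of $\mcal{C}_0$ is required.

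Your approach instead posits a slope-$0$ analogue of Lemma \ref{pureinjatrationalslope} — that $\mcal{C}_0$ consists precisely of $\mcal{T}_0$, one Pr\"ufer and one adic per quasi-simple, and one generic — and then verifies that each infinite-dimensional point lies in the closure of $\mcal{T}_0$. The closure computations (direct limits along rays for Pr\"ufers, inverse limits along corays and \cite[2.3]{ZieZardomstring} for adics, $\prod N_i/\bigoplus N_i$ with \cite[Theorem 6.4]{InfdimcanalgReitenRingel} and \cite[8.10]{KrauseGeneric} for the generic) are fine \emph{if} the classification holds. But the classification itself is not established; you say it ``would be extracted from the Reiten--Ringel analysis,'' yet Reiten--Ringel's structural results are tailored to slopes in $(0,\infty)$, and the description of $\mcal{C}_0$ and $\mcal{C}_\infty$ is precisely what is attributed in Remark \ref{finalremark} to the separate work of Angeleri-H\"ugel and Kussin \cite{AngeleriHugelKussin}, which the paper explicitly states it does \emph{not} use. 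Moreover, the paper's own proof that every indecomposable pure-injective of slope $0$ lies in $\mcal{E}_0\cup\mcal{E}_1$ (the proposition following \ref{fdinimF0F1}) itself invokes \ref{fddenseinslopezero}, so you could not lean on that route without circularity. The net effect is that your argument assumes the hardest ingredient rather than proving it, whereas the paper's separation argument buys density without ever needing to know what the points of $\mcal{C}_0$ are.
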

\begin{proof}
By an argument exactly as the first paragraph of \cite[Theorem 13.6]{PreBk} we know that every open set containing a module of slope zero contains a finite-dimensional module of slope greater than or equal to zero.

Suppose $N$ is an infinite-dimensional indecomposable module of slope zero and that $F$ is a coherent functor with $FN\neq 0$. Let $P_2,P_1\in\mod\text{-}R$ be preprojective, $T_2,T_1\in\mod\text{-}R$ of slope zero, $Q_2,Q_1\in\mod\text{-}R$ of slope greater than zero and $f:P_1\oplus T_1\oplus Q_1\rightarrow P_2\oplus T_2\oplus Q_2$ be such that
\[(P_2\oplus T_2\oplus Q_2 ,-)\xrightarrow{(f,-)} (P_1\oplus T_1\oplus Q_1,-)\rightarrow F\rightarrow 0\] is exact.

Let $\pi_i:P_i\oplus T_i\oplus Q_i\rightarrow P_i\oplus T_i$ and $\mu_i:P_i\oplus T_i\rightarrow P_i\oplus T_i\oplus Q_i$ be canonical projections and embeddings for $i=1,2$. Let $G$ be a coherent functor such that
\[ (P_2\oplus T_2,-)\xrightarrow{(\pi_2\circ f\circ \mu_1,-)} (P_1\oplus T_1,-) \rightarrow G \rightarrow 0\] is exact.

Suppose that $M$ is an indecomposable module of slope zero. We show that $(\pi_2f\mu_1,M)$ is surjective if and only if $(f,M)$ is surjective. That is, we show that $FM\neq 0$ if and only if $GM\neq 0$.

Since there are no non-zero maps from modules of slope greater than zero to $M$, for all $g\in (P_2\oplus T_2\oplus Q_2 ,M)$, $g=g\mu_2\pi_2$. So the following diagram commutes.

\[\xymatrix@=20pt{
  (P_2\oplus T_2\oplus Q_2,M) \ar[d]_{(\mu_2,M)} \ar[r]^{(f,M)}
                & (P_1\oplus T_1\oplus Q_1,M) \ar[d]^{(\mu_1,M)}  \\
  (P_2\oplus T_2,M)  \ar[r]_{(\pi_2f\mu_1,M)}
                & (P_1\oplus T_1,M)             }\]

Moreover,  $(\mu_1,M)$ and $(\mu_2,M)$ are isomorphisms. Thus $g\in (P_1\oplus T_1\oplus Q_1,M)$ is in the image of $(f,M)$ if and only if $g\mu_1$ is in the image of $(\pi_2f\mu_2,M)$. So for $M$ a module of slope zero, $GM\neq 0$ if and only if $FM\neq 0$.

Now in order to show that there is some $L\in\ind\text{-}R$ of slope zero such that $FL\neq 0$, it is enough to show that $GL\neq 0$.

By the first paragraph, there exists $L\in\mod\text{-}R$ of slope greater than or equal to zero, such that $GL\neq 0$. Suppose that the slope of $L$ is greater than zero and let $h\in (P_1\oplus T_1,L)$ be such that it doesn't factor through $g$. Since the finite-dimensional modules of slope zero separate the preprojective modules from those of slope greater than zero, $h$ factors through some direct sum of finite-dimensional modules of slope zero. One of these modules $T$ is such that $GT\neq 0$.
\end{proof}

We gather together the facts we need in order to show that every finite-dimensional module of slope zero over a canonical algebra of tubular type is either in the image of $F_1:\Mod\text{-}A\rightarrow \Mod\text{-}A[X]$ or in the image of $F_2:\Mod\text{-}A\rightarrow \Mod\text{-}A[X]$.

From now on, let $m$ be the rank of the tube $\mcal{T}$ containing $X$. The quasi-simples at the mouth of $\mcal{T}$ are $X, \tau^{-1}X,\ldots, \tau^{-(m-1)}X$. Note that if $1\leq p<m$ and $i\in \N$ then $\Hom_A(X,\tau^{-p}X[i])=0$. So, in particular $F_1\tau^{-p}X[i]\cong F_0\tau^{-p}X[i]$.

A finite-dimensional module $M$ over a tame hereditary algebra is \textbf{regular} if $(M,P)=0$ for all preprojective $P$ and $(E,M)=0$ for all preinjective $E$.

\begin{lemma}\label{restrictiontoAregular}
If $M=(M_0,M_1,\Gamma_M)\in\Mod\text{-}A[X]$ is finite-dimensional, indecomposable and has slope zero then $M_1$ is regular. Moreover either $M=(0,M_1,0)$ with $M_1$ indecomposable (and regular) or $M_1$ is a sum of finite-dimensional modules of the form $X[i]$.
\end{lemma}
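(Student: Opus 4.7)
The plan is to prove the two assertions in turn, pulling the slope zero hypothesis on $M$ back through the adjunctions $F_0\dashv r\dashv F_1$ noted just before the lemma.

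For the regularity of $M_1$, the idea is to combine two adjunction computations with a structural input. For any preprojective $A$-module $P$, I would first observe that $F_1(P)=(\Hom_A(X,P),P,\mathrm{id})=(0,P,0)=F_0(P)$, since $\Hom_A(X,P)=0$ (there are no maps from the regular module $X$ to a preprojective over the tame hereditary $A$). Invoking Ringel's description of the module category of the tubular algebra $A[X]$ \cite{Ringeltub}, I would argue that $F_0(P)\in\mcal{P}_0$ for every preprojective $A$-module $P$ and that $F_0(E)\in\mcal{Q}_0$ for every preinjective $A$-module $E$. Then the adjunction $r\dashv F_1$ gives
\[ \Hom_A(M_1,P) \;=\; \Hom_{A[X]}(M,F_1 P) \;=\; 0 \]
for every preprojective $P$ (using $(M,\mcal{P}_0)=0$), and the adjunction $F_0\dashv r$ gives
\[ \Hom_A(E,M_1) \;=\; \Hom_{A[X]}(F_0 E,M) \;=\; 0 \]
for every preinjective $E$ (using $(\mcal{Q}_0,M)=0$). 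Over tame hereditary $A$ the first vanishing rules out preprojective direct summands of $M_1$ and the second rules out preinjective summands, so $M_1$ is regular.

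For the ``moreover'' clause I would decompose $M_1=M_1^{T_X}\oplus M_1'$, where $M_1^{T_X}$ collects the indecomposable summands of $M_1$ lying in the tube $T_X$ of $A$ containing $X$ and $M_1'$ collects the summands in other tubes. Pairwise Hom-orthogonality of distinct tubes forces $\Hom_A(X,M_1')=0$, so $\Gamma_M$ lands in the direct summand $\Hom_A(X,M_1^{T_X})$ of $\Hom_A(X,M_1)$. This produces the $A[X]$-module decomposition
\[ M \;=\; (M_0,M_1^{T_X},\Gamma_M) \,\oplus\, F_0(M_1'), \]
and indecomposability of $M$ forces one summand to vanish. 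If $M_0=0$ we land in the first case with $M=(0,M_1,0)$ and $M_1$ indecomposable; if $M_0\neq 0$ the first summand is nonzero so $F_0(M_1')=0$, and $M_1=M_1^{T_X}$ is a direct sum of modules in the tube containing $X$.

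The hard part will be the structural input in the first step: verifying that $F_0$ sends the preprojective component of $\mod-A$ into $\mcal{P}_0$ and the preinjective component into $\mcal{Q}_0$. I expect to deduce this from the identity $\tau^{-1}_{A[X]}(F_0 N)=F_0(\tau^{-1}_A N)$ (and its $\tau$-dual), which holds for an indecomposable non-projective $A$-module $N$ whenever $\Hom_A(X,\tau^{-1}_A N)=0$; iterated from the projectives of $A$ this gives the claim, using that the regular module $X$ is Hom-orthogonal to the preprojective and preinjective components of $A$. Once this is in place, the remaining arguments above are a direct combination of adjunction and tube orthogonality.
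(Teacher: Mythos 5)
Your argument for the \emph{moreover} clause is correct and is actually more explicit than what the paper records: the paper's proof stops after establishing that $M_1$ is regular, and the decomposition $M=(M_0,M_1^{T_X},\Gamma_M)\oplus F_0(M_1')$ using Hom-orthogonality of distinct tubes is exactly the right way to finish. Your reduction of regularity to the two adjunction identities $\Hom_A(M_1,P)\cong\Hom_{A[X]}(M,F_1P)$ and $\Hom_A(Q,M_1)\cong\Hom_{A[X]}(F_0Q,M)$ is also the same skeleton the paper uses, and your observation that $F_1P=F_0P=(0,P,0)$ for preprojective $P$ is fine.

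The gap is in the structural input for the preinjective case, and it is a real one. You propose to prove $F_0(E)\in\mcal{Q}_0$ for $E$ preinjective by iterating the identity $\tau^{-1}_{A[X]}(F_0 N)=F_0(\tau^{-1}_A N)$ (and its $\tau$-dual), ``using that the regular module $X$ is Hom-orthogonal to the preprojective and preinjective components of $A$.'' That orthogonality is false in the direction you need: $\Hom_A(X,E)\neq 0$ for $E$ preinjective. Indeed, by AR duality $\Ext_A(X,E)\cong D\Hom_A(E,\tau X)=0$ because $E$ is preinjective and $\tau X$ is regular, so $\langle \udim X,\udim E\rangle_A=\dim\Hom_A(X,E)$, and this is strictly positive since $\udim X$ is a positive multiple of the radical generator of $A$ and preinjectives have positive defect. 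So the hypothesis under which your $\tau$-commutation identity holds fails for preinjectives, and the iteration cannot get started. There is also a conceptual obstruction: $F_0(E)$ is in general \emph{not} preinjective over $A[X]$ (it only has positive, typically finite, slope), so no $\tau$-iteration from injectives of $A[X]$ can reach it. The paper instead uses precisely the non-vanishing $\Hom_A(X,E)\neq 0$ together with $\Ext_A(X,E)=0$ to compute directly (via the Euler form) that $(0,E,0)$ has slope strictly greater than $0$, hence lies in $\mcal{Q}_0$. You should replace your $\tau$-argument for the preinjective case with this Euler-form computation; for the preprojective case your $\tau^{-1}$-iteration from the projectives is acceptable (here $\Hom_A(X,\tau^{-1}_A P)=0$ genuinely holds), and the paper simply asserts this fact without proof.
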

\begin{proof}
Suppose $P$ is a preprojective $A$-module, so $\Hom(X,P)=0$. Then $\Hom_A(rM,P)\cong\Hom_{A[X]}(M,F_1P)=0$ since $F_1P=(0,P,0)\in \mcal{P}_0$.

Note that if $Q$ is preinjective over $A$ then $(0,Q,0)$ has slope greater than zero. This is because $\langle\underline{\dim}X,\underline{\dim}Q\rangle =\dim\Hom_A(X,Q)-\dim\Ext_A(X,Q)=\dim\Hom_A(X,Q)\neq 0$, since $X$ is a regular module. Then $\Hom_A(Q,rM)\cong\Hom_{A[X]}(F_0Q,M)=0$ since $F_0Q\in \mcal{Q}_0$.

So for all preprojective $A$-modules $P$, $\Hom_A(rM,P)=0$ and for all preinjective $A$-modules $Q$, $\Hom_A(Q,rM)=0$. Thus $rM=M_1$ is regular.

For the final part, suppose that $M_1=L\oplus K$ where $L$ is a direct sum of $A$-modules of the form $X[i]$ and $K$ is a direct sum of regular modules not of the form $X[i]$. Then $\Hom_A(X,K)=0$. So $(M_0,M_1,\Gamma_M)$ decomposes as a direct sum of $(0,K,0)$ and $(M_0,L,\Gamma_M)$.
\end{proof}

\begin{lemma}\label{Easyass}

\begin{enumerate}
\item When $m\geq 2$, for each $i\in\N$ and $1\leq p<m$,
\[\xymatrix@C=0.38cm{
  0 \ar[r] & \scalebox{0.85}{$F_0\tau^{-p}X[i]$} \ar[rr] && \scalebox{0.85}{$F_0\tau^{-p}X[i+1]\oplus F_0\tau^{-(p+1)}X[i-1]$} \ar[rr] && \scalebox{0.85}{$F_0\tau^{-(p+1)}X[i] \ar[r]$} & 0 }\] is an almost split exact sequence, where $\tau^{-(p+1)}X[0]=0$.
\item For each $i\in \N$,
\[\xymatrix@C=0.4cm{
  0 \ar[r] & F_1X[i] \ar[rr] && F_1X[i+1]\oplus F_0\tau^{-1}X[i-1] \ar[rr] && F_0\tau^{-1}X[i] \ar[r] & 0 }\] is an almost split exact sequence, where $\tau^{-1}X[0]=0$.
\end{enumerate}
\end{lemma}
\begin{proof}
Apply \cite[XV 1.6]{ss3} to the almost split sequence $0\rightarrow \tau^{-p}X[i]\rightarrow \tau^{-p}X[i+1]\oplus \tau^{-(p+1)}X[i-1]\rightarrow \tau^{-(p+1)}X[i]\rightarrow 0$.
\end{proof}

%

The following lemma is most likely well known but since we couldn't find a reference, we include a proof.

\begin{lemma}\label{otherass}
For each $i\in\N$,
\[\xymatrix@C=0.5cm{
  0 \ar[r] & F_0X[i] \ar[rr] && F_1X[i]\oplus F_0X[i+1] \ar[rr] && F_1X[i+1] \ar[r] & 0 }\] is an almost split exact sequence.
\end{lemma}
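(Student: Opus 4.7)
The plan is to construct the sequence as a pushout, verify it is short exact and non-split with indecomposable endpoints, and then identify it with the Auslander--Reiten sequence ending at $F_1 X[i+1]$.

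There is a natural transformation $\eta: F_0 \Rightarrow F_1$ with components $\eta_M = (0, \mathrm{id}_M): (0, M, 0) \to (\Hom_A(X, M), M, \mathrm{id})$. Combined with the canonical inclusion $\iota: X[i] \hookrightarrow X[i+1]$ this yields a commutative square in $\Mod-A[X]$ with vertical sides $F_0\iota$ and $F_1\iota$. Since $X$ is quasi-simple at the base of a homogeneous tube we have $\Hom_A(X, X[j]) \cong k$ for all $j \geq 1$, and the map $\Hom_A(X, X[i]) \to \Hom_A(X, X[i+1])$ induced by $\iota$ is an isomorphism. A direct component-wise calculation in $\overline{\mathrm{Rep}}(X)$ then shows that this square is a pushout, and the associated short exact sequence of the pushout is precisely the one claimed. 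Both endpoints are indecomposable because $F_0, F_1$ are fully faithful, and the sequence is non-split by Krull--Schmidt applied to the middle term.

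To show the sequence is almost split, note first that $F_1 X[i+1]$ is non-projective for $i \geq 1$: the only indecomposable projective $A[X]$-module of the form $F_1 M$ is $P_0 = F_1 X$. Hence a unique Auslander--Reiten sequence ends at $F_1 X[i+1]$. Applying the Auslander--Reiten duality of irreducible morphisms to Lemma~\ref{Easyass} at index $i$, which provides an irreducible morphism $F_1 X[i+1] \to F_0 X[i]$, yields an irreducible morphism $F_1 X[i] = \tau F_0 X[i] \to F_1 X[i+1]$. The natural map $\eta_{X[i+1]}: F_0 X[i+1] \to F_1 X[i+1]$ is also irreducible: it lies in the radical of $\Hom$ (it is not an isomorphism, since the two modules have different vertex-$0$ dimensions), and a factorisation analysis shows it is not in $\rad^2$. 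Dimension-counting then forces the middle term of the AR sequence ending at $F_1 X[i+1]$ to be $F_1 X[i] \oplus F_0 X[i+1]$, with $\tau F_1 X[i+1] = F_0 X[i]$, matching the constructed sequence.

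The main obstacle will be proving the irreducibility of $\eta_{X[i+1]}$ and ruling out further irreducible morphisms into $F_1 X[i+1]$; this will require a careful case analysis of possible indecomposable intermediate modules, using the concrete structure of the tube containing $X$ together with the description $\Hom_{A[X]}(N, F_1 X[i+1]) = \Hom_A(N_1, X[i+1])$ provided by the adjunction $r \dashv F_1$.
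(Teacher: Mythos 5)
Your pushout construction of the sequence is a genuinely different and rather elegant starting point: you build the exact sequence directly as the pushout of $\eta_{X[i]}\colon F_0 X[i]\to F_1 X[i]$ along $F_0\iota$, using that $\Hom_A(X,\iota)$ is an isomorphism for a quasi-simple $X$ in a homogeneous tube, and the Krull--Schmidt argument for non-splitness is fine. You also correctly extract the irreducibility of $F_1 X[i]\to F_1 X[i+1]$ --- though note this comes straight from reading off a component of the left minimal almost split map in Lemma~\ref{Easyass}; no separate appeal to AR duality is required.

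The real issue is that the remaining steps, which you flag as ``the main obstacle,'' are precisely where the content of the lemma lives, and they are not carried out. First, the claim that $\eta_{X[i+1]}\colon F_0 X[i+1]\to F_1 X[i+1]$ is irreducible is asserted via ``a factorisation analysis shows it is not in $\rad^2$,'' which is not an argument. (The paper gets this for free only at $i=1$, where $F_0X[1]$ is the radical of the indecomposable projective $F_1X[1]$; for $i>1$ that shortcut is unavailable, which is exactly why the paper inducts.) Second, even granting both irreducibilities, all you can conclude is that $F_1 X[i]\oplus F_0 X[i+1]$ is a direct summand of the middle term $E$ of the AR sequence ending at $F_1X[i+1]$. ``Dimension-counting'' cannot then force $E = F_1 X[i]\oplus F_0 X[i+1]$ and $\tau F_1X[i+1]\cong F_0X[i]$, because you would need to know $\tau F_1X[i+1]$ (or equivalently rule out further summands of $E$) in advance --- and $\tau F_1X[i+1]\cong F_0X[i]$ is part of what the lemma asserts. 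Having a non-split sequence with indecomposable ends whose middle maps are componentwise irreducible does not by itself identify it as the AR sequence. The paper's induction sidesteps both difficulties: at each stage it already knows the $\tau$-orbit picture from Lemma~\ref{Easyass} and the inductive hypothesis, and uses \cite[IV~3.8]{Ass1} to convert ``irreducible into $F_0X[i]$'' data into a finite list of possible $\tau N$'s, which pins down the AR sequence starting at $F_0X[i]$ without ever needing an a priori irreducibility proof for $\eta_{X[i+1]}$.
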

\begin{proof}
We prove this by induction on $i$.

Suppose $i=1$. First note that the embedding of $X[1]$ into $X[2]$ remains irreducible in $A[X]$ after applying $F_0$ and the canonical embedding of $F_0X[1]$ into $F_1X[1]$ is irreducible since it is the embedding of the radical of an indecomposable projective.

Suppose that $N$ is an indecomposable non-projective module and that $f:F_0X[1]\rightarrow N$ is irreducible. By \cite[IV 3.8]{Ass1}, there exists an irreducible map from $\tau N$ to $F_0X[1]$. So by \ref{Easyass}, $\tau N\cong F_1X[2]$ if $m=1$ and $\tau N = F_0\tau^{-(m-1)}X[2]$ otherwise. In either case \ref{Easyass} implies $N\cong F_0X[2]$. It now remains to remark that $\Hom_{A[X]}(F_0X[1],F_1X[1])\cong\Hom_A(X[1],X[1])$ and $\Hom_{A[X]}(F_0X[1],F_0X[2])\cong\Hom_A(X[1],X[2])$ are both one dimensional and that the cokernel of the left minimal almost split map from $F_0X[1]$ to $F_1X[1]\oplus F_0X[2]$ is $F_1X[2]$.

Now suppose that we have proved the assertion of the lemma for all $i\leq n$. Suppose that $N$ is an indecomposable non-projective module and that $f:F_0X[n+1]\rightarrow N$ is irreducible. Then, as before, there is an irreducible map from $\tau N$ to $F_0X[n+1]$. So, by \ref{Easyass}, if $m=1$ then $\tau N\cong F_0X[n]$ or $\tau N\cong F_1X[n+2]$, and, if $m\neq 1$ then $\tau N\cong F_0 X[n]$ or $\tau N\cong F_0\tau^{m-1}X[n+2]$. If $\tau N\cong F_0 X[n]$ then, by the induction hypothesis, $N\cong F_1 X[n+1]$. If $m=1$ and $\tau N\cong F_1X[n+2]$, or, if $m\neq 1$ and $\tau N\cong F_0\tau^{m-1}X[n+2]$ then $N\cong F_0X[n+2]$.

It remains now to note that every map from $F_0X[n+1]$ to $F_1X[n+1]$ factors though the canonical embedding and that the spaces of irreducible morphisms $\text{Irr}_{A[X]}(F_0X[n+1],F_0X[n+2])$ and $\text{Irr}_{A}(X[n+1],X[n+2])$ are isomorphic.
\end{proof}

\begin{lemma}\label{GammaMembedding}
If $M\in\Mod\text{-}A[X]$ is indecomposable and not injective then $\Gamma_M$ is an embedding.
\end{lemma}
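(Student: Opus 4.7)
The plan is to prove the contrapositive: if $\Gamma_M$ has a nonzero kernel, then $M$ splits off a copy of the simple $S_\infty := (k,0,0)$ at the extension vertex, and since $S_\infty$ is injective this contradicts $M$ being indecomposable and not injective.

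First I would check that $N := (\ker \Gamma_M, 0, 0)$ is a submodule of $M = (M_0,M_1,\Gamma_M)$ in $\overline{\mathrm{Rep}}(X)$. The candidate inclusion is the pair $(\iota_0, 0)$, where $\iota_0$ is the inclusion of $\ker \Gamma_M$ into $M_0$; the commuting square required in $\overline{\mathrm{Rep}}(X)$ is trivial because $\Gamma_M \circ \iota_0 = 0$ by the definition of the kernel. From the action formula displayed just before \ref{F0intfunctorwithdefimage}, one sees that any pair $(0,\delta)$ with $\delta \in \ker \Gamma_M$ is annihilated by every generator of the radical of $A[X]$ and fixed by the idempotent at $\infty$; hence $N$ decomposes as a direct sum of copies of $S_\infty$, one for each element of a $k$-basis of $\ker \Gamma_M$.

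Next I would verify that $S_\infty$ is injective in $\Mod-A[X]$. The indecomposable injective $I_\infty$ is the $k$-dual of the left projective $A[X]\cdot e_\infty$; a direct calculation with the matrix form of $A[X]$ shows that $A[X]\cdot e_\infty = k\cdot e_\infty$ is one-dimensional, so $I_\infty$ is also one-dimensional and therefore equal to $S_\infty$. Equivalently, the extension vertex $\infty$ is a source in the quiver of $A[X]$, so no right-module ``arrow'' of $A[X]$ ends at $\infty$.

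Combining these two facts, the embedding $N \hookrightarrow M$ of an injective module splits, giving $M \cong N \oplus M'$ for some $A[X]$-module $M'$. If $\ker \Gamma_M \neq 0$, then $N \neq 0$ and indecomposability of $M$ forces $M' = 0$ and $M \cong N \cong S_\infty^n$, making $M$ injective, contrary to hypothesis. Therefore $\ker \Gamma_M = 0$, i.e., $\Gamma_M$ is an embedding. The only step that is not purely formal is the identification $I_\infty = S_\infty$, but this is immediate from the one-point extension structure of $A[X]$.
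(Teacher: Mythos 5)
Your proof is correct and follows essentially the same idea as the paper's: the kernel of $\Gamma_M$ sits entirely at the extension vertex and is a direct sum of copies of the simple injective $(k,0,0)$, contradicting indecomposability and non-injectivity of $M$. The only difference is that the paper obtains the splitting $M\cong(\ker\Gamma_M,0,0)\oplus(M_0/\ker\Gamma_M,M_1,\Gamma_M)$ directly by choosing a vector-space complement of $\ker\Gamma_M$ in $M_0$, whereas you derive it from the injectivity of $S_\infty$; both routes are fine, the paper's being marginally more elementary since it avoids the auxiliary verification that $S_\infty$ is injective.
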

\begin{proof}
Note that $(M_0,M_1,\Gamma_M)\cong (\ker\Gamma_M,0,0)\oplus (M_0/\ker\Gamma_M,M_1,\Gamma_M)$ and the only indecomposable $A[X]$-module with $M_1=0$ is the simple injective module.
\end{proof}

\begin{proposition}\label{fdinimF0F1}
If $M$ is an indecomposable finite-dimensional module of slope zero then $M =F_0N$ or $M=F_1N$ for some indecomposable regular module $N$.
\end{proposition}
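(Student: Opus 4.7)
The plan is to split on whether $M_0 = 0$. If $M_0 = 0$, then $M = (0,M_1,0) = F_0(M_1)$, and the second clause of Lemma~\ref{restrictiontoAregular} guarantees that $M_1$ is indecomposable and regular; setting $N := M_1$ completes this case.

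If $M_0 \neq 0$, Lemma~\ref{restrictiontoAregular} gives $M_1 \cong \bigoplus_{i=1}^{r} X[a_i]$ with every indecomposable summand lying in the tube of $X$. Since $M_1 \neq 0$, $M$ is not the simple injective $(k,0,0)$, so Lemma~\ref{GammaMembedding} applies and gives $\Gamma_M$ injective. The unit of the adjunction $r \dashv F_1$ then yields an embedding
\[
\eta_M = (\Gamma_M, \mathrm{id}_{M_1}) : M \hookrightarrow F_1 M_1 = \bigoplus_{i=1}^{r} F_1 X[a_i].
\]
I would aim to show $r=1$ and $\eta_M$ is surjective, so that $M \cong F_1 X[a_1]$. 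The key tool is the almost split sequences of Lemmas~\ref{Easyass} and~\ref{otherass}, which together show that $\{F_0 X[i], F_1 X[i] : i \geq 1\}$ is closed under the Auslander--Reiten translation wherever $\tau^{\pm 1}$ is defined, and which determine all irreducible morphisms within this set. Since $\tau$ preserves slope, and since Lemma~\ref{restrictiontoAregular} applied to each AR-translate of $M$ keeps the ``tube of $X$'' property of the $A$-part, the whole AR-orbit of $M$ consists of slope-$0$ indecomposables with the same form; I would then trace through the AR-quiver guided by the two lemmas to identify $M$ with some $F_1 X[a]$.

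The principal obstacle in the second case is placing $M$ rigorously within the AR-component described by Lemmas~\ref{Easyass} and~\ref{otherass}: a priori there could be a slope-$0$ tube in $\mcal{T}_0$ whose indecomposables have $A$-part in the tube of $X$ but which fails to intersect the list $\{F_0 X[i], F_1 X[i]\}$. I would rule this out either by a dimension-vector argument (invoking the fact that in a canonical algebra of tubular type, an indecomposable of slope $0$ is determined up to parameter by its dimension vector together with its tube) or by combining $\eta_M$ with the indecomposability of $M$ and the local structure of $\mathrm{End}(F_1 M_1)$ to force the image of $\eta_M$ to coincide with a summand $F_1 X[a_i]$, from which $r=1$ and $\eta_M$ an isomorphism follow.
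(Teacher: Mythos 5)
Your first case ($M_0 = 0$) is correct and matches the paper, and your setup for the second case is also right: you use Lemma~\ref{GammaMembedding} to get $\Gamma_M$ injective, the adjunction to embed $M$ into $F_1M_1$, and Lemma~\ref{restrictiontoAregular} to write $F_1M_1 \cong \bigoplus_i F_1X[a_i]$. But you then explicitly flag, and do not close, the crucial gap: why must $M$ itself be one of the $F_1X[j]$, rather than some other slope-$0$ indecomposable whose $A$-part lies in the tube of $X$? Your two suggested repairs are speculative: the ``dimension-vector determines the module up to parameter'' line only works once you already know $M$ lies in the distinguished rank-two tube $\{F_0X[i],F_1X[i]\}$, which is exactly the point at issue, and the $\End(F_1M_1)$ route is not developed at all. ``Tracing through the AR-quiver'' is the right instinct but, as stated, has no anchor: you need a base case and a descent, and you have neither.

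The paper closes this gap with a short minimality argument that you should note. From the embedding $M \hookrightarrow \bigoplus_i F_1X[a_i]$, project onto the summands to get maps $f = (f_0,f_1) : M \to F_1X[j]$; since $\Gamma_M$ is injective and $M_0 \neq 0$, at least one such projection has $f_0 \neq 0$. Choose $j$ \emph{minimal} with this property. If $j > 1$ and $M \not\cong F_1X[j]$, then $f$ is not a split epi, so it factors through the right minimal almost split map $F_0X[j] \oplus F_1X[j-1] \to F_1X[j]$ from Lemma~\ref{otherass}. Any map $M \to F_0X[j]$ has zero $0$-component, so the factorisation forces a map $M \to F_1X[j-1]$ with nonzero $0$-component, contradicting minimality of $j$. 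If $j = 1$, note $F_1X[1]$ is indecomposable projective with radical $F_0X[1]$; a non-surjective $f$ would factor through $F_0X[1]$, again killing $f_0$, so $f$ is surjective, hence split, hence an isomorphism by indecomposability of $M$. This is a genuinely different, and sharper, use of the almost split sequences than the AR-orbit tracing you sketch, and it is the step your proposal is missing.
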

\begin{proof}
Since $M:=(M_0,M_1,\Gamma_M)$ is not injective either $M_0=0$ and $M$ is in the image of $F_0$, or, $M_0\neq 0$ and, using \ref{GammaMembedding} and the adjunction $\Hom_A(rM,M_1)\cong \Hom_{A[X]}(M,F_1M_1)$, there is an embedding of $M$ into $F_1M_1$. From \ref{restrictiontoAregular} we know that if $M_0\neq 0$ then $F_1M_1$ is a direct sum of modules of the form $F_1X[i]$. Thus there is a non-zero map $f=(f_0,f_1)$ from $M$ to some $F_1X[i]$. Take $i$ minimal such that $f_0\neq 0$. First suppose $i>1$. If $M$ is not isomorphic to $F_1X[i]$ then $f$ factors through the right minimal almost split map from $F_0X[i]\oplus F_1X[i-1]$ as in \ref{otherass}. Since $f_0\neq 0$ there is a non-zero map from $M$ to $F_1 X[i-1]$ contradicting the minimality of $i$. Thus $M\cong F_1X[i]$.

Now suppose $i=1$. Then $F_1X[1]$ is an indecomposable projective and $F_0X[1]$ is its radical. Thus either $f:M\rightarrow F_1X[1]$ is surjective or $f$ factors through $F_0X[1]$. If $f:M\rightarrow F_1X[1]$ is surjective then it is split since $F_1X[1]$ is projective. So, since $M$ is indecomposable, $M\cong F_1X[1]$. The second possibility can't occur since $f_0\neq 0$.
\end{proof}

\begin{proposition}
Every indecomposable pure-injective module of slope zero is either in the image of $F_0$ or in the image of $F_1$. Note that all preprojective modules are in the image of $F_0$.
\end{proposition}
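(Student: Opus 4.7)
The plan is to assemble the pieces already in place. The three ingredients I will need are: (i) that $\mcal{E}_0$ and $\mcal{E}_1$ are definable subcategories of $\Mod-A[X]$, which is exactly Remark \ref{F0intfunctorwithdefimage} and Proposition \ref{F1intfunctorwithdefimage}; (ii) that every indecomposable finite-dimensional module of slope zero lies in $\mcal{E}_0\cup\mcal{E}_1$, which is Proposition \ref{fdinimF0F1}; (iii) that the finite-dimensional indecomposables of slope zero are dense in $\mcal{C}_0$ inside $\Zg_{A[X]}$, which is Proposition \ref{fddenseinslopezero}.

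The argument will then be essentially one sentence. Since $\mcal{E}_0\cap\pinj_{A[X]}$ and $\mcal{E}_1\cap\pinj_{A[X]}$ are each closed in $\Zg_{A[X]}$ (definable subcategories correspond exactly to closed subsets of the Ziegler spectrum), their union is a closed subset of $\Zg_{A[X]}$. By (ii) this closed set contains every finite-dimensional indecomposable of slope zero, and by (iii) these points are dense in $\mcal{C}_0$; hence $\mcal{C}_0\subseteq(\mcal{E}_0\cup\mcal{E}_1)\cap\pinj_{A[X]}$. This is precisely the content of the main claim: every indecomposable pure-injective of slope zero is in the image of $F_0$ or of $F_1$.

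For the parenthetical remark about preprojectives, I would argue as follows. The indecomposable projective at the extension vertex is $P_\omega\cong F_1(X)=(k,X,\mathrm{id})$; since its radical $X$ is a regular $A$-module (sitting in the homogeneous tube at whose base we extended), $P_\omega$ does not lie in the preprojective component of $A[X]$. The remaining indecomposable projectives of $A[X]$ are all of the form $F_0(P_i^A)$ for $P_i^A$ an indecomposable projective $A$-module, and because $F_0$ is exact and fully faithful, iterated application of $\tau^{-1}$ to these stays within $\mcal{E}_0$; hence the whole preprojective component of $A[X]$ lies in the image of $F_0$.

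The density/closedness argument is the heart of the proof and carries no real obstruction once (i)--(iii) are invoked; the only slightly delicate point is the remark about preprojectives, which relies on the fact that extending by a regular module ensures that the new projective $P_\omega$ does not enter the preprojective component, a standard feature of one-point extensions of tame hereditary algebras by regular modules (as in Ringel's book).
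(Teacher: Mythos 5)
Your argument for the main claim is exactly the paper's: the images of $F_0$ and $F_1$ are definable subcategories by \ref{F0intfunctorwithdefimage} and \ref{F1intfunctorwithdefimage}, so $(\mcal{E}_0\cup\mcal{E}_1)\cap\pinj_{A[X]}$ is Ziegler-closed; it contains every finite-dimensional indecomposable of slope zero by \ref{fdinimF0F1}; and these points are dense in $\mcal{C}_0$ by \ref{fddenseinslopezero}, so $\mcal{C}_0\subseteq\mcal{E}_0\cup\mcal{E}_1$. The paper leaves the parenthetical remark about preprojectives unjustified; your sketch is essentially right, though the step that iterated $\tau^{-1}$ stays in $\mcal{E}_0$ should be backed up by the observation that $\Hom_A(X,P)=0$ for $P$ preprojective over $A$, so $F_0P=F_1P$ and the one-point extension machinery carries the almost split sequences of the preprojective component of $A$ to those of $A[X]$.
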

\begin{proof}
The pure-injective modules of slope zero form a closed subset $\mcal{C}_0$ of the Ziegler spectrum. By \ref{fddenseinslopezero}, the finite-dimensional indecomposable modules of slope zero are dense in this set. We have shown, \ref{F0intfunctorwithdefimage}, \ref{F1intfunctorwithdefimage}, that the images of $F_0$ and $F_1$ are definable subcategories. Let $\mcal{A}_0$ and $\mcal{A}_1$ be their images in $\Zg_{A[X]}$ intersected with $\mcal{C}_0$, note that both $\mcal{A}_0$ and $\mcal{A}_1$ are closed. Since all finite-dimensional points of slope zero are contained in either $\mcal{A}_0$ or $\mcal{A}_1$ the closure of $\mcal{A}_0\cup\mcal{A}_1$ is $\mcal{C}_0$. Thus $\mcal{A}_0\cup\mcal{A}_1=\mcal{C}_0$ as required.
\end{proof}

We now use the above to provide an algorithm which given pp-pairs
\[
\phi/\psi,\phi_1/\psi_1,\ldots,\phi_n/\psi_n
\]
answers whether there is a preprojective or module of slope zero in $(\phi/\psi)$ but not in $\bigcup_{i=1}^n(\phi_i/\psi_i)$.

\begin{proposition}\label{algforslopezero}
Let $A$ be a tame hereditary algebra and $A[X]$ be a canonical algebra of tubular type both over a recursive algebraically closed field. There is an algorithm which given pp-pairs $\phi/\psi,\phi_1/\psi_1,\ldots,\phi_n/\psi_n$ answers whether there is an indecomposable pure-injective module $N$ such that $N\in\left(\phi/\psi\right)\cap(\mcal{E}_0\cup\mcal{E}_1)$ and $N\notin \bigcup_{i=1}^n \left(\phi_i/\psi_i\right)\cap(\mcal{E}_0\cup\mcal{E}_1)$.
\end{proposition}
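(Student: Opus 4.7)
The plan is to reduce each of the two cases (membership in $\mcal{E}_0$ versus $\mcal{E}_1$) to a decision problem about the Ziegler spectrum of the tame hereditary algebra $A$, which is decidable by the theorem of Geisler \cite{Geislerthesis} and Prest \cite{decmiketameher}; the algebraically closed recursive field $k$ automatically has a splitting algorithm, so the hypotheses of that theorem are satisfied.

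First I would observe that the existence condition splits as the disjunction over $i\in\{0,1\}$ of the following question: does there exist $N\in\mcal{E}_i\cap \pinj_{A[X]}$ such that $N\in(\phi/\psi)$ and $N\notin(\phi_j/\psi_j)$ for all $j$? Indeed, if $N\in\mcal{E}_i$ then $N\in\mcal{E}_0\cup\mcal{E}_1$, so the condition $N\notin(\phi_j/\psi_j)\cap(\mcal{E}_0\cup\mcal{E}_1)$ reduces to $N\notin(\phi_j/\psi_j)$. Thus it suffices to produce an algorithm for each $i$ separately and return the disjunction.

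For fixed $i\in\{0,1\}$, the functor $F_i:\Mod-A\to\Mod-A[X]$ is a full and faithful interpretation functor whose essential image is the finitely axiomatisable definable subcategory $\mcal{E}_i$ (by \ref{F0intfunctorwithdefimage} and \ref{F1intfunctorwithdefimage}), so it induces an equivalence $\Mod-A\simeq\mcal{E}_i$ and in particular a bijection $\pinj_A\to\mcal{E}_i\cap\pinj_{A[X]}$. Since $F_i$ is an interpretation functor given by explicit pp-data (the pp-formulas defining the underlying sort and the pp-formulas describing the action of each $r\in A[X]$), there is an effective translation $\alpha/\beta\mapsto F_i^\sharp\alpha/F_i^\sharp\beta$ from pp-pairs in $\mcal{L}_{A[X]}$ to pp-pairs in $\mcal{L}_A$ such that for every $M\in\Mod-A$,
\[
\alpha(F_iM)/\beta(F_iM)=F_i^\sharp\alpha(M)/F_i^\sharp\beta(M)
\]
as $k$-vector spaces; see \cite[\S 18]{PSL}. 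Applying this translation, the question for $\mcal{E}_i$ becomes: is it the case that
\[
(F_i^\sharp\phi/F_i^\sharp\psi)\not\subseteq\bigcup_{j=1}^n(F_i^\sharp\phi_j/F_i^\sharp\psi_j)
\]
in $\zg_A$? This is decidable by Geisler--Prest.

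The main obstacle is making the translation $F_i^\sharp$ effective from the data already in hand. For $F_0$ this is straightforward: $\mcal{E}_0$ consists of those $A[X]$-modules killed by the idempotent $e_0=\bigl(\begin{smallmatrix}0&0\\0&1\end{smallmatrix}\bigr)$, and on such modules the $A[X]$-action reduces to the $A$-action on the $A$-summand, so each matrix of ring elements appearing in a pp-formula translates directly. For $F_1$, one uses the explicit description $F_1M=(\Hom_A(X,M),M,\mathrm{id})$ together with the finite generating tuple $t_1,\ldots,t_n$ of $X$ introduced in the proof of \ref{F1intfunctorwithdefimage} to compute, for each $r\in A[X]$, a pp-definable formula representing the action of $r$ on $F_1M$ in terms of the $A$-action on $M$; the axioms for $\mcal{E}_1$ given there then guarantee that the translation behaves correctly. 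Once $F_0^\sharp$ and $F_1^\sharp$ are in hand, running the Geisler--Prest decision procedure for each $i$ and returning the disjunction of the two outputs gives the required algorithm.
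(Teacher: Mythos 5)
Your proposal is correct and takes essentially the same approach as the paper: split into the $\mcal{E}_0$ and $\mcal{E}_1$ cases, translate the pp-pairs along the interpretation functors $F_0$ and $F_1$ into pp-pairs in $\mcal{L}_A$, and then invoke the Geisler--Prest decidability result for the tame hereditary algebra $A$. The paper's own proof is more terse (it simply asserts the effective translation along $F_i$ without spelling out the construction of $F_i^\sharp$), but the route is the same; your additional remarks about $k$ being algebraically closed guaranteeing a splitting algorithm, and about why the conjunction $N\notin(\phi_j/\psi_j)\cap(\mcal{E}_0\cup\mcal{E}_1)$ reduces to $N\notin(\phi_j/\psi_j)$ once $N\in\mcal{E}_i$, are correct and worth stating explicitly.
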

\begin{proof}
If $N\in \mcal{P}_0\cup\mcal{C}_0$ then $N\in \mcal{E}_0$ or $N\in \mcal{E}_1$. We now describe how to effectively check whether there is an $N\in \mcal{E}_0$ such that $N\in\left(\phi/\psi\right)$ and $N\notin \bigcup_{i=1}^n\left(\phi_i/\psi_i\right)$. Given a pp-pair $\phi/\psi$ we can effectively translate this to a pp-pair $\sigma/\tau$ in the language of $A$-modules via $F_0$ such that $N\in \left(\sigma/\tau\right)$ if and only if $F_0N\in(\phi/\psi)$. Since the common theory of modules over a tame hereditary algebra is decidable (\cite{Geislerthesis}) we can answer whether one Ziegler basic open set over $A$ is contained in a finite union of other specified Ziegler open sets over $A$.

The argument is exactly the same for $\mcal{E}_1$.
\end{proof}

We now deal with the preinjective component and modules of slope $\infty$. If $R$ is a canonical algebra of tubular type (respectively a tubular algebra) then $R^{\text{op}}$ is also a canonical algebra of the same tubular type (respectively a tubular algebra) as $R$. For canonical algebras, this can be easily seen from the original definition of canonical algebra \cite[pg 161]{Ringeltub} and for tubular algebras is \cite[5.2.3]{Ringeltub}.

We have shown \ref{fddenseinslopezero} that, for a tubular algebra $R$, the finite-dimensional indecomposable $R$-modules of slope zero are dense in $\mcal{C}_0$. Using elementary duality, this implies the same result for $\mcal{C}_\infty$.

\begin{proposition}\label{fddenseinfinity}
Let $R$ be a tubular algebra. The finite-dimensional indecomposable $R$-modules of slope infinity are dense in the definable subcategory of modules of slope infinity.
\end{proposition}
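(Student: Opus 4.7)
My plan is to deduce this from Proposition \ref{fddenseinslopezero} by combining Herzog's elementary duality with $k$-duality on finite-dimensional modules. The key observation is that for a canonical algebra of tubular type $R$ the opposite algebra $R^{\text{op}}$ is again a canonical algebra of the same tubular type, so Proposition \ref{fddenseinslopezero} applied to $R^{\text{op}}$ already gives density of the finite-dimensional indecomposable left $R$-modules of slope zero in $\mcal{C}_0 \subseteq {_R}\Zg = \Zg_{R^{\text{op}}}$.

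The first step is to establish the mirror of Remark \ref{dualityC0Cinf}: under the lattice isomorphism of Theorem \ref{lattice iso ziegler}, $\mcal{C}_\infty$ in $\Zg_R$ corresponds to $\mcal{C}_0$ in ${_R}\Zg$. This can be verified either by symmetric versions of the arguments underlying \ref{dualityC0Cinf} (via Lemmas \ref{effrepfun}, \ref{efftenfun} and Remark \ref{kdualslope}), or more efficiently by applying \ref{dualityC0Cinf} in the setting of $R^{\text{op}}$ and invoking the idempotence of $D$. Granted this, a basic open $(\phi/\psi) \cap \mcal{C}_\infty$ of $\mcal{C}_\infty$ is non-empty if and only if the ``dual'' basic open $(D\psi/D\phi) \cap \mcal{C}_0$ of $\mcal{C}_0 \subseteq {_R}\Zg$ is non-empty, because $D$ is a lattice isomorphism of open sets.

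Now, given any non-empty basic open $U = (\phi/\psi) \cap \mcal{C}_\infty$, density at slope zero applied to $R^{\text{op}}$ supplies a finite-dimensional indecomposable left $R$-module $L$ of slope zero with $L \in (D\psi/D\phi)$. Set $M := L^{*}$; this is a finite-dimensional indecomposable right $R$-module, and by Remark \ref{kdualslope} its slope is $1/0 = \infty$, so $M \in \mcal{C}_\infty$. Finally, Lemma \ref{kdualandelemdual} applied to $\phi/\psi$ and $M$ gives that $\phi/\psi$ is open on $M$ if and only if $D\psi/D\phi$ is open on $M^{*} \cong L$ (using $L^{**} \cong L$ for finite-dimensional $L$); the latter holds by construction, so $M \in U$. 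The main obstacle is cleanly verifying the mirror of Remark \ref{dualityC0Cinf}; once that is in hand, everything else is a direct translation across the duality.
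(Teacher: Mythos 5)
Your proposal is correct and takes essentially the same route as the paper: apply Proposition \ref{fddenseinslopezero} to left modules (i.e.\ to $R^{\mathrm{op}}$, and one only needs that the opposite of a tubular algebra is tubular, not that it be canonical), then transfer between $\mcal{C}_\infty \subseteq \Zg_R$ and $\mcal{C}_0 \subseteq {_R}\Zg$ via elementary duality and Lemma \ref{kdualandelemdual}. The only cosmetic difference is that the paper dualizes a concrete witness $N$ by taking $\Hom(N,k)$ to land directly in $(D\psi/D\phi) \cap \mcal{D}_0$, whereas you first pass through the abstract non-emptiness-transfer argument for the lattice isomorphism $D$ before dualizing back with $L^*$ — both come down to the same ingredients.
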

\begin{proof}
Suppose that $N\in \mcal{C}_\infty$ and $N\in\left(\phi/\psi\right)$. By \ref{kdualandelemdual} and \ref{dualityC0Cinf}, $\Hom(N,k)\in \mcal{D}_0$ and $\Hom(N,k)\in \left(D\psi/D\phi\right)$. Thus, by \ref{fddenseinslopezero}, there is an indecomposable finite-dimensional module $M$ of slope zero such that $M\in\left(D\psi/D\phi\right)$. Now $\Hom(M,k)\in \left(\phi/\psi\right)$ and is an indecomposable finite-dimensional module of slope infinity.
\end{proof}

If $\mcal{E}$ is a definable subcategory of $\Mod\text{-}R$ such that $\mcal{E}:=\{N\in\Mod\text{-}R \st \phi_i(N)=\psi_i(N) \text{ for all } i\in I\}$ then let $D\mcal{E}$ be the definable subcategory of $\Mod\text{-}R^{\text{op}}$ such that $D\mcal{E}:=\{N\in\Mod\text{-}R^{\text{op}} \st D\phi_i(N)=D\psi_i(N) \text{ for all } i\in I\}$.

\begin{lemma}
Let $A$ be a tame hereditary algebra and $R:=A[X]$ be a canonical algebra of tubular type. Every indecomposable pure-injective module of slope infinity and every indecomposable preinjective module over $R^{\text{op}}$ is in $D\mcal{E}_0\cup D\mcal{E}_1$.
\end{lemma}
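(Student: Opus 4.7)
The plan is to transfer the preceding slope-zero result over $R$ to the slope-infinity setting over $R^{\text{op}}$ by combining $k$-duality with elementary duality, in the same spirit as the proof of \ref{fddenseinfinity}. The previous proposition yields $\mcal{P}_0 \cup \mcal{C}_0 \subseteq \mcal{E}_0 \cup \mcal{E}_1$ inside $\Zg_R$, with the preprojective points lying in $\mcal{E}_0$. Since $\mcal{E}_0$ and $\mcal{E}_1$ are definable subcategories of $\Mod-R$, their elementary duals $D\mcal{E}_0, D\mcal{E}_1$ are by construction definable subcategories of $\Mod-R^{\text{op}}$, and so their pure-injective parts form a closed subset of $\Zg_{R^{\text{op}}}$.

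I first treat the finite-dimensional case. Let $M$ be a finite-dimensional indecomposable $R^{\text{op}}$-module that is either preinjective or of slope $\infty$. Then $M^* = \Hom_k(M,k)$ is a finite-dimensional indecomposable right $R$-module with $M^{**} \cong M$, and by \ref{kdualslope} it is preprojective (respectively of slope $0$) over $R$. The preceding proposition therefore gives $M^* \in \mcal{E}_0$ in the preinjective case and $M^* \in \mcal{E}_0 \cup \mcal{E}_1$ in the slope-$\infty$ case. Applying \ref{kdualandelemdual} pp-pair by pp-pair to a defining family for $\mcal{E}_i$ and using the definition of $D\mcal{E}_i$ via the elementary duals of those pp-pairs yields the equivalence $M^* \in \mcal{E}_i$ if and only if $M \in D\mcal{E}_i$; hence $M \in D\mcal{E}_0 \cup D\mcal{E}_1$. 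For an infinite-dimensional indecomposable pure-injective $N \in \mcal{C}_\infty$ over $R^{\text{op}}$, I invoke \ref{fddenseinfinity} applied to the tubular algebra $R^{\text{op}}$: the finite-dimensional indecomposables of slope $\infty$ are dense in $\mcal{C}_\infty$. Since $D\mcal{E}_0 \cup D\mcal{E}_1$ is closed in $\Zg_{R^{\text{op}}}$ and, by the preceding paragraph, already contains every such finite-dimensional module, density forces $N$ into it.

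The argument is essentially bookkeeping; the only mild subtlety is that two a priori distinct dualities are in play, namely $k$-duality on modules and elementary duality on pp-pairs, and \ref{kdualandelemdual} is precisely the bridge between them. I do not anticipate any serious obstacle, provided one restricts $k$-duality to finite-dimensional modules so that the identification $M^{**} \cong M$ holds without complication, and appeals to denseness (rather than to $k$-duality) to reach the infinite-dimensional pure-injectives of slope $\infty$.
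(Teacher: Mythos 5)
Your proof is correct and follows the same route as the paper: transfer the finite-dimensional case via $k$-duality and \ref{kdualslope}, then pass to the infinite-dimensional pure-injectives of slope $\infty$ by density (\ref{fddenseinfinity}) and closedness of $D\mcal{E}_0\cup D\mcal{E}_1$. You make explicit the step the paper leaves implicit, namely that $M^*\in\mcal{E}_i$ iff $M\in D\mcal{E}_i$ for finite-dimensional $M$, via \ref{kdualandelemdual}, which is a helpful clarification but not a different argument.
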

\begin{proof}
This is true for all finite-dimensional modules by \ref{kdualslope}.
So by \ref{fddenseinfinity}, this is also true for all indecomposable pure-injectives of slope infinity.
\end{proof}

If $R$ is a canonical algebra of tubular type then let $\mcal{E}'_0$ (respectively $\mcal{E}'_1$) be $D\mcal{E}_1$ (respectively $D\mcal{E}_0$) where $\mcal{E}_0$ and $\mcal{E}_1$ are the images of $F_0$ (respectively $F_1$) as functors to $\Mod\text{-}R^{\text{op}}$.

\begin{lemma}\label{Herdual0toinf}
Let $A$ be a tame hereditary algebra and $R:=A[X]$ be a canonical algebra of tubular type. Let $\phi/\psi,\phi_1/\psi_1,\ldots ,\phi_n/\psi_n$ be pp-pairs over $R$.
There exists an indecomposable pure-injective $R$-module $N\in\mcal{E}_0\cup\mcal{E}_1$ such that
$N\in \left(\phi/\psi\right)$ and $N\notin \bigcup\left(\phi_i/\psi_i\right)$
if and only if there exists a indecomposable pure-injective  $R^{\text{op}}$-module $L\in \mcal{E}'_0\cup \mcal{E}'_1$ such that
$L\in \left(D\psi/D\phi\right)$ and $L\notin \bigcup\left(D\psi_i/D\phi_i\right)$.
\end{lemma}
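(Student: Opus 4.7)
The plan is to deduce the statement purely formally from Herzog's lattice isomorphism \ref{lattice iso ziegler} between the open sets of $\Zg_R$ and of ${_R}\Zg$, together with the identities $\mcal{E}'_0=D\mcal{E}_1$ and $\mcal{E}'_1=D\mcal{E}_0$. Set
\[\mcal{V}:=(\mcal{E}_0\cup\mcal{E}_1)\cap\Zg_R\qquad\text{and}\qquad\mcal{V}':=(\mcal{E}'_0\cup\mcal{E}'_1)\cap{_R}\Zg;\]
each is Ziegler-closed as a finite union of closed subsets. The existence of an indecomposable pure-injective $N$ as in the statement is precisely the non-containment
\[(\phi/\psi)\not\subseteq\mcal{V}^c\cup\bigcup_{i=1}^n(\phi_i/\psi_i),\]
and, in the same way, the existence of $L$ is the non-containment
\[(D\psi/D\phi)\not\subseteq(\mcal{V}')^c\cup\bigcup_{i=1}^n(D\psi_i/D\phi_i).\]

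Next I would show that the lattice isomorphism of \ref{lattice iso ziegler} sends $\mcal{V}^c$ to $(\mcal{V}')^c$. The Herzog correspondence matches each closed set $\mcal{E}\cap\Zg_R$ with $D\mcal{E}\cap{_R}\Zg$ on definable subcategories; since $\mcal{E}'_{1-i}=D\mcal{E}_i$ by definition and the lattice iso preserves finite unions of closed sets (equivalently, finite intersections of open sets), one obtains
\[D\bigl((\mcal{E}_0\cap\Zg_R)\cup(\mcal{E}_1\cap\Zg_R)\bigr)=(\mcal{E}'_1\cap{_R}\Zg)\cup(\mcal{E}'_0\cap{_R}\Zg)=\mcal{V}',\]
whence, taking complements, $D(\mcal{V}^c)=(\mcal{V}')^c$.

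Finally, apply the lattice isomorphism to the first non-containment. Using the basic-open identifications $(\phi/\psi)\mapsto(D\psi/D\phi)$ and $(\phi_i/\psi_i)\mapsto(D\psi_i/D\phi_i)$ from \ref{lattice iso ziegler}, the image $\mcal{V}^c\mapsto(\mcal{V}')^c$ established in the previous step, and the preservation of $\subseteq$ and of finite unions, the first non-containment is equivalent to the second, yielding the biconditional. The only step requiring care is the identification $D(\mcal{V})=\mcal{V}'$, which depends on the compatibility of Herzog's closed-set isomorphism with the construction $\mcal{E}\mapsto D\mcal{E}$ appearing earlier in this section; the rest of the argument is then a routine manipulation of lattice operations.
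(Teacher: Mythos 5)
Your argument is correct, and it takes a genuinely different (more abstract) route than the paper. The paper proves the equivalence by passing to the $k$-dual module $\Hom(L,k)$: given a witness $L$ on one side, it observes via Lemma \ref{kdualandelemdual} that $L^*$ opens the dualized pp-pairs and lies in the dual definable class, and then — since $L^*$ need not itself be indecomposable — it invokes that a definable subcategory opening a pp-pair and closing others must contain an indecomposable pure-injective doing the same. Your proof sidesteps the module-level manipulation entirely: you reformulate each side as a non-containment of open sets and push that through Herzog's lattice isomorphism \ref{lattice iso ziegler}, which automatically preserves inclusions and finite unions. This buys you two things: you never leave the lattice of open sets, so the question of whether the $k$-dual of a witness is indecomposable never arises; and the only non-formal input is the identification $D(\mcal{V})=\mcal{V}'$, which you correctly isolate as the one step needing care. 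Be aware, though, that the paper's wording of the definition of $\mcal{E}'_0,\mcal{E}'_1$ (it speaks of $\mcal{E}_0,\mcal{E}_1$ ``as functors to $\Mod-R^{\text{op}}$'') is ambiguous about which module category the pieces live in, so your assertion that $\mcal{E}'_{1-i}=D\mcal{E}_i$ holds ``by definition'' glosses over a genuine notational mismatch in the source; the underlying mathematics — that Herzog's closed-set isomorphism matches $\mcal{E}\cap\Zg_R$ with $D\mcal{E}\cap{_R}\Zg$ and that the relevant slope-zero/preprojective and slope-infinity/preinjective pieces are dual — is what makes both your proof and the paper's work.
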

\begin{proof}
Suppose $L\in\mcal{E}'_0\cup \mcal{E}'_1$ is such that $L\in\left(\phi/\psi\right)$ and $L\notin\bigcup_{i=1}^n\left(\phi_i/\psi_i\right)$.

Then $\Hom(L,k)$ is in the definable category $\mcal{E}_0\cup\mcal{E}_1$ and $\Hom(L,k)$ opens $D\psi/D\phi$ but not $D\psi_i/D\phi_i$ for any $1\leq i\leq n$. Therefore there is an indecomposable pure-injective module $M$ over $R^{\text{op}}$ in $\mcal{E}'_0\cup \mcal{E}'_1$ which is in $\left(D\psi/D\phi\right)$ but not in $\bigcup_{i=1}^n\left(D\psi_i/D\phi_i\right)$.

The reverse direction is proved symmetrically.
\end{proof}

\begin{cor}\label{algforslopeinfinity}
Let $A$ be a tame hereditary algebra and $A[X]$ be a canonical algebra of tubular type over a recursive algebraically closed field. There is an algorithm which given pp-pairs $\phi/\psi,\phi_1/\psi_1,\ldots,\phi_n/\psi_n$ answers whether there is an indecomposable pure-injective module $N$ such that $N\in\left(\phi/\psi\right)\cap(\mcal{E}'_0\cup \mcal{E}'_1)$ and $N\notin \bigcup_{i=1}^n \left(\phi_i/\psi_i\right)\cap(\mcal{E}'_0\cup \mcal{E}'_1)$.
\end{cor}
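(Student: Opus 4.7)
The plan is to reduce the question to Proposition \ref{algforslopezero} applied to the opposite algebra, via the elementary duality machinery set up in the preceding lemma.

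First I would note that $R^{\text{op}}$ is again a canonical algebra of tubular type, and from our presentation of $R=A[X]$ we can effectively write down a presentation (basis and structure constants) of $R^{\text{op}}$ and identify it as a one-point extension $A'[X']$ of some tame hereditary star algebra $A'$ by a quasi-simple module $X'$ at the base of a homogeneous tube. Consequently, Proposition \ref{algforslopezero} applies to $R^{\text{op}}$: there is an algorithm which, given pp-pairs $\sigma/\tau,\sigma_1/\tau_1,\ldots,\sigma_n/\tau_n$ in the language of $R^{\text{op}}$-modules, decides whether there exists an indecomposable pure-injective $L$ over $R^{\text{op}}$ with $L\in (\sigma/\tau)\cap(\mcal{E}_0\cup\mcal{E}_1)$ and $L\notin\bigcup_{i=1}^n (\sigma_i/\tau_i)\cap(\mcal{E}_0\cup\mcal{E}_1)$, where here $\mcal{E}_0,\mcal{E}_1$ denote the images of the embedding functors $F_0,F_1$ into $\Mod\text{-}R^{\text{op}}$.

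Next I would invoke the preceding lemma which, using Prest--Herzog elementary duality, translates the existence of $N\in \mcal{E}'_0\cup\mcal{E}'_1$ with $N\in(\phi/\psi)$ and $N\notin\bigcup(\phi_i/\psi_i)$ into the existence of $L\in \mcal{E}_0\cup\mcal{E}_1$ (over $R^{\text{op}}$) with $L\in(D\psi/D\phi)$ and $L\notin\bigcup(D\psi_i/D\phi_i)$, since by definition $\mcal{E}'_0=D\mcal{E}_1$ and $\mcal{E}'_1=D\mcal{E}_0$.

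Finally, since computing the elementary duals $D\phi$ of pp-formulas is a purely syntactic operation on the matrix presenting $\phi$ (see the definition of $D$ given in the background section, and as used in the proof of \ref{efftenfun}), we can effectively produce $D\phi,D\psi,D\phi_i,D\psi_i$. Feeding these duals, together with the presentation of $R^{\text{op}}$, into the algorithm from Proposition \ref{algforslopezero} applied to $R^{\text{op}}$ answers the translated question, and by the lemma this answer coincides with the answer to the original question. The only non-routine point is the effectivity of the passage $R\mapsto R^{\text{op}}$ and the identification of $R^{\text{op}}$ as a one-point extension of the required form, which follows from the symmetry in the defining data of a canonical algebra of tubular type noted at the start of Section \ref{1pointext}.
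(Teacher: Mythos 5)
Your proof is correct and is exactly the argument the paper intends (the corollary is stated without an explicit proof, and your reduction---dualize the pp-pairs via $D$, pass to $R^{\text{op}}=A'[X']$, and invoke Proposition~\ref{algforslopezero} for $R^{\text{op}}$, with the preceding lemma providing the equivalence---is the natural way to assemble the pieces the paper has just set up). You also correctly read the preceding lemma despite what appears to be a slip in the paper's statement of it: as written the paper swaps the roles of $\mcal{E}_0\cup\mcal{E}_1$ and $\mcal{E}'_0\cup\mcal{E}'_1$ between the two sides, whereas the definitions dictate (and your version uses) that $\mcal{E}'_0\cup\mcal{E}'_1$ lives in $\Mod\text{-}R$ and $\mcal{E}_0\cup\mcal{E}_1$ in $\Mod\text{-}R^{\text{op}}$.
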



\section{Corrections to a paper of Harland and Prest}\label{corharlandprest}
Throughout this section, unless explicitly indicated, $R$ will be a tubular algebra.

The main work of this section is to show, \ref{accallslopes}, that Corollary 8.8 of \cite{modirrslope} is false for all tubular algebras and to provide, \ref{replcor}, a best possible replacement. Although the replacement of corollary 8.8 will not be used in later sections, many statements in this section will be needed.

We start by correcting some statements in section 3 of \cite{modirrslope}.



In \cite{modirrslope}, it is claimed that for $a,b\in\R^+$ the set of modules which are direct limits of finite-dimensional modules with slope in the interval $(a,b)$ is a definable subcategory of $\Mod\text{-}R$, in \cite{modirrslope} this is called the set of modules supported on $(a,b)$. This is false for $a,b\in\Q^+$. The problem is, that although the set of modules supported on $(a,b)$ is a definable category by \cite[2.1]{LenzingHomtransfer}, it is not a definable subcategory of $\Mod\text{-}R$.

Using the terminology of \cite{modirrslope}, the set of modules lying over $(a,b)$, i.e. those modules $M$ such that $M\otimes P^*=0$ for all finite-dimensional $P$ of slope less than or equal to $a$ and $(P,M)=0$ for all finite-dimensional $P$ of slope greater than or equal to $b$, is a definable subcategory by definition. However, the description of the indecomposable pure-injectives lying over $(a,b)$ given in \cite{modirrslope} is not correct.  The following proposition corrects this.

\begin{proposition}\label{descriptionofDab} Let $R$ be a tubular algebra and let $a,b\in \R^+$. The smallest definable subcategory, $\mcal{D}^+_{(a,b)}$, containing all finite-dimensional indecomposable modules with slope in $(a,b)$ contains exactly all indecomposable pure-injectives with slope in $(a,b)$ plus,
 \begin{enumerate}
 \item the Pr\"ufer and generic modules of slope $a$ if $a\in\Q^+$
 \item the adic and generic modules of slope $b$ if $b\in \Q^+$
 \item all indecomposable pure-injective modules of slope $a$ if $a\notin \Q^+$
 \item all indecomposable pure-injective modules of slope $b$ if $b\notin \Q^+$
 \end{enumerate}
\end{proposition}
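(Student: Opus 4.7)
The plan is to compute the Ziegler closure in $\Zg_R$ of the set $\mcal{X}$ of finite-dimensional indecomposables of slope in $(a,b)$, since this closure equals $\mcal{D}^+_{(a,b)}\cap\pinj_R$ by the correspondence between closed subsets of $\Zg_R$ and definable subcategories of $\Mod-R$. For each rational $q\in(a,b)$, Proposition~\ref{closedsubsets} applied inside $\mcal{C}_q$ immediately gives $\mcal{C}_q\subseteq\mcal{D}^+_{(a,b)}$: every quasi-simple $S\in\mcal{T}_q$ has its ray and coray contained in $\mcal{X}$, producing $S[\infty]$ and $\widehat{S}$ in the closure by (1) and (2) of \ref{closedsubsets}, and the infinitely many fd points then produce $\mcal{G}_q$ by (3). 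For irrational $r\in(a,b)$, and for an irrational endpoint $r\in\{a,b\}$, pick rationals $q_n\in(a,b)$ with $q_n\to r$; each pp-pair instance defining slope $r$ is closed on $\mcal{G}_{q_n}$ for cofinitely many $n$ (by the definition of slope for $\mcal{G}_{q_n}$), so the non-principal ultraproduct $M:=\prod_n\mcal{G}_{q_n}/\mcal{F}$ lies in $\mcal{C}_r$ and is non-zero. Theorem~\ref{onepointuptoelemeqatirrational} then forces all of $\mcal{C}_r$ into $\mcal{D}^+_{(a,b)}$.

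For a rational left endpoint $a$, the same ultraproduct trick with $q_n\searrow a$ again produces a non-zero slope-$a$ module $M$ in $\mcal{D}^+_{(a,b)}$; \cite[8.10]{KrauseGeneric} gives $\mcal{G}_a\in\mcal{D}^+_{(a,b)}$ since $\mcal{G}_a$ is in the closure of every non-zero slope-$a$ module. To place each individual Pr\"ufer $S[\infty]$ in the closure I would refine the ultraproduct: for a fixed quasi-simple $S$ of slope $a$, choose fd modules $T_n$ of slope $q_n\in(a,b)$ situated (via the tubular relationship between $\mcal{T}_{q_n}$ and $\mcal{T}_a$) so that the pp-type of a generator of $S[\infty]$ arises as the ultraproduct of the pp-types of generators of the $T_n$. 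The rational right endpoint $b$ is handled by elementary duality: \ref{kdualslope} and \ref{dualityC0Cinf} carry $\mcal{D}^+_{(a,b)}$ on $\Mod-R$ to $\mcal{D}^+_{(1/b,1/a)}$ on $\Mod-R^{\textnormal{op}}$ while swapping Pr\"ufer with adic and the two endpoints, so the argument at $a$ produces adics and generic (not Pr\"ufers) at $b$.

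For the reverse inclusion, any $N\in\pinj_R$ outside the claimed list lies in an open neighbourhood of $\Zg_R$ missing $\mcal{X}$. If $N$ has slope $c<a$, pick a fd $P$ of slope $c$ with $N\otimes P^*\neq 0$; by \ref{efftenfun} this is a pp-pair open on $N$, and the definition of slope gives $M\otimes P^*=0$ for every fd $M$ of slope strictly greater than $c$, so closed on all of $\mcal{X}$. The case $c>b$ is symmetric via \ref{effrepfun} and a coherent functor $(Q,-)$ for fd $Q$ of slope $c$. At a rational left endpoint $a$, each adic $\widehat{S}$ is separated from $\mcal{X}$ by a coray pp-pair isolating $\widehat{S}$ in $\mcal{C}_a$ (Section~\ref{Zieglerrationalslope}): since $\mcal{X}$ contains no fd module of slope $a$, and the coray pp-pair at $S$ is closed on fd modules of slope $\neq a$ (as one checks from the explicit coray morphisms used to present it), the associated open set meets neither $\mcal{X}$ nor the Pr\"ufer/generic points already placed in the closure. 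Dually at $b$ for Pr\"ufers.

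The main obstacle is placing each individual Pr\"ufer $S[\infty]$ (not merely $\mcal{G}_a$) in $\mcal{D}^+_{(a,b)}$ at a rational left endpoint. The naive direct limit $\varinjlim S[n]=S[\infty]$ takes place entirely among slope-$a$ modules, so it is not directly available inside $\mcal{D}^+_{(a,b)}$; and the bare ultraproduct above only produces ``some'' slope-$a$ module, without identifying its indecomposable summands. Making this step precise requires exploiting the tubular structure relating tubes at different rational slopes, matching pp-types along a shift from the ray at $S\in\mcal{T}_a$ into rays at slopes $q_n\searrow a$, and that is where the principal technical content of the proof lies.
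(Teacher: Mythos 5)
Your proposal identifies the correct difficult point but does not close it, and the difficulty stems from choosing the wrong endpoint at which to argue directly. You propose to argue directly at a rational left endpoint $a$ (place the Pr\"ufers $S[\infty]$ there) and then deduce the right endpoint $b$ by elementary duality. The paper does the opposite, and the asymmetry matters: by \ref{directunions}, every indecomposable of positive slope $r$ is the directed union of its finite-dimensional submodules of slope in $(r-\epsilon,r]$, and by \ref{homsbetweeninfandfdatq} no finite-dimensional indecomposable of slope $b$ embeds into an adic or the generic of slope $b$. These two facts make the direct argument at a rational $b$ immediate: $\widehat{S}$ and $\mathcal{G}_b$ are directed unions of direct sums of finite-dimensional indecomposables of slope strictly inside $(b-\epsilon,b)$, so they land in $\mcal{D}^+_{(a,b)}$ with no further work. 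There is no dual statement about directed unions from above, which is exactly why your attempt to produce Pr\"ufers at $a$ by ``refining the ultraproduct'' and ``matching pp-types along a shift'' does not get off the ground --- you note yourself that this is ``where the principal technical content of the proof lies,'' which is an admission that the step is not proved. The fix is to run the directed-union argument for left modules at slope $1/a$ (producing the left adics and generic there), and then to use \ref{kdualandelemdual} together with the fact that each right Pr\"ufer module at $a$ is a pure submodule of the $k$-dual of a left adic at $1/a$; since $k$-duality sends the definable subcategory generated by finite-dimensional left modules of slope in $(1/b,1/a)$ into $\mcal{D}^+_{(a,b)}$ and definable subcategories are closed under pure submodules, the Pr\"ufers at $a$ follow. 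Your duality step therefore points in the wrong direction: it should carry the easy $b$-side conclusion to $a$, not the (unestablished) $a$-side conclusion to $b$.

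Two smaller remarks. First, for the reverse inclusion at a rational endpoint $a$ you invoke a coray pp-pair isolating $\widehat{S}$ inside $\mcal{C}_a$ and assert it is ``closed on fd modules of slope $\neq a$,'' but a pp-pair that isolates $\widehat{S}$ within $\mcal{C}_a$ can still be open on finite-dimensional modules of other slopes, so this does not by itself exclude $\widehat{S}$. The paper avoids the issue by observing at the outset that $\mcal{D}^+_{(a,b)}$ is contained in the definable subcategory cut out by the conditions $(P,M)=0$ for fd $P$ of slope $\geq b$ and $M\otimes P^*=0$ for fd $P$ of slope $\leq a$ (each generating module visibly satisfies these), and then showing that the forward inclusion exhausts the pure-injectives of that larger subcategory; this gives the exclusion of the adics at $a$ for free, since $\widehat{S}\otimes ([i]S)^*\neq 0$. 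Second, your ultraproduct-plus-\ref{onepointuptoelemeqatirrational} argument for irrational endpoints is a valid alternative to the paper's appeal to \ref{nearirrational}, and your handling of interior rational slopes via \ref{closedsubsets} is fine; those parts are sound, just less economical than the paper's use of \ref{directunions} for interior slopes.
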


Before we prove the proposition, we need a few lemmas and to recall a few facts. The following remark will hold for general rings if $\Hom(M,k)$ is replaced an appropriate notion of dual module (see \cite{PSL} for notions of dual modules in the general context). We will only need it for finite-dimensional $k$-algebras.

\begin{remark}
Let $R$ be a finite-dimensional $k$-algebra. If $\{M_i \st i\in I\}$ is a set of finite-dimensional left modules and $L\in\langle M_i \st i\in I \rangle$ then $\Hom(L,k)\in\langle \Hom(M_i,k) \st i\in I\rangle$.
\end{remark}

\begin{lemma}\label{directunions}\cite[3.4]{modirrslope}
Suppose that $M$ is an indecomposable module of positive slope $r > 0$. Then for every $\epsilon > 0$, $M$
is the directed union of its finite-dimensional submodules in $(r - \epsilon,r]$, indeed in $(r - \epsilon,r)$ in
the case that r is irrational.
\end{lemma}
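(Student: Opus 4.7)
The plan is to combine three ingredients: the fact that every module over a finite-dimensional algebra is a directed union of its finite-dimensional submodules; the upper slope bound on such submodules coming from the slope condition on $M$; and a pushing-up argument using Ringel's separation of tubular families for the lower bound.

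Since $R$ is finite-dimensional, every $R$-module is the directed union of its finite-dimensional submodules. As $M$ has slope $r$, we have $\Hom(Q,M)=0$ for every $Q\in\mcal{Q}_r$, so indecomposable summands of any finite-dimensional submodule of $M$ lie in $\mcal{P}_0\cup\bigcup_{q'\le r}\mcal{T}_{q'}$; in other words, they are preprojective or of slope at most $r$. When $r$ is irrational, the slope of any finite-dimensional indecomposable is rational (being a ratio of integer-valued pairings), so these summands must have slope strictly less than $r$, giving the upper bound in the irrational case.

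For the lower bound, fix a rational $q\in(r-\epsilon,r)$. Given a finite-dimensional submodule $N\subseteq M$, decompose its indecomposable summands into good ones (slope in $(q,r]$) and bad ones (preprojective or of slope $\le q$, i.e., lying in $\mcal{P}_q$). The goal is to enlarge $N$ to a finite-dimensional $N'\subseteq M$ with only good summands. For each bad summand $L$, I would use Ringel's separating theorem---morphisms between finite-dimensional modules from $\mcal{P}_q$ into $\mcal{Q}_q$ factor through $\mathrm{add}(\mcal{T}_q)$---combined with the presentation $M=\varinjlim M_j$ over finite-dimensional submodules of $M$, to factor the inclusion $L\hookrightarrow M$ through a finite-dimensional submodule of $M$ whose summands all have slope in $(q,r]$. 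Doing this for each of the finitely many bad summands of $N$ and combining with the good part of $N$ produces the required $N'$; once this is in place, the finite-dimensional submodules of $M$ with summands in $(r-\epsilon,r]$ (respectively $(r-\epsilon,r)$) form a cofinal subfamily, hence their union is $M$.

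The main obstacle is the factorization step just described: Ringel's separation applies to finite-dimensional targets in $\mcal{Q}_q$, whereas here the target $M$ is infinite-dimensional, and the finite-dimensional submodule $M_j$ through which $L\hookrightarrow M$ factors need not have all summands in $\mcal{Q}_q$. One must exploit the slope condition on $M$---in particular $\Hom(M,\mcal{P}_r)=0$, which forbids any nonzero morphism from $M$ to a finite-dimensional module of slope $<r$---to argue that by taking $j$ sufficiently large the image of $L$ in $M_j$ is confined to the slope-$>q$ direct summand of $M_j$. I expect this to be the technical heart of the proof.
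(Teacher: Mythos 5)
The paper does not prove this lemma; it is cited verbatim from Harland and Prest \cite[3.4]{modirrslope}, so there is no in-paper argument to compare against and your sketch must stand on its own. Your upper-bound step is sound: a finite-dimensional indecomposable summand $L$ of a submodule of $M$ cannot lie in $\mcal{Q}_r$, since the inclusion $L\hookrightarrow M$ would contradict $(\mcal{Q}_r,M)=0$, and rationality of slopes of finite-dimensional indecomposables upgrades the bound from ``$\le r$'' to ``$<r$'' when $r$ is irrational. The lower bound, which you yourself flag as the technical heart, is not established. The mechanism you propose --- invoking $\Hom(M,\mcal{P}_r)=0$ to conclude that for $j$ large ``the image of $L$ in $M_j$ is confined to the slope-$>q$ direct summand of $M_j$'' --- is not a valid inference: that slope condition constrains maps \emph{out of} $M$, whereas the question is where a nonzero composite $L\to M_j\to (M_j)_{\mcal{P}_q\cup\mcal{T}_q}$ lands, and such a composite need not extend to a morphism defined on all of $M$, so nothing about $\Hom(M,-)$ is being contradicted. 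Likewise, Ringel's factorization statement concerns maps from $\mcal{P}_q$ into objects of $\mcal{Q}_q$, and the finite-dimensional submodule $M_j$ is not assumed to be a $\mcal{Q}_q$-object.

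The missing step is an argument that the trace in $M$ of the finite-dimensional modules of slope $>q$ is all of $M$; equivalently, that $M$ is torsion for the torsion pair in $\Mod$-$R$ induced by the split torsion pair $(\text{add}\,\mcal{Q}_q,\text{add}(\mcal{P}_q\cup\mcal{T}_q))$ attached to the separating tubular family. Here $(M,\mcal{P}_r)=0$ is exactly the right hypothesis, but used differently than you suggest: it shows $M$ admits no nonzero map to any module cogenerated by $\mcal{P}_q\cup\mcal{T}_q$, hence $M$ is torsion, so $M$ equals the union of its finite-dimensional torsion submodules; those are exactly the ones with all summands in $\mcal{Q}_q$ --- hence, by the upper bound, of slope in $(q,r]$ --- and because $\text{add}\,\mcal{Q}_q$ is closed under quotients this family is directed. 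Your proposal correctly identifies the conceptual frame but does not carry this out, so as written there is a genuine gap.
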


\begin{lemma}\label{homsbetweeninfandfdatq}
Let $M$ be a finite-dimensional indecomposable module of slope $q\in \Q^+$ and let $E$ be a quasi-simple of slope $q$. Then
\begin{enumerate}[(i)]
\item $\Hom(\mcal{G}_q,M)=0$
\item $\Hom(E[\infty],M)=0$
\item $\Hom(M, \mcal{G}_q)=0$
\item $\Hom(M,\widehat{E})=0$
\end{enumerate}
\end{lemma}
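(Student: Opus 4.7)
My plan is to prove (ii) and (iv) directly by inverse-limit computations in the ray and coray, and then to derive (i) and (iii) from these by a Ziegler-closure argument based on the fact, recalled in the proof of \ref{closedzg3}, that by \cite[8.10]{KrauseGeneric} the generic $\mcal{G}_q$ lies in the Ziegler closure of every Pr\"ufer and of every adic of slope $q$.

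For (ii), I will use $E[\infty]=\varinjlim E[i]$, so that $\Hom(E[\infty],M)=\varprojlim\Hom(E[i],M)$. A d\'evissage along a composition series of $M$ in $\mcal{T}_q$, via short exact sequences $0\to M_1\to M\to S\to 0$ with $S$ a quasi-simple top, together with the Hom-Ext long exact sequence, reduces the claim to $\Hom(E[\infty],S)=0$ for every quasi-simple $S$. If $S$ is not in the tube of $E$ this is immediate. Otherwise, since tubes at rational slope in a tubular algebra are standard and hence uniserial, every nonzero map $E[i]\to S$ factors through the quasi-simple top $E[i]/E[i-1]$, and the inclusion $E[i-1]\hookrightarrow E[i]$ lies in the kernel of this quotient; so every transition map $\Hom(E[i],S)\to\Hom(E[i-1],S)$ is zero and the inverse limit vanishes. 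Part (iv) is dual: using $\widehat{E}=\varprojlim[i]E$ and $\Hom(M,\widehat{E})=\varprojlim\Hom(M,[i]E)$, the same d\'evissage reduces to $\Hom(S,\widehat{E})=0$, and the key observation is that the kernel of the projection $[i+1]E\twoheadrightarrow[i]E$ is the quasi-simple socle of $[i+1]E$, so every nonzero map $S\to[i+1]E$ factors through this socle and is killed by the projection.

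For (i), the functor $-\otimes_R M^*$ is defined by a pp-pair by \ref{efftenfun}, so $\{N\in\pinj_R\mid N\otimes_R M^*=0\}$ is Ziegler-closed. Since $M$ is finite-dimensional, $M\cong M^{**}$, and the tensor-hom adjunction yields $\Hom_R(N,M)\cong(N\otimes_R M^*)^*$, so this class equals $\{N\mid\Hom(N,M)=0\}$. By (ii) it contains every Pr\"ufer $E[\infty]$ of slope $q$, and by \cite[8.10]{KrauseGeneric} it therefore contains $\mcal{G}_q$, giving $\Hom(\mcal{G}_q,M)=0$. Part (iii) follows symmetrically: $\Hom_R(M,-)$ is given by the pp-pair $\phi/(\overline{x}=0)$ of \ref{effrepfun}, so $\{N\mid\Hom(M,N)=0\}$ is Ziegler-closed; by (iv) it contains every adic $\widehat{E}$, and again by \cite[8.10]{KrauseGeneric} it contains $\mcal{G}_q$.

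The main obstacle lies in the inverse-limit computations for (ii) and (iv): the ray and coray orientations must be kept consistent with the paper's conventions so that the kernels of the structural inclusions and projections are correctly identified with the appropriate tops and socles of the uniserial tube modules, and the d\'evissage with the long exact sequence has to go through uniformly regardless of which tubes of $\mcal{T}_q$ contain $S$ and $E$.
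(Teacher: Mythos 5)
Your proof is correct, but it takes a genuinely different route from the paper's. The paper simply cites Reiten--Ringel \cite[Theorem 6.4]{InfdimcanalgReitenRingel} as a black box for (i) and (ii), and then obtains (iii) and (iv) from these by $k$-duality: a nonzero $M\to\widehat{E}$ (resp.\ $M\to\mcal{G}_q$) would dualise to a nonzero $E^*[\infty]\to M^*$ (resp.\ $\mcal{G}_{1/q}\to M^*$), where $E^*[\infty]$ is a left Pr\"ufer of slope $1/q$ and $M^*$ is finite-dimensional of slope $1/q$, contradicting (ii) (resp.\ (i)) over the opposite algebra. You instead prove (ii) and (iv) from scratch by the inverse-limit computations $\Hom(E[\infty],S)=\varprojlim\Hom(E[i],S)$ and $\Hom(S,\widehat{E})=\varprojlim\Hom(S,[i]E)$, using standardness and uniseriality of the tubes to see that the transition maps vanish, and a d\'evissage along a quasi-composition series of $M$; and then you deduce (i) from (ii) and (iii) from (iv) by observing that $\{N\mid N\otimes M^*=0\}=\{N\mid\Hom(N,M)=0\}$ and $\{N\mid\Hom(M,N)=0\}$ are Ziegler-closed and invoking Krause \cite[8.10]{KrauseGeneric} (which the paper also uses, but in the proof of \ref{closedzg3} rather than here) to place $\mcal{G}_q$ in the closure of each Pr\"ufer and each adic. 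Both derivation schemes are sound. Yours has the merit of being self-contained for the Pr\"ufer and adic cases rather than relying on the cited Reiten--Ringel theorem, at the price of using the Krause closure result and the standardness of tubes explicitly; the paper's is shorter and leans on the $k$-duality already set up in \ref{kdualandelemdual} and \ref{kdualslope}. Your worry about getting the ray/coray orientations right is well-founded but resolved correctly: with the paper's conventions (the Pr\"ufer $E[\infty]$ a direct union of $E[i]$ with quasi-socle $E$, the adic $\widehat{E}$ an inverse limit of $[i]E$ with quasi-top $E$, as used in the proofs surrounding \ref{closedsubsets}) your identification of the relevant kernels with tops and socles is exactly right.
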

\begin{proof}
The first two statements are a consequence of \cite[Theorem 6.4]{InfdimcanalgReitenRingel}. The right adic modules of slope $q$ are $k$-duals of the left Pr\"ufer modules at slope $1/q$, the generic module of slope $q$ is the $k$-dual of the generic module of slope $1/q$ and $k$-duality sends finite-dimensional indecomposable modules of slope $q$ to finite-dimensional modules of slope $1/q$. Thus, if $\Hom(M,\widehat{E})\neq 0$ (respectively $\Hom(M,\mcal{G}_q)\neq 0$) then $\Hom(E^*[\infty],M^*)\neq 0$ (respectively $\Hom(\mcal{G}_{1/q},M^*)\neq 0$). But this can't happen by parts (i) and (ii).
\end{proof}

\begin{proposition}\label{nearirrational}\cite[3.11]{modirrslope}
Let $\phi/\psi$ be a pp-pair and $r\in \R^+$ irrational. Then the following are equivalent:

\begin{enumerate}
\item there is an $\epsilon>0$ such for all finite-dimensional modules $M$ lying in homogeneous tubes with slope in $(r,r+\epsilon)$, $\phi(M)>\psi(M)$.
\item there is an $\epsilon>0$ such for all finite-dimensional modules $M$ lying in homogeneous tubes with slope in $(r-\epsilon,r)$, $\phi(M)>\psi(M)$.
\item there is an indecomposable pure-injective module $N$ with slope $r$ such that $\phi(N)>\psi(N)$.
\end{enumerate}
\end{proposition}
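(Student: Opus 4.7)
The key tool is Theorem \ref{onepointuptoelemeqatirrational}: for irrational $r$ the definable subcategory $\mcal{D}_r$ admits no non-trivial proper definable subcategories, so any non-zero module of $\mcal{D}_r$ alone determines the openness or closedness of every pp-pair on $\mcal{D}_r$. Both directions of the equivalence will be realised by ultraproducts of finite-dimensional modules whose slopes converge to $r$.

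For $(1)$ or $(2) \Rightarrow (3)$: pick a sequence of finite-dimensional modules $M_n$ lying in homogeneous tubes with slopes $s_n \to r$ from the side supplied by the hypothesis, chosen so that $\phi(M_n) > \psi(M_n)$ for every $n$. Let $\mcal{U}$ be a non-principal ultrafilter on $\N$ and form $M = \prod_n M_n/\mcal{U}$. Because $|\phi/\psi| > 1$ is a first-order sentence (Baur--Monk), Lo\'s's theorem gives $\phi(M) > \psi(M)$. To see $M \in \mcal{D}_r$, recall that $\mcal{D}_r$ is cut out by the vanishing of the coherent functors $-\otimes P^*$ for each finite-dimensional $P \in \mcal{P}_r$ and $(Q,-)$ for each finite-dimensional $Q \in \mcal{Q}_r$, and each such vanishing is equivalent to a pp-pair condition (see \ref{effrepfun} and \ref{efftenfun}). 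For a fixed such $P$ of slope $p < r$ we have $s_n > p$ eventually, so $M_n \otimes P^* = 0$ for cofinitely many $n$, and Lo\'s transfers this to $M$; symmetrically $(Q, M) = 0$ for every such $Q$. Hence $M \in \mcal{D}_r$, so the closed set $\mcal{C}_r$ meets the basic open set $(\phi/\psi)$, giving the required indecomposable pure-injective $N$.

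For $(3) \Rightarrow (2)$ (the argument for $(3) \Rightarrow (1)$ is identical after swapping intervals): assume $N \in \mcal{C}_r$ opens $\phi/\psi$ and suppose for contradiction that $(2)$ fails. Then for every $n$ we can choose a non-zero finite-dimensional $M_n$ in a homogeneous tube with slope $s_n \in (r-1/n,r)$ and $\phi(M_n) = \psi(M_n)$. Form $M = \prod_n M_n/\mcal{U}$ as before: it is non-zero (since each $M_n$ is), lies in $\mcal{D}_r$ by the same verification as above, and satisfies $\phi(M) = \psi(M)$ by Lo\'s applied to the first-order sentence $\forall \bar x\,(\phi(\bar x) \to \psi(\bar x))$. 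Then $\mcal{E} := \{L \in \mcal{D}_r : \phi(L) = \psi(L)\}$ is a non-trivial (it contains $M \ne 0$) proper (it omits $N$) definable subcategory of $\mcal{D}_r$, contradicting Theorem \ref{onepointuptoelemeqatirrational}. The main obstacle throughout is the verification $M \in \mcal{D}_r$: although $\mcal{D}_r$ is cut out by infinitely many pp-pair conditions, the convergence $s_n \to r$ combined with the irrationality of $r$ guarantees that each individual condition holds in cofinitely many of the $M_n$, so Lo\'s can be applied to each one separately.
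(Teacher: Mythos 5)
The paper itself does not prove this proposition; it cites it directly from Harland and Prest as \cite[3.11]{modirrslope}, so there is no in-paper proof to compare against. Your argument is nonetheless logically sound within the framework of this paper: the ultraproduct construction correctly lands in $\mcal{D}_r$, since for each fixed $P\in\mcal{P}_r$ and $Q\in\mcal{Q}_r$ the conditions $-\otimes P^*=0$ and $(Q,-)=0$ are single pp-pair conditions that hold on cofinitely many of the $M_n$ once $s_n\to r$ (indecomposable finite-dimensional modules all have rational slope, so $\mcal{P}_r,\mcal{Q}_r$ are countable and each is handled separately by \L o\'s), and the reduction of $(3)\Rightarrow(1),(2)$ to Theorem \ref{onepointuptoelemeqatirrational} is clean. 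One caveat worth flagging: you derive $(3)\Rightarrow(1),(2)$ from Theorem \ref{onepointuptoelemeqatirrational}, which is \cite[8.5]{modirrslope}, whereas the result you are proving is \cite[3.11]{modirrslope}; in the original source the ``no non-trivial proper definable subcategories'' theorem appears much later and (very plausibly) depends on 3.11 and its companions, so your argument would be circular as a proof in that paper. Since the present paper imports both 3.11 and 8.5 as black boxes, that circularity does no harm here, but it means your proof should be read as a re-derivation of 3.11 from 8.5 rather than an independent proof; it also means your argument buys less than the original, which establishes 3.11 as a step towards 8.5 rather than a consequence of it.
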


\begin{proof}[Proof of \ref{descriptionofDab}]
We give proofs for the case $a,b\in\Q^+$ and the case $a,b\notin\Q^+$.

Note that the definable subcategory $\mcal{D}^+_{(a,b)}$ is contained in the definable subcategory of modules $M$ such that $(P,M)=0$ for all finite-dimensional $P$ of slope greater than or equal to $b$ and $M\otimes P^*=0$ for all finite-dimensional $P$ of slope less than or equal to $a$. We will now show that this definable subcategory is in fact $\mcal{D}^+_{(a,b)}$.

By \ref{directunions}, every indecomposable pure-injective module with slope in $(a,b)$ is in $\mcal{D}_{(a,b)}^+$.

If $b\in\Q^+$, $\epsilon\in \R^+$ and a definable subcategory $\mcal{D}$ of $\Mod\text{-}R$ contains all finite-dimensional indecomposable modules with slope in $(b-\epsilon,b)$ then $\mcal{D}$ contains the generic at $b$ and all adic modules at $b$. This is because any module of slope $b$ is a direct union of direct sums of indecomposable finite-dimensional modules with slope in $(b-\epsilon,b]$ and no finite-dimensional indecomposable module of slope $b$ is a submodule of the generic at $b$ or any adic module at $b$ by \ref{homsbetweeninfandfdatq}.

If $a\in\Q^+$, $\epsilon\in \R^+$ and a definable subcategory $\mcal{D}$ of $\Mod\text{-}R$ contains all finite-dimensional indecomposable modules with slope in $(a,a+\epsilon)$ then $\mcal{D}$ contains the generic at $a$ and all Pr\"{u}fer modules at $a$. This follows from the above paragraph, since each left adic module at $1/a$ is equal to the $k$-dual of some right Pr\"ufer module at $a$, so, by \cite[1.3.16]{PSL}, every right Pr\"ufer module at $a$ is a pure-submodule of the $k$-dual of a left adic module at $1/a$.

Note now that, by \ref{closedsubsets}, if any definable subcategory contains either a Pr\"ufer module at slope $c$ or an adic module at slope $c$ then it also contains the generic module at slope $c$.

Thus if $a, b$ are both rational then $\mcal{D}_{(a,b)}^+$ contains all pure-injective indecomposables with slope in $(a,b)$, the adic and generic modules at $b$ and the Pr\"ufer and generic modules at $a$. These are exactly the pure-injective indecomposable modules $M$ such that $(P,M)=0$ for all finite-dimensional $P$ of slope greater than or equal to $b$ and $M\otimes P^*=0$ for all finite-dimensional $P$ of slope less than or equal to $a$.

If both $a,b$ are irrational then the situation is much simpler. All indecomposable modules of slope $b$ are direct unions of direct sums of finite-dimensional modules with slope in $(a,b)$. In order to deal with the $a$ irrational case we use \ref{nearirrational}, which says that if $\phi/\psi$ is open on some indecomposable pure-injective module of slope $a$ then it is open on all homogeneous tubes with slope in $(a,a+\epsilon)$ for some $\epsilon>0$.  Thus if $\phi/\psi$ is open on some indecomposable pure-injective module of slope $a$ then it is open on some finite-dimensional indecomposable module with slope in $(a,b)$. So the indecomposables pure-injectives of slope $a$ are in the definable subcategory generated by the finite-dimensional indecomposable modules with slope in $(a,b)$.
\end{proof}

\begin{definition}
For $a,b\in \Q^+$, let $\mcal{C}_{(a,b)}:=\mcal{D}^+_{(a,b)}\cap\pinj_R$.
\end{definition}

\subsection{A replacement for corollary 8.8 of \cite{modirrslope}}

We now consider corollary 8.8 of \cite{modirrslope} and prove a replacement.

We say a pp-pair $\phi/\psi$ is \textbf{uniformly open} at $q\in\Q^+$ if $\phi/\psi$ is open on all finite-dimensional indecomposable modules of slope $q$. We say that $\phi/\psi$ is \textbf{uniformly closed} at $q\in\Q^+$ if $\phi/\psi$ is closed on all finite-dimensional indecomposable modules of slope $q$. We say that $q\in\Q^+$ is a \textbf{non-uniform slope} for $\phi/\psi$ if $\phi/\psi$ is neither uniformly open or closed at $q$.

Corollary 8.8 of \cite{modirrslope} states that if $\phi/\psi$ is a pp-pair over a tubular algebra then for all but finitely many $r\in \R^+$, $\phi/\psi$ is either $\phi/\psi$ is open on all indecomposable pure-injective modules of slope $r$ or closed on all indecomposable pure-injective modules of slope $r$. It further states that the set of $r\in\R^+$ for which $\phi/\psi$ is open on all indecomposable pure-injective modules of slope $r$ is the union of finitely many rational points and intervals with rational endpoints. We will show in \ref{accallslopes} that for all tubular algebras this is not the case and in fact, that for all $p\in \Q_0^\infty$ there exists a pp-pair $\phi/\psi$ such that $p$ is an accumulation point of the set of slopes $q\in\Q^+$ where $\phi/\psi$ is neither uniformly open nor uniformly closed at $q$.

We first prove the following which is a best possible replacement for corollary 8.8 of \cite{modirrslope}.

\begin{theorem}\label{replcor}
Let $\phi/\psi$ be a pp-pair and $S$ be the set of slopes $q\in\Q^+$ where $\phi/\psi$ is neither uniformly open nor uniformly closed at $q$. The set $S$ has finitely many accumulation points in $\R$, and all these accumulation points are in $\Q$.
\end{theorem}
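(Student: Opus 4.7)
My plan is to reformulate the problem in terms of the coherent functor $F\colon \Mod-R \to \Ab$ defined by $\phi/\psi$: we have $q \in S$ precisely when $F$ vanishes on some but not all finite-dimensional indecomposable modules of slope $q$. The proof will split into two claims: (I) no irrational $r \in \R^+$ is an accumulation point of $S$; (II) only finitely many rational accumulation points exist.

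For (I), I would suppose for contradiction that $r \in \R^+\setminus\Q$ and there exist $q_n \in S$ with $q_n \to r$. By Theorem \ref{onepointuptoelemeqatirrational}, $F$ is either identically zero or identically nonzero on $\mcal{D}_r$; Proposition \ref{nearirrational} (and, in the closed case, its elementary-duality analogue via \ref{kdualandelemdual} and \ref{dualityC0Cinf}) then forces $F$ to be uniformly zero (respectively uniformly nonzero) on every finite-dimensional module lying in a homogeneous tube of slope in some interval $(r-\epsilon, r+\epsilon)$. Consequently, for $q_n$ in this interval, the non-uniformity at $q_n$ must be witnessed by a finite-dimensional indecomposable $M_n$ of slope $q_n$ lying in one of the finitely many inhomogeneous tubes at $q_n$---equivalently, $\udim M_n$ is a positive connected root and not a radical vector.

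The next step is to show $\|\udim M_n\|\to\infty$: any bounded sub-sequence of $\udim M_n$ would take only finitely many values, whose slopes form a finite subset of $\Q$, contradicting $q_n \to r \in \R^+\setminus\Q$. Choosing a presentation $(A,-)\xrightarrow{(f,-)}(B,-)\to F\to 0$ and shrinking $\epsilon$ so that no indecomposable summand of $A$ or $B$ has slope in $(r-\epsilon, r+\epsilon)$, I would then argue, via a slope-by-slope Euler-form computation in the spirit of Claim~1 of Example \ref{cornottrueexample}, that $\dim_k F(M_n) = \ell(\udim M_n) + c_n$, where $\ell$ is a fixed linear functional on $K_0(R)$ whose restriction to the slope-$r$ hyperplane describes the ``generic'' behaviour of $F$ on $\mcal{D}_r$ and $|c_n|$ is uniformly bounded. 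Since $\|\udim M_n\|\to\infty$ with slopes $\to r$, the linear term eventually dictates the sign of $\dim_k F(M_n)$, forcing it to match the behaviour on $\mcal{D}_r$ for all large $n$ and contradicting $q_n\in S$.

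For (II), the same type of analysis at a hypothetical rational accumulation point $q_0$ of $S$ shows that $q_0$ is such an accumulation point only if the linear functional $\ell$ constructed from the presentation of $F$ vanishes on the slope-$q_0$ hyperplane, so that the bounded $c_n$-terms alone can govern non-uniformity at nearby slopes. Since this condition involves only finitely many Euler-form pairings with the summands of $A$ and $B$, only finitely many rational slopes can satisfy it. The hard part will be making the ``asymptotic linearity with bounded error'' claim rigorous: this requires careful Euler-form bookkeeping for components of $A$ and $B$ on either side of $(r-\epsilon, r+\epsilon)$, together with a uniform bound on the correction terms arising in the inhomogeneous tubes, but the template is already present in the analysis of Example \ref{cornottrueexample}.
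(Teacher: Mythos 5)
Your high-level strategy---reduce to a linear functional on $K_0(R)$ and show that the slopes of its zeros among dimension vectors of indecomposables can accumulate only at finitely many rationals---is the right one, and it is essentially what the paper does (via \ref{dimvectorsforpppairs}). However, the central step of your claim (I), as written, has a gap, and your claim (II) relies on a criterion that the paper's own Example~\ref{cornottrueexample} refutes.

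The issue with (I): by Corollary~\ref{dimvectorsforpppairs}, between consecutive slopes of the summands of a free realisation of $\phi$ (and of $\coker(m)$), $\dim_k F(N)$ is \emph{exactly} $v\cdot\udim N$ for a fixed $v\in K_0(R)$---there is no bounded error term $c_n$. Consequently, for any witness $M_n$ of non-uniformity at $q_n$ we have $v\cdot\udim M_n = 0$ \emph{exactly}, so the quantity you want to call ``the linear term'' is identically zero along the sequence, and the argument that it ``eventually dictates the sign'' is vacuous. The idea can be repaired, but the repair is precisely what the paper does with Lemma~\ref{Omegaexists}: since $M_n$ lies in an inhomogeneous tube, $\udim M_n = y_n + w_n$ with $y_n$ ranging over a \emph{finite} set $\Omega$ of roots and $w_n\in\rad\chi$. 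The constraint $v\cdot\udim M_n = 0$ then becomes $v\cdot w_n = -v\cdot y_n$, which is a bounded linear condition on the coordinates $(\alpha_n,\beta_n)$ of $w_n$ with respect to generators of $\rad\chi$. Because this confines $(\alpha_n,\beta_n)$ to finitely many lines, the slope of $y_n+\alpha_n g_1+\beta_n g_2$ converges to one of finitely many \emph{rational} limits as $\|w_n\|\to\infty$. Your statement that $\|\udim M_n\|\to\infty$ is correct and useful; what is missing is the structural decomposition of the root lattice that turns the constraint into a Diophantine one with finitely many limiting slopes.

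The issue with (II): the rational accumulation points are \emph{not} characterised by $\ell$ vanishing on the slope-$q_0$ hyperplane. In Example~\ref{cornottrueexample}, the relevant functional for slopes $>1$ is $\ell(y)=y_\infty-y_1$, and $1$ is an accumulation point of $S$, yet $\ell$ does not vanish on $\{y : y_0=y_\infty\}$ (take $y=(1,0,0,0,0,1)$). The finitely many candidate accumulation points are instead the breakpoints $q_1<\cdots<q_n$ of the piecewise-linear description from \ref{dimvectorsforpppairs}---i.e.\ the slopes of the indecomposable direct summands of $M\oplus\coker(m)$---and the separate argument (via \ref{opensetaround} for rationals and the root-lattice computation for irrationals) shows there are no accumulation points strictly inside any $(q_j,q_{j+1})$. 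Your proof does not supply a substitute for Lemma~\ref{opensetaround}, so even if (I) were fixed, (II) would remain incomplete.
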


The following series of lemmas will be used in the proof of \ref{replcor}.

\begin{lemma}\label{openonhomoimpliesoenonallbutfinitelymany}
If $q\in \Q^+$ and $\phi/\psi$ is open on all finite-dimensional modules of slope $q$ in homogeneous tubes then $\phi/\psi$ is closed on at most finitely many $X\in \ind\text{-}R$ of slope $q$.
\end{lemma}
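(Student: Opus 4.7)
The plan is to apply the classification of compact open subsets of $\mcal{C}_q$ from Proposition \ref{compactopenatq}. Since $\left(\phi/\psi\right)$ is compact open in $\Zg_R$ and $\mcal{C}_q$ is closed, the intersection $\mcal{U}:=\left(\phi/\psi\right)\cap\mcal{C}_q$ is a compact open subset of $\mcal{C}_q$. The classification forces $\mcal{U}$ to fall into one of two shapes: either it is cofinite in $\mcal{C}_q$, missing only finitely many finite-dimensional points, or it is a finite union of rays and corays together with finitely many additional isolated finite-dimensional points.

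If $\mcal{U}$ is of the first type, then I am immediately done: the points of $\mcal{C}_q$ on which $\phi/\psi$ is closed form the complement of $\mcal{U}$ in $\mcal{C}_q$, which is a finite set of finite-dimensional indecomposables, and this set contains all the $X\in\ind-R$ of slope $q$ for which $\phi(X)=\psi(X)$ (recall the convention that $\ind-R$ consists of finitely presented, hence finite-dimensional, indecomposables).

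The substance of the proof will be ruling out the second case using the hypothesis that $\phi/\psi$ is open on every finite-dimensional module in every homogeneous tube of $\mcal{T}_q$. The plan is to write such a hypothetical $\mcal{U}$ as
\[\bigcup_{i=1}^r R[E_i[j_i]]\cup\bigcup_{i=1}^s C[[k_i]F_i]\cup\{X_1,\ldots,X_m\}\]
and then observe that for a quasi-simple $S$ of a homogeneous tube, the modules $S[1],S[2],\ldots$ must all sit in $\mcal{U}$. A ray $R[E_i[j_i]]$ (resp.\ coray $C[[k_i]F_i]$) can only contain some $S[n]$ when $E_i=S$ (resp.\ $F_i=S$), since rays and corays are confined to a single tube, and then only for $n\geq j_i$ (resp.\ $n\geq k_i$). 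Because $r+s$ is finite, only finitely many quasi-simples in homogeneous tubes can be among $\{E_1,\ldots,E_r,F_1,\ldots,F_s\}$, so all but finitely many homogeneous tubes will have all but finitely many of their modules missing from the rays and corays; such modules would then have to be accommodated inside the finite set $\{X_1,\ldots,X_m\}$, which is impossible.

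The only non-formal ingredient the argument leans on is that at every rational slope $q\in\Q^+$ the tubular family $\mcal{T}_q$ of a tubular algebra contains infinitely many homogeneous tubes (only finitely many tubes of rank $>1$, determined by the tubular type). The main obstacle is therefore not really an obstacle but a cleanly stated structural input: once Proposition \ref{compactopenatq} is combined with this infinitude of homogeneous tubes, the second case is ruled out and the first case delivers the conclusion.
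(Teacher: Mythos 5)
Your proof is correct and takes the same approach as the paper, which simply cites Proposition \ref{compactopenatq}; you have filled in the (implicit) dichotomy argument — ruling out the rays-and-corays case by observing that a tubular family contains infinitely many homogeneous tubes, each forced entirely into $\mcal{U}$ by the hypothesis, while a finite union of rays and corays can meet only finitely many tubes.
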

\begin{proof}
This follows directly from \ref{compactopenatq}.
%
%
%
\end{proof}

\begin{lemma}\label{closedonalloropenonallhomgeneous}
Suppose that $q\in\Q^+$, $\phi/\psi$ is a pp-pair and $v\in K_0(R)$ is such that $\dim \phi/\psi(X)=v\cdot\udim X$ for all $X\in\ind\text{-}R$ of slope $q$. Then $\phi/\psi$ is either open on all modules in homogeneous tubes of slope $q$ or closed on all modules in homogeneous tubes of slope $q$.
\end{lemma}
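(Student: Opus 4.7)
My plan is to combine the linear hypothesis with the structural fact that all quasi-simples lying in homogeneous tubes of a given tubular family $\mcal{T}_q$ share a common dimension vector.

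First I would observe that every finite-dimensional module in a homogeneous tube of slope $q$ has the form $E[n]$ for some quasi-simple $E$ in a homogeneous tube of slope $q$ and some $n\geq 1$, and that $\udim E[n] = n\udim E$. The hypothesis then gives
\[
\dim(\phi/\psi)(E[n]) \;=\; v\cdot(n\,\udim E) \;=\; n\,(v\cdot \udim E).
\]

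Next I would invoke that in any tubular family $\mcal{T}_q$ the positive connected radical vectors of slope $q$ form a rank-one monoid generated by a primitive vector $w_q$, and that the quasi-simples of the homogeneous tubes of slope $q$ are exactly the indecomposables with dimension vector $w_q$; this is part of Ringel's explicit description in \cite[Sections 5.1, 5.2]{Ringeltub}. Consequently $d := v\cdot w_q$ is independent of the chosen quasi-simple in a homogeneous tube of slope $q$, and since $nd = \dim(\phi/\psi)(E[n]) \geq 0$ for every $n\geq 1$, we have $d\geq 0$. A dichotomy on $d$ then finishes the argument: if $d=0$ then $\dim(\phi/\psi)(E[n]) = 0$ for every $E[n]$ in a homogeneous tube of slope $q$, so $\phi/\psi$ is closed on all these modules; if $d>0$, then $\dim(\phi/\psi)(E[n]) = nd > 0$ for every such $E[n]$, so $\phi/\psi$ is open on all of them.

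The main (and really only) obstacle is the appeal in the second paragraph. If one preferred to bypass quoting the explicit structural description of $\mcal{T}_q$, an alternative route would be to argue directly from $\tau$-invariance of $\udim E$ for quasi-simples $E$ of homogeneous tubes, combined with the fact (already used in the excerpt's discussion of $\mod$-$R$ being controlled by $\chi$) that the positive connected radical vectors of a fixed slope form a monoid with a single primitive generator; minimality of quasi-simples among the slope-$q$ indecomposables of that tube then forces their common dimension vector to be exactly $w_q$.
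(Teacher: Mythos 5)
Your proof is correct and essentially the same as the paper's: both identify the common dimension vector $w$ of the quasi-simples in homogeneous tubes of slope $q$, note that every module in a homogeneous tube of that slope has dimension vector $nw$, and then split on whether $v\cdot w = 0$ or $v\cdot w > 0$. Your version is just slightly more explicit about why $v\cdot w \geq 0$ (nonnegativity of $\dim(\phi/\psi)$) and why $w$ is independent of the choice of homogeneous tube; the paper takes both for granted.
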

\begin{proof}
Let $w$ be the dimension vector of a finite-dimensional quasi-simple in a homogeneous tube of slope $q$. Then for all finite-dimensional indecomposable modules $X$ of slope $q$ lying in homogeneous tubes, $\udim X=n\cdot w$ for some $n\in\N$. Since $\dim \phi/\psi(X)=v\cdot\udim X$ for all $X\in\ind\text{-}R$ of slope $q$, $\phi/\psi$ is open on all modules in homogeneous tubes of slope $q$ if $v\cdot w>0$ and $\phi/\psi$ is closed on all modules in homogeneous tubes of slope $q$ if $v\cdot w=0$.
\end{proof}

\begin{proposition}\label{closedhomqimpliesclosedq}
 Suppose that $q\in\Q^+$, $\phi/\psi$ is a pp-pair and $v\in K_0(R)$ is such that $\dim \phi/\psi(X)=v\cdot\udim X$ for all $X\in\ind\text{-}R$ of slope $q$. If $\phi/\psi$ is closed on all modules of slope $q$ in homogeneous tubes then $\phi/\psi$ is closed on all modules of slope $q$.
\end{proposition}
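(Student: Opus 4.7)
The plan is to (i) translate the hypothesis into $v\cdot h_q=0$ where $h_q$ is the minimal positive connected radical vector of slope $q$; (ii) propagate this to the quasi-simples in inhomogeneous tubes using the non-negativity of $\dim(\phi/\psi)(-)$; and (iii) extend from finite-dimensional indecomposables of slope $q$ to the whole definable subcategory $\mcal{D}_q$ using the description of closed subsets in Proposition~\ref{closedsubsets}.

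By Ringel's controlled-by-$\chi$ theorem (cited after the background theorem of \cite{Ringeltub}), every quasi-simple in a homogeneous tube at slope $q$ realises the same positive connected radical dimension vector $h_q$. The hypothesis then gives $0=\dim(\phi/\psi)(E)=v\cdot h_q$ for any such quasi-simple $E$.

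Now let $E_1,\ldots,E_r$ be the quasi-simples of an inhomogeneous tube of rank $r$ at slope $q$. Any $E_i[r]$ has composition factors $E_i,E_{i+1},\ldots,E_{i+r-1}$ (indices read cyclically), so $\udim E_i[r]=\sum_{k=1}^{r}\udim E_k$; this is a positive connected radical vector of slope $q$ and hence a positive integer multiple $m\cdot h_q$ of $h_q$. By the linearity hypothesis, $v\cdot\udim E_i=\dim(\phi/\psi)(E_i)\geq 0$ for each $i$, while $\sum_{i=1}^{r}v\cdot\udim E_i=v\cdot m h_q=0$. A sum of non-negative integers being zero forces each term to be zero, so $v\cdot\udim E_i=0$ for every $i$. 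Applying the composition-series formula once more, $v\cdot\udim E_i[j]=0$ for all $i,j$, so $\phi/\psi$ is closed on every finite-dimensional indecomposable of slope $q$.

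Finally, the set $Y:=\{N\in\mcal{C}_q\st\phi(N)=\psi(N)\}$ is closed in $\mcal{C}_q$ (being the complement of $(\phi/\psi)\cap\mcal{C}_q$) and, by the previous step, contains every module in $\mcal{T}_q$. Proposition~\ref{closedsubsets} then forces $Y$ to contain every Pr\"ufer module (for each quasi-simple $S$, infinitely many $S[j]\in Y$ have $\Hom(S,S[j])\neq 0$), every adic module (by the dual condition), and the generic module $\mcal{G}_q$. Hence $Y=\mcal{C}_q$, and by the correspondence between closed subsets of $\Zg_R$ and definable subcategories, $\phi/\psi$ is closed on every module of slope $q$.

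The main structural input is the identification of the ``total'' quasi-simple dimension vector of any inhomogeneous tube at slope $q$ with a positive integer multiple of the homogeneous dimension vector $h_q$; with that identification the non-negativity sign argument in step (ii) closes cleanly. Everything else is a direct combination of linearity and the Ziegler-topological description of $\mcal{C}_q$.
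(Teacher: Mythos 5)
Your proof is correct and follows essentially the same route as the paper: both reduce to the observation that the sum of the dimension vectors of the quasi-simples at the mouth of an inhomogeneous tube at slope $q$ equals the dimension vector of a module in a homogeneous tube (you phrase it as a positive multiple of $h_q$, which amounts to the same thing), both then use non-negativity of $\dim(\phi/\psi)(-)$ together with linearity to force $v\cdot\udim E_i=0$ for each quasi-simple, and both conclude by invoking Proposition~\ref{closedsubsets} to pass from $\mcal{T}_q$ to all of $\mcal{C}_q$. The only cosmetic difference is that you spell out the Ziegler-topological finish slightly more explicitly.
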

\begin{proof}
Let $E_1,\ldots,E_n$ be the quasi-simples at the mouth of an inhomogeneous tube $T(\rho)$ of slope $q$. Then $\udim E_1+\ldots+\udim E_n$ is the dimension vector of an indecomposable module in a homogeneous tube with slope $q$. Thus $\phi/\psi$ is closed on all modules with dimension vector $\udim E_1+\ldots+\udim E_n$, so $v\cdot(\udim E_1+\ldots+\udim E_n)=0$. Since $v\cdot\udim E_i\geq 0$ for $1\leq i\leq n$, it follows that $v\cdot\udim E_i=0$ for $1\leq i\leq n$. Thus $v\cdot \udim X=0$ for every finite-dimensional module in $T(\rho)$. Thus $\phi/\psi$ is closed on all finite-dimensional indecomposable modules of slope $q$. So, by \ref{closedsubsets}, $\phi/\psi$ is closed on all modules of slope $q$.
\end{proof}

\begin{lemma}\label{uniformclosedrationininterval}
 Let $a<b\in\Q_0^\infty$, $\phi/\psi$ be a pp-pair and suppose there is a $v\in K_0(R)$ such that $\dim \phi/\psi(X)=v\cdot\udim X$ for all $X\in\ind\text{-}R$ of slope $q\in (a,b)$. If $\phi/\psi$ is closed on all homogeneous tubes of slope $q$ for some rational $q\in (a,b)$ then $\phi/\psi$ is uniformly closed on all rational slopes in $(a,b)$.
\end{lemma}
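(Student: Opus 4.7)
The plan is to reduce, via \ref{closedhomqimpliesclosedq} applied slope-by-slope, to showing that $v\cdot\udim Y=0$ for every finite-dimensional indecomposable $Y$ lying in a homogeneous tube of some rational slope in $(a,b)$. The key algebraic fact I will use is that, because $R$ is tubular, the subgroup $\Z h_0+\Z h_\infty$ has finite index in $\rad\chi$ and one computes $\mathrm{slope}(a'h_0+b'h_\infty)=b'/a'$. Consequently, for every rational slope $q=p/r\in(a,b)$ (written in lowest terms with $p,r>0$), the dimension vector of any quasi-simple of a homogeneous tube of slope $q$ is a positive rational multiple of $rh_0+ph_\infty$, and the dimension vectors of the remaining indecomposables in such a tube are positive integer multiples of this quasi-simple. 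Under the assumed linear formula $\dim\phi/\psi(X)=v\cdot\udim X$, closure on all homogeneous tubes at slope $q$ is therefore equivalent to the single scalar equation $v\cdot(rh_0+ph_\infty)=0$.

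Next I set $\alpha:=v\cdot h_0$ and $\beta:=v\cdot h_\infty$. The hypothesis at $q_0=p_0/r_0\in(a,b)$ then reads $r_0\alpha+p_0\beta=0$. At every other rational $q=p/r\in(a,b)$, the quantity $r\alpha+p\beta=v\cdot(rh_0+ph_\infty)$ is the value of $\dim\phi/\psi$ on a quasi-simple of a homogeneous tube of slope $q$, so it is non-negative. Eliminating $\alpha$ using the equality at $q_0$ rewrites the inequality as a positive multiple of $\beta(q-q_0)\geq 0$. Since $q_0$ lies strictly between $a$ and $b$, both $(a,q_0)\cap\Q$ and $(q_0,b)\cap\Q$ are non-empty, so evaluating the inequality on one rational on each side of $q_0$ forces $\beta=0$; then $\alpha=0$ follows from the equality at $q_0$.

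Thus $v\cdot(rh_0+ph_\infty)=0$ for every rational $q=p/r\in(a,b)$, which is to say that $\phi/\psi$ is closed on every homogeneous tube at every rational slope in $(a,b)$. Applying \ref{closedhomqimpliesclosedq} at each such $q$ (its hypothesis on $v$ is inherited directly from ours) promotes this to closure on all finite-dimensional indecomposables of slope $q$, i.e.\ uniform closedness. The only delicate point is the identification of the quasi-simple dimension vector of a homogeneous tube of slope $p/r$ with a positive rational multiple of $rh_0+ph_\infty$; everything after that is an elementary two-variable positivity argument.
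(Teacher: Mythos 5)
Your proof is correct, and it takes a genuinely different route from the paper's. The paper proves this lemma topologically: it first uses \ref{closedhomqimpliesclosedq} to conclude $\mcal{C}_{q_0}\subseteq\Zg_R\setminus(\phi/\psi)$, then writes $\mcal{C}_{q_0}=\bigcap_{\epsilon>0}\mcal{C}_{(q_0-\epsilon,q_0+\epsilon)}$ and uses compactness of $(\phi/\psi)$ and the fact that these closed sets form a chain to produce a $\delta>0$ with $\phi/\psi$ closed on the entire band $\mcal{C}_{(q_0-\delta,q_0+\delta)}$; only then does it deduce $v\cdot h_0=v\cdot h_\infty=0$ from closure on homogeneous tubes over a nontrivial interval of slopes. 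You reach the same key conclusion $v\cdot h_0=v\cdot h_\infty=0$ but replace the compactness step by a purely arithmetic positivity argument: since $\dim\phi/\psi(X)=v\cdot\udim X\geq 0$ for every relevant $X$, the single equality $r_0\alpha+p_0\beta=0$ at $q_0$ together with $r\alpha+p\beta\geq 0$ at rationals on both sides of $q_0$ forces $\beta=0$ and then $\alpha=0$. Your route avoids invoking the ``lying over'' definable subcategories $\mcal{D}^+_{(a,b)}$ and the compactness of basic Ziegler opens entirely, at the modest cost of making explicit the non-negativity of dimension and the slope formula $\mathrm{slope}(a'h_0+b'h_\infty)=b'/a'$ (which the paper's proof also uses implicitly when it passes from closure on $\mcal{C}_{(q_0-\delta,q_0+\delta)}$ to $v\cdot(ch_0+dh_\infty)=0$). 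The one point worth phrasing carefully in a final write-up is the reduction ``closure on all homogeneous tubes at slope $p/r$ $\Leftrightarrow$ $v\cdot(rh_0+ph_\infty)=0$'': this relies on the fact that every quasi-simple in a homogeneous tube at that slope has dimension vector a positive rational multiple of $rh_0+ph_\infty$ (radical, positive, connected, correct slope) and that the remaining indecomposables in the tube are positive integer multiples, which you do state; otherwise the proof is complete as written.
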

\begin{proof}
Let $q\in (a,b)$ be such that $\phi/\psi$ is closed on all homogeneous tubes of slope $q$. By \ref{closedhomqimpliesclosedq}, we may assume that $\phi/\psi$ is uniformly closed at $q$.

Thus
\[\mcal{C}_q=\bigcap_{\epsilon>0}\mcal{C}_{(q-\epsilon,q+\epsilon)}\subseteq \Zg_R\backslash (\phi/\psi).\]

So, since the closed sets $\mcal{C}_{(q-\epsilon,q+\epsilon)}$ form a chain and $(\phi/\psi)$ is compact, there exists a $\delta>0$ such that
\[(\phi/\psi)\subseteq \Zg_R\backslash\mcal{C}_{(q-\delta,q+\delta)}.\] Thus $\phi/\psi$ is closed on $\mcal{C}_{(q-\delta,q+\delta)}$. This implies that $v\cdot(ch_0+dh_{\infty})=0$ for all $d/c\in(q-\delta,q+\delta)$. Thus $v\cdot h_0=v\cdot h_\infty=0$. So $v\cdot \udim X=0$ for all $X$ in homogeneous tubes with slope in $(a,b)$ and thus $\phi/\psi$ is uniformly closed on all rational slopes in $(a,b)$.
\end{proof}

\begin{lemma}\label{opensetaround}
 If $p\in \Q^+$ is such that $\phi/\psi$ is closed on just finitely many indecomposable modules of slope $p$ and $p$ is rational then there is an $\epsilon>0$ such that for all $q\in (p-\epsilon,p+\epsilon)\backslash\{p\}$, $\phi/\psi$ is uniformly open at $q$.
\end{lemma}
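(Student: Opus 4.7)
The proof will use a compactness argument in the Ziegler spectrum. Set $Z := \Zg_R\setminus(\phi/\psi)$; by hypothesis, $Z\cap\mcal{C}_p$ is finite, and by \ref{closedsubsets} it must consist entirely of finite-dimensional indecomposables $\{Y_1,\dots,Y_k\}$ of slope $p$. By Lemma \ref{singlefdpoints} each $Y_i$ is isolated in $\Zg_R$, so $\{Y_1,\dots,Y_k\}$ is clopen and $Z' := Z\setminus\{Y_1,\dots,Y_k\}$ is still closed in $\Zg_R$, with $Z'\cap\mcal{C}_p=\emptyset$ by construction.

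The key observation is that $\{\mcal{D}^+_{(p-\epsilon,p+\epsilon)}\cap\pinj_R\}_{\epsilon>0}$ is a decreasing chain of closed subsets of the compact space $\Zg_R$ whose intersection is $\mcal{C}_p$. Granted this, the chain $\{Z'\cap\mcal{D}^+_{(p-\epsilon,p+\epsilon)}\cap\pinj_R\}_{\epsilon>0}$ of closed sets has empty total intersection $Z'\cap\mcal{C}_p$, so by compactness of $\Zg_R$ and the finite intersection property some term $Z'\cap\mcal{D}^+_{(p-\epsilon,p+\epsilon)}\cap\pinj_R$ is already empty. For any rational $q\in(p-\epsilon,p+\epsilon)\setminus\{p\}$, every finite-dimensional indecomposable of slope $q$ lies in $\mcal{D}^+_{(p-\epsilon,p+\epsilon)}$ by \ref{descriptionofDab}, hence is not in $Z'$; and since no $Y_i$ has slope $q$, such a module is not in $Z$ at all, so it lies in $(\phi/\psi)$. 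This is exactly uniform openness of $\phi/\psi$ at $q$, as required.

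The only claim still requiring justification is $\bigcap_{\epsilon>0}\mcal{D}^+_{(p-\epsilon,p+\epsilon)}\cap\pinj_R=\mcal{C}_p$. Every indecomposable pure-injective of slope $p$ sits in every member of the chain (since $p\in(p-\epsilon,p+\epsilon)$ for every $\epsilon>0$), giving one inclusion. Conversely, for an indecomposable pure-injective of slope $r\neq p$, as soon as $\epsilon<|r-p|$ we have $r\notin(p-\epsilon,p+\epsilon)$, and by \ref{descriptionofDab} the only modules of slope $\neq p$ appearing in $\mcal{D}^+_{(p-\epsilon,p+\epsilon)}$ are the Pr\"ufer and generic at $p-\epsilon$ and the adic and generic at $p+\epsilon$; these are slope-specific to $p\pm\epsilon$, so they vary with $\epsilon$ and fail to persist across the whole chain. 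Hence only the slope-$p$ indecomposable pure-injectives survive.
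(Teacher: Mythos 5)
Your proof is correct and takes essentially the same route as the paper's: both isolate the finitely many exceptional finite-dimensional points, pass to the decreasing chain $\mcal{C}_{(p-\epsilon,p+\epsilon)}$ (which the paper denotes $\mcal{C}_{(p-\epsilon,p+\epsilon)}=\mcal{D}^+_{(p-\epsilon,p+\epsilon)}\cap\pinj_R$), and apply compactness of the complementary closed set to find a single $\epsilon$ that works; you additionally spell out why $\bigcap_{\epsilon>0}\mcal{C}_{(p-\epsilon,p+\epsilon)}=\mcal{C}_p$, which the paper asserts without comment.

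One small inaccuracy: to justify that $Z\cap\mcal{C}_p$ consists entirely of finite-dimensional points you cite \ref{closedsubsets}, but that result alone does not exclude the possibility that a finite closed subset of $\mcal{C}_p$ contains, say, $\mcal{G}_p$ or a Pr\"ufer module (its conditions are vacuous for finite sets). What you really need is that $(\phi/\psi)\cap\mcal{C}_p$ is a cofinite \emph{compact open} subset of $\mcal{C}_p$, and then Proposition \ref{compactopenatq} tells you that such a set can exclude only finite-dimensional points. Replacing the appeal to \ref{closedsubsets} by one to \ref{compactopenatq} (or combining the two) closes the gap; the rest of the argument is sound.
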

\begin{proof}
Let $\{Z_1,\ldots,Z_n\}$ be the indecomposable modules of slope $p$ which do not open $\phi/\psi$. Let $\sigma/\tau$ be a pp-pair such that $(\sigma/\tau)=\{Z_1,\ldots,Z_n\}$. Then $\mcal{U}:=(\phi/\psi)\cup (\sigma/\tau)$ is open and \[\mcal{U}\cap\bigcap_{\epsilon>0}\mcal{C}_{(p-\epsilon,p+\epsilon)}=\mcal{U}\cap\mcal{C}_p=\mcal{C}_p=\bigcap_{\epsilon>0}\mcal{C}_{(p-\epsilon,p+\epsilon)}.\]

Thus $\Zg_R\backslash\mcal{U}\subseteq\cup_{\epsilon>0}\Zg_R\backslash \mcal{C}_{(p-\epsilon,p+\epsilon)}$. Since $\Zg_R\backslash \mcal{U}$ is closed and hence compact, there is an $\epsilon>0$ such that $\Zg_R\backslash \mcal{U}\subseteq \Zg_R\backslash\mcal{C}_{(p-\epsilon,p+\epsilon)}$. So $\mcal{C}_{(p-\epsilon,p+\epsilon)}\subseteq \mcal{U}$. Thus for all $q\in (p-\epsilon,p+\epsilon)\backslash\{p\}$, $\phi/\psi$ is uniformly open at $q$.
\end{proof}

The following is inspired by \cite[Theorem 3.2]{modirrslope}.

\begin{proposition}
Let $\phi$ be a pp-formula. There is an algorithm which outputs $n\in\N$, $q_0=0<q_1<q_2<\ldots<q_n<q_{n+1}=\infty\in\Q_0^\infty$ and $v_1,\ldots,v_{n+1}\in K_0(R)$ such that for $0\leq i\leq n$, for all finite-dimensional indecomposable modules $N$ with slope in $(q_i,q_{i+1})$, \[\dim\phi(N)=v_i\cdot \udim N.\]
\end{proposition}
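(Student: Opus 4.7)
The plan is to reduce the computation of $\dim\phi(N)$ to Hom-dimensions from a free realisation, and then exploit the Hom/Ext vanishing between different tubular families.

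First, using Lemma~\ref{recfreerealise} (which extends to pp-$n$-formulae by the same construction), I would compute a finite-dimensional free realisation $(M,\bar m)$ of $\phi$. Let $M':=\bar m R\subseteq M$, which is effectively computable from the presentation of $M$ and $\bar m$. By the standard characterisation of free realisations, a tuple $\bar n\in N^n$ lies in $\phi(N)$ iff there is some $f\in\Hom(M,N)$ with $f(\bar m)=\bar n$; the evaluation map $\Hom(M,N)\to N^n$ sending $f\mapsto f(\bar m)$ therefore has image $\phi(N)$ and kernel exactly those homomorphisms factoring through $M/M'$, that is $\Hom(M/M',N)$. Hence
\[
\dim_k\phi(N)=\dim_k\Hom(M,N)-\dim_k\Hom(M/M',N).
\]

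Second, I would apply Lemma~\ref{effdecomp} to decompose $M$ and $M/M'$ into indecomposable summands, use Lemma~\ref{effdimvector} to compute the dimension vector and slope of each summand, and take $0<q_1<\cdots<q_n$ to be the list of distinct strictly positive rational slopes that arise. Fix an interval $(q_i,q_{i+1})$ and an indecomposable finite-dimensional $N$ of slope $q$ in that interval. For each indecomposable summand $L$ that is preprojective or of rational slope strictly less than $q$, the Auslander--Reiten formula gives $\Ext^1(L,N)\cong D\Hom(N,\tau L)=0$ (because $\tau L$ has the same slope as $L$ and is separated from $N$ by any $\mcal{T}_{q'}$ with $\text{slope}(L)<q'<q$), while $\Ext^2(L,N)=0$ because $N$ has injective dimension at most $1$; hence $\dim\Hom(L,N)=\langle\udim L,\udim N\rangle$. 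For each summand $L$ that is preinjective or of rational slope strictly greater than $q$, the analogous separation property gives $\Hom(L,N)=0$. Summing over the indecomposable summands of $M$ and subtracting the same sum for $M/M'$ yields an identity $\dim\phi(N)=v_i\cdot\udim N$ on the whole interval, where $v_i\in K_0(R)$ is effectively computable from the matrix of the Euler form and the dimension vectors of the relevant summands.

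The algorithm then simply outputs the sorted break points $q_1<\cdots<q_n$ together with the effectively computed vectors $v_0,\ldots,v_n$. The main obstacle I expect in writing this up cleanly is the bookkeeping for preprojective and preinjective summands: their nominal slopes $0$ and $\infty$ do not lie in any open interval $(q_i,q_{i+1})\subseteq\Q^+$, and one must verify that preprojective summands contribute uniformly to the linear functional on every such interval while preinjective summands never do. Both facts follow directly from the separation structure of the tubular families together with the Auslander--Reiten formula, so once these edge cases are pinned down, the rest of the argument is routine linear algebra over $k$.
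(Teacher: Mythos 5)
Your proof is correct and takes essentially the same route as the paper's: reduce to $\dim\phi(N)=\dim\Hom(M,N)-\dim\Hom(\coker(\bar m),N)$ via a free realisation, decompose $M$ and $\coker(\bar m)$ into indecomposable summands, use the computed slopes as breakpoints, and show that on each open interval the Hom-dimensions agree with the Euler form by the separation of tubular families. The paper also uses \ref{recfreerealise}, \ref{effdecomp}, and \ref{effdimvector} in the same way, forming $w_i$ and $u_i$ as the sums of dimension vectors of the low-slope (plus preprojective) summands and setting $v_i$ to encode $\langle w_i-u_i,-\rangle$; your treatment of the Ext-vanishing via the Auslander--Reiten formula and your remark about preprojective versus preinjective summands just make explicit what the paper states more tersely.
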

\begin{proof}
There is an algorithm, \ref{recfreerealise}, which given $\phi$ outputs a presentation $(k^n,A_1,\ldots,A_s)$ of a finite-dimensional module $M$ and an element $m\in k^n$ such that $(M,m)$ is a free-realisation of $\phi$. Note that, \cite[Corollary 1.2.19]{PSL} for any finite-dimensional module $L$, $\dim\phi(L)=\dim \Hom(M,L)-\dim\Hom(\coker(m),L)$.

There is an algorithm, \ref{effdecomp}, which given a presentation of $M$ outputs presentations of its indecomposable factors with multiplicity. From this we can compute the dimension vectors of the indecomposable factors of $M$ and $\coker(m)$ with multiplicity. We may now, by \ref{effdimvector}, compute the slope of each of the indecomposable factors of $M$ and $\coker(m)$. Let $q_1<\ldots<q_n$ be the slopes of those indecomposable factors which have slope greater than zero and less than infinity.

For $0\leq i\leq n$, let $w_i$ (respectively $u_i$) be the sum of the dimension vectors of all indecomposable factors of $M$ (respectively of $\coker(m)$) with slope strictly smaller than $q_{i+1}$ (equivalently have slope less than or equal to $q_i$).

Since all indecomposable factors of $M$ are either preprojective or have slope less than or equal to $q_i$,
\[\dim\Hom(M,N)=\langle w_i, \udim N\rangle+\dim \Ext(M,N)=\langle w_i, \udim N\rangle\] and
\[\dim\Hom(\coker(m),N)=\langle u_i, \udim N\rangle+\dim \Ext(M,N)=\langle u_i, \udim N\rangle\] for all finite-dimensional indecomposable $N$ with slope in $(q_{i},q_{i+1})$.

Thus $\dim\phi(N)/\psi(N)=\langle w_i-u_i, \udim N\rangle$ for $N$ with slope in $(q_{i},q_{i+1})$. For $0\leq i\leq n$, let $v_i=(\langle w_i-u_i, \udim S_1\rangle,\ldots,\langle w_i-u_i, \udim S_m\rangle)$ where $S_1,\ldots,S_m$ are the simple modules over $R$.
\end{proof}

\begin{cor}\label{dimvectorsforpppairs}
Let $\phi/\psi$ be a pp-pair. There is an algorithm which outputs $n\in\N$, $q_0=0<q_1<q_2<\ldots<q_n<q_{n+1}=\infty\in\Q_0^\infty$ and $v_0,\ldots,v_{n}\in K_0(R)$ such that for all for all finite-dimensional indecomposable modules $N$ with slope in $(q_i,q_{i+1})$, \[\dim\phi/\psi(N)=v_i\cdot \udim N.\]
\end{cor}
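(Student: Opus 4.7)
The plan is to reduce the corollary to the preceding proposition by applying it separately to $\phi$ and to $\psi$ and then taking the common refinement of the two sets of breakpoints.

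First I would invoke the previous proposition on $\phi$ to obtain, effectively, breakpoints $p_1<\ldots<p_s\in\Q_0^\infty$ and vectors $u_0,\ldots,u_s\in K_0(R)$ such that for every finite-dimensional indecomposable $N$ with slope in $(p_i,p_{i+1})$ we have $\dim\phi(N)=u_i\cdot\udim N$. Applying the same proposition to $\psi$ yields breakpoints $r_1<\ldots<r_t$ and vectors $w_0,\ldots,w_t$ with the analogous property for $\psi$. Both outputs are produced by the algorithm of the proposition, so this step is effective.

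Next I would merge the two finite sets $\{p_1,\ldots,p_s\}\cup\{r_1,\ldots,r_t\}$ into a single increasing sequence $q_1<\ldots<q_n$ (setting $q_0=0$, $q_{n+1}=\infty$). On each open interval $(q_i,q_{i+1})$, both of the original partitions are constant, so there is a uniquely determined index $a(i)$ and $b(i)$ with $(q_i,q_{i+1})\subseteq(p_{a(i)},p_{a(i)+1})$ and $(q_i,q_{i+1})\subseteq(r_{b(i)},r_{b(i)+1})$. Define $v_i:=u_{a(i)}-w_{b(i)}$. Since $\phi/\psi$ is a pp-pair, $\psi(N)\subseteq\phi(N)$ for every $R$-module $N$, so $\dim(\phi/\psi)(N)=\dim\phi(N)-\dim\psi(N)$. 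Hence for any finite-dimensional indecomposable $N$ with slope in $(q_i,q_{i+1})$,
\[
\dim(\phi/\psi)(N)=u_{a(i)}\cdot\udim N-w_{b(i)}\cdot\udim N=v_i\cdot\udim N,
\]
which is what the corollary requires. The merging and subtraction of vectors are trivially computable, so the whole procedure is algorithmic.

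There is essentially no obstacle here: all the real work was done in the preceding proposition, which combined \ref{recfreerealise}, \ref{effdecomp}, \ref{effdimvector} and the Auslander--Reiten formula to compute, for a single pp-formula, a piecewise-linear description of $\dim\phi(N)$ on slope intervals avoiding the slopes of the indecomposable summands of the free-realisation and its cokernel. The only point worth being careful about is that one uses $\dim(\phi/\psi)(N)=\dim\phi(N)-\dim\psi(N)$ rather than a quotient of dimensions, which is valid precisely because $\psi\leq\phi$ as pp-formulas.
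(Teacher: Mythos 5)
Your proof is correct and is exactly the intended route: the paper leaves the corollary unproved because it follows immediately by applying the preceding proposition to $\phi$ and to $\psi$ separately, merging the two finite breakpoint sequences, and on each common subinterval taking the difference $v_i=u_{a(i)}-w_{b(i)}$, using $\dim(\phi/\psi)(N)=\dim\phi(N)-\dim\psi(N)$ (valid since $\psi\le\phi$). Nothing is missing.
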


\begin{proof}[Proof of theorem \ref{replcor}]
By \ref{dimvectorsforpppairs}, it is enough to show that if $a<b\in\Q^\infty_0$ and there exists $v\in K_0(R)$ such that for all $M\in\ind\text{-}R$,
\[\dim\phi(M)/\psi(M)=v\cdot\udim M\] then there are only finitely many accumulation points of non-uniform slopes for $\phi/\psi$ in $(a,b)$.

Lemmas \ref{openonhomoimpliesoenonallbutfinitelymany}, \ref{closedonalloropenonallhomgeneous}, \ref{closedhomqimpliesclosedq} and \ref{uniformclosedrationininterval} show that either $\phi/\psi$ is uniformly closed on all rational $q\in (a,b)$ or for each rational $q\in (a,b)$, $\phi/\psi$ is open on all but finitely many points of slope $q$, all of which are finite-dimensional.

If $\phi/\psi$ is uniformly closed on all rational $q\in (a,b)$ then $\phi/\psi$ is closed on all indecomposable pure-injectives with slope in $(a,b)$. This is because the finite-dimensional indecomposable modules are dense in $\mcal{C}_{(a,b)}$.

If for each rational $q\in (a,b)$, $\phi/\psi$ is open on all but finitely many points of slope $q$, all of which are finite-dimensional then there are no rational accumulation points in the set of non-uniform slopes for $\phi/\psi$ between $a$ and $b$ by \ref{opensetaround}.

It remains to show that if for each rational $q\in (a,b)$, $\phi/\psi$ is open on all but finitely many points of slope $q$, all of which are finite-dimensional then there are no irrational accumulation points of non-uniform slopes for $\phi/\psi$ inside $(a,b)$.  Note that if $\phi/\psi$ is closed on $X\in\ind\text{-}R$ then $X$ is in an inhomogeneous tube. We refer forward to \ref{Omegaexists}, which states that there is a finite set $\Omega$ of roots of $\chi_R$ such that for all $X\in\ind\text{-}R$ lying in inhomogeneous tubes  $\udim X = y+ w$ where $y\in\Omega$ and $w\in\rad \chi_R$. Let $g_1,g_2$ generate $\rad \chi_R$. Note that, since $\phi/\psi$ is open on all homogeneous tubes with slope in $(a,b)$ either $vg_1\neq 0$ or $vg_2\neq 0$. So if $\udim X= y+ \alpha g_1 +\beta g_2$ for $y\in \Omega$ and $\alpha,\beta\in\Z$, $X$ has slope in $(a,b)$ and $\phi/\psi$ is closed on $X$ then $0=v\cdot\udim X=\alpha v\cdot g_1+\beta v\cdot g_2+v\cdot y$.

For fixed $y\in \Omega$, we now consider the set of $\alpha,\beta\in\Z$ such that $\alpha v\cdot g_1+\beta v\cdot g_2+v\cdot y=0$. If $v\cdot g_1=0$ then $\beta$ is a fixed integer and if $v\cdot g_1\neq 0$ then $\alpha=\sigma \beta+\mu$ for some fixed $\sigma,\mu\in\Q$. In the first case there are fixed rationals $c,d,e,f$ such that the slope of $y+\alpha g_1+\beta g_2$ is of the form $(c\alpha+d)/(e\alpha +f)$. So as $\alpha$ tends to $\pm\infty$, the slope of $y+\alpha g_1+\beta g_2$ tends to a rational or $\pm\infty$. In the second case there are fixed rationals $c,d,e,f$ such that the slope of $y+\alpha g_1+\beta g_2$ is of the form $(c\beta +d)/(e\beta+f)$. So as $\beta$ tends to infinity, the slope of $y+\alpha g_1+\beta g_2$ tends to a rational or $\pm\infty$. Therefore, since $\Omega$ is finite, there are no irrational accumulation points of non-uniform slope for $\phi/\psi$ in $(a,b)$.
\end{proof}

In the above proof we could have replaced the final argument with the following argument using \ref{onepointuptoelemeqatirrational}.
We know that if $r\in(a,b)$ is irrational and an accumulation point of non-uniform slopes for $\phi/\psi$ then $\phi/\psi$ is open on all points in $\mcal{C}_r$. Thus $\bigcap_{\epsilon>0}\mcal{C}_{(r-\epsilon,r+\epsilon)}=\mcal{C}_r\subseteq (\phi/\psi)$. So $\Zg_R\backslash (\phi/\psi)\subseteq \bigcup_{\epsilon>0} \Zg_R\backslash\mcal{C}_{(r-\epsilon,r+\epsilon)}$. Since $\Zg_R\backslash (\phi/\psi)$ is closed, it is compact. Thus there exists an $\epsilon_0>0$ such that $\Zg_R\backslash (\phi/\psi)\subseteq \Zg_R\backslash\mcal{C}_{(r-\epsilon_0,r+\epsilon_0)}$. So $\mcal{C}_{(r-\epsilon_0,r+\epsilon_0)}\subseteq(\phi/\psi)$. So $r$ is not an accumulation point of non-uniform slopes for $\phi/\psi$.

\subsection{Accumulation points of non-uniform slopes}
The rest of this section will be spent proving the following result.

\begin{proposition}\label{accallslopes}
Let $R$ be a tubular algebra. For any $q'\in \Q^+\cup\{0\}$ there exists $L\in\ind\text{-}R$ of slope $q'$ such that $q'$ is an accumulation point of the set of non-uniform slopes for $\Hom(L,-)$.
\end{proposition}

If $X$ is finite-dimensional then the functor $\Hom(X,-)$ is equivalent to a functor given by a pp-pair $\phi/\psi$ (see \ref{effrepfun}). So \ref{accallslopes} implies that for all $q'\in\Q^+\cup\{0\}$ there is a pp-pair $\phi/\psi$ such that $q'$ is an accumulation point of the set of non-uniform slopes of $\phi/\psi$ thus contradicting corollary 8.8 of \cite{modirrslope}.

\begin{cor}
Let $R$ be a tubular algebra. For all $q\in \Q_0^\infty$ there exists a pp-pair $\phi/\psi$ such that $q$ is an accumulation point of the set of non-uniform slopes for $\phi/\psi$.
\end{cor}
\begin{proof}
For all $q\in \Q^+\cup\{0\}$, this follows directly from \ref{accallslopes}. The result for $q=\infty$ follows by combining duality for pp-formulas with \ref{kdualandelemdual} and \ref{kdualslope}.
\end{proof}

\subsubsection{Coherent sheaves on weighted projective lines}\label{cohsheaves}

We will use categories of coherent sheaves on a weighted projective lines of tubular type to prove \ref{accallslopes}. In order to introduce notation and for the readers convenience, we briefly review various features of categories of coherent sheaves on weighted projective lines. Our main references are \cite{GeiLen} and \cite{LenMeltub}.

Let $t\in \N$ be greater than $2$, $\mathbf{p}=(p_1,\ldots,p_t)$ be a $t$-tuple of strictly positive integers $p_i$, called a \textbf{weight sequence}, and $\mathbf{\lambda}=(\lambda_1,\ldots,\lambda_t)$ a $t$-tuple of pairwise distinct elements of $\mathbb{P}_1(k)$, called a \textbf{parameter sequence}, normalised so that $\lambda_1=\infty$, $\lambda_2=0$ and, if it exists $\lambda_3=1$. For $1\leq i\leq t$, we will refer to the point $\lambda_i\in \mathbb{P}_1(k)$ as an \textbf{exceptional} point (of \textbf{weight} $p_i$) and all other points in $\mathbb{P}_1(k)$ as \textbf{ordinary}.

For every pair $(\mathbf{p},\mathbf{\lambda})$, Geigle and Lenzing define, \cite[1.1,1.5]{GeiLen}, a weighted projective line $\mathbb{X}:=\mathbb{X}(\mathbf{p},\mathbf{\lambda})$. We will not give this definition but instead define the category of coherent sheaves on $\mathbb{X}(\mathbf{p},\mathbf{\lambda})$ purely in terms of $(\mathbf{p},\mathbf{\lambda})$.

Given a weight sequence $\mathbf{p}$, let $\mathbb{L}:=\mathbb{L}(\mathbf{p})$ be the abelian group on generators $x_1,\ldots, x_t$ with the relations
\[p_1x_1=p_2x_2=\cdots p_tx_t=:c.\] The \textbf{degree homomorphism} $\delta:\mathbb{L}\rightarrow \Z$ is defined on generators by $\delta(x_i):=p_i/p$ where $p$ is the lowest common multiple of $p_1,\ldots,p_t$.


The $\mathbb{L}(\mathbf{p})$-graded $k$-algebra $S:=S(\mathbf{p},\mathbf{\lambda})$ is the quotient of $k[X_1,\ldots,X_t]$ by the ideal generated by \[f_i:=X_i^{p_i}-X_2^{p_2}-\lambda_iX_1^{p_1},\] for $3\leq i\leq t$, with the $\mathbb{L}(\mathbf{p})$-grading given by assigning $X_i$, for $1\leq i\leq t$, degree $x_i$.

Let $\mod^{\mathbb{L}}\text{-}S$ denote the category of finitely generated $\mathbb{L}$-graded $S$-modules with morphisms given by $S$-linear maps of degree zero. Let $\mod_0^{\mathbb{L}}\text{-}S$ denote the full subcategory of all graded modules of finite length. The category of coherent sheaves, $\coh(\mathbb{X})$, on the weighted projective line $\mathbb{X}$, is equivalent to the category $\mod^{\mathbb{L}}\text{-}S/\mod_0^{\mathbb{L}}\text{-}S$ (see \cite[Serre's theorem]{GeiLen} and \cite[7.4]{GeiLenperp}). The \textbf{structure sheaf} $\mcal{O}$ is the image of $S$ in $\mod^{\mathbb{L}}\text{-}S/\mod_0^{\mathbb{L}}\text{-}S$.

The group $\mathbb{L}$ acts on $\mod^{\mathbb{L}}\text{-}S$ by grading shift i.e. for $x\in \mathbb{L}$ and $M\in \mod^{\mathbb{L}}\text{-}S$, $M(x)$ is defined to be the $\mathbb{L}$-graded $S$-module such that for all $y\in\mathbb{L}$, the homogeneous component of degree $y$,  $M(x)_y$, is equal to $M_{x+y}$. Since the $\mathbb{L}$-action on $\mod^{\mathbb{L}}\text{-}S$ fixes $\mod_0^{\mathbb{L}}\text{-}S$ as a subcategory, $\mathbb{L}$ acts on $\coh(\mathbb{X})$.

The category $\text{coh}(\mathbb{X})$ is a hereditary hom-finite $k$-category with Serre duality. In particular, \cite[2.2]{GeiLen}, for all $X,Y\in\coh(\mathbb{X})$,
\[\D \Ext(X,Y)\cong \Hom(Y, X(\omega))\] where $\omega:=(t-2)c-\sum_{i=1}^tx_i$ is called the \textbf{dualising element}. Moreover, $\coh(\X)$ has almost-split sequences and the Auslander-Reiten translate $\tau X$ of $X\in\coh(\X)$ is $X(\omega)$.

The Grothendieck group of $\coh(\mathbb{X})$, denoted $K_0(\mathbb{X}):=K_0(\coh(\mathbb{X}))$, is equipped with the Euler form $\langle -,- \rangle: K_0(\mathbb{X})\times K_0(\mathbb{X})\rightarrow \Z$ which is given on sheaves $X,Y\in\coh(\mathbb{X})$ by
\[\langle [X],[Y] \rangle=\dim \Hom(X,Y)-\dim\Ext(X,Y) .\]

The torsion-free objects in $\text{coh}(\mathbb{X})$, i.e. those without non-zero subobjects of finite length, are called \textbf{vector bundles}. Every object in $\text{coh}(\mathbb{X})$ decomposes as $V\oplus F$ where $V$ is a vector bundle and $F$ is finite length.

The subcategory, $\text{coh}_0(\mathbb{X})$, of finite length objects is uniserial and decomposes into connected components as $\coprod_{\lambda\in\mathbb{P}^1(k)}\mcal{U}_\lambda$ where for each $\lambda\in \mathbb{P}^1(k)\backslash \{\lambda_1,\ldots,\lambda_t\}$, $\mcal{U}_\lambda$ is a homogeneous tube and for $1\leq i\leq t$, $\mcal{U}_{\lambda_i}$ is a stable tube of rank $p_i$. We will refer to the sheaves in $\mcal{U}_\lambda$ for $\lambda\in\mathbb{P}^1(k)$ as being \textbf{sheaves concentrated at $\lambda$}.

There are linear forms $\text{rk}:K_0(\mathbb{X})\rightarrow \Z$ \cite[1.8.2]{GeiLen}, called \textbf{rank}, and $\text{deg}:K_0(\mathbb{X})\rightarrow \Z$ \cite[2.8]{GeiLen}, called \textbf{degree}. The linear form $\text{rk}$ is determined by $\text{rk}\mcal{O}(x)=1$ for all $x\in \mathbb{L}$ and the linear form $\text{deg}$ is determined by $\text{deg}\mcal{O}(x)=\delta(x)$ for all $x\in\mathbb{L}$. For all $X\in\coh(\mathbb{X})$, $\text{rk}X\geq 0$. A coherent sheaf $X$ has rank zero if and only if $X$ is finite length. If $X\in\coh(\mathbb{X})$ is finite length and non-zero then $\deg X>0$.

The vector bundles of rank $1$ are called \textbf{line bundles} and they are all isomorphic to vector bundles of the form $\mcal{O}(x)$ where $x\in\mathbb{L}$. Moreover, \cite[2.6]{GeiLen}, every vector bundle $F$ has a filtration by line bundles and the number of line bundles occurring in such a filtration is equal to the rank of $F$.


The \textbf{virtual genus} $g_\mathbb{X}:=1+\frac{1}{2}\delta(\omega)$ of $\mathbb{X}$ strongly controls the structure of $\coh(\mathbb{X})$. When $g_{\mathbb{X}}=1$ we say that $\mathbb{X}$ is \textbf{of tubular type}. Note that $g_\mathbb{X}$ is only dependent on the weight sequence of $\mathbb{X}$ and, up to permutation, the only weight sequences of tubular type are $(2,3,6)$, $(2,4,4)$, $(3,3,3)$ and $(2,2,2,2)$.

The Riemann-Roch formula, given here as in \cite[2.4]{LenMeltub}, relates the degree and rank of $x,y\in\K_0(\mathbb{X})$ with the Euler form:
\[\sum_{j=0}^{p-1}\langle \tau^jx,y\rangle= p(1-g_\mathbb{X})\text{rk}x\cdot\text{rk}y+\left|\begin{array}{cc}
                                             \text{rk}x & \text{rk}y \\
                                             \text{deg}x & \text{deg}y
                                           \end{array}\right|.
\]

The \textbf{(GL-)slope} of a non-zero vector bundle $X$ is given by $\mu(X):=\deg X/\text{rk} X$. If $X$ is a finite length sheaf then we set $\mu(X):=\infty$.

A vector bundle $X\in\coh(\mathbb{X})$ is \textbf{semistable} if for each $Y\subseteq X$, $\mu(Y)\leq \mu(X)$. For $q\in \Q\cup\{\infty\}$, let $\mcal{W}_q$ denote the full subcategory of all semistable sheaves of slope $q$.

When $\mathbb{X}$ is of tubular type the following theorem describes $\coh(\mathbb{X})$.

\begin{theorem}\label{cohtub}\cite[5.6]{GeiLen}\cite{LenMeltub}
Let $\mathbb{X}$ be a weighted projective line of tubular type.
\begin{enumerate}[(i)]
\item All indecomposable $X\in \coh(\mathbb{X})$ are semistable.  
\item For all $X\in\coh(\mathbb{X})$ indecomposable, $\mu(X)=\mu(\tau X)$.
\item For $X,Y\in \coh(\mathbb{X})$ indecomposable, if $\Hom(X,Y)\neq 0$ then $\mu(X)\leq \mu(Y)$.
\item For each $q\in\Q\cup\{\infty\}$, $\mcal{W}_q$ is equivalent to $\coh_0(\mathbb{X})$.
\end{enumerate}
\end{theorem}

\subsubsection{Concealed canonical algebras}
A vector bundle $\Sigma$ is said to be a \textbf{tilting bundle} if $\Ext^1(\Sigma,\Sigma)=0$ and $\text{D}^b(\mathbb{\X}):=\text{D}^b(\coh(\mathbb{X}))$ is the smallest triangulated subcategory of $\text{D}^b(\mathbb{X})$ containing $\Sigma$ (see \cite[3.1]{GeiLen}).
A \textbf{concealed canonical algebra} is the endomorphism ring of a tilting bundle in $\coh(\mathbb{X})$ for some weighted projective line $\mathbb{X}$. If $\mathbb{X}$ is of tubular type and $\Sigma\in\coh(\mathbb{X})$ is a tilting bundle then $\End(\Sigma)$ is a tubular algebra and all tubular algebras occur in this way \cite[3.6]{LenMeltilt}.

For any weighted projective line $\mathbb{X}(\mathbf{p},\mathbf{\lambda})$, Geigle and Lenzing defined a tilting bundle $\Sigma_{can}:=\oplus_{0\leq x\leq c}\mcal{O}(x)$, called the \textbf{canonical tilting bundle}. The endomorphism ring of $\Sigma_{can}$ is the canonical algebra $\Lambda(\mathbf{p},\mathbf{\lambda})$ in the sense of Ringel \cite[\S 4]{GeiLen}.

Suppose that $R$ is a concealed canonical algebra of tubular type, i.e. a tubular algebra, and $\Sigma\in\text{coh}(\mathbb{X})$ is a tilting vector bundle such that $\End(\Sigma)=R$.

Let $\mcal{T}$ be the \textbf{torsion class} of $\Sigma$, that is, the full subcategory of $\text{coh}(\mathbb{X})$ generated by $\Sigma$ (or equivalently, the full subcategory of objects $X\in\text{coh}(\mathbb{X})$ such that $\Ext(\Sigma,X)=0$) and let $\mcal{F}$ be the \textbf{torsion-free class} of $\Sigma$, that is, the full subcategory of objects $X\in\text{coh}(\mathbb{X})$ such that $\Hom(\Sigma,X)=0$.

Since $\text{coh}(\mathbb{X})$ is hereditary, the objects of the bounded derived category, $\text{D}^{\text{b}}(\mathbb{X})$, are of the form $\oplus_{i\in I}X_i[i]$ where $I\subseteq \Z$ is finite and $X_i\in\text{coh}(\mathbb{X})$ for all $i\in I$. For all $X,Y\in \text{coh}(\mathbb{X})$ and $i,j\in \Z$, \[\Hom_{D^b(\mathbb{X})}(X[i],Y[j])=\Ext^{j-i}_{\text{coh}(\mathbb{X})}(X,Y).\]

The right derived functor of $\Hom(\Sigma,-)$ gives equivalence of bounded derived categories
\[\text{RHom}(\Sigma,-):\D^\text{b}(\mathbb{X})\rightarrow \D^\text{b}(R):=\D^\text{b}(\mod\text{-}R)\] and $\mod\text{-}R$ is equivalent to the subcategory $\mcal{T}\vee\mcal{F}[1]$ of $\D^\text{b}(\mathbb{X})$ consisting of objects of the form $X\oplus Z[1]$ where $X\in \mcal{T}$ and $Z\in\mcal{F}$ (see \cite[\S 3]{GeiLen}). Moreover, $\text{RHom}(\Sigma,-)$ induces an Euler form preserving isomorphism of Grothendieck groups
\[K_0(\mathbb{X})\rightarrow K_0(R), \ \ \ [X]\mapsto \sum_{i=0}^\infty(-1)^i[\Ext_{\X}^{i}(\Sigma,X)].\]

In what follows, we will use this equivalence to identify $\D^\text{b}(\mathbb{X})$ and $\D^\text{b}(R)$ (and hence $K_0(\mathbb{X})$ and $K_0(R)$ equipped with their Euler forms).

We now recall, see \cite{LenMeltub} and \cite[4.9]{Kussinthesis}, how the various parts of $\mod\text{-}R$ sit in $\mcal{T}\vee\mcal{F}[1]$. This will allow us to link slope in the sense of Geigle and Lenzing and slope in the sense of Ringel.

Throughout this section, let $\mu_{\text{max}}$ (respectively $\mu_{\text{min}}$) be the maximal (respectively minimal) GL-slope of any indecomposable direct summand of $\Sigma$. Decompose the tilting bundle $\Sigma=\Sigma_0\oplus\Sigma_{\text{max}}=\Sigma_\infty\oplus \Sigma_{\text{min}}$ where $\Sigma_{\text{max}}$ (respectively $\Sigma_{\text{min}}$) is the sum of the indecomposable direct summands of $\Sigma$ with GL-slope $\mu_{\text{max}}$ (respectively $\mu_{\text{min}}$). Note that $\mu_{\text{min}}<\mu_{\text{max}}$ since if all indecomposable direct summands of $\Sigma$ had the same GL-slope then $\Sigma$ would not generate $\text{D}^b(\mathbb{X})$.

\begin{proposition}\label{modLambdaincoh}
\begin{enumerate}
\item If $X\in\coh(\mathbb{X})$ is indecomposable and $\mu(X)>\mu_{\text{max}}$ then $X\in\mcal{T}$.
\item If $Z\in\coh(\mathbb{X})$ is indecomposable and $\mu(Z)<\mu_{\text{min}}$ then $Z\in\mcal{F}$.
\item The indecomposable projective $R$-modules are the indecomposable direct summands of $\Sigma$ and the preprojective component of $R$ is equal to those $X\in\coh(\mathbb{X})$ with $\mu(X)<\mu_{\text{max}}$ and $\Ext_{\mathbb{X}}(\Sigma,X)=0$. So, in particular, the indecomposable direct summands of $\Sigma_{\text{max}}$ are exactly the projective $R$-modules of Ringel slope zero.
\item The indecomposable injective $R$-modules are $(\tau X)[1]$ where $X$ is an indecomposable direct summand of $\Sigma$ and the preinjective component of $R$ is equal to $Z[1]$ such that $Z\in \coh(\mathbb{X})$, $\mu(Z)>\mu_{\text{min}}$ and $\Hom(\Sigma,Z)=0$.
\end{enumerate}
\end{proposition}
\begin{proof}
(1) and (2) follow from \ref{cohtub} and Serre duality. For (3), see \cite[5.7]{LenMeltilt} and \cite[4.9]{Kussinthesis}. The first part of (4) is \cite[5.3]{LenMeltilt} and the rest follows from (3) using vector bundle duality as indicated in \cite[5.1]{LenMeltub}.
%
%
\end{proof}

Following \cite{LenMeltub}, let $u_j:=[\tau^j\mcal{O}]$ and $u:=\sum_{j=0}^{p-1}u_j$ and $w:=[S]$ where $S\in\coh(\X)$ is a simple sheaf concentrated at an ordinary point. Then $\langle u,x\rangle=\text{deg}x$ and $\langle w,x\rangle =-\text{rk} x$. By \cite[2.6]{LenMeltub}, $w$ and $u$ generate $\rad(K_0(\mathbb{X}))$ as an abelian group.

Let $h_0$ and $h_\infty$ be the canonical radical vectors of $R$ in the sense of Ringel. Let $\alpha_0,\alpha_\infty,\beta_0,\beta_\infty\in \Q$ be such that $h_0=\alpha_0 u+\beta_0 w$ and $h_{\infty}=\alpha_\infty u+\beta_\infty w$.

Suppose $X\in\mcal{T}$. Then
\[\langle h_0,[X]\rangle=\left\{
                          \begin{array}{ll}
                            \alpha_0\deg X, & \hbox{when $\text{rk} X=0$;} \\
                            \text{rk} X(\alpha_0\mu(X)-\beta_0), & \hbox{otherwise}
                          \end{array}
                        \right.
\] and
\[\langle h_\infty,[X]\rangle=\left\{
                          \begin{array}{ll}
                            \alpha_\infty\deg X, & \hbox{when $\text{rk} X=0$;} \\
                            \text{rk} X(\alpha_\infty\mu(X)-\beta_\infty), & \hbox{otherwise.}
                          \end{array}
                        \right.
\]

Suppose $Z\in\mcal{F}$. Then
\[\langle h_0,[Z[1]]\rangle=-\text{rk} Z(\alpha_0\mu(Z)-\beta_0)\] and
\[\langle h_\infty,[Z[1]]\rangle=-\text{rk} Z(\alpha_\infty\mu(Z)-\beta_\infty).\]

\begin{lemma}\label{calcGLtoLen}
With the notation as in the rest of this section, the following hold:
\begin{enumerate}[(i)]
\item $\beta_0=\mu_{\text{max}}\alpha_0$
\item $\beta_\infty=\mu_{\text{min}}\alpha_\infty$
\item $\alpha_0>0$ and $\alpha_{\infty}<0$.
\end{enumerate}
\end{lemma}
\begin{proof}
(i) If $X$ is an indecomposable direct summand of $\Sigma_{\text{max}}$ then $X\in\mcal{T}$ and $X$ is a projective $R$-module not in the preprojective component by \ref{modLambdaincoh} (3). Hence $X$ has Ringel slope $0$. Therefore $0=\langle h_0,[X]\rangle=\text{rk} X(\alpha_\infty\mu(X)-\beta_\infty)$. So, since $\text{rk}X>0$, $\beta_0=\mu_{\text{max}}\alpha_0$. Note that we can also conclude from this that $\alpha_0\neq 0$.

(ii) This is proved as (i) using \ref{modLambdaincoh} (4). As in (i), we can conclude that $\alpha_{\infty}\neq 0$.

(iii) If $X\in\coh(\mathbb{X})$ is indecomposable and $\mu(X)=\infty$ then, by \ref{modLambdaincoh}, as a $R$-module, $X$ is neither preprojective, of slope $0$ or preinjective. So, by \cite[5.2]{Ringeltub}, $\alpha_0\deg X>0$ and $\alpha_\infty \deg X\leq 0$. Therefore $\alpha_0>0$ and $\alpha_\infty<0$.
\end{proof}

\begin{definition}
Let $\gamma:\Q\cup\{\infty\}\rightarrow \Q\cup\{\infty\}$ be defined by
\[\gamma(q):= -\frac{\alpha_0}{\alpha_\infty}\cdot\frac{q-\mu_{\text{max}}}{q-\mu_{\text{min}}}\] for $q\in\Q\backslash\{\mu_{\text{min}}\}$, $\gamma(\mu_{\text{min}})=\infty$ and $\gamma(\infty)=-\alpha_0/\alpha_\infty$.
\end{definition}

The determinant of the m\"{o}bius transformation $\gamma$ is \[\alpha_0\beta_\infty-\alpha_\infty\beta_0=\alpha_0\alpha_\infty(\mu_{\text{min}}-\mu_{\text{max}}).\] So, by \ref{calcGLtoLen}, the determinant is strictly positive. It is now easy to see that \[\gamma|_{(-\infty,\mu_{\text{min}})}:(-\infty,\mu_{\text{min}})\rightarrow (-\alpha_0/\alpha_\infty,\infty)\] and
\[\gamma|_{(\mu_{\text{max}},\infty]}:(\mu_{\text{max}},\infty]\rightarrow (0,-\alpha_0/\alpha_\infty]\] are both strictly increasing and bijective.

So, in particular, if $X\in\coh(\mathbb{X})$ is indecomposable and $q:=\mu(X)>\mu_{\text{max}}$ then the Ringel slope of $X$  is $\gamma(q)\in (0,-\alpha_0/\alpha_\infty]$ and if $Z\in \coh(\mathbb{X})$ is indecomposable and $q:=\mu(Z)<\mu_{\text{min}}$ then the Ringel slope of $Z[1]$ is $\gamma(q)\in (-\alpha_0/\alpha_\infty,\infty)$.

%
%
%

\subsubsection{Accumulation points for coherent sheaves and tubular algebras}

\begin{remark}\label{coprime}
Let $\mathbb{X}$ be a weighted projective line of tubular type. Let $(a,b)\in \Z\times \N\cup \{(1,0)\}$ with $a,b$ coprime. Every quasi-simple $Y\in\text{coh}(\mathbb{X})$ in a tube of rank $p:=\text{lcm}\{p_1,\ldots,p_t\}$ with $\mu(Y)=a/b$ has $\text{rk}Y=b$ and $\text{deg}Y=a$.
\end{remark}
\begin{proof}
Let $E$ be a simple sheaf concentrated at an exceptional point of weight $p$. By definition, see \cite[2.8]{GeiLen}, $\text{deg}E=1$ and $\text{rk}E=0$. For all $q\in \Q$ there is an equivalence, defined in \cite{LenMeltub}, called a telescopy functor, $\Phi_{q,\infty}:\mcal{C}_\infty\rightarrow \mcal{C}_q$. These functors are compositions of shift functors $S:\text{coh}(\mathbb{X})\rightarrow \text{coh}(\mathbb{X})$ given on objects by $SX=X(x_i)$ where $1\leq i\leq t$ is such that $p_i=p$, inverses of shift functors and right mutations $R_q:\mcal{C}_q\rightarrow \mcal{C}_{q/1+q}$ for $0<q\leq \infty$.  If $X\in\text{coh}(\mathbb{X})$ then $\text{rk}SX=\text{rk}X$ and $\text{deg}SX=\text{deg}X+\text{rk}X$. If $X\in\text{coh}(\mathbb{X})$ and $0<\mu(X)\leq \infty$ then $\text{rk}RX=\text{rk}X+\text{deg}X$ and $\text{deg}RX=\text{deg}X$. So $S$ and $R$, and hence $\Phi_{q,\infty}$ preserve coprimeness of rank and degree.
\end{proof}

\begin{proposition}\label{acccoh}
Let $\mathbb{X}$ be a weighted projective line of tubular type.
\begin{enumerate}[(a)]
\item Let $q\in \mathbb{Q}$ and $Y$ be a quasi-simple in a tube of rank $p$ with $\mu(Y)=q$. There exist
\begin{itemize}
\item $(q_n)_{n\in\N}$ a strictly decreasing sequence with $q_n\in \Q$ such that $q_n\rightarrow q$ as $n\rightarrow \infty$, and
\item $X_n,Z_n\in \coh(\mathbb{X})$ with $\mu(X_n)=\mu(Z_n)=q_n$, $\Hom(Y,X_n)=0$ and $\Hom(Y,Z_n)\neq 0$.
\end{itemize}

\item There exist
\begin{itemize}
\item $Y\in\text{coh}(\mathbb{X})$ with $\mu(Y)=\infty$, and
\item $X_n,Z_n\in\text{coh}(\mathbb{X})$ with $\mu(X_n)=\mu(Z_n)=-n$, $\Hom(Z_n,Y)=0$ and $\Hom(Z_n,Y)\neq 0$.
\end{itemize}
\end{enumerate}
\end{proposition}
\begin{proof}
(a) Let $q\in \mathbb{Q}$. By \ref{coprime}, $r:=\text{rk}Y>0$ and $d:=\text{deg}Y$ are coprime and $d/r=q$. Since $r$ and $d$ are coprime, there exists $a_0\in\Z$ and $b_0\in\N$ such that $ra_0-b_0d=1$.  For all $n\in\N$, let $a_n=a_0+nd$ and $b_n=b_0+nr$. Then $ra_n-b_nd=1$ for all $n\in\N_0$ and hence $a_n$ and $b_n$ are coprime. Moreover $q_n:=a_n/b_n$ is a strictly decreasing sequence of rational numbers such that $q_n\rightarrow d/r=q$ as $n\rightarrow \infty$.

Since $a_n$ and $b_n$ are coprime, by \ref{coprime}, for each $n\in\N$, there exists a quasi-simple $W$ in a tube of rank $p$ such that $\text{rk}(W)=b_n$ and $\text{deg}(W)=a_n$. By the Riemann-Roch equation and since $q_n>q$,
\[\sum_{j=0}^{p-1}\dim\Hom(Y,\tau^jW)=ra_n-b_nd=1.\] Therefore $\dim\Hom(Y,\tau^jW)\neq 0$ for exactly one $0\leq j\leq p-1$.

(b) The argument is similar to part (a) and left to the reader.
\end{proof}

\begin{proof}[Proof of \ref{accallslopes}.]
Let $\mathbb{X}$ be a weighted projective line and $\Sigma\in\text{coh}(\mathbb{X})$ a tilting bundle such that $\End(\Sigma)\cong R$. We keep the notation as in the rest of this section.

First suppose that $q'\in (0,-\alpha_0/\alpha_{\infty})$. Let $q\in (\mu_{\text{max}},\infty)$ be such that $\gamma(q)=q'$. Let $Y,X_n,Z_n\in\text{coh}(\mathbb{X})$ and $q_n\in \Q$ be as in \ref{acccoh}(a). Since $\mu(Y),\mu(X_n),\mu(Z_n)>\mu_{\text{max}}$, $Y, X_n,Z_n\in \mcal{T}$. So $\Hom_R(Y,X_n)=0$ for all $n\in \N$ and $\Hom_R(Y,Z_n)\neq 0$ for all $n\in\N$. Let $q_n':=\gamma(q_n)$. Then $\text{slope}Y=q'$, $\text{slope} X_n=\text{slope} Z_n=q_n'$ and $q_n'\rightarrow q$ as $n\rightarrow \infty$. So $q'$ is an accumulation point of the set of non-uniform slopes for $\Hom_R(Y,-)$.

The case when $q'\in (-\alpha_0/\alpha_{\infty},\infty)$ is similar and left to the reader.

Suppose that $q'=-\alpha_0/\alpha_{\infty}$. Let $Y,X_n,Z_n\in\text{coh}(\mathbb{X})$ be as in \ref{acccoh}(b). Then $\mu(\tau X_n)=\mu(\tau Z_n)=-n$ for all $n\in\N$.  For all $n\geq -\mu_{\text{min}}+1$, $X_n,Z_n\in\mcal{F}$. By Serre duality, $0=D\Hom_\mathbb{X}(X_n,Y)=\Ext_{\mathbb{X}}(Y,\tau X_n)$ and $0\neq D\Hom_\mathbb{X}(Z_n,Y)=\Ext_{\mathbb{X}}(Y,\tau Z_n)$. So $\Hom_R(Y,(\tau X_n)[1])=0$ and $\Hom_R(Y,(\tau X_n)[1])\neq 0$ for all $n\geq -\mu_{\text{min}}+1$. It just remains to note that the Ringel slope of $X_n$ and $Z_n$, that is $\gamma(-n)$, tends to $-\alpha_0/\alpha_{\infty}$ as $n\rightarrow \infty$.

Suppose $q'=0$. The description of the tilting objects of $\text{coh}(\mathbb{X})$ given in \cite[3.1 \& 3.5]{LenMeltilt} means that if $\mathbb{X}$ is of tubular type and if $\mcal{T}$ is inhomogeneous tube of slope $\mu_{\text{max}}$ then if $\Sigma_{\text{max}}$ has a direct summand in $\mcal{T}$ then $\Sigma_{\text{max}}$ has a quasi-simple from $\mcal{T}$ as a direct summand. Let $Y$ be a quasi-simple of slope $\mu_{\text{max}}$ in a tube $\mcal{T}$ of rank $p$. If $\Sigma_{\text{max}}$ has a direct summand from $\mcal{T}$ then assume that $Y$ is a direct summand of $\Sigma_{\text{max}}$. In either case, $\Ext(\Sigma_{\text{max}},Y)=0$ and hence $\Ext(\Sigma,Y)=0$.

Now, arguments as in \ref{acccoh} imply that there exist a strictly decreasing sequence $q_n\in \Q$ such that $q_n\rightarrow q$ as $n\rightarrow \infty$ and $X_n,Z_n\in\text{coh}(\mathbb{X})$ indecomposable of slope $q_n$ such that $\Hom_{\mathbb{X}}(Y,X_n)=0$ and $\Hom_{\mathbb{X}}(Y,Z_n)\neq 0$. Since $\mu(X_n)=\mu(Z_n)>\mu_{\text{max}}$ for $n\in\N$, $Y,X_n,Z_n\in \mcal{T}$. The argument now proceeds as in the previous cases. \end{proof}

\section{Almost all slopes and Presburger arithmetic}\label{almostallandpresburger}

The language of Presburger arithmetic is $\mcal{L}_{\text{Pr}}:=(+,<,0)$ where $+$ is a binary function symbol, $<$ is a binary relation symbol and $0$ is a constant symbol. Presburger arithmetic is the theory of $\Z$ in $\mcal{L}_{\text{Pr}}$ where $+$ is interpreted as the usual addition on $\Z$, $<$ is interpreted as the usual order on $\Z$ and $0$ is interpreted as the additive unit in $\Z$. Presburger arithmetic is decidable. For more information about Presburger arithmetic see \cite{Markermodeltheory} (see \cite[3.1.21]{Markermodeltheory} for the proof of decidability).

We start this section by showing that for a tubular algebra $R$, the set of $x\in \Z^n\cong K_0(R)$ such that $x$ is the dimension vector of some indecomposable $X\in\mod\text{-}R$ is a definable subset of $\Z^n$ in the language of Presburger arithmetic \ref{dimvecofinddefinPres}. In order to do this, we will use the fact that $\mod\text{-}R$ is controlled by $\chi_R$, in particular that the dimension vectors of indecomposable finite-dimensional $R$-modules correspond exactly to the positive connected radical and root vectors of $\chi_R$. Note, however, that if we add a function symbol $\chi$ to Presburger arithmetic and interpret it as any non-zero quadratic form on $\Z$ then we can define multiplication in $\Z$ and hence, the theory becomes undecidable. So instead we argue that for $\chi_R$ the Euler quadratic form on $K_0(R)$, the set of $x\in\Z^n$ such that $\chi_R(x)=0$ or $\chi_R(x)=1$ is already a definable subset of $\Z^n$ in the language of Presburger arithmetic.

\begin{lemma}\label{puregroupdef} \quad For any pure subgroup $G$ of $\Z^n$ there is an $n$-formula $\Delta(x_1,\ldots,x_n)$ in the language of Presburger arithmetic such that $(g_1,\ldots,g_n)$ is in $G$ if and only if $\Delta(g_1,\ldots,g_n)$ holds in $\Z$.
\end{lemma}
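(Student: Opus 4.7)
The plan is to invoke the standard structure result that a pure subgroup of a finitely generated free abelian group is a direct summand, and then write down the definition of $G$ explicitly in terms of a basis.

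First I would argue that $G$ is free of finite rank. Since $G$ is pure in $\Z^n$, the quotient $\Z^n/G$ is torsion-free; being finitely generated, it is therefore free abelian. Hence the short exact sequence
\[0 \to G \to \Z^n \to \Z^n/G \to 0\]
splits, so $G$ is a direct summand of $\Z^n$, and in particular is itself free of some rank $k \leq n$. Fix a $\Z$-basis $v_1, \ldots, v_k \in \Z^n$ of $G$ and write $v_j = (v_{j,1}, \ldots, v_{j,n})$.

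Next I would observe that $(x_1,\ldots,x_n) \in G$ if and only if there exist $y_1, \ldots, y_k \in \Z$ with $x_i = \sum_{j=1}^k v_{j,i}\, y_j$ for every $1 \leq i \leq n$. The desired formula is therefore
\[\Delta(x_1,\ldots,x_n) \;:=\; \exists y_1 \cdots \exists y_k \; \bigwedge_{i=1}^n \Bigl( x_i = \sum_{j=1}^k v_{j,i}\, y_j \Bigr).\]
Each expression $v_{j,i}\, y_j$ with $v_{j,i}$ a fixed integer is a legitimate Presburger term: if $v_{j,i} > 0$ it abbreviates $y_j + \cdots + y_j$ ($v_{j,i}$ summands), if $v_{j,i} = 0$ it is $0$, and if $v_{j,i} < 0$ it is the additive inverse of $|v_{j,i}|\, y_j$ (and additive inverses are definable from $+$ and $0$). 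So $\Delta$ is genuinely a formula in the language of Presburger arithmetic.

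There is no real obstacle; the content of the lemma is entirely algebraic (purity forces $G$ to be a direct summand, hence free with an integer basis), and once a basis is in hand the defining formula is just the Presburger transcription of ``lies in the $\Z$-span of $v_1, \ldots, v_k$''.
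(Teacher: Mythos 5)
Your proof is correct, but it takes a genuinely different route from the paper's. You describe $G$ from the \emph{inside}: purity makes $\Z^n/G$ torsion-free, hence free, so the sequence splits and $G$ is a direct summand with a $\Z$-basis $v_1,\ldots,v_k$; the defining formula is then the existential statement that $(x_1,\ldots,x_n)$ lies in the $\Z$-span. The paper instead describes $G$ from the \emph{outside}: it takes the $\Q$-linear span $V$ of $G$ inside $\Q^n$, uses purity only for the identity $V\cap\Z^n=G$, expresses $V$ as the kernel of a matrix over $\Q$, and clears denominators to get an integer matrix $A$, so that $\Delta$ is simply the conjunction of linear equations $(x_1,\ldots,x_n)A=0$. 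The paper's route is shorter (it never needs the splitting or a basis of $G$) and it produces a quantifier-free formula, which is a slightly stronger conclusion; your route produces an $\exists$-formula, which is still perfectly adequate for the lemma as stated and for its later use in \ref{dimvecofinddefinPres}. Both are valid.
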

\begin{proof}
Let $V$ be the $\Q$-linear span of $G$ as a subset of $\Q^n$. Since $G$ is pure, $V\cap \Z^n=G$. Since $V$ is a subspace of $\Q^n$ there is a matrix $A$ with entries from $\Q$ such that $v\in V$ if and only if $vA=0$. By multiplying $A$ by some integer, we may assume that $A$ has integer entries. Now, for any $g\in \Z^n$, $g\in G$ if and only if $gA=0$. Let $\Delta(x_1,\ldots,x_n)$ be the formula $(x_1,\ldots,x_n)A=0$. Note that $\Delta$ is a formula without parameters.
\end{proof}

\begin{cor}
Let $R$ be a tubular algebra. The group $\rad\chi_R\subseteq \Z^n\cong K_0(R)$ is definable in the language of Presburger arithmetic.
\end{cor}
\begin{proof}
Recall, that, when $R$ is a tubular algebra, $\chi_R$ is positive semi-definite and hence $\rad\chi_R$ is a subgroup of $K_0(R)$. If $x\in K_0(R)$ and $nx\in \rad\chi_R$ for some $n\in\Z\backslash\{0\}$ then $n^2\chi_R(x)=\chi_R(nx)=0$ and hence $x\in\rad\chi_R$. So $\rad\chi_R$ is pure in $K_0(R)$.
\end{proof}

If $R$ is a tubular algebra, $\chi_R(x)=1$ and $x-y\in \rad\chi_R$ then, since $\chi_R$ is positive semi-definite, $\chi_R(y)=\chi_R(x-(x-y))=\chi_R(x)=1$.

Similar results to the following have been obtained purely K-theoretically in \cite[2.3]{KussinKtheory}. However we require exactly the formulation of \ref{Omegaexists}.

\begin{lemma}\label{Omegaexists}
Let $R$ be a tubular algebra. There is a finite subset $\Omega\subseteq K_0(R)$ such that for all $x\in K_0(R)$ with $\chi_R(x)=1$, there exists $y\in\Omega$ such that $x-y\in\rad\chi_R$.
\end{lemma}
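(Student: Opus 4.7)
The plan is to exhibit $\Omega$ as a set of coset representatives in $K_0(R)/\rad\chi$ for the image of the set of roots. The key input is that for a tubular algebra the Euler form $\chi$ is positive semi-definite on $K_0(R)$; this is a classical fact due to Ringel (see \cite[Chapter 5]{Ringeltub}, and it is visible in the $C(2,2,2,2)$ case from the explicit formula $\chi(\underline{x})=\sum_{i=1}^4\frac{1}{4}(2x_i-x_0-x_\infty)^2$ given earlier in the paper).

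First I would pass to the quotient. Let $(-,-):=\chi(-+-)-\chi(-)-\chi(-)$ be the associated symmetric bilinear form. By Cauchy--Schwarz for the positive semi-definite form $\chi$, an element $x$ satisfies $\chi(x)=0$ if and only if $(x,y)=0$ for every $y\in K_0(R)$; in particular $\rad\chi$ is the radical of the bilinear form, and is pure in $K_0(R)$ because $\chi(nx)=n^2\chi(x)$. Hence the quotient $\bar K:=K_0(R)/\rad\chi$ is a finitely generated free abelian group, and $\chi$ descends to a well-defined \emph{positive definite} integral quadratic form $\bar\chi$ on $\bar K$.

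Next I would invoke the standard finiteness of lattice points in an ellipsoid: because $\bar\chi$ is positive definite, the set $\{\bar v\in\bar K:\bar\chi(\bar v)\leq 1\}$ is the intersection of a compact ellipsoid in $\bar K\otimes\R$ with the lattice $\bar K$, and so is finite. In particular the image in $\bar K$ of the set of roots $\{x\in K_0(R):\chi(x)=1\}$ is finite. Choose any finite set $\Omega\subseteq K_0(R)$ of lifts of this finite set of cosets. Then for every $x$ with $\chi(x)=1$ there exists $y\in\Omega$ with $\bar x=\bar y$, i.e.\ $x-y\in\rad\chi$, as required.

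The only nontrivial ingredient is the positive semi-definiteness of $\chi$ for tubular algebras; once this is cited, the rest is elementary linear algebra plus the classical finiteness of representations of a value by a positive definite integral form on a finite-rank lattice. No obstacle is anticipated beyond locating the precise reference in \cite{Ringeltub}.
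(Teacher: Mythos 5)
Your proof is correct, but it takes a genuinely different route from the paper's. The paper argues by contradiction through representation theory: if infinitely many roots lay in pairwise distinct cosets of $\rad\chi$, one could first normalize the values of $\langle h_0,-\rangle$ and $\langle h_\infty,-\rangle$ by adding integer multiples of $h_0,h_\infty$, then add a suitable sincere radical vector to make each representative positive and connected while preserving the slope, producing infinitely many positive connected roots all of the same slope and in distinct cosets of $\rad\chi$. Since $\mod\text{-}R$ is controlled by $\chi$ and a tubular algebra has only finitely many inhomogeneous tubes at each slope, this is a contradiction. You instead give a purely lattice-theoretic argument: $\chi$ is positive semi-definite, so $\rad\chi$ is the radical of the associated symmetric bilinear form, is pure, and the induced form on $K_0(R)/\rad\chi$ is positive definite; finiteness of lattice points in the compact ellipsoid $\{\bar v:\bar\chi(\bar v)\leq 1\}$ does the rest. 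Your approach is more elementary and self-contained once positive semi-definiteness of $\chi$ for tubular algebras is cited (it is indeed in \cite{Ringeltub}; it is also what makes ``the subgroup of radical vectors $\rad\chi$'' well-defined, so the paper already implicitly relies on it). What the paper's argument buys is that it only draws on facts about the module category it has already set up, rather than importing an extra quadratic-form input; what your argument buys is avoiding the representation-theoretic finiteness statement entirely. Both are sound.
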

\begin{proof}
Suppose that no such finite set $\Omega\subseteq K_0(R)$ exists. Then there are infinitely many $y$ with $\chi_R(y)=1$ all in pairwise distinct cosets of $\rad \chi_R$. Note that if $\lambda,\mu\in\Z$ then $\langle h_0,y+\lambda h_0+\mu h_\infty\rangle=\langle h_0,y\rangle+\mu\langle h_0,h_\infty\rangle$ and $\langle h_\infty,y+\lambda h_0+\mu h_\infty\rangle=\langle h_\infty, y\rangle+\lambda\langle h_\infty,h_0\rangle$. Let $a,b\in\N$ be such that $a=\langle h_0,h_\infty\rangle$ and $-b=\langle h_\infty,h_0\rangle$. Thus, there are infinitely many $y$ with $\chi_R(y)=1$ in pairwise different cosets of $\rad\chi_R$ such that $0< \langle h_0,y\rangle\leq a$ and $-b\leq\langle h_\infty,y\rangle< 0$. Therefore, there exists $e,f\in \N$ such that there are infinitely many $y$ with $\chi_R(y)=1$ in pairwise different cosets of $\rad\chi_R$ such that $\langle h_0,y\rangle=e$ and $\langle h_\infty,y\rangle=-f$.

Let $x=fh_0+eh_\infty$. Note that by \cite[5.1.1]{Ringeltub} $x$ is sincere i.e. $x_i>0$ for all $1\leq i\leq n$ where $x=(x_1,\ldots,x_n)$. A quick calculation gives that $-e\langle h_\infty,x\rangle=f\langle h_0,x\rangle$. Since $x$ is sincere, for any $y$ in our infinite set, there is a $c\in \N_0$ such that $y+cx$ is positive and connected; note that $-e\langle h_\infty,y+cx\rangle=f\langle h_0,y+cx\rangle$. Thus we have an infinite set of elements $z\in K_0(R)$ such that $z$ is connected, positive, $\chi_R(z)=1$ and $-e\langle h_\infty,z\rangle=f\langle h_0,z\rangle$ all of which are pairwise in different cosets of $\rad \chi_R$. This contradicts that fact that for each slope $q$, $R$ has only finitely many inhomogeneous tubes.
\end{proof}

\begin{lemma}\label{dimvecofinddefinPres}
Let $R$ be a tubular algebra. The set of dimension vectors $x\in K_0(R)$ such that $\chi_R(x)=0$ or $\chi_R(x)=1$ is definable in the language of Presburger arithmetic. Thus, the set of dimension vectors of finite-dimensional indecomposable modules over $R$ is definable in the language of Presburger arithmetic.
\end{lemma}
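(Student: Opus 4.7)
The plan is to treat the level sets $\{\chi=0\}$ and $\{\chi=1\}$ separately, then deduce the ``Thus'' clause from the classification of indecomposable dimension vectors recalled in Section~\ref{background}.

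First I would argue that $\rad\chi$, which has already been declared a subgroup, is in fact \emph{pure} in $K_0(R)\cong\Z^n$: if $nx\in\rad\chi$ with $n\neq 0$ then $n^2\chi(x)=\chi(nx)=0$, forcing $\chi(x)=0$. Lemma \ref{puregroupdef} then supplies a Presburger formula $\Delta_0(\bar x)$ defining $\{x : \chi(x)=0\}$.

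Next I would tackle the roots using \ref{Omegaexists}. Shrinking $\Omega$ if necessary, I may take each $y\in\Omega$ to itself be a root (for each coset of $\rad\chi$ that meets $\{\chi=1\}$, retain one root drawn from that coset). The crucial observation is that for such $y$ and $w\in\rad\chi$,
\[
\chi(y+w)=\chi(y)+\langle y,w\rangle+\langle w,y\rangle+\chi(w)=1+\bigl(\langle y,w\rangle+\langle w,y\rangle\bigr),
\]
so $\chi(y+w)=1$ is equivalent to the \emph{linear} equation $\langle y,w\rangle+\langle w,y\rangle=0$ in $w$. Consequently
\[
\{x\in K_0(R):\chi(x)=1\}=\bigcup_{y\in\Omega}\bigl(y+\bigl\{w\in\rad\chi : \langle y,w\rangle+\langle w,y\rangle=0\bigr\}\bigr),
\]
a finite union of translates of sets defined by $\Delta_0$ conjoined with a single integer linear equation, hence definable by some Presburger formula $\Delta_1(\bar x)$.

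For the ``Thus'' clause I would invoke \cite[5.2.6]{Ringeltub}: the dimension vectors of finite-dimensional indecomposable $R$-modules are exactly the positive connected roots together with the positive connected radical vectors. Positivity is the Presburger condition $\bigwedge_i x_i\geq 0 \wedge \bigvee_i x_i>0$; connectedness of the support, since the quiver of $R$ is a fixed finite graph, is a finite disjunction over connected vertex subsets $S$ of $\bigwedge_{i\in S}x_i>0 \wedge \bigwedge_{i\notin S}x_i=0$. Conjoining these with $\Delta_0\vee\Delta_1$ finishes the argument.

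The only conceptual step is the linearisation above: on each coset $y+\rad\chi$ the quadratic condition $\chi(\cdot)=1$ collapses to a single integer linear equation in $w$. Without \ref{Omegaexists} to reduce to finitely many such cosets, one would be trying to define a genuine integer quadric inside Presburger, which is impossible; so \ref{Omegaexists} is exactly the lever that makes the result work.
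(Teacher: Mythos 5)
Your proof is correct and follows the same strategy as the paper's: reduce to finitely many cosets of $\rad\chi$ via \ref{Omegaexists}, invoke \ref{puregroupdef} for definability of $\rad\chi$ (your purity check is a detail the paper leaves implicit), and then append Presburger-expressible positivity and connectedness together with \cite[5.2.6]{Ringeltub} for the ``Thus'' clause. One remark on your ``crucial observation'': for a tubular algebra the form $\chi$ is positive semidefinite, so $\rad\chi=\{x:\chi(x)=0\}$ coincides with the radical of the symmetric bilinear form, and hence $\langle y,w\rangle+\langle w,y\rangle=0$ holds automatically for every $w\in\rad\chi$ and every $y$. The extra linear constraint you impose is therefore vacuous --- each coset $y+\rad\chi$ with $y$ a root consists entirely of roots --- which is exactly why the paper can pass directly from ``$\Omega$ finite'' and ``$\rad\chi$ definable'' to the conclusion without any linearisation. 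Your version is harmless and would be needed in a setting where $\chi$ is merely a quadratic form, but here you are not in fact cutting the coset down to a proper affine subspace.
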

\begin{proof}
By lemma \ref{Omegaexists}, there is a finite subset $\Omega\subseteq K_0(R)$ such that for all $x\in K_0(R)$ with $\chi_R(x)=1$, there exists $y\in\Omega$ such that $x-y\in\rad\chi_R$. By \ref{puregroupdef} $\rad\chi_R$ is definable in the language of Presburger arithmetic. Thus, the set of dimension vectors $x\in K_0(R)$ such that $\chi_R(x)=0$ or $\chi_R(x)=1$ is definable in the language of Presburger arithmetic.

That $x=(x_1,\ldots,x_n)\in K_0(R)$ is positive is expressed by saying $x_i\geq 0$ for all $1\leq i\leq n$ and that $x_i>0$ for some $1\leq i\leq n$. That $x=(x_1,\ldots,x_n)\in K_0(R)$ is connected is expressed by saying that if $x_i>0$ and $x_j>0$ then there is some path $P$ in the underlying quiver of $R$ between $i$ and $j$ such that for all vertices $k$ in $P$, $x_k>0$.

By \ref{Ringelcontrol}, $\mod\text{-}R$ is controlled by $\chi_R$. Thus any connected positive dimension vector with $\chi_R(x)=0$ or $\chi_R(x)=1$ is the dimension vector of an indecomposable module and all dimension vectors of indecomposable modules are of this form. Thus we have shown that the set of dimension vectors of finite-dimensional indecomposable modules over $R$ is definable in the language of Presburger arithmetic.
\end{proof}

\begin{proposition}\label{nonexpenoughtocheckfd}
Let $R$ be a tubular algebra and $\phi/\psi,\phi_1/\psi_1,\ldots,\phi_n/\psi_n$ be pp-pairs such that there exist $v,w_1,\ldots,w_n\in K_0(R)$ such that for indecomposable finite-dimensional modules $M$ with slope in the interval $(a,b)$,
\[\dim\phi(M)/\psi(M)=v\cdot\udim M\] and for $1\leq i\leq n$
\[\dim \phi_i(M)/\psi_i(M)=w_i\cdot\udim M.\]

If there is an indecomposable pure-injective module $N$ with slope in $(a,b)$ such that $N\in(\phi/\psi)$ but $N\notin \bigcup_{i=1}^n(\phi_i/\psi_i)$ then there is a finite-dimensional indecomposable module $M$ with slope in $(a,b)$ such that $M\in(\phi/\psi)$ but $M\notin \bigcup_{i=1}^n(\phi_i/\psi_i)$.
\end{proposition}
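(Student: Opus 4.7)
The plan is to produce, from the pure-injective witness $N$, a finite-dimensional indecomposable module $M$ of slope in $(a,b)$ whose dimension vector satisfies the linear conditions $v\cdot\udim M>0$ and $w_i\cdot\udim M=0$ for $1\le i\le n$; by the linearity hypothesis this is equivalent to $M\in(\phi/\psi)\setminus\bigcup_{i=1}^n(\phi_i/\psi_i)$. I split on the kind of point $N$ is: finite-dimensional, rational slope non-f.d., or irrational slope.

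If $N$ is finite-dimensional take $M=N$ and we are done. Suppose next $N$ has rational slope $q\in(a,b)$. By \ref{pureinjatrationalslope}, $N$ is a Pr\"ufer $E[\infty]$, an adic $\widehat E$, or the generic $\mcal{G}_q$. For the Pr\"ufer case, openness of $\phi/\psi$ at $N$ together with \ref{compactopenatq} forces $(\phi/\psi)\cap\mcal{C}_q$ to contain a full ray $R[E[j]]$, while each $(\phi_i/\psi_i)\cap\mcal{C}_q$, being a compact open missing $E[\infty]$, contains no ray through $E[\infty]$ and so only finitely many $E[k]$'s; any $E[k]$ with $k$ larger than these finitely many exceptions works. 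The adic case is dual. For $N=\mcal{G}_q$, $(\phi/\psi)\cap\mcal{C}_q$ must be cofinite in $\mcal{C}_q$, so by the linearity hypothesis applied to quasi-simples of homogeneous tubes at slope $q$ (all of which have the same dimension vector $h_q$, a common multiple of $ah_0+bh_\infty$ for $q=b/a$), we get $v\cdot h_q>0$; each $(\phi_i/\psi_i)\cap\mcal{C}_q$, not containing $\mcal{G}_q$, is a finite union of rays/corays plus finitely many f.d.\ points, hence touches only finitely many tubes at slope $q$, which by the linearity hypothesis forces $w_i\cdot h_q=0$. Picking $M$ a quasi-simple in any homogeneous tube at slope $q$ not appearing in those finitely many tubes finishes this case.

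The main obstacle is the irrational slope case. Suppose $N$ has irrational slope $r\in(a,b)$. By \ref{onepointuptoelemeqatirrational}, $\mcal{D}_r$ admits no non-trivial proper definable subcategory, so each $\phi_i/\psi_i$ being closed on $N$ is closed on \emph{every} indecomposable pure-injective of slope $r$, i.e.\ condition (iii) of \ref{nearirrational} fails for each $\phi_i/\psi_i$. Applying \ref{nearirrational}(iii)$\Rightarrow$(i) to $\phi/\psi$ gives $\epsilon>0$ such that $\phi/\psi$ is open on every finite-dimensional module in a homogeneous tube of slope in $(r,r+\epsilon)$, and the contrapositive of (i)$\Rightarrow$(iii) for each $\phi_i/\psi_i$ gives, for every $\epsilon'>0$, a finite-dimensional homogeneous-tube module of slope in $(r,r+\epsilon')$ on which $\phi_i/\psi_i$ is closed. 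Now using linearity and the identity $h_q=ah_0+bh_\infty$ for $q=b/a$, closure of $\phi_i/\psi_i$ on a homogeneous-tube module at slope $q$ is the linear equation $a(w_i\cdot h_0)+b(w_i\cdot h_\infty)=0$; this has at most one rational solution $q$ unless both $w_i\cdot h_0=0$ and $w_i\cdot h_\infty=0$. Since the set of such $q$ accumulates at the irrational $r$, we must be in the latter case, so $w_i\cdot h_q=0$ for every rational $q$ and every $i$. Choosing any rational $q\in(r,r+\epsilon)\cap(a,b)$ (which is non-empty for small $\epsilon$), a quasi-simple in a homogeneous tube at slope $q$ has $v\cdot h_q>0$ and $w_i\cdot h_q=0$ for all $i$, giving the desired $M$.

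The only delicate point is the irrational case; the rational and finite-dimensional cases are direct consequences of \ref{pureinjatrationalslope} and \ref{compactopenatq} combined with the linearity hypothesis. The rigidity argument that forces $w_i\cdot h_0=w_i\cdot h_\infty=0$ simultaneously for all $i$ from accumulation of solutions of the linear equation at an irrational point is what makes the irrational case work cleanly.
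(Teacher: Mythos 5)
Your proof is correct, and it reaches the conclusion by a genuinely different route in the irrational-slope case (and handles the rational case with a more hands-on subcase analysis). The paper first invokes the dichotomy of Lemmas \ref{closedonalloropenonallhomgeneous}, \ref{closedhomqimpliesclosedq} and \ref{openonhomoimpliesoenonallbutfinitelymany}: at any rational slope, each pair is either closed everywhere or open on all but finitely many f.d.\ points, so in the rational case it simply picks one of the cofinitely many f.d.\ witnesses; for irrational $q$ it uses compactness of $(\phi_i/\psi_i)$ against the chain $\mcal{C}_{(q-\epsilon,q+\epsilon)}$ (and an external citation \cite[8.7]{modirrslope} for $\phi/\psi$) to get uniform openness/closedness on all f.d.\ indecomposables of slope near $q$. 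You instead go through \ref{compactopenatq} directly (splitting into Pr\"ufer/adic/generic subcases, which the dichotomy lemmas would let you avoid), and in the irrational case you replace the compactness argument with a rigidity argument built from the linearity hypothesis: the failure of \ref{nearirrational}(1) for each $\phi_i/\psi_i$ gives closed homogeneous-tube modules at slopes accumulating at the irrational $r$, and since $w_i\cdot(\alpha h_0+\beta h_\infty)=0$ pins down at most one ratio $\beta/\alpha$ unless $w_i\cdot h_0=w_i\cdot h_\infty=0$, you get the stronger conclusion that $\phi_i/\psi_i$ vanishes on all homogeneous tubes of the whole interval. Both approaches are valid; yours actually exploits the linearity hypothesis more aggressively in the irrational case (the paper's irrational-case argument is purely topological and would work without linearity), trades the external citation for the lemma \ref{nearirrational} restated in the paper, and makes the $N$ finite-dimensional case explicit, which the paper handles only implicitly.
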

\begin{proof}
Suppose that $N$ is as in the statement and that $N$ has slope $q$.

For any slope $p\in(a,b)$ either $\phi/\psi$ is closed on all modules of slope $p$ or $\phi/\psi$ is open on all the indecomposable pure-injective modules of slope $p$ except for finitely many finite-dimensional indecomposable modules. See \ref{closedhomqimpliesclosedq} and \ref{openonhomoimpliesoenonallbutfinitelymany} for $p$ rational and \ref{onepointuptoelemeqatirrational} for $p$ irrational.

So, $N\in(\phi/\psi)$  implies that $\phi/\psi$ is open on almost all indecomposable pure-injectives of slope $q$ and $N\notin (\phi_i/\psi_i)$ implies $\phi_i/\psi_i$ is closed on all indecomposable pure-injectives of slope $q$. So if $q$ is rational then there is a finite-dimensional indecomposable module $M$ such that $M\in(\phi/\psi)$ and $M\notin(\phi_i/\psi_i)$ for $1\leq i\leq n$.

If $q$ is irrational then there is some $\epsilon>0$ such that $\phi/\psi$ is open on all finite-dimensional indecomposable $M$ with slope in $(q-\epsilon,q+\epsilon)$ \cite[8.7]{modirrslope}. Likewise, for each $1\leq i\leq n$, there is some $\epsilon_i>0$ such that $\phi_i/\psi_i$ is closed on all finite-dimensional indecomposable $M$ with slope in $(q-\epsilon_i,q+\epsilon_i)$. This is true because if $\mcal{C}_q\subseteq \Zg_R\backslash \left(\phi_i/\psi_i\right)$ then $\left(\phi_i/\psi_i\right)\subseteq \Zg_R\backslash\mcal{C}_q=\bigcup_{\epsilon>0}\Zg_R\backslash \mcal{C}_{(q-\epsilon,q+\epsilon)}$. Since $\left(\phi_i/\psi_i\right)$ is compact, there exists some $\epsilon>0$ such that $\left(\phi_i/\psi_i\right)\subseteq \Zg_R\backslash\mcal{C}_{(q-\epsilon,q+\epsilon)}$.

Thus there is some finite-dimensional indecomposable module $M$ with slope in $(a,b)$ such that $M\in\left(\phi/\psi\right)$ and $M\notin\bigcup_{i=1}^n\left(\phi_i/\psi_i\right)$.
\end{proof}

\begin{lemma}\label{algorfornonexp}
There is an algorithm which given $w,v_1,\ldots,v_n\in\Z^m$ and $a<b\in \Q_0^\infty$ answers whether there is an indecomposable finite-dimensional module $X$ with slope in $(a,b)$ such that $w\cdot \udim X>0$ and for $1\leq i\leq n$, $v_i\cdot \udim X=0$.
\end{lemma}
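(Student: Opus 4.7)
My plan is to reduce the question to the decidability of Presburger arithmetic, via Lemma \ref{dimvecofinddefinPres}. The data $w, v_1, \ldots, v_n, a, b$ define conditions on dimension vectors $x \in K_0(R) \cong \Z^m$ that are each linear (with integer coefficients), hence expressible in the Presburger language.

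First, using Lemma \ref{dimvecofinddefinPres}, I would fix a Presburger formula $\mathrm{Ind}(x)$ which holds precisely when $x \in K_0(R)$ is the dimension vector of a finite-dimensional indecomposable $R$-module; this uses that $\rad\chi$ is pure in $K_0(R)$ together with the finite set $\Omega$ of root representatives modulo $\rad\chi$ from Lemma \ref{Omegaexists}, all of which can be computed effectively from the relations defining $R$. Next, I would translate the slope constraint \[\mathrm{slope}(x) = -\frac{\langle h_0, x\rangle}{\langle h_\infty, x\rangle} \in (a,b)\] into a Presburger formula. For a dimension vector of a finite-dimensional indecomposable of positive slope, the denominator $\langle h_\infty, x\rangle$ is strictly positive, so if $a = p/q$ and $b = r/s$ with $q, s \in \N^+$ (and with the endpoints $a=0$ or $b=\infty$ handled as the corresponding strict linear inequality or trivial condition), clearing denominators gives the pair of strict linear inequalities
\[p \cdot \langle h_\infty, x\rangle < -q \cdot \langle h_0, x\rangle \quad\text{and}\quad -s \cdot \langle h_0, x\rangle < r \cdot \langle h_\infty, x\rangle.\]
These are Presburger-definable. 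Finally, $w \cdot x > 0$ and the equations $v_i \cdot x = 0$ are immediately Presburger-definable.

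Combining these yields a single Presburger sentence
\[\exists x \, \bigl(\mathrm{Ind}(x) \wedge \Psi_{a,b}(x) \wedge w\cdot x > 0 \wedge \bigwedge_{i=1}^n v_i \cdot x = 0 \bigr),\]
and by the decidability of Presburger arithmetic (see e.g.\ \cite[3.1.21]{Markermodeltheory}), we can decide whether this sentence holds in $\Z$. Witnessing dimension vectors correspond, via the fact that $\mod\text{-}R$ is controlled by $\chi$, to actual finite-dimensional indecomposable modules of the desired slope, giving the required decision procedure.

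The only slightly delicate step is the effective construction of $\mathrm{Ind}(x)$: one needs a computable description of $\rad\chi$ (as a pure subgroup of $\Z^m$) and a computable enumeration of the finite transversal $\Omega$. Both are obtainable from the explicit Cartan data of $R$: $\rad\chi$ is the kernel of the matrix associated to the symmetrization of $\chi$, whose generators can be found by Smith normal form, and $\Omega$ is finite by Lemma \ref{Omegaexists}, so a bounded search for positive representatives $y$ with $\chi(y)=1$ modulo $\rad\chi$ terminates. Everything else is straightforward linear algebra over $\Z$ together with the decidability oracle for Presburger arithmetic, so no genuinely new obstacle arises.
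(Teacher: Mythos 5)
Your proof takes essentially the same route as the paper: combine Lemma \ref{dimvecofinddefinPres} with the linear slope and orthogonality conditions into a single Presburger sentence and invoke decidability of Presburger arithmetic. One small error is worth flagging: the claim that $\langle h_\infty,\udim X\rangle>0$ for positive-slope indecomposables does not match the paper's sign convention --- for instance, over $C(2,2,2,2)$ the radical vector $h_0+h_\infty=(2,2,2,2,2,2)$ has slope $1$ yet $\langle h_\infty, h_0+h_\infty\rangle = 2\cdot 2 - 8 = -4$ --- so as written your cleared-denominator inequalities point the wrong way. Since the sign of $\langle h_\infty,-\rangle$ is constant on $\bigcup_{q>0}\mcal{T}_q$, this is repaired simply by reversing the two inequalities, or more robustly by case-splitting on the sign of $\langle h_\infty,x\rangle$ inside the Presburger formula; this is exactly the technicality the paper glosses over with ``can be easily rewritten.''
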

\begin{proof}
Note that there are vectors $g_0$ and $g_{\infty}$ such that for all $x\in\Z^m$, $\langle h_0,x\rangle =g_0\cdot x$ and $\langle h_\infty,x\rangle =g_\infty\cdot x$.

Thus $x\in\Z^m$ has ``slope'' in $(a,b)$ if and only if $-(g_0\cdot x)/(g_\infty\cdot x)\in (a,b)$. This statement can be easily rewritten in the language of Presburger arithmetic.

In \ref{dimvecofinddefinPres}, we showed that set of dimension vectors of indecomposable finite-dimensional modules over $R$ is definable in Presburger arithmetic. Thus, since Presburger arithmetic is decidable, there is an algorithm which decides whether there is an $x\in\Z^m$ such that $x$ is the dimension vector of an indecomposable finite-dimensional module over $R$, $x$ has slope in $(a,b)$, $w\cdot x>0$ and for $1\leq i\leq n$, $v_i\cdot x=0$.
\end{proof}

\section{Decidability for theories of modules over tubular algebras}\label{maintheorem}

In this section we combine the results of the previous sections in order to prove that if $R$ is a tubular algebra over a recursive algebraically closed field then $R$ has decidable theory of modules.

\begin{theorem}\label{mainthm}
Let $R$ be a tubular algebra over a recursive algebraically closed field. The common theory of $R$-modules is decidable.
\end{theorem}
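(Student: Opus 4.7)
The plan is to reduce the decidability problem to checking whether a finite union of Ziegler basic opens covers another, and then to handle each ``slope stratum'' of $\Zg_R$ by a distinct subalgorithm. By the background material (Baur--Monk together with the proof algorithm), to decide $\text{Th}(\Mod{-}R)$ it suffices to produce an algorithm which, given pp-1-pairs $\phi/\psi, \phi_1/\psi_1,\ldots,\phi_n/\psi_n$, decides whether $(\phi/\psi) \subseteq \bigcup_{i=1}^n (\phi_i/\psi_i)$. Since every indecomposable pure-injective of $R$ lives in $\mcal{P}_0\cup\bigcup_{r\in\R_0^\infty}\mcal{C}_r\cup\mcal{Q}_\infty$, the containment holds globally iff it holds on each of these strata. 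Thus it suffices to describe, for each kind of stratum, an algorithm that decides the containment restricted to that stratum.

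First I would handle the extremes $\mcal{P}_0\cup\mcal{C}_0$ and $\mcal{C}_\infty\cup\mcal{Q}_\infty$. For these the work is already done: Proposition \ref{algforslopezero} and Corollary \ref{algforslopeinfinity} give the required algorithms using the one-point (co)extension interpretation functors $F_0, F_1$ together with decidability of the theory of modules over the underlying tame hereditary algebra and elementary duality. Next I would apply Corollary \ref{dimvectorsforpppairs} to the full list of pp-pairs $\phi/\psi, \phi_1/\psi_1,\ldots,\phi_n/\psi_n$ at once: taking the union of the critical rationals produced for each pair yields a single finite list $0 = q_0 < q_1 < \cdots < q_N < q_{N+1} = \infty$ in $\Q_0^\infty$ and vectors $v^{(j)}, v_1^{(j)},\ldots,v_n^{(j)} \in K_0(R)$ such that on every finite-dimensional indecomposable $M$ of slope in $(q_j, q_{j+1})$ one has $\dim(\phi/\psi)(M) = v^{(j)}\cdot\udim M$ and similarly for the $\phi_i/\psi_i$.

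For each critical rational $q_j \in \Q^+$ in this list, the restriction of the containment problem to $\mcal{C}_{q_j}$ is decided by Lemma \ref{algorforparticularq}. It remains to deal with $\bigcup_{q\in (q_j, q_{j+1})} \mcal{C}_q$ (including the irrational-slope points in that interval). Here is where the Presburger-arithmetic machinery enters: since all the pp-pairs behave linearly in dimension vectors on this interval, Proposition \ref{nonexpenoughtocheckfd} reduces the existence of a witnessing indecomposable pure-injective in the interval to the existence of a finite-dimensional indecomposable witness in the interval. That latter problem is exactly what Lemma \ref{algorfornonexp} decides via definability of the set of indecomposable dimension vectors in Presburger arithmetic (Lemma \ref{dimvecofinddefinPres}) and decidability of Presburger arithmetic. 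Applied to each negative disjunct coming from $\bigcup_{i=1}^n(\phi_i/\psi_i)$, this yields an algorithm for the interval.

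Running all of these subalgorithms in parallel and conjoining their outputs decides the original containment. The hard part, already absorbed into the earlier sections, is twofold: first, establishing that the critical-rational stratification of Corollary \ref{dimvectorsforpppairs} exists and is effectively computable, so that between any two consecutive critical rationals all relevant pp-pairs act linearly on dimension vectors; and second, showing that this linearity is enough to reduce the interval problem to a question about finite-dimensional modules (Proposition \ref{nonexpenoughtocheckfd}), where the only remaining difficulty, handling irrational-slope points, is disposed of by the one-point-up-to-elementary-equivalence result \ref{onepointuptoelemeqatirrational}. With those inputs in place, the final theorem is essentially a bookkeeping assembly of the stratum-by-stratum algorithms.
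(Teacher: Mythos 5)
Your proposal is correct and follows essentially the same route as the paper: reduce via Baur--Monk to deciding containment of basic open sets, stratify $\Zg_R$ by slope, use Corollary \ref{dimvectorsforpppairs} to compute the critical rationals at which the pp-pairs change linear behaviour, apply Lemma \ref{algorforparticularq} at each critical rational and Proposition \ref{nonexpenoughtocheckfd} together with Lemma \ref{algorfornonexp} on the intervening intervals, and handle $\mcal{P}_0\cup\mcal{C}_0$ and $\mcal{C}_\infty\cup\mcal{Q}_\infty$ via Proposition \ref{algforslopezero} and Corollary \ref{algforslopeinfinity}. The only cosmetic difference is that you explicitly merge the critical-rational lists for all the pp-pairs up front, which the paper does implicitly.
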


\begin{proof}[Proof of 9.1 for canonical algebras of tubular type]
It is enough to show that there is an algorithm which, given pp-pairs $\phi/\psi,\phi_1/\psi_1,\ldots,\phi_n/\psi_n$, answers whether
\[\left(\phi/\psi\right)\subseteq\bigcup_{i=1}^n\left(\phi_i/\psi_i\right).\]

First we show that there is an algorithm which answers whether there is an indecomposable pure-injective $N$ of strictly positive non-infinite slope with $N\in(\phi/\psi)$ such that $N\notin \bigcup_{i=1}^n(\phi_i/\psi_i)$.

By \ref{dimvectorsforpppairs}, there is an algorithm which, given $\phi/\psi,\phi_1/\psi_1,\ldots,\phi_n/\psi_n$, outputs $0=q_0<q_1<\ldots<q_m<q_{m+1}=\infty$ and $v_j$,$w_{ij}$ such that for all $0\leq j \leq m$ and all indecomposable finite-dimensional modules $N$ with slope in $(q_j,q_{j+1})$,

\[\dim \phi(N)/\psi(N)=v_j\cdot \udim N\]

and

\[\dim \phi_i(N)/\psi_i(N)=w_{ij}\cdot \udim N .\]

By \ref{nonexpenoughtocheckfd}, if there is an indecomposable pure-injective module $N$ with slope in $(q_j,q_{j+1})$ such that $N\in(\phi/\psi)$ and $N\notin \bigcup_{i=1}^n(\phi_i/\psi_i)$ then there is a finite-dimensional indecomposable module with slope in $(q_j,q_{j+1})$ such that $N\in(\phi/\psi)$ and $N\notin \bigcup_{i=1}^n(\phi_i/\psi_i)$. Thus, by \ref{algorfornonexp}, we can effectively answer whether there is an indecomposable pure-injective module $N$ with slope in $(q_j,q_{j+1})$ such that $N\in(\phi/\psi)$ and $N\notin \bigcup_{i=1}^n(\phi_i/\psi_i)$.

By \ref{algorforparticularq}, there is an algorithm which, for each $1\leq j\leq m$ answer whether
\[\left(\phi/\psi\right)\cap\mcal{C}_{q_j}\subseteq\bigcup_{i=1}^n\left(\phi_i/\psi_i\right)\cap\mcal{C}_{q_j} .\]

It now remains to answer whether there is an indecomposable pure-injective module $N\in \mcal{P}_0\cup\mcal{C}_0$ or $N\in \mcal{C}_{\infty}\cup\mcal{Q}_{\infty}$ such that $N\in \left(\phi/\psi\right)$ and $N\notin \bigcup_{i=1}^n\left(\phi_i/\psi_i\right)$.

Since $\mcal{P}_0\cup\mcal{C}_0\subseteq \mcal{E}_0\cup\mcal{E}_1$ and $\mcal{C}_{\infty}\cup\mcal{Q}_{\infty}\subseteq \mcal{E}'_0\cup \mcal{E}'_1$, it is enough to check if there is an indecomposable pure-injective module $N\in\mcal{E}_0\cup\mcal{E}_1 $ or $N\in \mcal{E}'_0\cup \mcal{E}'_1$ such that $N\in \left(\phi/\psi\right)$ and $N\notin \bigcup_{i=1}^n\left(\phi_i/\psi_i\right)$.  For this we refer to \ref{algforslopezero} and \ref{algforslopeinfinity}.
\end{proof}

We now extend the above result to tubular algebras. Note that, since the results of sections \ref{algslopeq}, \ref{corharlandprest} and \ref{almostallandpresburger} are for general tubular algebras, the only part of the proof missing is an algorithm which given pp-pairs $\phi/\psi,\phi_1/\psi_1,\ldots,\phi_n/\psi_n$ answers yes or no such that
\begin{enumerate}
\item if the algorithm answers yes then there is an (indecomposable pure-injective) $R$-module $N$ such that $N\in \left(\phi/\psi\right)$ and $N\notin \bigcup_{i=1}^n\left(\phi_i/\psi_i\right)$ and
\item if the algorithm answers no then there does not exist  $N\in \mcal{H}$ such that $N\in \left(\phi/\psi\right)$ and $N\notin \bigcup_{i=1}^n\left(\phi_i/\psi_i\right)$ where $\mcal{H}:=\mcal{P}_{0}\cup\mcal{C}_{0}\cup\mcal{C}_{\infty}\cup\mcal{Q}_{\infty}$.
\end{enumerate}

Using Herzog's duality, as in \ref{Herdual0toinf}, it is sufficient to replace $\mcal{H}$ in the above by $\mcal{P}_{0}\cup\mcal{C}_{0}$.

Let $\Gamma$ be a finite-dimensional algebra. A finite-dimensional $\Gamma$-module $T$ is a tilting module if the following three conditions are satisfied:
\begin{itemize}
\item [(T$1$)] $T$ has projective dimension less than or equal to $1$,
\item [(T$2$)] $\Ext^1(T,T)=0$, and
\item [(T$3$)] There exists a short exact sequence $\xymatrix@C=0.35cm{
  0 \ar[r] & \Gamma \ar[rr] && T' \ar[rr] && T'' \ar[r] & 0 }$ where $T'$ and $T''$ are direct summands of some finite power of $T$.
\end{itemize}

Note that, \cite[2.1]{Bongartz}, (T$3$) can be replaced with the condition that the number of pairwise non-isomorphic indecomposable direct summands of $T$ is equal to the number of pairwise non-isomorphic simple $\Gamma$-modules.

For an introduction to tilting theory for finite-dimensional algebras, see \cite[Chapter VI]{Ass1}.

\begin{proposition}\label{tiltfromcan}
Let $R$ be a tubular algebra. There exists a canonical algebra $\Gamma$ of tubular type and a tilting module $\Sigma\in\mod\text{-}\Gamma$ with $\End(\Sigma)\cong R$ such that for all indecomposable pure-injective $R$-modules $N$ which are either of slope zero or preprojective, there exists an indecomposable pure-injective $\Gamma$-module $M$ with $\Hom_\Gamma({_R}\Sigma,M)\cong N$.
\end{proposition}
\begin{proof}
Let $\Sigma$ be a tilting vector bundle in $\coh(\mathbb{X})$ where $\mathbb{X}$ is of tubular type such that $\End(\Sigma)\cong R$. Let $\Gamma$ be the endomorphism ring of the canonical tilting bundle $\Sigma_{\text{can}}:=\bigoplus_{0\leq x\leq c}\mcal{O}(x)$. Then $\Gamma$ is a canonical algebra of tubular type. We will view the categories $\coh(\mathbb{X})$, $\mod\text{-}\Gamma$ and $\mod\text{-}R$ as subcategories of $\D^{b}(\mathbb{X})$.

Let $\coh_{\geq}(\mathbb{X})$ be the torsion class of $\Sigma_{\text{can}}$ and $\coh_{-}(\mathbb{X})$ be the torsion-free class of $\Sigma_{\text{can}}$. So $\mod\text{-}\Gamma$ is equivalent to $\coh_{\geq}(\mathbb{X})\vee\coh_{-}(\mathbb{X})[1]$. Note that the maximal (respectively minimal) GL-slope of any $\mcal{O}(x)$ for $0\leq x\leq c$ is $p=\text{lcm}\{p_1,\ldots,p_t\}$ (respectively $0$).

By repeatedly applying the shift functor, which acts on sheaves by sending $X$ to $X(x_t)$, we may assume that each indecomposable direct summand of $\Sigma$ has slope strictly greater than $p$ and hence $\Sigma\in \coh_{\geq}(\mathbb{X})$. Since $\Sigma$ generates $\D^{b}(\mathbb{X})\cong\D^{b}(\Gamma)$, the indecomposable direct summands of $\Sigma$ generate $K_0(\Gamma)$. So, viewed as a $\Gamma$-module $\Sigma$ satisfies (T$3$). That $\Sigma$ has projective dimension less that or equal to $1$ follows from \cite[3.1.5]{Ringeltub}. Therefore $\Sigma\in\mod\text{-}\Gamma$ is a tilting module.

Let $\mcal{T}$ be the torsion class induced on $\coh(\mathbb{X})$ by $\Sigma$. Let $\mu_{\text{max}}$ be the maximal slope of any indecomposable direct summand of $\Sigma$ and let $\mu_{\text{min}}$ be the minimal slope of any indecomposable direct summand of $\Sigma$. Note that if $X\in\mcal{T}$ is indecomposable then $\mu_{\text{min}}\leq \mu(X)$. So, in particular, $p<\mu(X)$ and hence $X\in \coh_{\geq}(\mathbb{X})$. Moreover, if $X\in\mcal{T}$ then $\Ext^1_\Gamma(\Sigma,X)=\Ext^1_\mathbb{X}(\Sigma,X)=0$ and $\Hom_\Gamma(\Sigma,X)\cong\Hom_\mathbb{X}(\Sigma,X)$ as $R$-modules.

Since $\mcal{T}$ contains the preprojective component and all finite-dimensional $R$-modules of slope $0$, the image of the torsion class of $\Sigma$ in $\mod\text{-}\Gamma$ under $\Hom_{\Gamma}(\Sigma,-)$ in $\Mod\text{-}R$ contains all preprojective $R$-modules and all finite dimensional $R$-modules of slope zero.

Since $\Sigma$ is a tilting module in $\mod\text{-}\Gamma$, $\Hom_\Gamma(\Sigma,-):\Mod\text{-}\Gamma\rightarrow\Mod\text{-}R$ induces an equivalence between the torsion class $\mcal{G}$ in $\Mod\text{-}\Gamma$ of $\Gamma$-modules $M$ with $\text{Ext}_\Gamma(\Sigma,M)=0$ and the torsion-free class $\mcal{Y}$ in $\Mod\text{-}R$ of $R$-modules $N$ with $\text{Tor}_R(N,\Sigma)=0$ by \cite[3.5.1]{ColbyFuller}. By \cite[10.2.36]{PSL}, $\mcal{Y}$ is a definable subcategory of $\Mod\text{-}R$. Since, by \ref{fddenseinslopezero}, the smallest definable subcategory of $\Mod\text{-}R$ containing all finite-dimensional $R$-modules of slope $0$ contains all $R$-modules of slope $0$ and $\mcal{Y}$ contains all finite-dimensional $R$-modules of slope $0$, it follows that all $R$-modules of slope $0$ are in the image of $\Hom_\Gamma(\Sigma,-):\Mod\text{-}\Gamma\rightarrow \Mod\text{-}R$.
\end{proof}

\begin{proof}[proof of 9.1 for tubular algebras via tilting]
Let $R$ be a tubular algebra. Let $\Gamma$ and $\Sigma\in\mod\text{-}\Gamma$ be as in \ref{tiltfromcan}.

Since $F:=\Hom_\Gamma(\Sigma,-)$ is a $k$-linear interpretation functor, given a pp-pair $\phi/\psi$ over $R$, we can effectively construct a pp-pair $\phi'/\psi'$ over $\Gamma$ such that for all $M\in \Mod\text{-}\Gamma$, $|\phi'(M)/\psi'(M)|>1$ if and only if $|\phi(FM)/\psi(FM)|>1$. By the previous discussion and since we have already shown that the theory of $\Gamma$-modules is decidable, this is enough.
\end{proof}

\begin{cor}\label{concandec}
Prest's conjecture is true for concealed canonical algebras.
\end{cor}
\begin{proof}
As a consequence of \ref{mainthm}, it remains to confirm that domestic concealed canonical algebras have decidable theory of modules and that wild concealed canonical algebras have undecidable theory of modules.

By \cite[5.7]{LenMeltilt}, if $\Lambda$ is a wild concealed canonical algebra then $\Lambda$ is strictly wild and hence, by \cite{Epiintandstrwild}, has undecidable theory of modules.

By \cite[7.1]{LenPensep}, if $\Lambda$ is a domestic concealed canonical algebra then $\Lambda$ is tame concealed. So, \cite[17.17]{PreBk}, $\Lambda$ has decidable theory of modules.
\end{proof}

\textbf{Acknowledgements}
\noindent
I would like to thank Mike Prest for many helpful conversations. I would also like to thank Lidia Angeleri-H\"ugel for her showing me a draft of \cite{AngeleriHugelKussin} and for a very useful conversation about some of the material which is now in section \ref{1pointext}. I thank the anonymous referee for helpful comments which have improved this paper, in particular for sketching a proof of Proposition \ref{accallslopes}.
\bibliographystyle{alpha}
\bibliography{tubularalgebras}

\newcommand{\etalchar}[1]{$^{#1}$}
\begin{thebibliography}{EGN{\etalchar{+}}98}

\bibitem[AHK17]{AngeleriHugelKussin}
L.~Angeleri~H\"ugel and D.~Kussin.
\newblock Tilting and cotilting modules over concealed canonical algebras.
\newblock {\em Math. Z.}, 285(3-4):821--850, 2017.

\bibitem[ASS06]{Ass1}
I.~Assem, D.~Simson, and A.~Skowro{\'n}ski.
\newblock {\em Elements of the representation theory of associative algebras.
  {V}ol. 1}, volume~65 of {\em London Mathematical Society Student Texts}.
\newblock Cambridge University Press, Cambridge, 2006.
\newblock Techniques of representation theory.

\bibitem[Bau75]{decundectheoriesabgroups}
W.~Baur.
\newblock Decidability and undecidability of theories of abelian groups with
  predicates for subgroups.
\newblock {\em Compositio Math.}, 31(1):23--30, 1975.

\bibitem[Bau80]{Baurfourss}
W.~Baur.
\newblock On the elementary theory of quadruples of vector spaces.
\newblock {\em Ann. Math. Logic}, 19(3):243--262, 1980.

\bibitem[Bon81]{Bongartz}
K.~Bongartz.
\newblock Tilted algebras.
\newblock In {\em Representations of algebras ({P}uebla, 1980)}, volume 903 of
  {\em Lecture Notes in Math.}, pages 26--38. Springer, Berlin-New York, 1981.

\bibitem[BP02]{ZieZardomstring}
K.~Burke and M.~Prest.
\newblock The {Z}iegler and {Z}ariski spectra of some domestic string algebras.
\newblock {\em Algebr. Represent. Theory}, 5(3):211--234, 2002.

\bibitem[But80]{Conass}
M.~C.~R. Butler.
\newblock The construction of almost split sequences. {I}.
\newblock {\em Proc. London Math. Soc. (3)}, 40(1):72--86, 1980.

\bibitem[CF04]{ColbyFuller}
R.~R. Colby and K.~R. Fuller.
\newblock {\em Equivalence and duality for module categories}, volume 161 of
  {\em Cambridge Tracts in Mathematics}.
\newblock Cambridge University Press, Cambridge, 2004.
\newblock With tilting and cotilting for rings.

\bibitem[Dro79]{Drozd}
Ju.~A. Drozd.
\newblock Tame and wild matrix problems.
\newblock In {\em Representations and quadratic forms ({R}ussian)}, pages
  39--74, 154. Akad. Nauk Ukrain. SSR, Inst. Mat., Kiev, 1979.

\bibitem[EGN{\etalchar{+}}98]{HandRecMath}
Yu.~L. Ershov, S.~S. Goncharov, A.~Nerode, J.~B. Remmel, and V.~W. Marek,
  editors.
\newblock {\em Handbook of recursive mathematics. {V}ol. 1}, volume 138 of {\em
  Studies in Logic and the Foundations of Mathematics}.
\newblock North-Holland, Amsterdam, 1998.
\newblock Recursive model theory.

\bibitem[End01]{Enderton}
H.~B. Enderton.
\newblock {\em A mathematical introduction to logic}.
\newblock Harcourt/Academic Press, Burlington, MA, second edition, 2001.

\bibitem[Gei94]{Geislerthesis}
G.~Geisler.
\newblock {\em {Zur Modelltheorie von Moduln}}.
\newblock Phd thesis, {Universit\"at Freiburg}, 1994.

\bibitem[GL87]{GeiLen}
W.~Geigle and H.~Lenzing.
\newblock A class of weighted projective curves arising in representation
  theory of finite-dimensional algebras.
\newblock In {\em Singularities, representation of algebras, and vector bundles
  ({L}ambrecht, 1985)}, volume 1273 of {\em Lecture Notes in Math.}, pages
  265--297. Springer, Berlin, 1987.

\bibitem[GL91]{GeiLenperp}
W.~Geigle and H.~Lenzing.
\newblock Perpendicular categories with applications to representations and
  sheaves.
\newblock {\em J. Algebra}, 144(2):273--343, 1991.

\bibitem[GP16]{intandreptype}
L.~Gregory and M.~Prest.
\newblock Representation embeddings, interpretation functors and controlled
  wild algebras.
\newblock {\em J. Lond. Math. Soc. (2)}, 94(3):747--766, 2016.

\bibitem[Har11]{Richardthesis}
R.~Harland.
\newblock {\em Pure-injective Modules over Tubular Algebras and String
  Algebras}.
\newblock PhD thesis, University of Manchester, 2011.
\newblock available at www.maths.manchester.ac.uk/$\sim$
  mprest/publications.html.

\bibitem[Her93]{herzogduality}
I.~Herzog.
\newblock Elementary duality of modules.
\newblock {\em Trans. Amer. Math. Soc.}, 340(1):37--69, 1993.

\bibitem[HP15]{modirrslope}
R.~Harland and M.~Prest.
\newblock Modules with irrational slope over tubular algebras.
\newblock {\em Proc. Lond. Math. Soc. (3)}, 110(3):695--720, 2015.

\bibitem[Kra98]{KrauseGeneric}
H.~Krause.
\newblock Generic modules over {A}rtin algebras.
\newblock {\em Proc. London Math. Soc. (3)}, 76(2):276--306, 1998.

\bibitem[Kus97]{Kussinthesis}
D.~Kussin.
\newblock {\em Graduierte Faktorialit\"at und die Parameterkurven tubularer
  Familien}.
\newblock PhD thesis, Universit\"at-Gesamthochschule Paderborn, 1997.

\bibitem[Kus00]{KussinKtheory}
D.~Kussin.
\newblock On the {$K$}-theory of tubular algebras.
\newblock {\em Colloq. Math.}, 86(1):137--152, 2000.

\bibitem[LdlPn99]{LenPensep}
H.~Lenzing and J.~A. de~la Pe\~{n}a.
\newblock Concealed-canonical algebras and separating tubular families.
\newblock {\em Proc. London Math. Soc. (3)}, 78(3):513--540, 1999.

\bibitem[Len83]{LenzingHomtransfer}
H.~Lenzing.
\newblock Homological transfer from finitely presented to infinite modules.
\newblock In {\em Abelian group theory ({H}onolulu, {H}awaii, 1983)}, volume
  1006 of {\em Lecture Notes in Math.}, pages 734--761. Springer, Berlin, 1983.

\bibitem[LM92]{LenMeltub}
H.~Lenzing and H.~Meltzer.
\newblock Sheaves on a weighted projective line of genus one and
  representations of a tubular algebra.
\newblock In {\em Proceedings of the {S}ixth {I}nternational {C}onference on
  {R}epresentations of {A}lgebras ({O}ttawa, {ON}, 1992)}, volume~14 of {\em
  Carleton-Ottawa Math. Lecture Note Ser.}, page~25. Carleton Univ., Ottawa,
  ON, 1992.

\bibitem[LM96]{LenMeltilt}
H.~Lenzing and H.~Meltzer.
\newblock Tilting sheaves and concealed-canonical algebras.
\newblock In {\em Representation theory of algebras ({C}ocoyoc, 1994)},
  volume~18 of {\em CMS Conf. Proc.}, pages 455--473. Amer. Math. Soc.,
  Providence, RI, 1996.

\bibitem[Mar02]{Markermodeltheory}
D.~Marker.
\newblock {\em Model theory: An introduction}, volume 217 of {\em Graduate
  Texts in Mathematics}.
\newblock Springer-Verlag, New York, 2002.

\bibitem[Pre85]{decmiketameher}
M.~Prest.
\newblock Tame categories of modules and decidability.
\newblock {\em preprint}, 1985.

\bibitem[Pre88]{PreBk}
M.~Prest.
\newblock {\em Model theory and modules}, volume 130 of {\em London
  Mathematical Society Lecture Note Series}.
\newblock Cambridge University Press, Cambridge, 1988.

\bibitem[Pre96]{Epiintandstrwild}
M.~Prest.
\newblock Epimorphisms of rings, interpretations of modules and strictly wild
  algebras.
\newblock {\em Comm. Algebra}, 24(2):517--531, 1996.

\bibitem[Pre97]{PreInterp}
M.~Prest.
\newblock Interpreting modules in modules.
\newblock {\em Ann. Pure Appl. Logic}, 88(2-3):193--215, 1997.
\newblock Joint AILA-KGS Model Theory Meeting (Florence, 1995).

\bibitem[Pre09]{PSL}
M.~Prest.
\newblock {\em Purity, spectra and localisation}, volume 121 of {\em
  Encyclopedia of Mathematics and its Applications}.
\newblock Cambridge University Press, Cambridge, 2009.

\bibitem[Pre11]{defaddcats}
M.~Prest.
\newblock Definable additive categories: purity and model theory.
\newblock {\em Mem. Amer. Math. Soc.}, 210(987):vi+109, 2011.

\bibitem[PT09]{decfinrings}
G.~Puninski and C.~Toffalori.
\newblock Towards the decidability of the theory of modules over finite
  commutative rings.
\newblock {\em Ann. Pure Appl. Logic}, 159(1-2):49--70, 2009.

\bibitem[Rin84]{Ringeltub}
C.~M. Ringel.
\newblock {\em Tame algebras and integral quadratic forms}, volume 1099 of {\em
  Lecture Notes in Mathematics}.
\newblock Springer-Verlag, Berlin, 1984.

\bibitem[Rin98]{ZgspcetameherRingel}
C.~M. Ringel.
\newblock The {Z}iegler spectrum of a tame hereditary algebra.
\newblock {\em Colloq. Math.}, 76(1):105--115, 1998.

\bibitem[RR06]{InfdimcanalgReitenRingel}
I.~Reiten and C.~M. Ringel.
\newblock Infinite dimensional representations of canonical algebras.
\newblock {\em Canad. J. Math.}, 58(1):180--224, 2006.

\bibitem[SF75]{SlobFrid}
A.~M. Slobodsko{\u\i} and {\`E}.~I. Fridman.
\newblock Theories of abelian groups with predicates that distinguish
  subgroups.
\newblock {\em Algebra i Logika}, 14(5):572--575, 607, 1975.

\bibitem[Sko90]{Skopolgrowth}
A.~Skowro\'{n}ski.
\newblock Algebras of polynomial growth.
\newblock In {\em Topics in algebra, {P}art 1 ({W}arsaw, 1988)}, volume~26 of
  {\em Banach Center Publ.}, pages 535--568. PWN, Warsaw, 1990.

\bibitem[SS07]{ss3}
D.~Simson and A.~Skowro{\'n}ski.
\newblock {\em Elements of the representation theory of associative algebras.
  {V}ol. 3}, volume~72 of {\em London Mathematical Society Student Texts}.
\newblock Cambridge University Press, Cambridge, 2007.
\newblock Representation-infinite tilted algebras.

\end{thebibliography}
\end{document}